\documentclass[11pt]{article}

\usepackage[compact]{titlesec}
\usepackage{array}
\usepackage{amsfonts}
\usepackage{amscd}
\usepackage{amssymb}
\usepackage{amsthm}
\usepackage{amsmath}
\usepackage{stmaryrd}
\usepackage{graphicx}
\usepackage{color}
\usepackage{verbatim}
\usepackage{mathrsfs}
\usepackage{pstricks,tikz}
\usetikzlibrary{calc}
\usepackage{caption}
\usepackage[neveradjust]{paralist}
\usepackage[a4paper, total={15cm, 25cm},centering]{geometry}
\usepackage{float}
\usepackage[absolute]{textpos}
\usepackage{subcaption}
\usepackage{ytableau}
\usepackage{hyperref}
\usepackage{bbm}
\usepackage{lineno}

\hypersetup{hidelinks}

\usetikzlibrary{automata, positioning, arrows}

\let\OLDthebibliography\thebibliography
\renewcommand\thebibliography[1]{
  \OLDthebibliography{#1}
  \setlength{\parskip}{0pt}
  \setlength{\itemsep}{0pt plus 0.3ex}
}

\def\peq{\preccurlyeq}

\definecolor{green1}{RGB}{153,216,201}
\definecolor{green2}{RGB}{44,190,95}

\definecolor{blue1}{RGB}{158,202,225}
\definecolor{blue2}{RGB}{49,130,189}

\definecolor{RedOrange}{cmyk}{0,0.77,0.87,0} 
\definecolor{Mahogany}{cmyk}{0,0.85,0.87,0.35} 
\definecolor{Maroon}{cmyk}{0,0.87,0.68,0.32} 
\definecolor{BrickRed}{cmyk}{0,0.89,0.94,0.28} 
\definecolor{Red}{cmyk}{0,1.,1.,0} 
\definecolor{OrangeRed}{cmyk}{0,1.,0.50,0} 

\definecolor{purple}{rgb}{0.8,0.12,0.8}
\definecolor{orange}{rgb}{1.0,0.7,0.0}
\definecolor{pink}{rgb}{1,0.5,0.8}
\definecolor{blackg}{rgb}{0.1,0.25,0.1}
\definecolor{ForestGreen}{cmyk}{0.91,0,0.88,0.42}
\definecolor{Turquoise}{cmyk}{0.85,0,0.20,0}

 \theoremstyle{plain}

\newtheorem{thm}{Theorem}[section]
\newtheorem{lemma}[thm]{Lemma}
\newtheorem{prop}[thm]{Proposition}
\newtheorem{cor}[thm]{Corollary}

\theoremstyle{definition}

\newtheorem{defn}[thm]{Definition}

\newtheorem{remark}[thm]{Remark}
\newtheorem{example}[thm]{Example}

\numberwithin{equation}{section}

\def\Wext{\widetilde{W}}
\def\Wfin{W_0}
\def\CwJp{C_{\sw_{n}}}
\def\Hext{H_{\mathsf{ext}}}

\def\Waff{W_{\mathsf{aff}}}
\def\Saff{S_{\mathsf{aff}}}
\def\Wext{W_{\mathsf{ext}}}

\def\veps{\varepsilon}

\def\sA{\mathsf{A}}
\def\sB{\mathsf{B}}
\def\sC{\mathsf{C}}
\def\sD{\mathsf{D}}

\def\sF{\mathsf{F}}
\def\sG{\mathsf{G}}

\def\sT{\mathsf{T}}

\def\sb{\mathsf{b}}

\def\cA{\mathcal{A}}

\def\cD{\mathcal{D}}

\def\cJ{\mathcal{J}}

\def\cL{\mathcal{L}}

\def \cP{\mathcal{P}}

\def\cV{\mathcal{V}}

\newcommand{\la}{\lambda}

\newcommand{\Ga}{\Gamma}

\def\CC{\mathbb{C}}

\def\NN{\mathbb{N}}

\def\QQ{\mathbb{Q}}
\def\RR{\mathbb{R}}

\def\TT{\mathbb{T}}

\def\ZZ{\mathbb{Z}}

\DeclareMathOperator\Tr{\mathsf{Tr}}

\def\la{\lambda}

\def\<{\langle}
\def\>{\rangle}

\def\la{\lambda}

\def\sR{\mathsf{R}}
\def\fs{\mathfrak{s}}

\newcommand{\fc}{\mathfrak{c}}

\newcommand{\sw}{\mathsf{w}}
\newcommand{\sd}{\mathsf{d}}

\newcommand{\sm}{\mathsf{m}}
\newcommand{\ssp}{\mathsf{p}}

\newcommand{\su}{\mathsf{u}}

\newcommand{\sv}{\mathsf{v}}

\newcommand{\fu}{\mathfrak{u}}

\newcommand{\st}{\mathsf{t}}

\newcommand{\ba}{\mathbf{a}}
\newcommand{\bb}{\mathbf{b}}

\newcommand{\sq}{\mathsf{q}}

\makeatletter
\renewcommand{\@makefnmark}{\mbox{\textsuperscript{}}}
\makeatother

\title{Lusztig's asymptotic algebra and a cell factorisation for the second lowest two-sided cell in type $\tilde{\sB}_n$}
\author{J. Parkinson}
\date{\today}

\begin{document}

\maketitle

\vspace{-0.5cm}

\begin{abstract}
We determine Lusztig's asymptotic algebra for the second lowest two-sided cell~$\Delta_1$ in the equal parameter extended affine Weyl group $\tilde{\mathsf{B}}_n$ for all $n\geq 2$, realising it as a full matrix algebra over the representation ring of $\mathsf{Sp}_{2n-2}(\CC)\times C_2$ with $C_2$ the group of order~$2$. Our result is an explicit isomorphism of based rings, determining the image of each basis element $\mathsf{t}_w$ of the asymptotic algebra. As a consequence we verify that Lusztig's original conjecture from 1989 on the structure of the asymptotic algebra holds for this cell. A key ingredient in our proof is a refinement of Shi's description of~$\Delta_1$, giving an explicit parametrisation of the elements of this cell (a \textit{cell factorisation}). This cell factorisation simultaneously describes all intersections of left and right cells, with uniqueness and length-additivity of expressions.  Applications include a classification of the involutions in $\Delta_1$, and a calculation of the length generating functions for $\Delta_1$ and the right cells in~$\Delta_1$. 
\end{abstract}
%


\section{Introduction}

Two-sided Kazhdan-Lusztig cells in equal parameter affine Weyl groups are classified by unipotent conjugacy classes in the corresponding complex dual group (\cite[Theorem 4.8]{Lus:IV}). This classification does not, however, provide explicit parametrisations of the elements of a cell or determine its asymptotic algebra. The latter records the leading coefficients of Kazhdan-Lusztig multiplication and retains the interactions between the left and right cells. Explicit descriptions of these structures remain difficult outside affine type~$\sA$, the lowest two-sided cell, and low-rank cases (we will give a more precise survey of known results below). 

In \cite[Conjecture~10.5]{Lus:IV} Lusztig gave a conjectural description of the asymptotic algebra associated to an equal parameter affine Hecke algebra. It is known that this conjectural description does not always hold (see \cite{BDD:23}), and a weaker form of the conjecture has been proved by Bezrukavnikov and Ostrik~\cite{BO:04}. 

In this paper we study the second lowest two-sided cell $\Delta_1$ in the equal parameter extended affine Weyl group of type $\tilde{\sB}_n$, uniformly for all ranks $n\geq 2$. We refine Shi's recent description~\cite{ShiBn} of $\Delta_1$ into a unique length-additive factorisation
$$
\Delta_1=\{u^{-1}\sm_{\la} v\mid u,v\in\sB,\,\lambda\in P_n^+\}
$$
(with notation to be introduced below) in which $u$ and $v$ identify the right and left cells. We then show that this parametrisation realises the asymptotic algebra for this cell as a full matrix algebra over the representation ring of $\mathsf{Sp}_{2n-2}(\CC)\times C_2$ (with $C_2$ the group of order $2$) with $\st_{u^{-1}\sm_{\la}v}$ corresponding to $E_{u,v}\otimes [V_{\lambda}]$. Thus the leading Kazhdan-Lusztig structure constants within $\Delta_1$ are explicitly identified with tensor product multiplicities. This establishes Lusztig's original conjectural description of the asymptotic algebra for this cell, and gives a uniform model beyond the lowest two-sided cell in affine type~$\sB$.

%
%
%

We now introduce some notation and give an outline of Shi's description of $\Delta_1$ from~\cite{ShiBn}. We use the standard Bourbaki conventions~\cite{Bou:02} for root systems~$\Phi$ of type $\sB_n$. Let $\Wfin$, $\Waff=Q\rtimes \Wfin$, and $\Wext=P\rtimes \Wfin$ be the spherical, affine, and extended affine Weyl groups, where $Q$ and $P$ are the coroot and coweight lattices of $\Phi$, respectively. For $i\in\{0,1,\ldots,n\}$ let $W_i$ be the parabolic subgroup generated by $\Saff\backslash\{s_i\}$, and let $\sw_i$ be the longest element of $W_i$. For example, $W_0$ and $W_1$ are of type $\sB_n$, and $W_n$ is of type~$\sD_n$. Let $\Delta_0$ be the lowest two-sided cell, and let 
\begin{align*}
\sB=\{v\in \Waff\mid \ell(\sw_nv)=\ell(\sw_n)+\ell(v),\,\sw_n v\notin\Delta_0,\,\ba(s_i\sw_nv)<\ell(\sw_n)\text{ for $0\leq i\leq n-1$}\},
\end{align*}
where $\ba$ is Lusztig's $\ba$-function. An element $w=x\sw_n y$ with $x,y\in\Wext$ is called an \textit{extension} of $\sw_n$ if $\ell(w)=\ell(x)+\ell(\sw_n)+\ell(y)$. In \cite{ShiBn} Shi gives the following description of the two-sided cell $\Delta_1$ and the one-sided cells contained in $\Delta_1$. 

\begin{thm}[{\cite[Theorem~4.4, Proposition~5.9, and Corollary~5.10]{ShiBn}}]\label{thm:Shi}
The two-sided cell $\Delta_1$ consists of all extensions of $\sw_{n}$ in $\Wext\backslash\Delta_0$, and the right cells of $\Delta_1$ are the sets
\begin{align*}
\Ga_u=\{u^{-1}\sw_ny\mid y\in\Wext,\,\ell(u^{-1}\sw_ny)=\ell(u)+\ell(\sw_n)+\ell(y),\,u^{-1}\sw_ny\notin\Delta_0\}
\end{align*}
with $u\in \sB$, and if $u,v\in\sB$ with $u\neq v$ then $\Ga_u\neq\Ga_v$. 
\end{thm}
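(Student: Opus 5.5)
This is a cited result of Shi; here is how I would reprove it with standard Kazhdan--Lusztig machinery.

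\emph{Step 1: every extension lies in $\Delta_1$.} First I would record that $\ba(\sw_n)=\ell(\sw_n)=n(n-1)$, since $\sw_n$ is the longest element of a finite parabolic of type $\sD_n$. Next, any extension $w=x\sw_ny$ satisfies $w\leq_{LR}\sw_n$: length-additivity lets one peel off simple reflections one at a time, and the standard star-operation and descent arguments keep each intermediate element in a two-sided cell at least as low as $\sw_n$. Since $\ba$ is monotone along $\leq_{LR}$, this gives $\ba(w)\geq\ba(\sw_n)$.

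Lusztig's classification of cells via unipotent classes shows that the only two-sided cells with $\ba$-value at least $n(n-1)$ are $\Delta_0$, with $\ba=\ell(\sw_0)=n^2$, and the cell attached to the subregular unipotent class of the dual group $\mathsf{Sp}_{2n}(\CC)$, which has $\ba=n(n-1)$. Hence every extension of $\sw_n$ outside $\Delta_0$ lies in $\Delta_1$. In particular $\sw_n\in\Delta_1$, because $\sw_n\notin\Delta_0$.

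\emph{Step 2: conversely, every $w\in\Delta_1$ is an extension of $\sw_n$.} This is the main obstacle. I would argue by induction on $\ell(w)$ using left and right star operations and the description of $\Delta_1$ via sign types (Shi's alcove-form characterisation). The idea is to show that any $w$ with $\ba(w)=n(n-1)$ contains some $W$-conjugate longest element of a $\sD_n$-type parabolic (or of $W_0$-type, which would force $\Delta_0$) as a length-additive factor.

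The proof would work as follows. Take a reduced expression for $w$. Use the fact that $\ba(w)\leq$ the maximal $\ell(\sw_J)$ over finite standard parabolics $J$ with $\sw_J$ a subword. Apply the Lusztig--Shi criterion that $\ba(w)=\ell(\sw_J)$ is attained exactly when such a factor occurs. Finally, use braid relations to move this factor to an element of the form $x\sw_ny$. Here I would use the diagram automorphism of the extended group to identify the $\sD_n$-parabolics $W_n$ and $W_0$'s counterpart. The case of $W_1$, which is of type $\sB_n$ and hence conjugate into $\Delta_0$-type, is excluded by the hypothesis $w\notin\Delta_0$.

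\emph{Step 3: the right cells.} For $u\in\sB$, the set $\Ga_u$ is closed under right star operations and right multiplications preserving length-additivity. I would show it is contained in a single right cell by connecting each $u^{-1}\sw_ny$ to $u^{-1}\sw_n$ through a chain of elementary $\sim_R$ moves within $\Delta_1$. This uses the fact that $y\mapsto u^{-1}\sw_ny$ adds generators on the right while preserving the left descent data.

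Then I would show that the $\Ga_u$ exhaust $\Delta_1$: given $x\sw_ny$, I would reduce $x$ to a $u\in\sB$ by pushing letters of $x$ through $\sw_n$ or into $y$. The defining condition $\ba(s_i\sw_nu)<\ell(\sw_n)$ says precisely that no further left reduction stays in $\Delta_1$, which gives a canonical minimal representative.

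Finally, to get $\Ga_u\neq\Ga_v$ for $u\ne v$, I would separate the sets by generalized left $\tau$-invariants, computed via Knuth-type star operations on $u^{-1}\sw_n$. Since $\Ga_u\neq\Ga_v$ is a statement about the sets themselves, it also suffices to show $u^{-1}\sw_n\notin\Ga_v$, which follows from the minimality and uniqueness of the left factor. Combined with the known count of left cells in $\Delta_1$, this completes the proof.
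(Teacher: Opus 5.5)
The paper does not prove this statement; it imports it from Shi. So your outline has to stand on its own, and only the easy half does.

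\textbf{What works.} For a length-additive extension, $x\sw_ny\le_{LR}\sw_n$ follows directly from $C_sC_w=C_{sw}+\cdots$ when $sw>w$; no star operations are needed. Hence $\ba(x\sw_ny)\ge n(n-1)$, and Lusztig's formula $\ba=\dim\mathcal{B}_u$ then puts every extension outside $\Delta_0$ into $\Delta_1$. (The relevant class is the minimal class $(2,1^{2n-2})$ of $\mathsf{Sp}_{2n}(\CC)$, not the subregular one, whose Springer fibre has dimension $1$.) Likewise each $\Ga_u$ lies in the right cell of $u^{-1}\sw_n$. Every $u^{-1}\sw_ny'$ with $y'$ a prefix of $y$ is an extension of $\sw_n$ lying outside $\Delta_0$, and two elements $x,xs$ of $\Delta_1$ with $\ell(xs)=\ell(x)+1$ satisfy $x\sim_R xs$.

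\textbf{The main gap is Step~2, which is the actual content of the theorem.} There is no general ``Lusztig--Shi criterion'' saying that $\ba(w)=\ell(\sw_J)$ forces $w$ to contain (a conjugate of) $\sw_J$ as a length-additive factor. Nor is there a general upper bound of $\ba(w)$ by the lengths of such factors; if ``subword'' means subexpression, the bound is vacuous for long $w$. The available general facts go the other way: $\ba(x\sw_Jy)\ge\ell(\sw_J)$, and $\ba\le\ell(\sw_0)$. That every $w$ with $\ba(w)=n(n-1)$ is an extension of $\sw_n$ is exactly what Shi proves by a detailed sign-type analysis, and it is delicate. As the paper notes, the natural necessary condition (meeting every long-root parallelism class and missing a short one) is satisfied by $\sw_0s_n$, which is not an extension of $\sw_n$.

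The gap propagates to your exhaustion step. Choose $w=x\sw_ny$ with $\ell(x)$ minimal, and suppose some $x\sw_ns_i$ lies in $\Delta_1$. You can only produce a shorter left factor, and so show $x^{-1}\in\sB$, if you already know that $x\sw_ns_i$ is an extension of $\sw_n$ — which is Step~2 again.

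\textbf{The separation of right cells is also not established.} Showing $\Ga_u\neq\Ga_v$ as sets (e.g.\ $u^{-1}\sw_n\notin\Ga_v$) does not show they lie in different right cells. That is what you need, since you only know each $\Ga_u$ sits inside some right cell; you must prove $u^{-1}\sw_n\not\sim_R v^{-1}\sw_n$. Generalized $\tau$-invariants would certify this only if you exhibited a difference for every pair $u\neq v$, which you do not. The pigeonhole alternative needs the covering of $\Delta_1$ by the $\Ga_u$ (which depends on Step~2), the count $|\sB|=2^{n-1}n!$, and an independently established count of $2^{n-1}n!$ right cells in $\Delta_1$. You supply no source for that last count.
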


The description in Theorem~\ref{thm:Shi} is very elegant and parallels the description of $\Delta_0$ as all extensions in $\Wext$ of $\sw_0$ from~\cite{Shi:Lowest}. However converting Theorem~\ref{thm:Shi} into a statement explicitly listing the elements of $\Delta_1$, and each right cell~$\Ga_u$, appears to be a nontrivial task. For example, to list the elements of $\Ga_u$ one must explicitly determine all elements $y\in\Wext$ such that $\ell(u^{-1}\sw_ny)=\ell(u)+\ell(\sw_n)+\ell(y)$ with the property that $w=u^{-1}\sw_ny$ cannot be written in the form $w=a\sw_0b$ with $a,b\in\Wext$  with $\ell(w)=\ell(a)+\ell(\sw_0)+\ell(b)$. Moreover, it is desirable to have a more explicit description of the set $\sB$, and to be able to list the elements of $\Ga_u\cap\Ga_v^{-1}$ for each $u,v\in\sB$ (these intersections of right and left cells play an important role in Kazhdan-Lusztig theory, see in particular~\cite{LusztigXi}). 


We now describe our results. Let $\cA_n$ be the closed region of $\mathbb{R}^n$ bounded by the hyperplanes $x_n=0$ and $x_n=1$ (this is the \textit{fundamental $n$-alcove} in the language of~\cite{GLP:23,Lus:97}). Let $P_n=P\cap\cA_n$ be the set of coweights contained in $\cA_n$. There is a natural action of a Weyl group $G_n$ of type $\sC_{n-1}$ (equivalently $\sB_{n-1}$) on $P_n$, and we write $P_n^+$ for a certain choice of orbit representatives for this action. For each $\lambda\in P_n$ there is naturally associated an explicit \textit{pseudo-translation} $\tau_{\la}\in\Wext$, and the set $\mathsf{T}_n=\{\tau_{\la}\mid \la\in P_n\}$ is an abelian group acting on $\cA_n$. For each $\la\in P_n$ let $\sm_{\la}$ denote the unique maximal length element of the double coset $W_n\tau_{\la}W_n$. It turns out that $\sm_{g\la}=\sm_{\la}$ for $g\in G_n$, and hence the elements $\sm_{\la}$ are indexed by~$P_n^+$. 

Writing $\Ga=\Ga_1$ (with $1\in\Wext$ the identity), we give the following explicit parametrisation of the elements of $\Delta_1$, each right cell $\Ga_u$, and each intersection $\Ga_u\cap\Ga_v^{-1}$. 

\begin{thm}\label{thm:main1}
We have:
\begin{compactenum}[$(1)$]
\item $\Delta_1=\{u^{-1}\sm_{\lambda}v\mid u,v\in\sB,\,\lambda\in P_n^+\}$;
\item each $w\in \Delta_1$ has a unique representation as $w=u^{-1}\sm_{\la}v$ with $u,v\in\sB$ and $\la\in P_n^+$;
\item $\Ga_u=\{u^{-1}\sm_{\lambda}v\mid v\in \sB,\,\lambda\in P_n^+\}=u^{-1}\Ga$;
\item $\Ga_u\cap\Ga_v^{-1}=\{u^{-1}\sm_{\lambda}v\mid \lambda\in P_n^+\}=u^{-1}(\Ga\cap\Ga^{-1})v$;
\item $\ell(u^{-1}\sm_{\lambda}v)=\ell(u)+\ell(\sm_{\lambda})+\ell(v)$ for all $u,v\in \sB$ and $\lambda\in P_n^+$.
\end{compactenum}
\end{thm}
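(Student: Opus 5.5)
The plan is to reduce everything to the right cell $\Ga=\Ga_1$ and the single intersection $\Ga\cap\Ga^{-1}$, and then transport by Shi's description (Theorem~\ref{thm:Shi}).

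\textbf{Step 1: the diagonal intersection.} I would first prove
$$\Ga\cap\Ga^{-1}=\{\sm_\la\mid \la\in P_n^+\},$$
with the $\sm_\la$ pairwise distinct. By Theorem~\ref{thm:Shi}, $w\in\Ga\cap\Ga^{-1}$ precisely when $w$ is a two-sided length-additive extension $w=\sw_n y=y'^{-1}\sw_n$ with $w\notin\Delta_0$. Hence both the left and right descent sets of $w$ contain $\Saff\backslash\{s_0\}$, since $W_n$ is generated by $\Saff\backslash\{s_n\}$ and the index needs care with the conventions. So $w$ is the maximal element of its double coset $W_nwW_n$.

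It then remains to show two things:
\begin{compactenum}[(a)]
\item every double coset $W_nxW_n$ whose maximal element is not in $\Delta_0$ has the form $W_n\tau_\la W_n$ for some $\la\in P_n$;
\item $W_n\tau_\la W_n=W_n\tau_\mu W_n$ if and only if $\mu\in G_n\la$.
\end{compactenum}
For (a) I would use the geometry of the fundamental $n$-alcove $\cA_n$. The double cosets $W_n\backslash\Wext/W_n$ correspond to $W_n$-orbits of $W_n$-translates of the type-$\sD_n$ "chamber" $\cA_n$. Not being in $\Delta_0$ means that no $\sw_0$ can be extracted. I expect this to force the translate to stay within a strip parallel to $\cA_n$, so the coset is represented by a pseudo-translation $\tau_\la$. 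The stabiliser computation in (b) yields the $G_n$ action. Conversely, I would check $\sm_\la\notin\Delta_0$ by verifying that its $\ba$-value is $\ell(\sw_n)$, or directly that no factorisation through $\sw_0$ exists.

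\textbf{Step 2: right cells.} Next I would show $\Ga=\{\sm_\la v\mid v\in\sB,\ \la\in P_n^+\}$ with lengths adding. Each left cell of $\Delta_1$ is $\Ga_v^{-1}$, and $\Ga\cap\Ga_v^{-1}$ is nonempty. By the same argument as in Step~1, its elements are maximal in left $W_n$-cosets and of the form $x v$. I would prove that right multiplication by $v\in\sB$ gives a length-additive bijection $\Ga\cap\Ga^{-1}\to\Ga\cap\Ga_v^{-1}$, using the defining conditions of $\sB$:
\begin{compactenum}[(i)]
\item the condition $\ell(\sw_nv)=\ell(\sw_n)+\ell(v)$, together with $\sm_\la$ having right descent set containing the generators of $W_n$, gives $\ell(\sm_\la v)=\ell(\sm_\la)+\ell(v)$;
\item the $\ba$-condition $\ba(s_i\sw_nv)<\ell(\sw_n)$ is what guarantees $\sm_\la v\notin\Delta_0$, and that $\sm_\la v$ lies in the left cell of $v$ rather than that of a shorter element.
\end{compactenum}
Surjectivity is the key part. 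Given $w\in\Ga\cap\Ga_v^{-1}$, write $w=\sw_n y$ and strip a maximal right factor. I would use Theorem~\ref{thm:Shi}'s statement that distinct $u\in\sB$ give distinct cells to identify that factor with $v$, leaving an element of $\Ga\cap\Ga^{-1}$.

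\textbf{Step 3: the general case.} Inverting gives the left-multiplication statement, and combining the two yields $\Ga_u\cap\Ga_v^{-1}=u^{-1}(\Ga\cap\Ga^{-1})v$ with additive length. This gives (4) and (5). Taking unions over $v$ gives (3), and taking unions over $u$ gives (1). Uniqueness (2) follows from three facts:
\begin{compactenum}[(i)]
\item the right cells $\Ga_u$ are distinct, hence disjoint, so $u$ is determined by $w$;
\item likewise the left cells determine $v$;
\item the distinctness of the $\sm_\la$ from Step~1 then determines $\la$.
\end{compactenum}

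\textbf{Main obstacle.} I expect the hardest step to be the surjectivity in Step~2, together with the claim in Step~1(a) that every non-$\Delta_0$ maximal double coset element is some $\sm_\la$. Both require controlling when an extension of $\sw_n$ is, or is not, an extension of $\sw_0$. I would approach both via an explicit alcove-walk, or hyperplane-separation, description of $\Delta_0$: $w\in\Delta_0$ if and only if the alcove of $w$ is separated from the base alcove by a full "$\sw_0$-cone" of hyperplanes.
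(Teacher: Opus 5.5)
The genuine gap is in Step~2(i), and this is exactly where statement (5) has to come from. Write $\sm_\la=x\sw_n$ with $D_R(x)\subseteq\{s_n\}$. Your inference, that $D_R(\sm_\la)\supseteq\{s_0,\ldots,s_{n-1}\}$ and $\ell(\sw_nv)=\ell(\sw_n)+\ell(v)$ give $\ell(\sm_\la v)=\ell(\sm_\la)+\ell(v)$, is then the general claim that $D_R(x),D_L(v)\subseteq\{s_n\}$ implies $\ell(x\sw_nv)=\ell(x)+\ell(\sw_n)+\ell(v)$. This is Shi's statement (2.9.8), and Remark~\ref{rem:Shipaper} shows it is false: $x=v=s_3s_2s_3s_0s_2s_3$ in $\tilde{\sB}_3$ gives $\ell(x\sw_3v)=16<18$. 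Descent-set information alone cannot give additivity, so you must use something specific to $\sB$.

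The paper does this in three steps:
\begin{itemize}
\item it proves that $\sB$ is closed under prefixes (Lemma~\ref{lem:square}, Proposition~\ref{prop:orderideal});
\item it deduces $\sB=[1,\sb]\cup[1,\sb^{\sigma}]$ from Shi's count and a volume computation (Theorem~\ref{thm:determineB});
\item it checks $\ell(b_1^{-1}\sm_\la b_2)=2\ell(\sb)+\ell(\sm_\la)$ directly for $b_1,b_2\in\{\sb,\sb^{\sigma}\}$ (Corollary~\ref{cor:additive}).
\end{itemize}
Alternatively, as in Proposition~\ref{prop:transfer}, you can move one generator at a time along a reduced word whose prefixes all lie in $\sB$; this again needs prefix-closure. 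If $v=v_1s$ with $v_1,v\in\sB$ and $\ell(v)=\ell(v_1)+1$, then $s\notin D_R(\sw_nv_1)$. Since right descent sets are constant on the left cell $\Ga_{v_1}^{-1}$, right multiplication by $s$ then increases length on all of that cell.

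Step~2(ii) is also misattributed. The condition $\ba(s_i\sw_nv)<\ell(\sw_n)$ concerns the shorter elements $s_i\sw_nv$, and says nothing about whether $\Phi(\sm_\la v)$ meets every parallelism class. What keeps $u^{-1}\sm_\la v$ out of $\Delta_0$ is two facts:
\begin{itemize}
\item $\Phi(u)$ and $\Phi(v)$ miss $[e_1]$ for $u,v\in\sB$ (Corollary~\ref{cor:Binvset}). This also follows directly from $\sw_nv\notin\Delta_0$: apply the closure argument of Lemma~\ref{lem:generalisedabsence} to $\sw_nv\in\Ga$, noting that $\sw_n$ preserves $\{e_1,-e_1+\delta\}$.
\item $\sm_\la^{-1}\star\{e_1,-e_1+\delta\}=\{e_1,-e_1+\delta\}$ (Lemma~\ref{lem:onedirection}).
\end{itemize}
Together these show that the positive roots $u^{-1}\star e_1$ and $u^{-1}\star(-e_1+\delta)$ are not inversions of $u^{-1}\sm_\la v$. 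So $\Phi(u^{-1}\sm_\la v)$ misses a short parallelism class, and Theorem~\ref{thm:Delta0} applies.

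Your Step~3 does not follow by ``combining'' statements about $\Ga\cap\Ga_v^{-1}$ and $\Ga_u\cap\Ga^{-1}$. You need the transfer on whole right cells, $\Ga_u=u^{-1}\Ga$, together with $sx\sim_L x$ whenever $x,sx\in\Delta_1$, as in Theorem~\ref{thm:intersections}.

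Step~1 is sound in outline and matches Theorem~\ref{thm:gammaintersection}, once made precise:
\begin{itemize}
\item the common descent set is $\Saff\backslash\{s_n\}$, not $\Saff\backslash\{s_0\}$;
\item $\Phi(w)\supseteq\Phi_n^+$ together with closure of inversion sets forces $\Phi(w)$ to miss $[e_1]$, so $\sw_{0,n}wA_0\subseteq\cA_n$, and then Lemma~\ref{lem:Dnfunddom} produces $\tau_\la$;
\item separating the $G_n$-orbits needs the explicit formula $\sm_\la=\sw_n\fu^{-1}\tau_\la\fu$ of Theorem~\ref{thm:mlambda}.
\end{itemize}
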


In particular, Theorem~\ref{thm:main1} shows that $\Delta_1$ displays a remarkable regularity in the sense that each right cell can be mapped bijectively to $\Ga=\Ga_1$ simply by left multiplication by an element of $\Waff$, and intersections of arbitrary left and right cells of $\Delta_1$ can be transported to the intersection $\Ga\cap\Ga^{-1}$ by left and right multiplication. Such highly regular behaviour is not a standard property of $KL$-cells (for example, such a phenomenon already fails for the second lowest two-sided cell in type $\tilde{\sA}_3$, where instead one must use more complicated \textit{$*$-operations} to transport structure between right cells; see \cite{ShiAn,Xibook}). 

To make Theorem~\ref{thm:main1} completely explicit one should concretely determine the set $\sB$, and compute the elements $\sm_{\lambda}$. Our description of $\sB$ in the theorem below is in terms of an explicit element $\sb\in\Waff$ given in~(\ref{eq:b}), and $\sb^{\sigma}=\sigma \sb\sigma^{-1}$ is the image of $\sb$ under the diagram automorphism $\sigma$ with $s_0\leftrightarrow s_1$. For $w\in\Waff$ let $[1,w]=\{v\in\Waff\mid \ell(v^{-1}w)=\ell(w)-\ell(v)\}$ (an interval in the right weak order). In other words, $[1,w]$ consists of all prefixes of $w$. 

The group $\sT_n$ acts on the set of (classical) alcoves contained in the fundamental $n$-alcove~$\cA_n$, however this action is not free. To rectify this, let $\mathbb{X}_n=\{w\in\Wext\mid wA_0\subseteq \cA_n\}$, where $A_0$ is the (classical) fundamental alcove. The group $\sT_n$ acts freely on this set. Let $\fu=\sw_{0,n}\sw_n\in W_n$ (where $\sw_{0,n}$ is the longest element of the $\sA_{n-1}$ subsystem generated by $s_1,\ldots,s_{n-1}$). Our description of the set $\sB$, and the elements $\sm_{\la}$, is given in the following theorem. 
\goodbreak

\begin{thm}\label{thm:main2}
For $\la\in P_n^+$ we have
\begin{compactenum}[$(1)$]
\item $\sm_{\la}=\sw_n\fu^{-1}\tau_{\la}\su$ with $\ell(\sm_{\la})=\ell(\sw_n)+\ell(\fu^{-1}\tau_{\la}\fu)$;
\item $\sB=[1,\sb]\cup[1,\sb^{\sigma}]$;
\item $\sF=\fu\sB$ is a fundamental domain for the action of $\sT_n$ on $\mathbb{X}_n$. 
\end{compactenum}
\end{thm}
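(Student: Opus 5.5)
We prove the three parts in the order (1), (3), (2), since the fundamental domain statement is what links the combinatorics of $\sB$ to the geometry of $\cA_n$.

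\textbf{Part (1).} We compute the longest element of the double coset $W_n\tau_\la W_n$ geometrically. The parabolic $W_n$ (type $\sD_n$) is the stabiliser of a vertex of $A_0$. The pseudo-translation $\tau_\la$ maps $\cA_n$ to itself, so it normalises the local structure of $\cA_n$ at the relevant walls. We write $\tau_\la=x\,d\,y$ with $d$ the minimal length double coset representative and $x,y\in W_n$. The longest element of $W_ndW_n$ is $\sw_n d\,\sw_{n}^{J}$, where $\sw_n^J$ is the longest element of the quotient $(W_n\cap d^{-1}W_nd)\backslash W_n$.

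We expect $\fu=\sw_{0,n}\sw_n$ to implement this quotient. The subgroup $W_n\cap \tau_\la^{-1}W_n\tau_\la$ should contain the type $\sA_{n-1}$ parabolic generated by $s_1,\dots,s_{n-1}$, because the hyperplanes $x_i=\pm x_j$ through the vertex are preserved. The factorisation $\sw_n\fu^{-1}\tau_\la\fu$ is then the candidate.

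Length additivity will be checked hyperplane by hyperplane, via $\ell(w)=\#\{$hyperplanes separating $A_0$ and $wA_0\}$. We verify two things:
\begin{itemize}
\item $\fu^{-1}\tau_\la\fu A_0$ lies in the dominant-type chamber cut out by the $\sD_n$ walls through the vertex;
\item every hyperplane through that vertex separates $A_0$ from $\sw_n$ applied to this alcove.
\end{itemize}
Maximality in the double coset follows from the standard criterion that $\ell(s\,m)<\ell(m)$ and $\ell(m\,s)<\ell(m)$ for all $s\in\Saff\backslash\{s_n\}$. Independence of the $G_n$-orbit representative is given, so we may fix $\la\in P_n^+$ throughout.

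\textbf{Part (3).} We show that the map $(\tau,v)\mapsto \tau\fu v A_0$, for $\tau\in\sT_n$ and $v\in\sB$, is a bijection onto $\mathbb{X}_n$.

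For \emph{surjectivity}, take $w\in\mathbb{X}_n$ and translate it by a suitable $\tau_\la^{-1}$ so that $wA_0$ lies in a bounded region $R\subseteq\cA_n$ near the origin. We then show that every alcove of $R$ is $\fu vA_0$ for some $v\in\sB$. This uses the characterisation of $\sB$ from Theorem~\ref{thm:Shi}, namely that $\sw_nv$ is length additive, lies outside $\Delta_0$, and satisfies the $\ba$-value condition. The first two conditions translate into geometric constraints: $\sw_n v\notin\Delta_0$ says $vA_0$ does not reach far enough to create an extension of $\sw_0$.

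For \emph{injectivity}, suppose $\tau\fu vA_0=\fu v'A_0$. Then $\fu^{-1}\tau\fu\, v=v'$, and Theorem~\ref{thm:main1}(2),(5) give uniqueness of the representation $\sm_\la v$. Comparing $\sm_{\la}v$ with $\sm_{\la'}v'$ forces $\la$ to be in the trivial class and $v=v'$.

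\textbf{Part (2).} With (3) in hand, $\sB$ has the same cardinality as $\sF$, which is the number of alcoves in a fundamental domain for $\sT_n$ on $\mathbb{X}_n$. This number is computed by volume: the volume of $\cA_n/\sT_n$ divided by the alcove volume. It remains to show the inclusion $[1,\sb]\cup[1,\sb^\sigma]\subseteq\sB$ and to match the counts.

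For the inclusion, it suffices to check $\sb$ and $\sb^\sigma$ themselves and use that $\sB$ is closed under prefixes. Prefix closure holds because the three defining conditions are inherited by prefixes: length additivity and non-membership in $\Delta_0$ are clearly inherited, and the $\ba$-condition is inherited by monotonicity of $\ba$ along weak order within a cell.

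For $\sb$ itself, we verify the conditions by exhibiting $\sw_n\sb$ via its alcove, and by showing that $s_i\sw_n\sb$ has a strictly smaller $\ba$-value. The latter follows from its lying in a cell indexed by a smaller unipotent class, or else from failing to be an extension of $\sw_n$. The result for $\sb^\sigma$ follows by applying $\sigma$.

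The counts are matched by computing $|[1,\sb]\cup[1,\sb^\sigma]|$ as the number of alcoves between $A_0$ and $\sb A_0$ that are cut by the hyperplane set of $\sb$, together with the union's overlap.

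\textbf{Main obstacle.} We expect the hardest step to be surjectivity in (3), equivalently showing that $\sB$ contains nothing beyond the two intervals. The $\ba$-function condition is not directly geometric. The plan is to replace it by the criterion that $s_i\sw_nv$ is not an extension of $\sw_n$, using Theorem~\ref{thm:Shi}, and then reduce to explicit hyperplane inequalities on the alcove $\fu vA_0$.
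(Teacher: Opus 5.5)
\textbf{The ordering (1), (3), (2) is circular.} Your injectivity step in (3) uses Theorem~\ref{thm:main1}(2),(5), and your count in (2) then takes $|\sB|$ from (3). But Theorem~\ref{thm:main1} is proved from (2):
\begin{itemize}
\item the fact that $u^{-1}\star\{e_1,-e_1+\delta\}\subseteq\Phi^+$ for all $u\in\sB$ (Corollary~\ref{cor:Binvset}) drives Lemma~\ref{lem:generalisedabsence} and Proposition~\ref{prop:transfer}, hence $\Ga_u=u^{-1}\Ga$ and $\Ga_u\cap\Ga_v^{-1}=u^{-1}(\Ga\cap\Ga^{-1})v$;
\item the length additivity $\ell(u^{-1}\sm_\la v)=\ell(u)+\ell(\sm_\la)+\ell(v)$ is Corollary~\ref{cor:additive}.
\end{itemize}
Both are obtained by checking only the maximal elements $\sb,\sb^\sigma$, which presupposes $\sB=[1,\sb]\cup[1,\sb^\sigma]$.

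The paper breaks the loop by importing $|\sB|=2^{n-1}n!$ from Shi (Theorem~\ref{thm:Bcount}) and proving (2) first. It shows $\sb\in\sB$ by inversion-set computations; for $i=0$, $\Phi(s_0\sw_n\sb)$ meets every long parallelism class, so the type-$n$ vertex criterion of Theorem~\ref{thm:convexhulls} is needed. It then uses prefix closure and a volume count of $[1,\sb]$ and $[1,\sb\wedge\sb^\sigma]$.

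Your prefix-closure argument is fine, and shorter than the paper's Lemma~\ref{lem:square}. Write $u=u_1s$. Since $\ell(\sw_nu_1s)=\ell(\sw_nu_1)+1$ and $s_i$ is a left descent of $\sw_nu_1$, one gets $\ell(s_i\sw_nu_1s)=\ell(s_i\sw_nu_1)+1$. Hence $s_i\sw_nu\leq_R s_i\sw_nu_1$, and $\ba(s_i\sw_nu_1)\leq\ba(s_i\sw_nu)$ by $\mathsf{P4}$. This is monotonicity along $\leq_R$, not ``within a cell''.

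\textbf{Surjectivity in (3) is not yet an argument.} The region $R$ is never specified. Showing that every alcove of a region meeting all $\sT_n$-orbits has the form $\fu vA_0$ with $v\in\sB$, directly from the cell-theoretic definition of $\sB$, is the whole content of (3).

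The paper needs only surjectivity, since $|\fu\sB|=|W_n|$ and $W_n$ is a fundamental domain (Lemma~\ref{lem:Dnfunddom}). It gets surjectivity from the cell factorisation. For $z\in W_n$, it pushes $\fu^{-1}z$ deep into the cone $\mathcal{C}=\{x_1\geq\cdots\geq x_n,\ x_1+x_2\leq 1\}$ at $\xi$ via $\fu^{-1}\tau_{N\nu}\fu=t_{N\eta}$. Then $y=\fu^{-1}\tau_{N\nu}z$ has $\ell(\sw_ny)=\ell(\sw_n)+\ell(y)$. Since $\sw_ny=\sw_{0,n}\tau_{N\nu}z$ maps $A_0$ into the slab $0\leq x_1\leq 1$, $\Phi(\sw_ny)$ misses $[e_1]$. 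So $\sw_ny\in\Ga$ by Theorem~\ref{thm:Shi}, and $\Ga=\{\sm_\la v\}$ gives $\tau_\la^{-1}\tau_{N\nu}z=\fu v$.

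\textbf{Further slips.}
\begin{itemize}
\item $\sT_n$ does not act freely on alcoves: $\tau_{e_n}$ is the reflection $x_n\mapsto 1-x_n$ and fixes some alcoves. You must work with elements of $\mathbb{X}_n$. From $\tau\fu vA_0=\fu v'A_0$ you only get $\fu^{-1}\tau\fu v\in\{v',v'\sigma\}$. Likewise $\mathsf{Vol}(\cA_n/\sT_n)/\mathsf{Vol}(A_0)=2^{n-2}n!$, which is half of $|\sB|$.
\item In your injectivity comparison, $\tau=\tau_\mu$ with $\mu\in P_n$ arbitrary, but $\sw_n\fu^{-1}\tau_\mu\fu=\sm_\mu$ only for $\mu\in P_n^+$. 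You must first multiply by $\fu^{-1}\tau_\nu\fu$ for a large dominant $\nu$ before invoking uniqueness.
\item In (1) your heuristic is false. For generic $\la$, the group $W_n\cap\tau_\la^{-1}W_n\tau_\la$ (the pointwise stabiliser of $\xi$ and $\tau_\la^{-1}\xi$) is trivial, not of type $\sA_{n-1}$. The direct descent-set check you fall back on is the paper's proof; for the right descents, use that $\sw_n\fu^{-1}\tau_\la\fu$ is an involution (Lemma~\ref{lem:tauinverse}).
\end{itemize}
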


Combining Theorems~\ref{thm:main1} and~\ref{thm:main2} gives a strong form of ``cell factorisation'' for $\Delta_1$ in the sense of J. Guilhot~\cite{Gui:10}, with each element $w\in \Delta_1$ written in a unique way as $w=u^{-1}\sw_n\tau v$ with $u,v\in\sB$ and $\tau\in T^+$ (a certain length-additive semigroup), with $\ell(w)=\ell(u)+\ell(\sw_n)+\ell(\tau)+\ell(v)$. See Corollary~\ref{cor:main3} for a precise statement. This cell factorisation leads to various corollaries. For example, in Corollary~\ref{cor:involutions} we completely classify the involutions in $\Delta_1$ (specifically, each $\Ga_u\cap\Ga_u^{-1}$ consists entirely of involutions, and each intersection $\Ga_u\cap\Ga_v^{-1}$ with $u\neq v$ contains no involution). Moreover we compute the rational length generating functions for the elements of $\Delta_1$, the right cells $\Ga_u$, and the set of involutions in~$\Delta_1$ in Corollary~\ref{cor:generating}. For example, we have
$$
\sum_{w\in\Delta_1}t^{\ell(w)}=\frac{2t^{n(n-1)}\sB(t)^2}{\prod_{i=1}^{n-1}(1-t^{i(2n-i)})}\quad\text{where}\quad \sB(t)=\sum_{u\in \sB}t^{\ell(u)}. 
$$

We now turn our attention to the component of the asymptotic algebra associated to this cell. Recall that the asymptotic algebra~$\cJ$ introduced by Lusztig~\cite{Lus:87} is a $\ZZ$-algebra with basis $(\st_w)_{w\in\Wext}$ and multiplication defined using certain leading coefficients of the structure constants of the Kazhdan-Lusztig basis in the Hecke algebra. If $\Delta$ is a two-sided cell then $\cJ_{\Delta}=\mathsf{span}_{\ZZ}\{\st_w\mid w\in\Delta\}$ is a two-sided ideal of $\cJ$, and $\cJ$ is a direct sum of $\cJ_{\Delta}$ as $\Delta$ runs through the set of two-sided cells of~$\Wext$.

A first step in determining $\cJ_{\Delta_1}$ is to convert the cell factorisation from Theorems~\ref{thm:main1} and~\ref{thm:main2} into a factorisation formula in $\cJ_{\Delta_1}$.

\begin{thm}\label{thm:main4}
For $u,v\in \sB$ and $\la\in P_n^+$ we have 
$$
\st_{u^{-1}\sw_n}\st_{\sm_{\la}}\st_{\sw_nv}=\st_{u^{-1}\sm_{\la}v}. 
$$
\end{thm}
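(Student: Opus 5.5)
We will prove the identity by computing the product in $\cJ$ through the standard structure constants $\gamma_{x,y,z^{-1}}$, and using the cell factorisation of Theorems~\ref{thm:main1} and~\ref{thm:main2} to force every product to have a single surviving term with coefficient $1$. Recall the general facts about $\cJ$ that we take as input. First, $\st_x\st_y=\sum_z\gamma_{x,y,z^{-1}}\st_z$. Second, $\gamma_{x,y,z^{-1}}\neq 0$ only if $x\sim_L y^{-1}$, $y\sim_L z$ and $x\sim_R z$. Third, for $d$ a distinguished involution, $\st_d\st_x=\st_x$ whenever $x\sim_R d$.

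We will also need the following property of length-additive products inside a cell, valid in $\tilde{\sB}_n$ with equal parameters since Lusztig's conjectures P1--P15 hold there: if $x,y\in\Delta$ with $\ell(xy)=\ell(x)+\ell(y)$, $xy\in\Delta$ and $\ba(xy)=\ba(x)=\ba(y)$, then typically $\st_x\st_y$ is a combination of $\st_z$ with $z$ in the appropriate left/right cell intersection. We do not want to rely on a general such lemma, so we argue directly as follows.

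The first step is to show $\st_{u^{-1}\sw_n}\st_{\sw_n}=\st_{u^{-1}\sw_n}$ and $\st_{\sw_n}\st_{\sw_n v}=\st_{\sw_nv}$. Here $\sw_n$ is the longest element of a finite parabolic subgroup, hence a distinguished involution in $\Delta_1$ (its $\ba$-value $\ell(\sw_n)$ equals $\ba(\Delta_1)$). By Theorem~\ref{thm:main1}(3),(4) we have $\sw_nv\in\Ga_1\cap\Ga_v^{-1}$ (take $\la=0$, for which $\sm_0=\sw_n$). So $\sw_n v\sim_R\sw_n$, and the third fact gives the second identity. The first follows by applying the anti-involution $\st_x\mapsto\st_{x^{-1}}$.

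The second step is to compute $\st_{u^{-1}\sw_n}\st_{\sm_\la}$. By the support condition, the terms $\st_z$ occurring have $z\sim_R u^{-1}\sw_n$ and $z\sim_L\sm_\la$. So $z\in\Ga_u\cap\Ga_1^{-1}=\{u^{-1}\sm_\mu\mid\mu\in P_n^+\}$. The key tool is the degree bound: $\gamma_{x,y,z^{-1}}$ is the coefficient of $q^{\ba}$ in $h_{x,y,z}$ (suitably normalised). We use $C_{u^{-1}\sw_n}=F\cdot C_{\sw_n}$-type factorisations. For $u\in\sB$, length-additivity $\ell(u^{-1}\sw_n)=\ell(u)+\ell(\sw_n)$ together with $\sw_n$ being longest in $W_n$ yields $C_{u^{-1}\sw_n}=\sum_{x}p_x C_x$-style expressions. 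More cleanly, we use Lusztig's result that $\gamma_{x,y,z}=\gamma_{y,z,x}$. This gives
$$\gamma_{u^{-1}\sw_n,\sm_\la,\sm_\mu^{-1}u}=\gamma_{\sm_\la,\sm_\mu^{-1}u,u^{-1}\sw_n}.$$
We compare this with $\st_{\sm_\mu^{-1}u}\st_{u^{-1}\sw_n}$, whose leading structure is governed by the pair $(u,\sw_n)$ alone. We then show that left multiplication by $u^{-1}$ induces an isomorphism of based modules from $\cJ_{\Ga_1\cap\Ga_1^{-1}}$-right-modules $\cJ_{\Ga_1}$ to $\cJ_{\Ga_u}$. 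Concretely, define $\phi_u(\st_z)=\st_{u^{-1}z}$ for $z\in\Ga$, which is a bijection by Theorem~\ref{thm:main1}(3). We must verify $\phi_u(\st_z\st_y)=\phi_u(\st_z)\st_y$, i.e. $\gamma_{u^{-1}z,y,w^{-1}u}=\gamma_{z,y,w^{-1}}$. We obtain this from $C_{u^{-1}}$-multiplication: for $z\in\Ga$, Theorem~\ref{thm:main1}(5) makes $u^{-1}z$ length-additive. We expect $T_{u^{-1}}C_z\equiv C_{u^{-1}z}$ modulo lower cells and modulo terms of lower $q$-degree that do not affect leading coefficients. This would be proved by induction on $\ell(u)$ using the prefix-closed description $\sB=[1,\sb]\cup[1,\sb^\sigma]$ of Theorem~\ref{thm:main2}(2): each step $u'=us$ with $u'\in\sB$ keeps $u'^{-1}z$ in $\Delta_1$ and is a left multiplication by $s$ with $s$ not in the left descent set. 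The KL formula $C_sC_x=C_{sx}+\sum\mu C_y$ then has all correction terms $y$ either outside $\Delta_1$ (lower $\ba$ would be impossible, since these lie in $\Delta_0$ or in higher cells) or in the same cell but killed in $\cJ$.

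With $\phi_u$ and its mirror $\psi_v$ (for right multiplication by $v$) in hand, we get
$$\st_{u^{-1}\sw_n}\st_{\sm_\la}\st_{\sw_nv}=\phi_u\psi_v(\st_{\sw_n}\st_{\sm_\la}\st_{\sw_n})=\phi_u\psi_v(\st_{\sm_\la})=\st_{u^{-1}\sm_\la v},$$
using the third fact twice, since $\sm_\la\in\Ga\cap\Ga^{-1}$. The main obstacle is the third step: controlling the $\mu$-correction terms $C_y$ in the induction. We must show that those lying in $\Delta_1$ contribute nothing to the leading coefficients. We would handle this using the uniqueness part (2) of Theorem~\ref{thm:main1} together with the fact that $s\notin\mathcal{L}(u'^{-1}z)$ forces any $y\in\Delta_1$ with $sy<y$ and $\mu(y,x)\neq0$ to lie in a different right cell $\Ga_{u''}$. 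Such terms then cancel against the cell-module structure.
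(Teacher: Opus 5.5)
You have a genuine gap. Your first step is fine, and your architecture would give the theorem: transport $\st_{\sw_n}\cJ_{\Delta_1}$ onto $\st_{\sd(u)}\cJ_{\Delta_1}$ by $\phi_u(\st_z)=\st_{u^{-1}z}$, and mirror this on the right. But the step that carries all the weight, namely that $\phi_u$ is a right $\cJ$-module map, is asserted rather than proved, and the mechanism you propose does not deliver it.

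To see this, carry out one induction step. Take $x\in\Ga_u$, $us\in\sB$ and $y'\in\Delta_1$. Write $C_{sx}=C_sC_x-\sum_y\mu(y,x)C_y$, expand $C_sC_xC_{y'}$, and compare coefficients of $\sq^{n(n-1)}$ at $C_w$ for $w\in\Ga_{us}$, which are the only $w$ that can occur. Terms in $\Delta_0$ drop out, and every $w''\in\Delta_1$ occurring in $C_xC_{y'}$ lies in $\Ga_u$, so $sw''>w''$. One gets
$$\gamma_{sx,y',w^{-1}}=\gamma_{x,y',(sw)^{-1}}+\sum_{w''\in\Ga_u,\ w<w''}\mu(w,w'')\,\gamma_{x,y',(w'')^{-1}}-\sum_{y\in\Ga_{us},\ y<x}\mu(y,x)\,\gamma_{y,y',w^{-1}}.$$
So your claim $\gamma_{sx,y',w^{-1}}=\gamma_{x,y',(sw)^{-1}}$ is \emph{equivalent} to the cancellation of the two $\mu$-sums.

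Your remark that $s\notin D_L(x)$ pushes the correction terms $y$ out of $\Ga_u$ is correct but beside the point. The harmful terms are exactly those with $y\in\Ga_{us}$, and $s\in D_L(y)$ holds for every $y\in\Ga_{us}=s\Ga_u$. Nothing excludes them: not the uniqueness in Theorem~\ref{thm:main1}(2), and not any length count, since elements $(us)^{-1}\sm_\nu v$ with $\ell(\sm_\nu)+1<\ell(\sm_\la)$ are shorter than $x=u^{-1}\sm_\la v$ and lie in the same left cell. The correction terms coming from $C_sC_{w''}$ are not addressed at all. So "$T_{u^{-1}}C_z\equiv C_{u^{-1}z}$ modulo lower cells and lower degree" is unjustified, and "such terms cancel against the cell-module structure" restates what must be shown. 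The appeal to $\gamma$-cyclicity and to the product being "governed by $(u,\sw_n)$ alone" does not supply it either.

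The missing idea is a norm (partial-isometry) argument inside $\cJ$, which is how the paper avoids any control of $\mu$-coefficients. Set $a_u=\st_{u^{-1}\sw_n}$.
\begin{itemize}
\item By Lemma~\ref{lem:duflobasics}(4),(5), $a_ua_u^*$ is supported on $\{u^{-1}\sm_\la u\}$. The degree bound of Lemma~\ref{lem:zbound} then forces $\ell(\sm_\la)\le n(n-1)$, hence $\la=0$ (since $\la=e_n$ leaves $\Waff$). Taking $\Tr^{\infty}$ gives $a_ua_u^*=\st_{\sd(u)}$.
\item Then $p=a_u^*a_u$ is a self-adjoint idempotent with $\Tr^{\infty}(\st_{\sw_n}-p)=0$, and positivity of $\|\cdot\|^2$ gives $p=\st_{\sw_n}$.
\item Consequently $\|a_u\st_{\sm_\la}\|^2=\Tr^{\infty}(\st_{\sm_\la}\st_{\sm_\la}^*\st_{\sw_n})=1$, so $a_u\st_{\sm_\la}=\pm\st_w$ by Lemma~\ref{lem:norm}.
\item Finally $w=u^{-1}\sm_\la$ with sign $+$ is read off from the top term $\sq^{n(n-1)}T_{u^{-1}\sm_\la}$ of $T_{u^{-1}}T_{\sw_n}^2T_{x_\la}$ (where $\sm_\la=\sw_nx_\la$), via Lemma~\ref{lem:filtration}. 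The right-hand factor is treated the same way.
\end{itemize}
The degree bound alone cannot finish the job: it still allows terms $\st_{u^{-1}\sm_\mu}$ with $\ell(\sm_\mu)<\ell(\sm_\la)$. It is precisely the norm-one computation that eliminates them.
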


After proving Theorem~\ref{thm:main4}, our general strategy follows the ideas initiated in~\cite{GP:19,GP:19b} and continued in \cite{CGLP:24,GLP:23}, however various modifications and refinements of the techniques are required in the process. In particular, since there is not a complete description of all cells in affine type $\sB$ arguments typically must be ``localised'' to the cell~$\Delta_1$, rather than relying on global information. Central to this approach is a detailed analysis of a certain representation $\pi$ of the Hecke algebra on a finite free module over a multivariate Laurent polynomial ring with a $C_2$ group-ring factor $\sR[z]$ (with $\sR=\ZZ[\sq,\sq^{-1}]$). We show that this representation shares many desirable properties with the (infinite dimensional) Kazhdan-Lusztig cell module associated to~$\Delta_1$. Specifically, $\pi$ annihilates all Kazhdan-Lusztig basis elements $C_w$ with $w\in\Delta_0$, and with respect to a natural basis the degree in $\sq$ of the entries of the matrix $\pi(T_w)$ (and also $\pi(C_w)$) is bounded by the value of Lusztig's $\ba$-function on $\Delta_1$. 

This allows us to define \textit{leading matrices} $\fc(w)=\mathsf{sp}_{\sq^{-1}=0}(\sq^{-\ba(\Delta_1)}\pi(T_w))$ with entries in~$\ZZ[z]$. Ultimately we are able to explicitly determine the leading matrices $\fc(w)$ for $w\in\Delta_1$ in terms of Schur functions $\fs_{\la}(z)\in\ZZ[z]^{G_n}$ associated to the group $\mathsf{Sp}_{2n-2}(\CC)\times C_2$, with $C_2$ the group of order~$2$. Two key ingredients in this important calculation are as follows. Firstly we use combinatorial interpretations for the matrix entries of $\pi$ in terms of \textit{$n$-folded alcove paths} in the fundamental $n$-alcove~$\cA_n$ (see \cite{GLP:23}) to prove certain entries attain an appropriate degree in~$\sq$. Secondly we make analytical estimates using Opdam's Plancherel formula~\cite{Opd:04}, suitably localised to the cell~$\Delta_1$. Positivity in the Plancherel formula allows us to bound the norm of a leading coefficient by one, and integrality and Schur orthogonality with respect to an asymptotic Plancherel measure then force it to be a single Schur function up to sign. This sign is determined using the combinatorial description in terms of alcove paths. 

\begin{thm}\label{thm:main5}
For $u,v\in\sB$ and $\la\in P_n^+$ we have 
$$
\fc(u^{-1}\sm_{\la}v)=\fs_{\la}(z)E_{u,v}. 
$$
\end{thm}

%
%
%
%

Using Theorem~\ref{thm:main5} we deduce the main theorem of the paper, realising $\cJ_{\Delta_1}$ as a full matrix algebra over a representation ring, paralleling the description in type $\tilde{\sA}_n$ from~\cite{Xibook}.

\begin{thm}\label{thm:main6}
The map 
$$
\cJ_{\Delta_1}\to \mathsf{Mat}_{N}(\mathsf{Rep}(\mathsf{Sp}_{2n-2}(\CC)\times C_2))\quad\text{with}\quad \st_{u^{-1}\sm_{\la}v}\mapsto E_{u,v}\otimes [V_{\lambda}],
$$
is an isomorphism of based rings, where $N=2^{n-1}n!$, and for $\la=(\la',\la_n)\in P_n^+$ we write $V_{\lambda}=V_{\la'}\otimes \epsilon^{\la_n}$ with $V_{\la'}$ the irreducible representation of $\mathsf{Sp}_{2n-2}(\CC)$ with highest weight~$\la'$, and  $\epsilon$ is the nontrivial character of the cyclic group~$C_2$. 
\end{thm}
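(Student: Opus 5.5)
The plan is to realise the map of Theorem~\ref{thm:main6} through the leading matrices $\fc$. Define $\Phi:\cJ_{\Delta_1}\to \mathsf{Mat}_N(\ZZ[z])$ by $\st_w\mapsto \fc(w)$ for $w\in\Delta_1$, extended $\ZZ$-linearly. First I show that $\Phi$ is a ring homomorphism. The standard argument from \cite{GP:19,GLP:23} runs as follows:
\begin{compactenum}[$(1)$]
\item Expand $\pi(C_x)\pi(C_y)=\sum_z h_{x,y,z}\pi(C_z)$.
\item Note that $\pi$ kills $C_z$ for $z\in\Delta_0$.
\item Use the bound $\deg_{\sq}\pi(C_z)\le \ba(\Delta_1)$, together with $\deg h_{x,y,z}\le \ba(z)$, and the fact that cells strictly above $\Delta_1$ in the two-sided order have smaller $\ba$-value. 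It follows that only $z\in\Delta_1$ survive after multiplying by $\sq^{-2\ba(\Delta_1)}$ and specialising.
\item Note that the leading term of $\pi(C_z)$ agrees with that of $\pi(T_z)$ up to lower-degree terms.
\end{compactenum}
The outcome is $\fc(x)\fc(y)=\sum_{z\in\Delta_1}\gamma_{x,y,z^{-1}}\fc(z)$, which is exactly the multiplication in $\cJ_{\Delta_1}$. Some care is needed to check that the degree bound on $\pi(T_w)$ transfers to $\pi(C_w)$ (stated in the introduction as a property of $\pi$), and to handle the ordering of the $\ba$-function along the partial order using only local information. This is handled as in the cited works: one uses Lusztig's P1--P15 and the fact that $\Delta_0$ is the unique cell with larger $\ba$-value among those relevant.

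Next I identify the image. By Theorem~\ref{thm:main5}, $\Phi(\st_{u^{-1}\sm_\la v})=\fs_\la(z)E_{u,v}$. The Schur functions $\fs_\la(z)$, $\la\in P_n^+$, are the characters of the irreducible representations $V_\la=V_{\la'}\otimes\epsilon^{\la_n}$ of $\mathsf{Sp}_{2n-2}(\CC)\times C_2$, viewed as $G_n$-invariant Laurent polynomials times $z^{\la_n}$ with $z^2=1$. By the Weyl character formula they form a $\ZZ$-basis of $\ZZ[z]^{G_n}\cong\mathsf{Rep}(\mathsf{Sp}_{2n-2}(\CC)\times C_2)$, and products expand with tensor product multiplicities. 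Hence $\Phi$ lands in $\mathsf{Mat}_N(\mathsf{Rep})$ and sends $\st_{u^{-1}\sm_\la v}\mapsto E_{u,v}\otimes[V_\la]$.

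By Theorem~\ref{thm:main1}(1),(2), the $\st_{u^{-1}\sm_\la v}$ form a basis of $\cJ_{\Delta_1}$ indexed bijectively by $\sB\times\sB\times P_n^+$. These are mapped bijectively onto the standard basis $\{E_{u,v}\otimes[V_\la]\}$, so $\Phi$ is a bijection of bases and hence a ring isomorphism. It remains to determine $N$ and to check the based-ring compatibilities:
\begin{compactenum}[$(1)$]
\item \emph{The size $N=|\sB|$.} By Theorem~\ref{thm:main2}(3), $|\sB|=|\sF|=|\mathbb{X}_n/\sT_n|$. This index counts the alcoves in a fundamental region of $\cA_n$ for $\sT_n$. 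I would compute it as $|W_n|\cdot[\text{translation index}]$, giving $2^{n-1}n!$. Alternatively, the number of left cells in $\Delta_1$ equals the sum of dimensions of Springer-type representations, and one could check consistency with this.
\item \emph{The involution.} $\st_w\mapsto\st_{w^{-1}}$ must correspond to transpose combined with duality. This follows since $(u^{-1}\sm_\la v)^{-1}=v^{-1}\sm_\la u$ (as $\sm_\la^{-1}=\sm_{\la^*}$ with $V_\la$ self-dual for symplectic$\times C_2$; I would verify $\sm_\la$ is an involution, consistent with Corollary~\ref{cor:involutions}).
\item \emph{The distinguished involutions.} These are $u^{-1}\sm_0u$, which map to $E_{u,u}\otimes[V_0]$.
\end{compactenum}
The main obstacle is the homomorphism step, namely controlling the contributions of cells other than $\Delta_1$ without a full cell classification. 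The counting of $N$ is the other point needing a concrete computation.
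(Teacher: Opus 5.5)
Your proposal is correct and follows the paper's route. The paper shows that $\st_w\mapsto\fc(w)$ is a homomorphism (Theorem~\ref{thm:asymptoticrepresentation}), then combines Theorem~\ref{thm:main5}, the basis $\{\st_{u^{-1}\sm_{\la}v}\}$ from Theorem~\ref{thm:main1}, and the fact that the $\fs_{\la}(z)$ form a $\ZZ$-basis of $\ZZ[z]^{G_n}\cong\mathsf{Rep}(\mathsf{Sp}_{2n-2}(\CC)\times C_2)$ to get Corollary~\ref{cor:asymp}, from which the theorem is immediate. For $N$ you can simply cite Shi's count $|\sB|=2^{n-1}n!$ (Theorem~\ref{thm:Bcount}), or equivalently use $|\sF|=|W_n|$, since $W_n$ is itself a fundamental domain (Lemma~\ref{lem:Dnfunddom}), so no extra ``translation index'' factor enters.
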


We then prove that Lusztig's original conjectural description holds for~$\cJ_{\Delta_1}$. 

\begin{cor}\label{cor:main7} Lusztig's conjecture \cite[Conjecture 10.5]{Lus:IV} holds for the cell $\Delta_1$. 
\end{cor}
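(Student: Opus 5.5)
The plan is to check the ingredients of Lusztig's Conjecture~10.5 in \cite{Lus:IV} one at a time against Theorem~\ref{thm:main6}. For a two-sided cell $\Delta$ attached to a unipotent class $\mathcal{O}$ with element $x\in\mathcal{O}$, the conjecture predicts three things. First, there is a finite set $Y$ with an action of a maximal reductive subgroup $F=Z_{G^\vee}(x)_{\mathrm{red}}$ of the centraliser. Second, there is a bijection between $\Delta$ and the set of triples $(y,y',V)$, where $y,y'\in Y$ and $V$ is an irreducible $F$-equivariant vector bundle on $Y\times Y$. Third, under this bijection $\cJ_\Delta$ is isomorphic, as a based ring, to the Grothendieck group $K_F(Y\times Y)$ with its convolution product. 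So the proof has three steps: identify $F$, choose $Y$, and compare based rings.

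First I will identify $F$. The dual group here is $\mathsf{Sp}_{2n}(\CC)$, or its adjoint form depending on the extended group convention. Under Lusztig's bijection (\cite[Theorem 4.8]{Lus:IV}), $\Delta_1$ corresponds to the subregular unipotent class; I would confirm this by matching $\ba(\Delta_1)=\ell(\sw_n)=n(n-1)$ with the dimension of the Springer fibre. For the subregular class in $\mathsf{Sp}_{2n}$ the Jordan type is $(2n-2,2)$. The reductive part of the centraliser of a symplectic unipotent with distinct even parts $2n-2$ and $2$, each of multiplicity one, is $\mathsf{O}_1\times\mathsf{O}_1\cong C_2\times C_2$. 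That is not $\mathsf{Sp}_{2n-2}\times C_2$, so I need to recheck the matching.

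Ranks give the right test. The reductive centraliser should have rank equal to the dimension of the cell's "lattice part", which here is $n-1$ coming from $P_n^+$ (plus the $C_2$ factor from the $\la_n$ coordinate). This points to the class with Jordan type $(2,1^{2n-2})$, the minimal class, rather than the subregular one. Its reductive centraliser is $\mathsf{Sp}_{2n-2}(\CC)\times\mathsf{O}_1=\mathsf{Sp}_{2n-2}(\CC)\times C_2$. This fits, because Lusztig's bijection is order-reversing: the lowest cell $\Delta_0$ corresponds to the trivial class, and the second lowest cell $\Delta_1$ to the minimal nontrivial class. The $\ba$-value check becomes $\dim\mathcal{B}_u=\ell(\sw_n)$ for the minimal class; I will verify this numerically from $\dim\mathcal{B}_u=\frac12(\dim Z(u)-\mathrm{rank})$. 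So I take $F=\mathsf{Sp}_{2n-2}(\CC)\times C_2$, up to the isogeny matching the extended group.

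Second, I take $Y$ to be a set of $N=|\sB|=2^{n-1}n!$ points with trivial $F$-action. Then $F$-equivariant vector bundles on $Y\times Y$ are matrices of $F$-representations. The irreducible ones are supported at a single point $(u,v)$ with fibre $V_\la$, and convolution is matrix multiplication tensored with the tensor product of representations. Hence $K_F(Y\times Y)\cong\mathsf{Mat}_N(\mathsf{Rep}(F))$ as based rings, with basis $E_{u,v}\otimes[V_\la]$. Theorem~\ref{thm:main6} then supplies exactly the required based-ring isomorphism and the bijection $u^{-1}\sm_\la v\leftrightarrow(u,v,V_\la)$. I also need $N$ to equal the number of left cells in $\Delta_1$, which follows from Theorem~\ref{thm:main1}.

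The main obstacle is the first step: pinning down the correct unipotent class and checking that the centraliser, including its component group and the precise isogeny type (adjoint versus simply connected, since $\Wext$ uses coweights $P$), is exactly $\mathsf{Sp}_{2n-2}\times C_2$ and not a quotient of it. Lusztig's conjecture also requires the bijection to be compatible with the involution $\st_w\mapsto\st_{w^{-1}}$, corresponding to swapping the factors of $Y\times Y$ and dualising. I would check this using Corollary~\ref{cor:involutions} together with $(u^{-1}\sm_\la v)^{-1}=v^{-1}\sm_\la u$, and the fact that $V_\la$ is self-dual for $\mathsf{Sp}_{2n-2}\times C_2$.
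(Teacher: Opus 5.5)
Your proposal is correct and follows the paper's own proof. After discarding the subregular class you land on the minimal class $(2,1^{2n-2})$ of $\mathsf{Sp}_{2n}(\CC)$ with $F=\mathsf{Sp}_{2n-2}(\CC)\times C_2$, and take $Y=\sB$ with trivial $F$-action so that $K_F(\sB\times\sB)\cong\mathsf{Mat}_{|\sB|}(\mathsf{Rep}(F))$ matches Theorem~\ref{thm:main6}. You then check compatibility with inversion via $(u^{-1}\sm_{\la}v)^{-1}=v^{-1}\sm_{\la}u$ and $V_{\la}^*\cong V_{\la}$, exactly as the paper does. The isogeny point you flag is settled by noting that $\Wext$ uses the full lattice $P=\ZZ^n$, the weight lattice of type $\sC_n$, so the relevant group is the simply connected $\mathsf{Sp}_{2n}(\CC)$; the adjoint form corresponds to $\Waff$ and gives $F\cong\mathsf{Sp}_{2n-2}(\CC)$, consistent with the paper's final corollary on $\Delta_1'$.
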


We present several further applications in Section~\ref{sec:applications2}, including multiplication formulae in $\cJ_{\Delta_1}$ in terms of tensor product multiplicities, a calculation of the centre of $\cJ_{\Delta_1}$, a characterisation of $\Delta_1$ (respectively the involutions in~$\Delta_1$) in terms of the representation $\pi$ (respectively the character of $\pi$), an asymptotic Plancherel Theorem for the cell~$\Delta_1$, and a calculation of the asymptotic algebra of the second lowest two-sided cell $\Delta_1'$ in the non-extended affine Weyl group of type $\tilde{\sB}_n$. Finally, in Section~\ref{sec:An} we discuss the $\tilde{\sA}_n$ case, and take the opportunity to give an elementary proof of a conjecture from~\cite{CGLP:24}, making the description of the leading matrices from \cite[Theorem~7.8]{CGLP:24} in the $\tilde{\sA}_n$ case unconditional. 

To put our results in perspective, we conclude this introduction with a brief survey of some known results on the determination of cells and the asymptotic algebra in affine types. Cells in type $\tilde{\sA}_n$ are understood by the fundamental work of Lusztig~\cite{Lus:IV} and Shi~\cite{ShiAn}. Various specific low rank cases in other affine types are known, including all irreducible cases of affine rank at most $4$ (see \cite{Bedard,DuB3,DuC3,DuD4,Lus:I,ShiD4,ShiF4,ShiC4,Zhang}). Shi has determined the structure of the lowest two-sided cell in~\cite{Shi:Lowest}. To the author's knowledge, the only affine types for which the asymptotic algebra is well understood are the rank~$2$ cases (see \cite{GP:19,GP:19b}, which also treat unequal parameters), the $\tilde{\sA}_n$ case (see \cite{Xibook} and the recent approaches in~\cite{CGLP:24,KP:23}), and $\tilde{\sB}_3$ with equal parameters (see the recent series \cite{B3:1,B3:2,B3:3,B3:4} and \cite{BDD:23}). The component of the asymptotic algebra associated to the lowest two-sided cell has been determined by Xi~\cite{JLow:1,JLow:2,JLow:3}.

We now outline the structure of the paper. Section~\ref{sec:background} gives background and preliminary material, and Sections~\ref{sec:Jalcove} and~\ref{sec:main} are devoted to the study of the cell $\Delta_1$, proving Theorems~\ref{thm:main1} and~\ref{thm:main2}. In Section~\ref{sec:Jfact} we prove the factorisation theorem (Theorem~\ref{thm:main4}), and in Section~\ref{sec:J} we study the representation $\pi$ and the leading matrices, proving Theorems~\ref{thm:main5} and~\ref{thm:main6} and Corollary~\ref{cor:main7}.

\section{Background and preliminary results}\label{sec:background}

In this section we give background and fix notation on affine Weyl groups of type $\tilde{\sB}_n$ and the affine root system. We also outline some properties of intervals and convex hulls in Coxeter groups, and we give a brief summary of the basic parts of Kazhdan-Lusztig theory that are required in the paper. 

\subsection{Root systems and Weyl groups}

Let $V=\mathbb{R}^n$ with standard inner product $\langle\cdot,\cdot\rangle$, and let $\Phi_0\subseteq V$ be a root system of type $\sB_n$, $n\geq 2$, with simple roots $\alpha_i=e_i-e_{i+1}$ for $1\leq i\leq n-1$ and $\alpha_n=e_n$ and associated positive roots $\Phi_0^+$. For $\alpha\in V\backslash\{0\}$ let $\alpha^{\vee}=2\alpha/\langle\alpha,\alpha\rangle$. Let $\varphi\in\Phi_0^+$ be the highest root (explicitly, $\varphi=e_1+e_2=\alpha_1+2(\alpha_2+\alpha_3+\cdots+\alpha_n)$). 

For $\alpha\in\Phi_0$ let $s_{\alpha}\in\mathsf{GL}(V)$ be the reflection 
$s_{\alpha}(x)=x-\langle x,\alpha\rangle\alpha^{\vee}$,  
and write $s_i=s_{\alpha_i}$ for $1\leq i\leq n$. Let $W_0$ be the subgroup of $\mathsf{GL}(V)$ generated by $\{s_{\alpha}\mid \alpha\in\Phi_0\}$. Thus $(W_0,S_0)$ is a Coxeter system of type $\sB_n$, where $S_0=\{s_i\mid 1\leq i\leq n\}$. 

The \textit{affine root system} is 
$
\Phi=\Phi_0+\ZZ\delta
$
(a subset of the vector space $V\oplus\RR\delta$). Let $\alpha_0=-\varphi+\delta$. The simple affine roots are $\alpha_0,\alpha_1,\ldots,\alpha_n$. Each root $\alpha+k\delta\in\Phi$ has a unique expression as $\alpha+k\delta=a_0\alpha_0+\cdots+a_n\alpha_n$ with $a_0,\ldots,a_n\in\ZZ$, and $\alpha+k\delta$ is \textit{positive} if and only if $a_0,\ldots,a_n\geq 0$. Explicitly, the set of positive affine roots is 
$$
\Phi^+=(\Phi_0^++\ZZ_{\geq 0}\delta)\cup(-\Phi_0^++\ZZ_{>0}\delta). 
$$
The \textit{support} of a root $\alpha=a_0\alpha_0+\cdots+a_n\alpha_n$ is 
$\mathsf{supp}(\alpha)=\{0\leq i\leq n\mid a_i\neq 0\}$. For $J\subseteq\{0,1,\ldots,n\}$ let 
$
\Phi_J=\{\alpha\in\Phi\mid \mathsf{supp}(\alpha)\subseteq J\}$ and $\Phi_J^+=\Phi_J\cap\Phi^+$. 
If $J=\{i_1,\ldots,i_k\}$ we write $\Phi_{i_1,\ldots,i_k}=\Phi_{\{0,1,\ldots,n\}\backslash J}$. In particular, $\Phi_n$ is a system of type~$\sD_n$.

For $\alpha+k\delta\in\Phi$, define an affine reflection $s_{\alpha+k\delta}\in\mathsf{Aff}(V)$ by 
$
s_{\alpha+k\delta}(x)=x-(\langle x,\alpha\rangle+k)\alpha^{\vee}.
$
Thus $s_{\alpha+k\delta}$ is the orthogonal reflection in the hyperplane 
$
H_{\alpha+k\delta}=\{x\in V\mid \langle x,\alpha\rangle+k=0\}.
$
Note that $s_{\alpha+k\delta}=t_{-k\alpha^{\vee}}s_{\alpha}$, where $t_{\lambda}\in\mathsf{Aff}(V)$ is the translation $t_{\lambda}(x)=x+\lambda$ by $\lambda\in V$. Let $s_0=s_{\alpha_0}$ (and so $s_0=t_{\varphi^{\vee}}s_{\varphi}$) and $\Saff=\{s_0,s_1,\ldots,s_n\}$.

The \textit{affine Weyl group} is the subgroup $\Waff$ of $\mathsf{Aff}(V)$ generated by $\{s_{\alpha+k\delta}\mid \alpha+k\delta\in \Phi\}$. Then $(\Waff,\Saff)$ is a Coxeter system of type $\tilde{\sB}_n$. Let $\ell:\Waff\to\ZZ_{\geq 0}$ be the length function. We have 
$
\Waff=Q\rtimes W_0
$
where $Q=\ZZ\alpha_1^{\vee}+\cdots+\ZZ\alpha_n^{\vee}=\{x\in \ZZ^n\mid x_1+\cdots+x_n=0\pmod{2}\}$ is the \textit{coroot lattice}. The left and right descent sets of $w\in\Waff$ are
$$
D_L(w)=\{s\in \Saff\mid \ell(sw)=\ell(w)-1\}\quad\text{and}\quad D_R(w)=\{s\in \Saff\mid \ell(ws)=\ell(w)-1\}. 
$$

If $J\subseteq \{0,1,\ldots,n\}$ is a proper subset, then the parabolic subgroup $W_J=\langle s_i\mid i\in J\rangle$ of $\Waff$ is spherical and hence has a longest element~$\sw_J$. If $i_1,\ldots,i_k\in\{0,1,\ldots,n\}$ we write 
\begin{align*}
W_{i_1,\ldots,i_k}=W_{\{0,1,\ldots,n\}\backslash\{i_1,\ldots,i_k\}}\quad\text{and}\quad \sw_{i_1,\ldots,i_k}=\sw_{\{0,1,\ldots,n\}\backslash\{i_1,\ldots,i_k\}}.
\end{align*}
In particular, $W_n$ is a subgroup of type $\sD_n$, with longest element $\sw_n$, and $\sw_0$ is the longest element of~$W_0$.

Let $\omega_1,\ldots,\omega_n\in V$ be the \textit{fundamental coweights} (defined by $\langle\omega_i,\alpha_j\rangle=\delta_{i,j}$ for $1\leq i,j\leq n$). Thus $\omega_i=e_1+\cdots+e_i$ for $1\leq i\leq n$. The \textit{coweight lattice} is 
$
P=\ZZ\omega_1+\cdots+\ZZ\omega_n=\ZZ^n.
$ The \textit{extended affine Weyl group} is 
$
\Wext=P\rtimes W_0,
$
and since $Q\subseteq P$ we have $\Waff\leq \Wext$. The group $\Wext$ naturally acts on $V$, as a subgroup of $\mathsf{Aff}(V)$. There is a distinct action of $\Wext$ on $V\oplus\RR\delta$, preserving the root system $\Phi$, by (for $u\in W_0$ and $\lambda\in P$) 
$$
u\star(\alpha+k\delta)=u(\alpha)+k\delta\quad\text{and}\quad t_{\lambda}\star(\alpha+k\delta)=\alpha+(k-\langle\lambda,\alpha\rangle)\delta\,.
$$
The \textit{inversion set} of $w\in\Wext$ is 
$
\Phi(w)=\{\alpha+k\delta\in\Phi^+\mid w^{-1}\star(\alpha+k\delta)\in-\Phi^+\},
$
and the \textit{length} of $w\in\Wext$ is $\ell(w)=|\Phi(w)|$ (agreeing with Coxeter length on restriction to $\Waff$).

We have
$
\Wext=\Waff\rtimes \Sigma
$
where $\Sigma=\{w\in\Wext\mid \ell(w)=0\}\cong P/Q\cong \ZZ_2$. The nontrivial element $\sigma$ of $\Sigma$ satisfies $\sigma s_0\sigma^{-1}=s_1$, $\sigma s_1\sigma^{-1}=s_0$, and $\sigma s_i\sigma^{-1}=s_i$ for $2\leq i\leq n$. Explicitly, $\sigma=t_{e_1}s_{e_1}$. For $w,w'\in\Wext$ we have $\Phi(w)=\Phi(w')$ if and only if $w'\in\{w,w\sigma\}$.

The closures of the open connected components of $V\backslash\bigcup_{\beta\in\Phi}H_{\beta}$ are called \textit{alcoves}. Let $\mathbb{A}$ denote the set of all alcoves. The \textit{fundamental alcove} is given by 
$$
A_0=\{x\in V\mid 0\leq \langle x,\alpha\rangle\leq 1\text{ for all $\alpha\in\Phi_0^+$}\}.
$$
The vertex set of $A_0$ is $\{0\}\cup\{\omega_i/m_i\mid 1\leq i\leq n\}$ where $\varphi=\sum_{i=1}^nm_i\alpha_i$ is the highest root of~$\Phi_0$. We call $0$ the \textit{type $0$} vertex, and $\omega_i/m_i$ the \textit{type $i$} vertex of $A_0$. The affine Weyl group $\Waff$ acts simply transitively on $\mathbb{A}$, and the \textit{type} of an arbitrary vertex is given by this action. 

The extended affine Weyl group $\Wext$ also acts transitively on $\mathbb{A}$, and the stabiliser of $A_0$ is~$\Sigma$. The inversion set $\Phi(w)$ has the geometric interpretation as the set of roots $\alpha+k\delta\in\Phi^+$ such that $H_{\alpha+k\delta}$ separates the alcoves $A_0$ and~$wA_0$. With this interpretation, the length function $\ell(w)$ on $\Wext$ is given by counting the hyperplanes separating $A_0$ from $wA_0$. Explicitly, for $\lambda\in P$ and $u\in\Wfin$ we have
\begin{align}\label{eq:length}
\ell(t_{\lambda}u)=\sum_{\alpha\in\Phi_0^+,\,u^{-1}(\alpha)>0}|\langle\lambda,\alpha\rangle|+\sum_{\alpha\in\Phi_0^+,\,u^{-1}(\alpha)<0}|\langle\lambda,\alpha\rangle-1|.
\end{align}

It is useful to note that, for $x\in\Wext$
\begin{align}\label{eq:add}
\ell(x\sw_n)=\ell(x)+\ell(\sw_n)\,\,\Longleftrightarrow \,\,D_R(x)\subseteq\{s_n\} 
\end{align}
and similarly $\ell(\sw_nx)=\ell(\sw_n)+\ell(x)\Longleftrightarrow D_L(x)\subseteq\{s_n\}$. This is a special case of the general Coxeter group fact that for a spherical subset $J$ we have $\ell(x \sw_J)=\ell(x)+\ell(\sw_J)$ if and only if $\ell(xs)>\ell(x)$ for all $s\in J$. In particular, note that 
$$
\ell(x\sw_ny)=\ell(x)+\ell(\sw_n)+\ell(y)\,\,\Longrightarrow\,\, D_R(x),D_L(y)\subseteq\{s_n\},
$$
however the converse is not true (for example $x=y=s_3s_2s_3s_0s_2s_3$ in $\tilde{\sB}_3$). See Remark~\ref{rem:Shipaper}. 

The element
\begin{align}\label{eq:fu}
\fu=\sw_{0,n}\sw_{n}\in W_n
\end{align}
will play an important role. 

\begin{lemma}\label{lem:etalebasics}
We have
\begin{compactenum}[$(1)$]
\item $\Phi_n^+=\{e_i-e_j\mid 1\leq i<j\leq n\}\cup\{-(e_i+e_j)+\delta\mid 1\leq i<j\leq n\}$;
\item $\Phi(\fu)=\{-(e_i+e_j)+\delta\mid 1\leq i<j\leq n\}$ and $\ell(\fu)=n(n-1)/2$. 
\end{compactenum}
\end{lemma}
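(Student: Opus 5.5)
The plan is a direct computation with the affine root coordinates. For (1), we need the positive roots whose support avoids index $n$. Write an affine root as $\alpha+k\delta=a_0\alpha_0+a_1\alpha_1+\cdots+a_n\alpha_n$ with $\alpha_0=-\varphi+\delta$. Comparing $\delta$-coefficients gives $a_0=k$. Then $\alpha=-k\varphi+\sum_{i\geq1}a_i\alpha_i$, where $\varphi=\alpha_1+2\alpha_2+\cdots+2\alpha_n$, so the $\alpha_n$-coefficient is $a_n=c_n(\alpha)+2k$, with $c_n(\alpha)$ the $\alpha_n$-coefficient of $\alpha$.

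The coefficient $c_n(\alpha)$ lies in $\{0,\pm1,\pm2\}$. It equals $\pm1$ for $\pm e_i$, equals $0$ for $\pm(e_i-e_j)$, and equals $\pm2$ for $\pm(e_i+e_j)$. Positivity together with $a_n=0$ forces either $k=0$ and $c_n(\alpha)=0$, or $k=1$ and $c_n(\alpha)=-2$; all other values of $k$ give $a_n\neq0$.

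In the first case we get $\alpha=e_i-e_j$ with $i<j$, since the root must be positive. In the second case $\alpha=-(e_i+e_j)$, giving the roots $-(e_i+e_j)+\delta$. Each of these has $a_0,\ldots,a_{n-1}\geq0$: for $e_i-e_j$ this is clear, and $-(e_i+e_j)+\delta=\alpha_0+(\varphi-e_i-e_j)$, where $\varphi-e_i-e_j$ is a nonnegative combination of $\alpha_1,\ldots,\alpha_{n-1}$ (because $e_1+e_2\geq e_i+e_j$ in dominance and both sides have the same $\alpha_n$-coefficient). This proves (1). As a sanity check, the count is $n(n-1)$, which is the number of positive roots of type $\sD_n$.

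For (2), use the standard parabolic fact that for the finite Coxeter system $W_n$ with root subsystem $\Phi_n$, the inversion set of $\sw_n$ is all of $\Phi_n^+$. Since $\sw_{0,n}$ is the longest element of the parabolic subgroup generated by $s_1,\ldots,s_{n-1}$, we have $\Phi(\sw_{0,n})=\Phi_{\{1,\ldots,n-1\}}^+=\{e_i-e_j\mid i<j\}$. Because $\sw_{0,n}$ is an involution, $\fu=\sw_{0,n}\sw_n$ satisfies $\sw_n=\sw_{0,n}\fu$ with $\ell(\sw_n)=\ell(\sw_{0,n})+\ell(\fu)$, as $\sw_{0,n}$ is a left prefix of $\sw_n$. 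The standard inversion-set formula $\Phi(xy)=\Phi(x)\sqcup x\star\Phi(y)$ for length-additive products then gives $\Phi(\sw_n)=\Phi(\sw_{0,n})\sqcup\sw_{0,n}\star\Phi(\fu)$. Hence $\sw_{0,n}\star\Phi(\fu)=\{-(e_i+e_j)+\delta\}$, and applying the involution $\sw_{0,n}$ gives $\Phi(\fu)=\sw_{0,n}\star\{-(e_i+e_j)+\delta\}$.

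It remains to note that $\sw_{0,n}$ acts on $V$ as the permutation $e_i\mapsto e_{n+1-i}$. This permutes the set $\{-(e_i+e_j)+\delta\mid i<j\}$, so $\Phi(\fu)=\{-(e_i+e_j)+\delta\mid 1\leq i<j\leq n\}$. Counting, $\ell(\fu)=\binom n2=n(n-1)/2$.

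The only real subtlety is getting the side conventions right in the inversion-set composition formula for the $\star$-action. To guard against this, we will double-check using the alternative route $\fu^{-1}=\sw_n\sw_{0,n}$ together with the geometric description of $\Phi(w)$ as the hyperplanes separating $A_0$ from $wA_0$.
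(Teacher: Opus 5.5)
Your proof is correct and follows the same route as the paper. For (1) you force $a_0\in\{0,1\}$ via the $\alpha_n$-coefficient of the finite part, and for (2) you use the decomposition $\Phi(\sw_n)=\Phi(\sw_{0,n})\sqcup\sw_{0,n}\star\Phi(\fu)$. You are slightly more careful than the paper: you check the reverse inclusion in (1), and you note that $\sw_{0,n}$ permutes $\{-(e_i+e_j)+\delta\}$, which is the step the paper leaves implicit when it writes $\Phi(\fu)=\Phi_n^+\backslash\Phi_{0,n}^+$.
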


\begin{proof}
(1) If $\beta\in\Phi_{n}^+$ then $\beta=a_0\alpha_0+\cdots+a_{n-1}\alpha_{n-1}$ with $a_0,\ldots,a_{n-1}\in\ZZ_{\geq 0}$. Since $\alpha_0=-\varphi+\delta$ this gives 
$
\beta=\alpha+a_0\delta
$ where $\alpha=-a_0\varphi+a_1\alpha_1+\cdots+a_{n-1}\alpha_{n-1}$. Since $\beta\in\Phi$ we have $\alpha\in\Phi_0$, and this forces $a_0\in\{0,1\}$ (for otherwise the absolute value of the coefficient of $\alpha_n$ in $\alpha$ strictly exceeds $2$, contradicting the fact that $\Phi_0$ is of type $\sB_n$). If $a_0=0$ then $\beta\in\Phi_{0,n}^+$, and if $a_0=1$ then by inspection of the $\sB_n$ root system we see that 
$
\alpha\in\{-(e_i+e_j)\mid 1\leq i<j\leq n\}
$ (the set of roots of $\Phi_0$ with coefficient of $\alpha_n$ equal to~$-2$). 

(2) Since $\fu=\sw_{0,n}\sw_n$, by properties of longest elements we have $\ell(\fu)=\ell(\sw_n)-\ell(\sw_{0,n})=n(n-1)/2$. Moreover, $\Phi(\fu)=\Phi_n^+\backslash\Phi_{0,n}^+=\{-(e_i+e_j)+\delta\mid 1\leq i<j\leq n\}$. 
\end{proof}

To make specific calculations (for example, using the length formula~(\ref{eq:length})) it is convenient to write elements of $\Wext$ in the standard form $t_{\la}v$ with $\la\in P$ and $v\in\Wfin$. The following lemma gives these standard forms for $\sw_n$ and $\fu$. 

\begin{lemma}\label{lem:etaleform}
We have 
\begin{align*}
\sw_{n}=\begin{cases}
t_{\omega_n}\sw_0&\text{if $n$ is even}\\
t_{\omega_n-\omega_1}\sw_{0,1}&\text{if $n$ is odd}
\end{cases}&&\fu&=\begin{cases}
t_{\omega_n}\sw_{0,n}\sw_0&\text{if $n$ is even}\\
t_{\omega_{n-1}}\sw_{0,n}\sw_{0,1}&\text{if $n$ is odd}.
\end{cases}
\end{align*}
\end{lemma}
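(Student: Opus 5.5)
The plan is to identify $\sw_n$ geometrically as the unique element of $W_n$ sending every root of $\Phi_n^+$ to a negative root. Once $\sw_n$ is in standard form, $\fu$ follows by one conjugation.

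\textbf{Step 1: geometry of $W_n$.} The parabolic subgroup $W_n$ is the stabiliser in $\Waff$ of the type $n$ vertex of $A_0$. This vertex is $c=\omega_n/m_n=\omega_n/2=(\tfrac12,\ldots,\tfrac12)$, since $m_n=2$. By Lemma~\ref{lem:etalebasics}(1), the linear parts of the roots in $\Phi_n$ are $\pm(e_i\pm e_j)$, so the linear parts of elements of $W_n$ form a Weyl group of type $\sD_n$. Every element of $W_n$ therefore has the form $x\mapsto w(x-c)+c=t_{c-w c}w$, with $w$ in this $\sD_n$ group.

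\textbf{Step 2: choose the candidates and check membership.} I take the candidate linear parts to be $w=\sw_0=-1$ when $n$ is even, and $w=\sw_{0,1}$ when $n$ is odd. Here $\sw_{0,1}$ is the longest element of the $\sB_{n-1}$ on coordinates $2,\ldots,n$, so it fixes $e_1$ and negates $e_2,\ldots,e_n$. This is an even number of sign changes, so it lies in type $\sD_n$. The translation part $c-wc$ is then $2c=\omega_n$ when $n$ is even, and $(0,1,\ldots,1)=\omega_n-\omega_1$ when $n$ is odd.

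Both translations lie in $Q$, since their coordinate sums $n$ and $n-1$ are even. Hence each candidate $x=t_{c-wc}w$ is in $\Waff$ and fixes $c$, and so $x\in W_n$.

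\textbf{Step 3: the candidate is the longest element.} It remains to check that $x\star\beta\in-\Phi^+$ for all $\beta\in\Phi_n^+$, which characterises $\sw_n$. I use the action $t_\lambda w\star(\alpha+k\delta)=w\alpha+(k-\langle\lambda,w\alpha\rangle)\delta$ and the explicit list of $\Phi_n^+$ from Lemma~\ref{lem:etalebasics}(1).

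For $n$ even:
\begin{itemize}
\item $e_i-e_j\mapsto -(e_i-e_j)+\langle\omega_n,e_i-e_j\rangle\delta=-(e_i-e_j)$, which is negative.
\item $-(e_i+e_j)+\delta\mapsto (e_i+e_j)+(1-\langle\omega_n,e_i+e_j\rangle)\delta=(e_i+e_j)-\delta$, which is negative.
\end{itemize}

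For $n$ odd, the same computation with $\lambda=\omega_n-\omega_1$ and $w=\sw_{0,1}$ splits into the cases $i=1$ and $i>1$, and gives negative roots in each case. Since $\sw_n$ is an involution, it does not matter whether one tests $x$ or $x^{-1}$.

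\textbf{Step 4: the formula for $\fu$.} Since $\sw_{0,n}\in W_0$, we have $\fu=\sw_{0,n}t_\lambda w=t_{\sw_{0,n}\lambda}\,\sw_{0,n}w$. The element $\sw_{0,n}$ is the longest element of the symmetric group on coordinates, so it reverses coordinates. It fixes $\omega_n$, and it sends $\omega_n-\omega_1=(0,1,\ldots,1)$ to $(1,\ldots,1,0)=\omega_{n-1}$. This gives the stated formulas for $\fu$.

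The only real obstacle is in Steps 2 and 3: for odd $n$, one must pin down exactly which coordinate the $\sD_n$ longest element fixes. This is settled by the root check, which fails for any other choice of the fixed coordinate.
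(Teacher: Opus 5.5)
Your proof is correct and is essentially the paper's: the paper also checks that the candidate lies in $\Waff$ (via $\omega_n\in Q$ for $n$ even, respectively $\omega_n-\omega_1\in Q$ for $n$ odd), then shows by a direct root computation that $\Phi(w)=\Phi_n^+$, and obtains the formula for $\fu$ from $\fu=\sw_{0,n}\sw_n$. The only differences are that you derive the candidate rather than stating it, and that you first place it in $W_n$ as the stabiliser of the type $n$ vertex, so you only need to check that $\Phi_n^+$ is sent to negative roots instead of computing the full inversion set.
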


\begin{proof}
Write $w$ for the right hand side of the claimed formula for $\sw_n$ (in the two cases $n$ even/odd). Since $\omega_n\in Q$ for $n$ even, and $\omega_n-\omega_1\in Q$ for $n$ odd, we have $w\in\Waff$. A direct calculation shows that $\Phi(w)=\Phi_n^+$, and hence $w=\sw_n$. The formulae for $\fu$ now follow directly from the definition $\fu=\sw_{0,n}\sw_n$. 
\end{proof}

\subsection{Intervals and convex hulls}\label{sec:convex}

Let $\peq$ denote the right weak order on $\Waff$ (we will not use this order on the extended affine Weyl group). That is, for $x,y\in\Waff$ we have $x\peq y$ if and only if $\ell(x^{-1}y)=\ell(y)-\ell(x)$, if and only if $x$ is a prefix of $y$ (meaning that there exists a reduced expression for $y$ starting with a reduced expression for $x$). For $x,y\in\Waff$ with $x\peq y$, the \textit{interval} $[x,y]$ is the set
$$
[x,y]=\{z\in\Waff\mid x\peq z\peq y\}.
$$
It is known that any Coxeter group is a complete meet semilattice with respect to the right weak order (see~\cite{BB}). In particular, for any $x,y\in\Waff$ there exists a unique \textit{meet} (greatest lower bound) denoted $z=x\wedge y$.

The \textit{convex hull} $\mathsf{conv}\{A,B\}$ of alcoves $A,B$ is defined to be the union of all alcoves $C$ such that there exists a gallery of reduced type from $A$ to $B$ passing through $C$. Geometrically, if $\Phi(A,B)$ denotes the set of roots $\beta\in\Phi^+$ such that $H_{\beta}$ separates $A$ and $B$, then 
$$
\mathsf{conv}\{A,B\}=\bigcap_{\beta\in\Phi^+\backslash\Phi(A,B)}H_{\beta}(A,B),
$$
where $H_{\beta}(A,B)$ denotes the closed half space bounded by $H_{\beta}$ that contains both $A$ and $B$ (see \cite[Theorem~3.131]{AB:08} for the equivalence of these two interpretations of convex hulls). 

For $x,y\in\Wext$ we will use the abbreviated notation 
$
\mathsf{conv}\{x,y\}=\mathsf{conv}\{xA_0,yA_0\}$. If $w\in\Waff$ then the description of $\mathsf{conv}\{1,w\}$ in terms of galleries gives
\begin{align}\label{eq:convexhullinterval}
\mathsf{conv}\{1,w\}=\bigcup_{v\in [1,w]}vA_0,
\end{align}
providing a bijection between elements of the interval $[1,w]$ and alcoves contained in $\mathsf{conv}\{1,w\}$. 

If $w\in\Wext$ then $\Phi(A_0,wA_0)=\Phi(w)$, and hence 
\begin{align}\label{eq:intersectionspace}
\mathsf{conv}\{1,w\}=\bigcap_{\beta\in\Phi^+\backslash\Phi(w)} H_{\beta}(A_0,wA_0).
\end{align}
Thus $\mathsf{conv}\{1,w\}$ is described by a family of inequalities, one for each root $\alpha\in\Phi_0^+$, of the form $M_{\alpha}\leq \langle x,\alpha\rangle\leq M_{\alpha}'$ where $M_{\alpha}$ and $M_{\alpha}'$ are integers. The integers $M_{\alpha}$ and $M_{\alpha}'$ are determined as follows, for $\alpha\in\Phi_0^+$. (a) If $\alpha\in\Phi(w)$ then $M_{\alpha}=-(k_{\alpha}+1)$ and $M_{\alpha}'=1$ where $k_{\alpha}\geq 0$ is the maximal integer such that $\alpha+k_{\alpha}\delta\in\Phi(w)$. (b) If $-\alpha+\delta\in\Phi(w)$ then $M_{\alpha}=0$ and $M_{\alpha}'=k_{\alpha}+1$ where $k_{\alpha}\geq 1$ is the maximal integer such that $-\alpha+k_{\alpha}\delta\in\Phi(w)$. (c) If $\Phi(w)$ contains neither $\alpha$ nor $-\alpha+\delta$ then $M_{\alpha}=0$ and $M_{\alpha}'=1$.

Equations~(\ref{eq:convexhullinterval}) and~(\ref{eq:intersectionspace}) give a useful method of counting the number of elements in an interval $[1,w]$ for $w\in\Waff$. Specifically, we have
\begin{align}\label{eq:volume}
\#[1,w]=\frac{\mathsf{Vol}(\mathsf{conv}\{1,w\})}{\mathsf{Vol}(A_0)}=2^{n-1}n!\times \mathsf{Vol}(\mathsf{conv}\{1,w\}),
\end{align}
where $\mathsf{Vol}$ denotes usual Euclidean volume. The calculation $\mathsf{Vol}(A_0)=1/(2^{n-1}n!)$ follows from the fact that $A_0$ has vertex set $\{0,e_1,\frac{1}{2}(e_1+e_2),\ldots,\frac{1}{2}(e_1+\cdots+e_n)\}$. Computing $\mathsf{Vol}(\mathsf{conv}\{1,w\})$ can typically be achieved by finding the inequalities determining $\mathsf{conv}\{1,w\}$ (as described above) and performing an iterated integration.

Let $\mathsf{conv}\{x,y\}^{\circ}$ denote the interior (in the Euclidean sense) of $\mathsf{conv}\{x,y\}$. Thus $\mathsf{conv}\{x,y\}^{\circ}$ is described by the strict inequalities $M_{\alpha}<\langle x,\alpha\rangle<M_{\alpha}'$, with $M_{\alpha}$ and $M_{\alpha}'$ as described above.

\subsection{Kazhdan-Lusztig theory}\label{sec:KL}

Kazhdan-Lusztig theory partitions the Coxeter group into left cells, and into right cells, and into two-sided cells. We do not need the precise definitions in this paper, and so we will only recall here the very few parts of the theory that will be required. See \cite{KL:79,Lus:03} for details. 

Let $\leq_L$, $\leq_R$, and $\leq_{LR}$ denote the left, right, and two-sided Kazhdan-Lusztig preorders on~$\Wext$ (defined using the equal parameter extended affine Hecke algebra, however we will not need the details of this construction). The equivalence classes of the associated equivalence relations $\sim_L$, $\sim_R$, and $\sim_{LR}$ are left, right, and two-sided cells (for example, $x\sim_R y$ means $x\leq_R y$ and $y\leq_R x$). Each two-sided cell is a union of left cells, and a union of right cells. Moreover, $x\leq_L y$ if and only if $x^{-1}\leq_R y^{-1}$, and so $x\sim_L y$ if and only if $x^{-1}\sim_R y^{-1}$, and hence $\Gamma$ is a right cell if and only if $\Gamma^{-1}$ is a left cell. Thus every statement about right/left cells has an obvious analogous statement about left/right cells. 

The preorder $\leq_{LR}$ naturally induces a partial order on the set of two-sided cells by $\Delta\leq_{LR}\Delta'$ if and only if $x\leq_{LR}y$ for some $x\in\Delta$ and $y\in\Delta'$ (and ``for some'' can be replaced by ``for all''). 

Let $\ba:\Wext\to \mathbb{Z}_{\geq0}$ be Lusztig's $\ba$-function. We will need the following general properties  (we restrict to the ``equal parameter'' setting):
\begin{compactenum}[$(1)$]
\item If $x\leq_R y$ then $D_L(x)\supseteq D_L(y)$. Hence if $x\sim_R y$ then $D_L(x)=D_L(y)$. 
\item If $x\leq_{LR} y$ then $\ba(x)\geq \ba(y)$. In particular, $\ba$ is constant on two-sided cells. 
\item If $x\leq_L y$ and $\ba(x)=\ba(y)$ then $x\sim_L y$.
\item If $w\sim_{LR}sw$ with $s\in \Saff$ then $w\sim_L sw$. 
\end{compactenum}
Statement (1) is a basic standard consequence of the definitions, and (2) and (3) are Lusztig's properties $\mathsf{P4}$ and $\mathsf{P9}$ (see \cite{Lus:03}; these statements are known to hold in the equal parameter setting). Statement (4) easily follows: interchanging the roles of $w$ and $sw$ if necessary, we may assume that $\ell(sw)=\ell(w)+1$, and hence $sw\leq_L w$ (this is immediate from the definitions). Since $w\sim_{LR}sw$ we have $\ba(w)=\ba(sw)$ (by (2)), and so (3) gives $sw\sim_L w$. 

Lusztig proved in~\cite[Theorem~4.8]{Lus:IV} that for an irreducible affine Weyl group (with equal parameters) there exists a bijection between the set of two-sided cells and unipotent conjugacy classes in the complex dual group $G^{\vee}$, and moreover that if the cell $\Delta$ corresponds to the class of a unipotent element $u\in G^{\vee}$ then $\ba(w)=\dim\mathcal{B}_u$ for all $w\in \Delta$, where $\mathcal{B}_u$ is the Springer fibre of all Borel subgroups containing~$u$. Furthermore, Lusztig conjectured that the partial order $\leq_{LR}$ on two-sided cells agrees with partial order on unipotent classes given by the closure relation, and Bezrukavnikov proved this conjecture in \cite[Theorem 4(b)]{Bez} (see also \cite{Shi:96} for the $\tilde{\sA}_n$ case). 

In particular, in any irreducible affine Weyl group with equal parameters it is known that there is a unique lowest two-sided cell with respect to $\leq_{LR}$, denoted $\Delta_0$ (corresponding to the class of the unipotent element $1\in G^{\vee}$). This cell has the following description: Let $\mathcal{J}$ be the set of subsets $J\subseteq \Saff$ such that $W_J$ is isomorphic to $\Wfin$ (for example, in type $\tilde{E}_6$ there are three such subsets). Then $\Delta_0$ consists of all extensions in $\Wext$ of the elements $\sw_J$ with $J\in\mathcal{J}$, and $\ba(w)=\ell(\sw_0)$ for all $w\in\Delta_0$. See \cite{Shi:Lowest} for details. 

Furthermore, there is always a unique second lowest two-sided cell $\Delta_1$ in an affine Weyl group, corresponding to the class of the unipotent element $x_{\tilde{\varphi}}(1)\in G^{\vee}$ where $\tilde{\varphi}$ is the highest root of $\Phi_0^{\vee}$, and we use standard Chevalley notations in $G^{\vee}$ (see~\cite{Shi:secondlowest}). Writing $\ba(\Delta_1)$ for the constant value of $\ba(w)$ for $w\in\Delta_1$, we have $\ba(\Delta_1)<\ba(\Delta_0)=\ell(\sw_0)$, and if $w\notin\Delta_0\cup\Delta_1$ then $\ba(w)<\ba(\Delta_1)$. For type $\tilde{\sB}_n$ we have $\ba(\Delta_1)=n(n-1)$, see \cite{Shi:secondlowest}.

\begin{remark}
There is a generalisation of Kazhdan-Lusztig theory to the ``unequal parameter'' setting by Lusztig, see \cite{Lus:03} and~\cite{Bon:17}. The description of the elements of $\Delta_0$ given above also holds for unequal parameters with minor modifications required for type $\tilde{\sC}_n$ with $n\geq 1$ (including $\tilde{\sA}_1=\tilde{\sC}_1$ and $\tilde{\sB}_2=\tilde{\sC}_2$; see \cite{Gui:08}). For unequal parameters it is not necessarily true that there is a unique second lowest two-sided cell, for example see the $\tilde{\sG}_2$ example for $2>r>3/2$ in \cite[Figure~2]{GP:19}.  
\end{remark}

\section{The fundamental $J$-alcove}\label{sec:Jalcove}

In this section we recall the definition and basic properties of the \textit{fundamental $J$-alcove} from~\cite{GLP:23}. Moreover, we study certain maximal length double coset representatives $\sm_{\la}$ that will ultimately describe the intersection $\Ga\cap\Ga^{-1}$ (where $\Ga=\Ga_1$ is the right cell containing~$\sw_n$). More generally, the geometry and combinatorics of the fundamental $J$-alcove, for $J=\{n\}$, will turn out to describe the structure of the cell~$\Delta_1$.

\subsection{The fundamental $J$-alcove and pseudo-translations}

Following~\cite{GLP:23}, if $J\subseteq\{1,2,\ldots,n\}$ the \textit{fundamental $J$-alcove} is 
$$
\cA_J=\{x\in V\mid 0\leq \langle x,\alpha\rangle\leq 1\text{ for all $\alpha\in\Phi_J^+$}\}
$$
(this is a fundamental domain for the action of the ``$J$-affine Weyl group'' $Q_J\rtimes W_J$ on $V$, where $Q_J=\sum_{\alpha\in\Phi_J}\ZZ\alpha^{\vee}$, hence the terminology). Let 
$$
P_J=P\cap\cA_J\quad\text{and}\quad  G_J=\{w\in\Wfin\mid w\cA_J=\cA_J\}.
$$
We call $P_J$ the set of \textit{$J$-weights}. The group $G_J$ naturally acts on $P_J$. 

In this paper the relevant subset is $J=\{n\}$, and we write $\cA_n=\cA_{\{n\}}$, $Q_n=Q_{\{n\}}$, $P_n=P_{\{n\}}$, and $G_n=G_{\{n\}}$. Thus explicitly $\cA_n=\{x\in V\mid 0\leq \langle x,e_n\rangle\leq 1\}$ (the \textit{fundamental $n$-alcove}), $Q_n=2\ZZ e_n$, and 
$$
P_n=\{\la_1e_1+\cdots+\la_{n-1}e_{n-1}+\la_ne_n\mid \la_1,\ldots,\la_{n-1}\in\ZZ\text{ and }\la_n\in\{0,1\}\}.
$$
We have $G_n=\langle s_1,s_2,\ldots,s_{n-2},s_{n-1}'\rangle$ where $s_{n-1}'=s_{n-1}s_ns_{n-1}$.

\begin{prop} The group $G_n$ acts on $P_n$, and the set
$$
P_n^+=\{\la_1e_1+\cdots+\la_{n-1}e_{n-1}+\la_ne_n\mid \la_1\geq \la_2\geq\cdots\geq \la_{n-1}\geq 0,\,\la_n\in\{0,1\}\}
$$
is a set of orbit representatives for this action. 
\end{prop}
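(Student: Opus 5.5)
The plan is to compute $G_n$ explicitly, check directly that it preserves $P_n$, and then exhibit a unique representative of each orbit by a sorting argument. Since $W_0$ is the group of signed permutations of the coordinates $e_1,\ldots,e_n$, an element $w\in W_0$ satisfies $w\cA_n=\cA_n$ if and only if $w$ maps the slab $0\le x_n\le 1$ onto itself. A signed permutation sends $x_n$ to $\pm x_{\pi(n)}$, so it preserves the slab exactly when it fixes $e_n$. Hence $G_n$ is the group of signed permutations of $e_1,\ldots,e_{n-1}$ fixing $e_n$, a Weyl group of type $\sB_{n-1}$. This agrees with the stated generators: $s_1,\ldots,s_{n-2}$ permute $e_1,\ldots,e_{n-1}$, and $s_{n-1}'=s_{n-1}s_ns_{n-1}=s_{e_{n-1}}$ negates $e_{n-1}$. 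Since $G_n\le W_0$ preserves $P=\ZZ^n$ and also $\cA_n$, it preserves $P_n=P\cap\cA_n$, so the action is well defined.

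For the representatives, I take $\la=(\la_1,\ldots,\la_{n-1},\la_n)\in P_n$ and act by $G_n$. The coordinate $\la_n\in\{0,1\}$ is fixed by every element of $G_n$. On the first $n-1$ coordinates, $G_n$ acts by arbitrary sign changes and permutations. First I apply sign changes to make every $\la_i$ with $i\le n-1$ nonnegative, and then I permute these coordinates into weakly decreasing order. This gives an element of $P_n^+$ in the orbit of $\la$.

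For uniqueness, I use two invariants of the orbit: the multiset $\{|\la_1|,\ldots,|\la_{n-1}|\}$ and the value $\la_n$. An element of $P_n^+$ is determined by its first $n-1$ coordinates, which are nonnegative and decreasing, together with $\la_n$. So it is determined by exactly these invariants, and two distinct elements of $P_n^+$ cannot lie in the same orbit.

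I do not expect a genuine obstacle here. The only point needing care is the identification of $G_n$, namely showing that any $w\in W_0$ preserving the slab must fix $e_n$. Such a $w$ maps the bounding hyperplanes $\{x_n=0\}$ and $\{x_n=1\}$ to themselves or to each other. Since $w$ is linear and fixes $0$, it must fix $\{x_n=0\}$. Then $w$ preserves the line orthogonal to it, so $w(e_n)=\pm e_n$. The sign $+$ is forced because the slab lies in the half-space $x_n\ge 0$.
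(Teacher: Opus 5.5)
Your proof is correct and follows the same route as the paper: identify $G_n$ as the group of signed permutations of $e_1,\ldots,e_{n-1}$ fixing $e_n$ (a Weyl group of type $\sB_{n-1}$), then take sorted nonnegative first coordinates together with $\la_n$ as orbit representatives. The only difference is that you derive this description of $G_n$ directly from the definition $G_n=\{w\in W_0\mid w\cA_n=\cA_n\}$ and write out the sorting and uniqueness argument, whereas the paper relies on the generating set $s_1,\ldots,s_{n-2},s_{n-1}'$ stated just before the proposition and leaves the rest as evident.
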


\begin{proof}
By definition, $G_n$ acts on $P_n$. Since $s_{n-1}'=s_{e_{n-1}}$ we see that $G_n$ is a Coxeter group of type $\sB_{n-1}$, acting on $P_n$ as signed permutations on the first $n-1$ coordinates, and fixing the final coordinate. Hence the stated set $P_n^+$ is a set of orbit representatives.
\end{proof}

\begin{remark}
If $c=s_1s_2\cdots s_{n-1}$ then $cG_nc^{-1}$ is the standard parabolic subgroup of type $\sB_{n-1}$.
\end{remark}

\begin{remark}
Note that $P_n^+\neq P_n\cap P^+$, where $P^+=\NN\omega_1+\cdots+\NN\omega_n$ is the usual set of dominant coweights. For example $\lambda=e_n=-\omega_{n-1}+\omega_n\in P_n^+$ yet $\lambda\notin P^+$. 
\end{remark}

Following \cite[Definition~3.9]{GLP:23}, for $\lambda\in P_n$ we define \textit{pseudo-translations} $\tau_{\la}\in\Wext$ by
$$
\tau_{\lambda}=\begin{cases}
t_{\lambda}&\text{if $\la_n=0$}\\
t_{\lambda}s_n&\text{if $\la_n=1$},
\end{cases}
$$
and set $\sT_n=\{\tau_{\lambda}\mid \lambda \in P_n\}$. By \cite[Corollary~3.15]{GLP:23} we have that $\sT_n$ is an abelian group, with $\sT_n\cong P/Q_n\cong \ZZ^{n-1}\times \ZZ_2$ via the natural map $\lambda\mapsto \lambda+Q_n$.

\begin{lemma}\label{lem:tauinverse}
For $g\in G_n$ and $\la\in P_n$ we have
\begin{compactenum}[$(1)$]
\item $\tau_{\lambda}^{-1}=\tau_{\la^*}$ where $\la^*=-\la_1e_1-\cdots-\la_{n-1}e_{n-1}+\la_ne_n\in P_n$. 
\item $y\tau_{\lambda}^{-1}y=\tau_{\lambda}$, where $y=\sw_{0,n}\sw_n\sw_{0,n}$ (if $n$ is even then $y$ simplifies to $\sw_n$). 
\item $g\tau_{\lambda}g^{-1}=\tau_{g\lambda}$.
\end{compactenum}
\end{lemma}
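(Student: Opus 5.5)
All three parts will be proved by direct computation in $\Wext=P\rtimes\Wfin$, using the semidirect product rules $wt_\mu w^{-1}=t_{w\mu}$ for $w\in\Wfin$ and $t_\mu s_n t_{-\mu}=t_{\mu-s_n\mu}s_n$. Here $s_n$ negates the last coordinate, so $s_n t_\mu s_n=t_{s_n\mu}$. Throughout we write $\la=\la'+\la_ne_n$, where $\la'$ is supported on the first $n-1$ coordinates.

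For (1), if $\la_n=0$ then $\tau_\la^{-1}=t_{-\la}=\tau_{\la^*}$ immediately. If $\la_n=1$ we compute
\[
\tau_\la^{-1}=s_nt_{-\la}=t_{-s_n\la}s_n .
\]
Since $-s_n\la=-\la'+e_n=\la^*$, this equals $\tau_{\la^*}$.

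For (3), $G_n$ acts by signed permutations of the first $n-1$ coordinates and fixes $e_n$. Hence $g$ commutes with $s_n$ and $g\la$ has the same last coordinate as $\la$. Therefore $g\tau_\la g^{-1}=t_{g\la}(gs_ng^{-1})^{\la_n}=\tau_{g\la}$.

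For (2), the main step is to put $y$ into standard form $t_\nu z$ using Lemma~\ref{lem:etaleform}, recalling that $\sw_{0,n}$ acts on $V$ as the coordinate reversal $e_i\mapsto e_{n+1-i}$.

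When $n$ is even, $\sw_n=t_{\omega_n}\sw_0$ with $\sw_0=-1$ central in $\Wfin$. Since $\sw_{0,n}\omega_n=\omega_n$, we get $y=t_{\omega_n}\sw_0=\sw_n$. This $y$ is an involution, and conjugation by it sends $t_\mu\mapsto t_{-\mu}$. Moreover
\[
ys_ny=t_{\omega_n}s_nt_{-\omega_n}=t_{2e_n}s_n .
\]
Hence $yt_{\la^*}s_n^{\la_n}y=t_{-\la^*+2\la_ne_n}s_n^{\la_n}$. We have $-\la^*=\la'-\la_ne_n$, so $-\la^*+2\la_ne_n=\la$, and the right-hand side equals $\tau_\la$.

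When $n$ is odd, $\sw_n=t_{\omega_n-\omega_1}\sw_{0,1}$, where $\sw_{0,1}$ acts as $-1$ on $e_2,\ldots,e_n$ and fixes $e_1$. Conjugating by the reversal gives $y=t_{\omega_{n-1}}z$, where $z$ negates $e_1,\ldots,e_{n-1}$ and fixes $e_n$; note $\sw_{0,n}(\omega_n-\omega_1)=\omega_{n-1}$. Since $z\omega_{n-1}=-\omega_{n-1}$, the element $y$ is an involution, and one checks $yt_\mu y=t_{z\mu}$. Also $z$ commutes with $s_n$ and $s_n\omega_{n-1}=\omega_{n-1}$, so $ys_ny=s_n$. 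Hence $y\tau_{\la^*}y=t_{z\la^*}s_n^{\la_n}=\tau_\la$, because $z\la^*=\la$.

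The only delicate point is this bookkeeping of standard forms in (2), specifically getting the correct translation part of $y$ in the two parity cases. Everything else is routine.
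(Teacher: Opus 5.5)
Your proposal is correct and follows the paper's route: (1) is the same elementary computation, and (2) is the direct standard-form calculation via Lemma~\ref{lem:etaleform} that the paper indicates, with your expressions $y=t_{\omega_n}\sw_0$ ($n$ even) and $y=t_{\omega_{n-1}}z$ ($n$ odd) checking out. The only difference is in (3): the paper cites an external lemma (Lemma~3.21 of its reference on $J$-alcoves), whereas you verify it directly, validly, using that $G_n$ fixes $e_n$ and hence commutes with $s_n$.
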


\begin{proof}
(1) is an elementary calculation (it also follows from the general results in \cite[Lemma~3.14]{GLP:23}). Then (2) follows by a direct calculation using Lemma~\ref{lem:etaleform}. For (3), see \cite[Lemma~3.21]{GLP:23}. 
\end{proof}

The type $n$ vertex of the fundamental alcove is $\xi=\frac{1}{2}(e_1+\cdots+e_n)$. Let $\cV_n$ be the set of all type $n$ vertices (of all alcoves). 

\begin{lemma}\label{lem:typenvertices}
We have $\mathsf{Stab}_{\Wext}(\xi)=\langle W_n,\sigma\rangle$ and  
$$\cV_n=\{x_1e_1+\cdots+x_ne_n\mid x_i\in\ZZ+\tfrac{1}{2}\text{ for $1\leq i\leq n$}\}.
$$
\end{lemma}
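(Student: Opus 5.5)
We need to prove two things: that $\mathsf{Stab}_{\Wext}(\xi)=\langle W_n,\sigma\rangle$, and that $\cV_n$ is the set of points all of whose coordinates lie in $\ZZ+\tfrac12$. The plan is to describe $\cV_n$ first and then compute the stabiliser from it.

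For $\cV_n$, I would start from the definition of types. Since $\Waff$ acts simply transitively on alcoves, a point is a type $n$ vertex exactly when it lies in the orbit $\Waff\xi$. Write $\Waff=Q\rtimes\Wfin$, so this orbit is $\{t_\mu w\xi\mid \mu\in Q,\,w\in\Wfin\}$. Now $\Wfin$ acts by signed permutations, so $\Wfin\xi$ is the set of vectors $\tfrac12(\pm1,\ldots,\pm1)$. Each such vector differs from $\xi$ by a vector in $\ZZ^n$, and the difference is $-\sum_{i\in I}e_i$ where $I$ is the set of negated coordinates. Since $Q=\{x\in\ZZ^n\mid \sum x_i\equiv 0\pmod 2\}$, translating $\Wfin\xi$ by $Q$ gives $\xi+(\text{all of }\ZZ^n)$: both parities of coordinate sum occur, since negating a single coordinate shifts by $-e_i$, which has odd sum. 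Hence $\Waff\xi=\xi+\ZZ^n$, which is the stated set. One should also check that $\sigma$ preserves types of vertices up to swapping types $0$ and $1$; in any case $\sigma\xi=t_{e_1}s_{e_1}\xi=\xi$ directly, so the description is consistent.

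For the stabiliser, the key fact is that in $\Waff$ the stabiliser of a vertex of $A_0$ is the standard parabolic subgroup generated by the simple reflections fixing it. Among $\alpha_0,\ldots,\alpha_n$, the only one whose hyperplane does not contain $\xi$ is $\alpha_n$, because $\langle\xi,e_n\rangle=\tfrac12$. Indeed $\langle\xi,e_i-e_{i+1}\rangle=0$ for $i<n$, and $\langle\xi,\varphi\rangle=1$, so $\xi\in H_{\alpha_0}$. This gives $\mathsf{Stab}_{\Waff}(\xi)=W_n$. Since $\Wext=\Waff\rtimes\Sigma$ with $\Sigma=\{1,\sigma\}$, any element of $\Wext$ can be written as $w$ or $w\sigma$ with $w\in\Waff$. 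We have already seen that $\sigma\xi=\xi$: concretely, $s_{e_1}\xi=\xi-e_1$, and then $t_{e_1}$ returns it to $\xi$. Therefore $w\sigma\xi=\xi$ if and only if $w\xi=\xi$, and so $\mathsf{Stab}_{\Wext}(\xi)=W_n\cup W_n\sigma=\langle W_n,\sigma\rangle$.

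The only step needing care is the parabolic stabiliser fact. I would quote it as the standard result that the stabiliser of a point of the closed fundamental chamber is generated by the simple reflections fixing that point. It can also be checked directly: an element fixing $\xi$ permutes the alcoves containing $\xi$, and $W_n$ already acts simply transitively on these.
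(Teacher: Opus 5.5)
Your proof is correct, but for the stabiliser it takes a different route from the paper.

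\textbf{The paper's route.} It never invokes a parabolic stabiliser theorem. Writing $w=t_{\lambda}v$ with $\lambda\in P$ and $v\in\Wfin$, it notes that $w\xi=\xi$ if and only if $\lambda=\xi-v(\xi)$. Since $\xi-v(\xi)\in P$ for every $v$, this gives $\mathsf{Stab}_{\Wext}(\xi)=\{t_{\xi-v(\xi)}v\mid v\in\Wfin\}$, which is isomorphic to $\Wfin$ via linear parts. Both $W_n$ and $\sigma$ lie in this group, and $|W_n|=|\Wfin|/2$ with $\sigma\notin W_n$, so an order count gives equality. This argument is purely coordinate-based and also records that the stabiliser is isomorphic to $\Wfin$, but it needs the order of $W_n$.

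\textbf{Your route.} You quote that $\mathsf{Stab}_{\Waff}(\xi)$ is the standard parabolic subgroup generated by the simple reflections fixing $\xi$, which you correctly identify as $s_0,\ldots,s_{n-1}$. You then use $\Wext=\Waff\sqcup\Waff\sigma$ together with $\sigma\xi=\xi$. This is more conceptual and avoids counting. It applies verbatim to any vertex of $A_0$ fixed by $\Sigma$, and it directly produces the coset decomposition $W_n\sqcup W_n\sigma$ that is used later when showing $W_n$ is a fundamental domain. The price is dependence on the quoted theorem, or on your residue argument.

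\textbf{The set $\cV_n$.} Here the two proofs are essentially the same. The paper computes the $\Wext$-orbit $P+\Wfin\xi$, which is immediate because $P=\ZZ^n$, but it tacitly uses that $\Wext$ preserves type $n$ vertices. You compute the $\Waff$-orbit $Q+\Wfin\xi$ with a parity argument, which matches the definition of type literally. Your observation $\sigma\xi=\xi$ is exactly what reconciles the two, since it gives $\Wext\xi=\Waff\xi$.
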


\begin{proof}
Both $\Waff$ and $\Wext$ act transitively on $\cV_n$. For $\lambda\in P$ and $v\in \Wfin$ we have $t_{\lambda}v(\xi)=\lambda+v(\xi)$. Since $\lambda\in P=\ZZ^n$ and $v$ acts by signed permutations, every coordinate of $t_{\lambda}v(\xi)$ is in $\ZZ+\frac{1}{2}$, and it is clear that all such points are in the $\Wext$ orbit of $\xi$. Hence $\cV_n=(\ZZ+\frac{1}{2})^n$ as claimed. To compute the stabiliser of $\xi$, if $w=t_{\lambda} v$ with $\lambda\in P$ and $v\in \Wfin$ then $w(\xi)=\xi$ if and only if $\lambda=\xi-v(\xi)$. Thus since $\xi-v(\xi)\in P$ we have $\mathsf{Stab}_{\Wext}(\xi)=\{t_{\xi-v(\xi)}v\mid v\in\Wfin\}$. Taking linear parts gives an isomorphism $\mathsf{Stab}_{\Wext}(\xi)\cong \Wfin$. Clearly $W_n\leq \mathsf{Stab}_{\Wext}(\xi)$. We also have $\sigma\in\mathsf{Stab}_{\Wext}(\xi)$ (as $\sigma=t_{e_1}s_{e_1}$), and so (by orders) 
$\mathsf{Stab}_{\Wext}(\xi)=\langle W_n,\sigma\rangle$. 
\end{proof}

The group $\sT_n$ acts on the set $\{A\in\mathbb{A}\mid A\subseteq \cA_n\}$ of (classical) alcoves contained in the fundamental $n$-alcove, however this action is not free. To resolve this issue, let
$$
\mathbb{X}_n=\{w\in\Wext\mid wA_0\subseteq\cA_n\}=\{w\in\Wext\mid e_n,-e_n+\delta\notin\Phi(w)\}
$$
(so $\mathbb{X}_n$ is a two fold cover of the set of alcoves contained in $\cA_n$). The group $\sT_n$ acts freely on~$\mathbb{X}_n$. By \cite[Theorem~3.11 and Corollary~3.15]{GLP:23} the set
$$
{^n}\Wfin=\{w\in \Wfin \mid \ell(s_nw)=\ell(w)+1\}
$$
is a fundamental domain for the action of $\sT_n$ on $\mathbb{X}_n$. In other words, if $w\in\Wext$ is such that the alcove $wA_0$ lies in the fundamental $n$-alcove, then $w=\tau_{\lambda}u$ for a unique $\lambda\in P_n$ and $u\in {^n}\Wfin$. Fundamental domains for the action of $\sT_n$ on $\mathbb{X}_n$ play an important role in the combinatorial approaches to Kazhdan-Lusztig theory initiated in \cite{GP:19,GP:19b} and continued in~\cite{GLP:23}. In this theory, formulae are often simplified by selecting a fundamental domain well adapted to the specific setting. The numerical observation $|{^n}\Wfin|=|W_n|=|\sB|$ hints that there are other natural fundamental domains for this action in the case of~$\Delta_1$. In the following lemma we show that $W_n\subseteq \mathbb{X}_n$ is indeed a fundamental domain, and in Section~\ref{sec:funddom} we will show that $\fu\sB$ is a third fundamental domain.

\begin{lemma}\label{lem:Dnfunddom}
The set $W_n$ is a fundamental domain for the action of $\mathsf{T}_n$ on $\mathbb{X}_n$. Thus if $w\in\Wext$ with $wA_0\subseteq \cA_n$ then there exist unique elements $\lambda\in P_n$ and $u\in W_n$ such that $w=\tau_{\lambda}u$. 
\end{lemma}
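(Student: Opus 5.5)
The plan is to reduce the statement to the known fundamental domain ${^n}\Wfin$ from \cite[Theorem~3.11 and Corollary~3.15]{GLP:23}. Two things have to be shown: that $W_n\subseteq\mathbb{X}_n$, and that $W_n$ meets each $\sT_n$-orbit exactly once.

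\emph{Step 1: $W_n\subseteq\mathbb{X}_n$.} For $u\in W_n$ we have $\Phi(u)\subseteq\Phi_n^+$. Lemma~\ref{lem:etalebasics}(1) describes $\Phi_n^+$ explicitly, and neither $e_n$ nor $-e_n+\delta$ lies in it. By the description $\mathbb{X}_n=\{w\mid e_n,-e_n+\delta\notin\Phi(w)\}$ this gives $u\in\mathbb{X}_n$.

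\emph{Step 2: a bijection between $W_n$ and ${^n}\Wfin$ compatible with the orbits.} Write $u\in W_n$ as $u=t_{\mu}v$ with $\mu\in P$ and $v\in\Wfin$. Since $u\in\mathbb{X}_n$, the result of \cite{GLP:23} gives a unique factorisation $u=\tau_{\la}u'$ with $\la\in P_n$ and $u'\in{^n}\Wfin$. Define $\phi:W_n\to{^n}\Wfin$ by $\phi(u)=u'$. Every element of $\mathbb{X}_n$ has the form $\tau_\la u'$, so the statement is equivalent to $\phi$ being a bijection. Both sets have cardinality $2^{n-1}n!$ (the order of $\sD_n$ and the index of $\langle s_n\rangle$ in $\sB_n$), so injectivity suffices.

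To identify $u'$: the linear part of $\tau_\la$ lies in $\{1,s_n\}$. Hence the linear part $v$ of $u$ satisfies $v\in\{u',s_nu'\}$, and $u'$ is the unique element of $\{v,s_nv\}$ lying in ${^n}\Wfin$. Consequently $\phi(u_1)=\phi(u_2)$ forces $v_1\in\{v_2,s_nv_2\}$.

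\emph{Step 3: injectivity, the main point.} The key is Lemma~\ref{lem:typenvertices}: $W_n$ fixes $\xi$, and taking linear parts is injective on $\mathsf{Stab}_{\Wext}(\xi)$, since there $u=t_{\xi-v(\xi)}v$.

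Suppose $\phi(u_1)=\phi(u_2)$. If $v_1=v_2$, then injectivity of the linear part on the stabiliser gives $u_1=u_2$. Otherwise $v_1=s_nv_2$, and then $u_1u_2^{-1}\in\mathsf{Stab}(\xi)$ has linear part $s_n$. The element of the stabiliser with linear part $s_n$ is $t_{\xi-s_n\xi}s_n=t_{e_n}s_n$, so $u_1u_2^{-1}=t_{e_n}s_n$. This is not in $W_n$: it is the reflection in the hyperplane $x_n=\tfrac12$, which corresponds to the root $\alpha_n$-type affine root $-2e_n+\delta$ up to scaling. Checking its inversion set, it contains $e_n$ (or $-e_n+\delta$), which is not in $\Phi_n^+$. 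Since $W_n$ is a group, $u_1u_2^{-1}\in W_n$, a contradiction. Hence $\phi$ is injective, therefore bijective.

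\emph{Conclusion and obstacle.} Bijectivity of $\phi$ yields both existence and uniqueness of $w=\tau_\la u$. Indeed, $w=\tau_\mu u'$ with $u'=\phi(u)$ and $u=\tau_\nu u'$ gives $w=\tau_\mu\tau_\nu^{-1}u$, using that $\sT_n$ is a group. The main care needed is in this last bookkeeping step and in verifying that $t_{e_n}s_n\notin W_n$; Lemma~\ref{lem:etalebasics}(1) settles the latter directly.
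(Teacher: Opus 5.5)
\textbf{Gap in the key step.} Your overall structure (injectivity of $\phi$ plus the count $|W_n|=|{^n}\Wfin|$) is sound. However, the step you yourself single out as the crux, namely $t_{e_n}s_n\notin W_n$, is justified incorrectly.

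The inversion set of $t_{e_n}s_n$ does not contain $e_n$ or $-e_n+\delta$. A direct computation gives $\Phi(t_{e_n}s_n)=\{e_i-e_n,\ -(e_i+e_n)+\delta\mid 1\leq i\leq n-1\}$, which lies entirely inside $\Phi_n^+$, so Lemma~\ref{lem:etalebasics}(1) gives nothing. You can see that $e_n,-e_n+\delta\notin\Phi(t_{e_n}s_n)$ even without computing: $t_{e_n}s_n=\tau_{e_n}=\tau_{e_n}\cdot 1$ lies in $\mathbb{X}_n$, because $\sT_n$ acts on $\mathbb{X}_n$.

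In fact no inversion-set argument can work here. We have $\Phi(w)=\Phi(w\sigma)$, and $t_{e_n}s_n\sigma=t_{e_1+e_n}s_{e_1}s_{e_n}$ is an element of $\Waff$ fixing $\xi$, hence lies in $W_n$. In other words, $t_{e_n}s_n\in W_n\sigma$ has the inversion set of an element of $W_n$. Your remark about $-2e_n+\delta$ is also off: this is not an affine root of type $\tilde{\sB}_n$, which is exactly why $x_n=\tfrac12$ is not a wall.

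\textbf{Repair.} The claim itself is true, and the fix is one line. We have $W_n\subseteq\Waff=Q\rtimes\Wfin$, so every element of $W_n$ has translation part in $Q=\{x\in\ZZ^n\mid x_1+\cdots+x_n\equiv 0\pmod 2\}$. The translation part of $t_{e_n}s_n$ is $e_n\notin Q$. With this substitution your proof is complete.

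\textbf{Comparison with the paper.} The paper proves existence: every $w\in\mathbb{X}_n$ lies in $\sT_nW_n$. It does this by using an element of $\sT_n$ to translate the type $n$ vertex $w(\xi)$ back to $\xi$, and then using $\mathsf{Stab}_{\Wext}(\xi)=W_n\sqcup W_n\sigma=W_n\sqcup\tau_{e_n}W_n$. Uniqueness then follows from the count.

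You instead prove uniqueness and get existence from the count. Your injectivity argument amounts to showing $\sT_n\cap W_n=\{1\}$: an element of $\sT_n$ fixing $\xi$ is $1$ or $\tau_{e_n}$. This is slightly more economical, since it avoids locating the type $n$ vertices in $\cA_n$. Both routes nevertheless rest on the same fact $\tau_{e_n}\notin W_n$, which the paper uses without comment.
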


\begin{proof}
Since $|W_n|=|{^n}\Wfin|$, and since ${^n}\Wfin$ is a fundamental domain, it suffices to show that each element $w\in\mathsf{X}_n$ has at least one expression as $w=\tau_{\lambda}v$ for some $\lambda\in P_n$ and $v\in W_n$. Let $w\in\mathbb{X}_n$. Then $w(\xi)$ is the type $n$ vertex of an alcove in $\cA_n$, and hence by Lemma~\ref{lem:typenvertices} we have $w(\xi)=x_1e_1+\cdots+x_{n-1}e_{n-1}+\frac{1}{2}e_n$ with $x_1,\ldots,x_{n-1}\in \ZZ+\frac{1}{2}$. Since $t_{e_1},\ldots,t_{e_{n-1}}\in \sT_n$ it is clear that there is $\lambda\in P_n$ such that $\tau_{\la}^{-1}w\xi=\xi$. Thus $\tau_{\la}^{-1}w\in\mathsf{Stab}_{\Wext}(\xi)$, and so by Lemma~\ref{lem:typenvertices} we have $w\in \tau_{\lambda}W_n\sqcup\tau_{\lambda}W_n\sigma$. If $w\in\tau_{\lambda}W_n$ then we are done. If $w\in \tau_{\la}W_n\sigma$ then note that $\tau_{e_n}=t_{e_n}s_n\in\mathsf{Stab}_{\Wext}(\xi)\backslash W_n$, and since $W_n$ is an index $2$ subgroup of the stabiliser we have $W_n\sigma=\tau_{e_n}W_n$, giving $w\in\tau_{\la}\tau_{e_n}W_n\in \sT_nW_n$ as required. 
\end{proof}

\subsection{Maximal length double coset representatives}

Maximal length double coset representatives naturally play an important role in Kazhdan-Lusztig theory (see \cite{LusztigXi}). In particular, since $\Ga$ is the right cell containing $\sw_n$ we have $D_L(w)=D_R(w)=\{s_0,\ldots,s_{n-1}\}$ for all $w\in\Ga\cap\Ga^{-1}$ (as left descent sets are constant on right cells, and right descent sets are constant on left cells). That is, each element of $\Ga\cap\Ga^{-1}$ is the unique maximal length element in its $W_n$-double coset. Thus determining $\Ga\cap\Ga^{-1}$ is equivalent to determining which $W_n$-double cosets intersect $\Ga\cap\Ga^{-1}$, and to determine explicitly the (unique) maximal length elements in these double cosets. 

For $\lambda\in P_n$ let
$$
\sm_{\lambda}=(\text{the longest element of the double coset $W_n\tau_{\lambda} W_n$}).
$$
We have the following basic properties. 
\begin{prop}\label{prop:dominance}
For $\la\in P_n$ and $g\in G_n$ we have $\sm_{g\lambda}=\sm_{\lambda}$, $\sm_{\lambda}^{\sigma}=\sm_{\lambda}$, and $\sm_{\lambda}^{-1}=\sm_{\lambda}$. 
\end{prop}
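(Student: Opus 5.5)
The plan is to reduce each of the three identities to a statement about double cosets. If $x$ and $y$ lie in the same $W_n$-double coset, then they have the same longest element. Similarly, if an automorphism or anti-automorphism of $\Wext$ preserves length and maps $W_n$ to $W_n$, it sends the longest element of $W_n\tau_\la W_n$ to the longest element of the image double coset.

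\emph{First identity, $\sm_{g\la}=\sm_\la$.} By Lemma~\ref{lem:tauinverse}(3) we have $\tau_{g\la}=g\tau_\la g^{-1}$. It therefore suffices to show $G_n\subseteq W_n$, since then $W_n\tau_{g\la}W_n=W_n\tau_\la W_n$. Now $s_1,\ldots,s_{n-2}\in W_n$ trivially. For $s_{n-1}'=s_{e_{n-1}}$, I would show that it lies in $W_n$ up to a translation absorbed by $\sT_n$. More cleanly, $s_{n-1}'=s_{n-1}s_ns_{n-1}$ is not in $W_n$ itself, so the argument has to use the translation. The reflection $s_{-e_{n-1}-e_n+\delta}\in W_n$ (its root lies in $\Phi_n^+$ by Lemma~\ref{lem:etalebasics}) and $s_{e_{n-1}-e_n}=s_{n-1}\in W_n$ compose to $t_{e_{n-1}+e_n}s_{e_{n-1}}s_{e_n}$. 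Then $s_{e_{n-1}}$ equals an element of $W_n$ times $t_{-(e_{n-1}+e_n)}s_{e_n}$. Since $\tau_{e_n}=t_{e_n}s_n$ and $t_{e_{n-1}}$ commute with $\tau_\la$ (the group $\sT_n$ is abelian), the conjugation $s_{n-1}'\tau_\la s_{n-1}'$ can be rewritten inside $W_n\tau_{g\la}W_n$.

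I would set this up carefully as follows. Write $s_{n-1}'=w\,t_\mu\tau_{e_n}^{\pm1}$ with $w\in W_n$, and use that $\tau$-elements commute. This gives $s_{n-1}'\tau_\la s_{n-1}'^{-1}=w\,\tau_\la\,w^{-1}$, so $W_n\tau_{g\la}W_n=W_n\tau_\la W_n$. This coset bookkeeping is the step I expect to be the main obstacle: one must check that the correction factor is genuinely in $\sT_n$ and commutes with $\tau_\la$, rather than merely lying in the stabiliser $\langle W_n,\sigma\rangle$.

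\emph{Second identity, $\sm_\la^\sigma=\sm_\la$.} Conjugation by $\sigma$ is length-preserving and fixes $W_n$, because it swaps $s_0$ and $s_1$ and fixes $s_2,\ldots,s_{n-1}$. Hence $\sm_\la^\sigma$ is the longest element of $W_n\sigma\tau_\la\sigma^{-1}W_n$. Now $\sigma=t_{e_1}s_{e_1}$, and $t_{e_1}$ commutes with $\tau_\la$ in $\sT_n$. So $\sigma\tau_\la\sigma^{-1}=t_{e_1}\tau_{s_{e_1}\la}t_{e_1}^{-1}=\tau_{s_{e_1}\la}$, using Lemma~\ref{lem:tauinverse}(3) with $s_{e_1}\in G_n$ (valid since $n\ge2$). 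The first identity then gives the result.

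\emph{Third identity, $\sm_\la^{-1}=\sm_\la$.} Inversion is a length-preserving anti-automorphism preserving $W_n$, so $\sm_\la^{-1}$ is the longest element of $W_n\tau_\la^{-1}W_n=W_n\tau_{\la^*}W_n$, by Lemma~\ref{lem:tauinverse}(1). Since $\la^*=g\la$ for $g=-1$ on the first $n-1$ coordinates, and this $g$ lies in $G_n$ (type $\sB_{n-1}$ signed permutations), the first identity gives $\sm_{\la^*}=\sm_\la$. Alternatively, Lemma~\ref{lem:tauinverse}(2) gives $\tau_\la^{-1}=y\tau_\la y$ with $y\in W_n$ (as $\sw_{0,n},\sw_n\in W_n$). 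This shows directly that $W_n\tau_\la^{-1}W_n=W_n\tau_\la W_n$, which is the cleaner route.
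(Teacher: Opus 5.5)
Your three reductions match the paper's proof. You use the same element $y=s_{-e_{n-1}-e_n+\delta}\,s_{e_{n-1}-e_n}\in W_n$ for the generator $s'_{n-1}$, and the same identity $\sigma\tau_\la\sigma^{-1}=\tau_{s_{e_1}\la}$ for the $\sigma$-case. Your ``cleaner route'' for inversion via Lemma~\ref{lem:tauinverse}(2) is exactly the paper's argument. The opening sentence ``it suffices to show $G_n\subseteq W_n$'' is false, as you notice yourself, and should be deleted.

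The step you flagged as the main obstacle contains an error as written. It is not true that $s_{e_{n-1}}$ is an element of $W_n$ times $t_{-(e_{n-1}+e_n)}s_{e_n}$. The quotient $s_{e_{n-1}}s_{e_n}t_{e_{n-1}+e_n}$ sends $\xi$ to $\xi-2(e_{n-1}+e_n)$, while $W_n$ fixes $\xi$. Moreover, that factor is not in $\sT_n$: it has the form $t_\mu s_n$ with $\mu_n=-1$, whereas $\tau_{e_n}^{\pm1}=t_{e_n}s_n$. So the commutation argument would break exactly there.

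The correct bookkeeping is a one-line check. On the last two coordinates, $y$ acts by $(x_{n-1},x_n)\mapsto(1-x_{n-1},1-x_n)$. Hence $y=y^{-1}$ and $y\,s_{e_{n-1}}=t_{e_{n-1}+e_n}s_n=\tau_{e_{n-1}+e_n}$, so $s'_{n-1}=y\,\tau_{e_{n-1}+e_n}$ with the correction factor genuinely in $\sT_n$. Commutativity of $\sT_n$ and Lemma~\ref{lem:tauinverse}(3) then give $\tau_{s'_{n-1}\la}=y\,\tau_{e_{n-1}+e_n}\tau_\la\tau_{e_{n-1}+e_n}^{-1}\,y^{-1}=y\,\tau_\la\, y^{-1}$. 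This is precisely the identity the paper asserts ``by a direct calculation''.

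With this line substituted, your proof is complete. Your route derives the paper's computation structurally, from Lemma~\ref{lem:tauinverse}(3) and the abelian group $\sT_n$, rather than by checking it by hand.
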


\begin{proof}
To prove $\sm_{g\lambda}=\sm_{\lambda}$ it suffices to prove that $W_n\tau_{g\lambda}W_n=W_n\tau_{\lambda}W_n$ for $\lambda\in P_n$ and $g\in G_n$, and it is sufficient to verify this for the generators of $G_n$. If $g\in\{s_1,\ldots,s_{n-2}\}$ then $g\in W_n$ and so $W_n\tau_{g\lambda}W_n=W_ng\tau_{\lambda}g^{-1}W_n=W_n\tau_{\lambda}W_n$. The case $g=s_{n-1}'$ requires a different argument, because $s_{n-1}'\notin W_n$. Let $\alpha=-e_{n-1}-e_n+\delta$ and $\beta=e_{n-1}-e_n$. Since $\alpha,\beta\in\Phi_n$ we have $y=s_{\alpha}s_{\beta}\in W_n$. A direct calculation shows that $y\tau_{\lambda}y^{-1}=\tau_{s_{n-1}'\lambda}$ for all $\lambda \in P_n$, and hence $W_n\tau_{g\lambda}W_n=W_n\tau_{\lambda}W_n$ for all $g\in G_n$. 

Now, since $\sigma=t_{e_1}s_{e_1}$ we have $\sigma\tau_{\lambda}\sigma=\tau_{s_{e_1}\lambda}$. Thus $\sm_{\lambda}^{\sigma}=\sm_{s_{e_1}\lambda}$. Since $s_{e_1}\in G_n$ the result follows from the statement $\sm_{\lambda}=\sm_{g\lambda}$ for all $g\in G_n$. Finally, by Lemma~\ref{lem:tauinverse} the double coset $W_n\tau_{\la}W_n$ is stable under inversion, and so $\sm_{\la}^{-1}=\sm_{\la}$. 
\end{proof}

Since $P_n^+$ is a set of orbit representatives for the action of $G_n$ on $P_n$, by Proposition~\ref{prop:dominance} we have $\sm_{\la}=\sm_{\la^+}$ where $\la^+$ is the unique element of $P_n^+$ in the $G_n$-orbit of $\la$. Thus we can restrict to $\lambda\in P_n^+$, in which case the following theorem gives an explicit formula for~$\sm_{\lambda}$.

\begin{thm}\label{thm:mlambda}
If $\lambda\in P_n^+$ then $\sm_{\lambda}=\sw_n\fu^{-1}\tau_{\lambda}\fu$, and $\ell(\sw_n\fu^{-1}\tau_{\lambda}\fu)=\ell(\sw_n)+\ell(\fu^{-1}\tau_{\lambda}\fu)$.
\end{thm}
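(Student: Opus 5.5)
Let $x=\fu^{-1}\tau_{\la}\fu$. The plan is to establish three facts about $x$. First, $x$ lies in the double coset $W_n\tau_{\la}W_n$. Second, $\ell(\sw_nx)=\ell(\sw_n)+\ell(x)$. Third, $\sw_nx$ has both left and right descent sets containing $\{s_0,\ldots,s_{n-1}\}$. Given these, $\sw_nx$ lies in $W_n\tau_\la W_n$ and is maximal in its $W_n\times W_n$ double coset. The maximal element of a double coset is the unique element with full $W_n$-descent on both sides, so $\sw_nx=\sm_\la$, and the length statement is the second fact.

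The first fact is immediate, since $\fu\in W_n$. For the second, by (\ref{eq:add}) (left version) it suffices to show $D_L(x)\subseteq\{s_n\}$. Equivalently, no simple root $\alpha_i$ with $0\le i\le n-1$ lies in $\Phi(x)$. Equivalently again, $\Phi(x)\cap\Phi_n^+=\emptyset$, since an inversion set meeting a parabolic root subsystem contains a simple root of it.

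To verify this I would compute explicitly. Write $\fu$ in standard form via Lemma~\ref{lem:etaleform}, so that $x=t_{\mu}v$ with $v=\fu_0^{-1}\tau_\la^{\mathrm{lin}}\fu_0$ for the linear part $\fu_0$ of $\fu$. Here $\fu_0$ is $\sw_{0,n}\sw_0=-\sw_{0,n}$ (up to the $s_{e_1}$ twist when $n$ is odd), so $\fu_0$ acts by $e_i\mapsto -e_{n+1-i}$, possibly composed with a sign change at $e_1$. Then I would apply the inversion criterion $x^{-1}\star\beta\in-\Phi^+$ to each $\beta\in\Phi_n^+$, using the list from Lemma~\ref{lem:etalebasics}(1): $e_i-e_j$ and $-(e_i+e_j)+\delta$. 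Since $\la$ is dominant for $G_n$ ($\la_1\ge\cdots\ge\la_{n-1}\ge0$) and $\la_n\in\{0,1\}$, the values $\langle\mu,\beta\rangle$ should have the right signs to keep these roots out of $\Phi(x)$. The $s_n$ factor appearing when $\la_n=1$ only affects roots involving $e_n$, which sit at the boundary and need separate checking.

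For the third fact I would argue the right descent by symmetry. By Proposition~\ref{prop:dominance}, $\sm_\la^{-1}=\sm_\la$. More directly, I would use Lemma~\ref{lem:tauinverse}(2): it gives $(\sw_nx)^{-1}=x^{-1}\sw_n=\fu^{-1}\tau_{\la^*}\fu\,\sw_n$, which I would relate back to $\sw_nx$ to reduce to the left-descent computation. The left descent is clear: $\sw_n x$ with lengths adding means every $s\in W_n$ satisfies $\ell(s\sw_nx)<\ell(\sw_nx)$. For the right descents, I would show $\ell(\sw_nx s_i)<\ell(\sw_nx)$ for $0\le i\le n-1$. Equivalently, $\Phi(\sw_nx)\supseteq x^{-1}$-images, i.e. $\sw_nx\star\alpha_i\in-\Phi^+$ for each $i<n$, which is again a direct root computation. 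Alternatively, I would show $x\in\fu^{-1}\sT_n\fu$ normalises the setup so that $\Phi(x^{-1})\cap\Phi_n^+$ is empty too. Then $x\sw_n$ is also length-additive, and a count $\ell(\sw_nx)=\ell(\sw_n)+\ell(x)$ combined with the maximal-element characterisation ($\ell(\sm)=\ell(\sw_n)+\ell(d)+\ell(\sw_n)-\ell(\text{stabiliser part})$) closes the argument.

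\textbf{Main obstacle.} I expect the hard part to be the explicit sign bookkeeping for $\Phi(x)\cap\Phi_n^+=\emptyset$ and the right-descent check. This splits into cases by parity of $n$ and by $\la_n\in\{0,1\}$. The degenerate case where some $\la_i=0$ (or equal consecutive entries) needs care, since roots then give $\langle\mu,\beta\rangle=0$ and the linear part $v$ decides the sign. In that case I would first try to show that the relevant linear part keeps such $\beta$ positive, using that $\fu_0$ reverses the order of the coordinates.
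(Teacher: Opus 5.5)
Your main line is correct and is essentially the paper's proof. The paper also shows $D_L(\fu^{-1}\tau_\la\fu)\subseteq\{s_n\}$, tabulating $(\fu^{-1}\tau_\la^{-1}\fu)\star\alpha_i>0$ only for the simple roots $0\le i\le n-1$ (which suffices, so you need not check all of $\Phi_n^+$), and it gets the right descents by using Lemma~\ref{lem:tauinverse}(2) to see that $\sw_n\fu^{-1}\tau_\la\fu$ is an involution.

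Discard your other two options for the right descents. Invoking Proposition~\ref{prop:dominance} is circular, since it concerns $\sm_\la$ before you know $\sm_\la=\sw_nx$. The claim $\Phi(x^{-1})\cap\Phi_n^+=\emptyset$ is false in general: for $\la_n=0$ one has $x\star\alpha_1=\alpha_1-\la_{n-1}\delta<0$ whenever $\la_{n-1}>0$.
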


\begin{proof}
Temporarily write $\mathsf{p}_{\la}=\sw_n\fu^{-1}\tau_{\lambda}\fu$. Since $\ssp_{\la}\in W_n\tau_{\la}W_n$, showing that $\ssp_{\la}=\sm_{\la}$ is equivalent to showing that $D_L(\ssp_{\la})=D_R(\ssp_{\la})=\{s_0,\ldots,s_{n-1}\}$. Equivalently, we must show that $\ssp_{\la}\star\alpha_i<0$ and $\ssp_{\la}^{-1}\star\alpha_i<0$ for all $0\leq i\leq n-1$. Using Lemma~\ref{lem:tauinverse} and the definition $\fu=\sw_{0,n}\sw_n$ we have 
$
\ssp_{\la}^{-1}=\fu^{-1}\tau_{\la}^{-1}\fu\sw_n=\sw_n\fu^{-1}\tau_{\la}\fu=\ssp_{\la},
$
and so it is sufficient to show that $\ssp_{\la}^{-1}\star\alpha_i<0$ for $0\leq i\leq n-1$. Since $\sw_n\star\alpha_i=-\alpha_{j}$ for some $0\leq j\leq n-1$ it is enough to show that $(\fu^{-1}\tau_{\la}^{-1}\fu)\star\alpha_i>0$ for all $0\leq i\leq n-1$. We have
\begin{align*}
(\fu^{-1}\tau_{\lambda}^{-1}\fu)\star\alpha_i=\begin{cases}
\alpha_i+(\la_{n-i}-\la_{n-i+1})\delta&\text{if $\la_n=0$ and $1\leq i\leq n-1$}\\
-\varphi+(1+\la_{n-1})\delta&\text{if $\la_n=0$ and $i=0$}\\
-\varphi+(1+\la_{n-1})\delta&\text{if $\la_n=1$ and $i=1$}\\
\alpha_i+(\la_{n-i}-\la_{n-i+1})\delta&\text{if $\la_n=1$ and $2\leq i\leq n-1$}\\
\alpha_1+\la_{n-1}\delta&\text{if $\la_n=1$ and $i=0$}.
\end{cases}
\end{align*}
In all cases $(\fu^{-1}\tau_{\lambda}^{-1}\fu)\star \alpha_i>0$ as required. Thus $\sm_{\la}=\ssp_{\la}$. 

Since $s_i\in D_L(\fu^{-1}\tau_{\la}\fu)$ if and only if $(\fu^{-1}\tau_{\la}^{-1}\fu)\star\alpha_i<0$, the calculation above shows that $D_L(\fu^{-1}\tau_{\la}\fu)\subseteq\{s_n\}$, and hence $\ell(\sw_n\fu^{-1}\tau_{\la}\fu)=\ell(\sw_n)+\ell(\fu^{-1}\tau_{\la}\fu)$ by~(\ref{eq:add}). 
\end{proof}

\begin{cor}
For $\lambda,\mu\in P_n$ we have $\sm_{\lambda}=\sm_{\mu}$ if and only if $G_n\lambda=G_n\mu$. 
\end{cor}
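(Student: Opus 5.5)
The "if" direction is immediate from Proposition~\ref{prop:dominance}: if $\mu=g\la$ with $g\in G_n$ then $\sm_{\mu}=\sm_{g\la}=\sm_{\la}$. For the converse, we may replace $\la,\mu$ by their orbit representatives $\la^+,\mu^+\in P_n^+$, again using Proposition~\ref{prop:dominance}. So it suffices to show that if $\la,\mu\in P_n^+$ satisfy $\sm_{\la}=\sm_{\mu}$, then $\la=\mu$.

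Assume $\sm_{\la}=\sm_{\mu}$ with $\la,\mu\in P_n^+$. Theorem~\ref{thm:mlambda} gives $\sw_n\fu^{-1}\tau_{\la}\fu=\sw_n\fu^{-1}\tau_{\mu}\fu$. Cancelling $\sw_n\fu^{-1}$ on the left and $\fu$ on the right yields $\tau_{\la}=\tau_{\mu}$. The map $\la\mapsto\tau_{\la}$ is injective on $P_n$, since by \cite[Corollary~3.15]{GLP:23} it induces the isomorphism $\sT_n\cong P/Q_n$ via $\la\mapsto\la+Q_n$, and distinct elements of $P_n$ are distinct modulo $Q_n=2\ZZ e_n$ because $\la_n\in\{0,1\}$. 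This can also be checked directly: the translation part of $\tau_{\la}$ is $t_{\la}$ in both cases, and the linear part ($1$ or $s_n$) records $\la_n$. Hence $\la=\mu$.

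There is no real obstacle here. The only point to get right is to reduce to $P_n^+$ before invoking Theorem~\ref{thm:mlambda}, because that theorem's formula is only valid for dominant representatives.

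An alternative route avoids Theorem~\ref{thm:mlambda}. Equality of the $\sm$'s means $W_n\tau_{\la}W_n=W_n\tau_{\mu}W_n$. One would then have to show that this double coset meets $\sT_n$ exactly in $\{\tau_{g\la}\mid g\in G_n\}$, which is harder. The cancellation argument above is the one to use.
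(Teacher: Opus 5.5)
Your proof is correct and follows essentially the same route as the paper's: reduce to orbit representatives in $P_n^+$ via Proposition~\ref{prop:dominance}, then cancel $\sw_n\fu^{-1}$ and $\fu$ in the formula of Theorem~\ref{thm:mlambda} to get $\tau_{\la^+}=\tau_{\mu^+}$, hence $\la^+=\mu^+$. You also justify the injectivity of $\la\mapsto\tau_\la$ on $P_n$ (for instance, $\tau_\la(0)=\la$), a step the paper leaves implicit.
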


\begin{proof}
If $G_n\la=G_n\mu$ then $\sm_{\la}=\sm_{\mu}$ by Proposition~\ref{prop:dominance}. Conversely, if $\sm_{\la}=\sm_{\mu}$ then by Proposition~\ref{prop:dominance} we have $\sm_{\la^+}=\sm_{\mu^+}$, where $\la^+\in G_n\la\cap P_n^+$ and $\mu^+\in G_n\mu\cap P_n^+$. The formula in Theorem~\ref{thm:mlambda} then gives $\tau_{\lambda^+}=\tau_{\mu^+}$, and hence $\lambda^+=\mu^+$, so $G_n\la=G_n\mu$.  
\end{proof}

Since $\ell(\sm_{\lambda})=\ell(\sw_n)+\ell(\fu^{-1}\tau_{\lambda}\fu)$, determining $\ell(\sm_{\lambda})$ is reduced to computing $\ell(\fu^{-1}\tau_{\lambda}\fu)$. We make this calculation in the following proposition. For $\la\in P_n$ we write 
$$
\tau_{\la}'=\fu^{-1}\tau_{\la}\fu.
$$

\begin{prop}\label{prop:taulength}
If $\la\in P_n$ then 
$$
\ell(\tau_{\la}')=3\sum_{i=1}^{n-1}|\la_i|+\sum_{1\leq i<j\leq n-1}(|\la_i-\la_j|+|\la_i+\la_j|).
$$
In particular, this length is independent of $\la_n$. Moreover, if $\la\in P_n^+$ we have 
$$\ell(\tau_{\la}')=\sum_{i=1}^{n-1}(2n-2i+1)\la_i,
$$
and so if $\la,\mu\in P_n^+$ then $\ell(\tau_{\la}'\tau_{\mu}')=\ell(\tau_{\la}')+\ell(\tau_{\mu}')$. 
\end{prop}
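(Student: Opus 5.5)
Since conjugation preserves neither length nor standard forms in general, I will compute $\tau_\la'=\fu^{-1}\tau_\la\fu$ explicitly in standard form $t_\mu w$ with $\mu\in P$ and $w\in\Wfin$, and then apply the length formula~(\ref{eq:length}). By Lemma~\ref{lem:etaleform}, $\fu=t_{\omega}x$ with $\omega=\omega_n$ and $x=\sw_{0,n}\sw_0$ if $n$ is even, and $\omega=\omega_{n-1}$ and $x=\sw_{0,n}\sw_{0,1}$ if $n$ is odd. Here $x$ is a signed permutation. Since $\sw_0=-1$, we get $x(e_i)=-e_{n+1-i}$ for $n$ even, and a similar reversal with one sign change for $n$ odd. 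Then
\[
\fu^{-1}\tau_\la\fu=x^{-1}t_{-\omega}t_\la s_n^{\la_n}t_\omega x=t_{x^{-1}(\la-\omega+s_n^{\la_n}\omega)}\,x^{-1}s_n^{\la_n}x .
\]
The linear part is $1$ or the reflection $x^{-1}s_nx=s_{e_1}$, and the translation part is $\mu=x^{-1}\la+\la_n\cdot(\text{explicit vector})$. Concretely, $\mu$ is, up to signs, $\la$ with its coordinates reversed, so that $\la_n$ sits in coordinate $1$ and $\la_1,\ldots,\la_{n-1}$ occupy coordinates $n,\ldots,2$ (possibly negated). I will carry out this bookkeeping separately for the cases $n$ even and odd and $\la_n\in\{0,1\}$; in each case it is a direct check.

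With $\tau_\la'=t_\mu$ or $t_\mu s_{e_1}$, I apply~(\ref{eq:length}). The positive roots of $\sB_n$ are $e_i\pm e_j$ for $i<j$ and $e_i$. Roots not involving coordinate $1$ contribute $\sum_{i}|\la_i|+\sum_{i<j\le n-1}(|\la_i-\la_j|+|\la_i+\la_j|)$, since the set of such pairs is invariant under reversal and sign changes. The roots $e_1$ and $e_1\pm e_j$ involve the coordinate $\mu_1\in\{0,\pm1\}$ coming from $\la_n$.

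When $\la_n=0$ we have $\mu_1=0$, and these roots contribute $|\mu_j|+|\mu_j|=2|\la|$ terms, which accounts for the total of $3\sum|\la_i|$. When $\la_n=1$ the linear part $s_{e_1}$ inverts exactly the roots $e_1,e_1\pm e_j$, which then contribute $|\langle\mu,\alpha\rangle-1|$. With $\mu_1=1$ this becomes $|0|+|\mu_j|+|-\mu_j|$, again giving $2\sum|\la_i|$ and so the same formula, independent of $\la_n$. Getting the sign of $\mu_1$ right so that the $-1$ cancels exactly is the main obstacle. I will verify it against the known case $\la=e_n$: there $\tau_{e_n}=t_{e_n}s_n$ stabilises $\xi$ and lies in $W_n\sigma$, so $\tau'_{e_n}$ has length $0$, which forces the cancellation and pins down $\mu_1=1$.

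For $\la\in P_n^+$, all $\la_i\ge0$ and the $\la_i$ are decreasing, so $|\la_i-\la_j|+|\la_i+\la_j|=2\la_i$ for $i<j$. Summing gives $3\la_i+2(n-1-i)\la_i=(2n-2i+1)\la_i$. This expression is additive in $\la$, and $P_n^+$ is closed under the coordinatewise addition $\la+\mu$ (taken mod $2$ in the last coordinate). Since $\sT_n$ is abelian with $\tau_\la\tau_\mu=\tau_{\la+\mu}$ up to $Q_n$, we get $\tau'_\la\tau'_\mu=\tau'_{\nu}$ with $\nu_i=\la_i+\mu_i$ for $i<n$. Because the length is independent of $\la_n$, this yields $\ell(\tau'_\la\tau'_\mu)=\ell(\tau'_\la)+\ell(\tau'_\mu)$.
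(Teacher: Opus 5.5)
Your proposal is correct and follows essentially the same route as the paper: write $\tau_{\la}'$ in standard form as $t_{\sw_0\sw_{0,n}\la}$ (if $\la_n=0$) or $t_{\sw_0\sw_{0,n}(\la-2e_n)}s_{e_1}$ (if $\la_n=1$), apply (\ref{eq:length}), simplify on $P_n^+$, and deduce additivity from $\tau_{\la}'\tau_{\mu}'=\tau_{\nu}'$ with $\nu_i=\la_i+\mu_i$ for $i<n$ together with independence of $\la_n$. One correction: your sanity check is not a valid inference, since $\tau_{e_n}\in W_n\sigma$ does not force $\ell(\tau_{e_n}')=0$ ($\tau_{e_n}$ itself lies in $W_n\sigma$ and has length $2(n-1)$); but the check is also unnecessary, because your explicit description of $x$ already shows the translation part has first coordinate $1$ whenever $\la_n=1$, for both parities of $n$ (so in particular $\tau_{e_n}'=t_{e_1}s_{e_1}=\sigma$).
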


\begin{proof}
It follows quickly from the definitions of $\fu$ and $\tau_{\la}$ that if $\la\in P_n$ then
$$
\tau_{\la}'=\fu^{-1}\tau_{\lambda}\fu=\begin{cases}
t_{\sw_0\sw_{0,n}\lambda}&\text{if $\la_n=0$}\\
t_{\sw_0\sw_{0,n}(\lambda-2e_n)}s_{e_1}&\text{if $\la_n=1$}.
\end{cases}
$$
Straightforward calculations, using~(\ref{eq:length}), complete the proof of the first formula, and assuming $\la\in P_n^+$ it is clear that the formula simplifies as claimed. To verify the formula $\ell(\tau_{\la}'\tau_{\mu}')=\ell(\tau_{\la}')+\ell(\tau_{\mu}')$ for $\la,\mu\in P_n^+$, note that if $\la_n+\mu_n\in\{0,1\}$ then $\tau_{\la}\tau_{\mu}=\tau_{\la+\mu}$, and if $\la_n+\mu_n=2$ then $\tau_{\la}\tau_{\mu}=\tau_{\la+\mu-2e_n}$. In particular, since the formula for $\ell(\tau_{\la'})$ is linear in $\la_1,\ldots,\la_{n-1}$, and is independent of $\la_n$, the claimed formula follows.
\end{proof}

\begin{cor}\label{cor:mlength}
If $\la\in P_n^+$ then 
\begin{align*}
\ell(\sm_{\lambda})=n(n-1)+\sum_{i=1}^{n-1}(2n-2i+1)\la_i. 
\end{align*}
\end{cor}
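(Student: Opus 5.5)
This follows directly by combining Theorem~\ref{thm:mlambda} with Proposition~\ref{prop:taulength}. For $\la\in P_n^+$, Theorem~\ref{thm:mlambda} gives $\sm_{\la}=\sw_n\fu^{-1}\tau_{\la}\fu=\sw_n\tau_{\la}'$ together with the length-additivity
$$
\ell(\sm_{\la})=\ell(\sw_n)+\ell(\tau_{\la}').
$$
So it suffices to identify the two summands.

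For the second summand, Proposition~\ref{prop:taulength} (the case $\la\in P_n^+$) gives $\ell(\tau_{\la}')=\sum_{i=1}^{n-1}(2n-2i+1)\la_i$, independently of $\la_n$.

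For the first summand, $\ell(\sw_n)$ is the number of positive roots of the type $\sD_n$ system $\Phi_n$. Lemma~\ref{lem:etalebasics}(1) lists these roots as the two families $e_i-e_j$ and $-(e_i+e_j)+\delta$, with $1\leq i<j\leq n$. Each family has $\binom{n}{2}$ elements, so $\ell(\sw_n)=|\Phi_n^+|=n(n-1)$. This is consistent with $\ell(\fu)=\ell(\sw_n)-\ell(\sw_{0,n})=n(n-1)/2$ from Lemma~\ref{lem:etalebasics}(2), since $\ell(\sw_{0,n})=n(n-1)/2$ for the $\sA_{n-1}$ subsystem.

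Adding the two summands gives the stated formula. There is no real obstacle here; the only point to check is that the length-additivity in Theorem~\ref{thm:mlambda} is stated for $\sw_n$ times $\fu^{-1}\tau_{\la}\fu$, which is exactly $\tau_{\la}'$.
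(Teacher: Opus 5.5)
Your proposal is correct and is exactly the paper's argument: the paper also obtains the formula immediately by combining the length-additivity $\ell(\sm_{\la})=\ell(\sw_n)+\ell(\tau_{\la}')$ from Theorem~\ref{thm:mlambda} with the formula for $\ell(\tau_{\la}')$, $\la\in P_n^+$, from Proposition~\ref{prop:taulength}. Your explicit count $\ell(\sw_n)=|\Phi_n^+|=n(n-1)$ from Lemma~\ref{lem:etalebasics}(1) supplies the one detail the paper leaves implicit.
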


\begin{proof}
This is immediate from Proposition~\ref{prop:taulength} and Theorem~\ref{thm:mlambda}. 
\end{proof}

\section{The second lowest two-sided cell in type $\tilde{\sB}_n$}\label{sec:main}

Recall Shi's description of $\Delta_1$ from the introduction (Theorem~\ref{thm:Shi}). We note that Shi works in the non-extended affine Weyl group, however the statements easily extend to the extended affine Weyl group (as right cells are invariant under right multiplication by $\sigma$, and the set of right cells is permuted by left multiplication by $\sigma$, with analogous statements for left cells), and hence we have stated the results for the extended affine Weyl group. 

In this section we derive some criteria for membership in $\Delta_0$ and $\Delta_1$. These criteria are used to deduce an explicit description of the set $\sB$ in Theorem~\ref{thm:determineB}. We then prove our main theorems, describing precisely the elements of the intersection $\Ga_u\cap\Ga_v^{-1}$ for $u,v\in\sB$. 

\subsection{Membership conditions for the cells $\Delta_0$ and $\Delta_1$}

Let $v\in\Wext$. We say that $w\in\Wext$ \textit{is an extension of~$v$} if there exist $x,y\in \Wext$ such that $w=xvy$ with $\ell(w)=\ell(x)+\ell(v)+\ell(y)$. 

The \textit{parallelism class} of $\alpha\in\Phi_0^+$ is $[\alpha]=\{\alpha+k\delta\mid k\geq 0\}\cup\{-\alpha+k\delta\mid k\geq 1\}$. For a subset $R\subseteq\Phi^+$, we say that $R$ \textit{misses} $[\alpha]$ if $R\cap[\alpha]=\emptyset$, while if $R\cap[\alpha]\neq\emptyset$ we say that $R$ \textit{meets} $[\alpha]$. If $R=\Phi(w)$ is an inversion set then it is clear that $\Phi(w)$ meets $[\alpha]$ if and only if either $\alpha\in \Phi(w)$ or $-\alpha+\delta\in \Phi(w)$ (and only one of these roots can be contained in $\Phi(w)$). In particular, to show that $\Phi(w)$ misses $[\alpha]$ it is sufficient to show that $\{\alpha,-\alpha+\delta\}\cap\Phi(w)=\emptyset$.

\begin{thm}[{\cite{Shi:Lowest}}]\label{thm:Delta0}
Let $w\in\Wext$. Then $w\in\Delta_0$ if and only if $w$ is an extension of $\sw_0$ or $\sw_1$, if and only if $\Phi(w)$ meets every parallelism class. 
\end{thm}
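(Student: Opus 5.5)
The statement is attributed to Shi, and the cell-theoretic content (that $\Delta_0$ consists exactly of the extensions of $\sw_J$ with $J\in\mathcal{J}$) is recalled in Section~\ref{sec:KL}. In type $\tilde{\sB}_n$ the subsets $J$ with $W_J\cong\Wfin$ are $\{1,\ldots,n\}$ and $\{0,2,\ldots,n\}$, with longest elements $\sw_0$ and $\sw_1$. So the first equivalence is that recalled description, once we check that extensions are taken in $\Wext$ (absorbing $\sigma$ into $x$ or $y$ has length zero). The real work is the second equivalence: $w$ is an extension of $\sw_0$ or $\sw_1$ if and only if $\Phi(w)$ meets every parallelism class.

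For the forward direction, suppose $w=x\sw_J y$ is length-additive. Then $\Phi(w)\supseteq\Phi(x\sw_J)=\Phi(x)\sqcup x\star\Phi(\sw_J)$. Here $\Phi(\sw_J)=\Phi_J^+$ contains exactly one root from each parallelism class, since $\Phi_J$ is a finite root system of type $\sB_n$ whose linear parts are all of $\Phi_0$. The action $\star$ preserves the linear part up to the linear action of $x$, which permutes $\Phi_0$ up to sign. Hence $x\star\Phi_J^+$ meets every class $[\alpha]$, because $\pm\alpha+k\delta$ lies in the class of $\alpha$ whenever it is positive. Therefore $\Phi(w)$ meets every class.

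For the converse, suppose $\Phi(w)$ meets every class. I would argue geometrically. Meeting $[\alpha]$ means some hyperplane $H_{\alpha+k\delta}$ separates $A_0$ from $wA_0$. Consider the finite set of special (type $0$ and type $1$) vertices; these are exactly the vertices whose stabiliser in $\Waff$ is conjugate to $\Wfin$. Take a special vertex $p$ of a suitable alcove lying on a minimal gallery from $A_0$ to $wA_0$. The alcoves around $p$ form the star $x W_J A_0$ with $J\in\{\{1,\ldots,n\},\{0,2,\ldots,n\}\}$, where $x A_0$ is the alcove of that star closest to $A_0$. The goal is to choose $p$ so that the star lies in $\mathsf{conv}\{1,w\}$. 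Then $xA_0$, $x\sw_JA_0$ and $wA_0$ lie in order along a reduced gallery, which gives $\ell(w)=\ell(x)+\ell(\sw_J)+\ell(y)$.

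To find such a $p$, I would describe $\mathsf{conv}\{1,w\}$ by the inequalities $M_\alpha\le\langle x,\alpha\rangle\le M'_\alpha$ of Section~\ref{sec:convex}. Because $\Phi(w)$ meets every class, each strip has width $M'_\alpha-M_\alpha\ge2$ and contains an integer level strictly inside. I then need a point $p$ of $P$ (for type $0$; type $1$ vertices also lie in $\ZZ^n$) with $M_\alpha<\langle p,\alpha\rangle<M'_\alpha$ for all $\alpha\in\Phi_0^+$ simultaneously. Any such $p$ is a special vertex in the interior, so its whole star lies in $\mathsf{conv}\{1,w\}$.

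The main obstacle is showing that these strict interior inequalities have a common integral solution. I would take $p$ to be a suitable vertex of the alcove $vA_0$, where $v$ is the prefix of $w$ obtained when a reduced gallery first crosses a hyperplane from each class. Alternatively I would use Shi's sign-type description: $\Phi(w)$ meeting every class means the sign type of $wA_0$ has no zero entry, which puts $wA_0$ beyond a special vertex in a Weyl chamber. Verifying this compatibility carefully is where I expect the effort to lie, and failing a clean argument I would simply cite \cite{Shi:Lowest} for this equivalence.
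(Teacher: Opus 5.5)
The paper gives no proof of this statement. Both equivalences are quoted from \cite{Shi:Lowest}; the first is the description of $\Delta_0$ recalled in Section~\ref{sec:KL}. So your fallback of citing Shi is exactly what the paper does.

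Your forward direction is correct, and it is the same argument the paper uses for $\sw_n$ in Proposition~\ref{prop:conditionA}. Your reduction of the converse is also sound: via Theorem~\ref{thm:convexhulls} with $i\in\{0,1\}$, it suffices to find a point of $\ZZ^n$ in $\mathsf{conv}\{1,w\}^{\circ}$, since these points are exactly the type $0$ and type $1$ vertices.

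As a self-contained argument, however, the converse has a genuine gap exactly where you flag it. Knowing that each strip $(M_\alpha,M'_\alpha)$ contains an integer on its own gives no common lattice point. Neither of your two suggested routes is actually carried out.

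The missing idea is to read a positive system off the sign data, which makes your sign-type remark precise.
\begin{itemize}
\item Fix $x_0$ in the interior of $wA_0$. Let $\Psi$ consist of $-\alpha$ for each $\alpha\in\Phi_0^+$ with $\alpha\in\Phi(w)$, and of $\alpha$ for each $\alpha\in\Phi_0^+$ with $-\alpha+\delta\in\Phi(w)$.
\item For $\beta\in\Psi$ put $c_\beta=1$ if $\beta\in\Phi_0^+$ and $c_\beta=0$ otherwise. Since $H_\alpha$ (respectively $H_{-\alpha+\delta}$) separates $A_0$ from $wA_0$, we get $\langle x_0,\beta\rangle>c_\beta\geq 0$ for every $\beta\in\Psi$.
\item $\Psi$ contains one root from each pair $\pm\alpha$, and all of them are positive at $x_0$. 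So $\Psi$ is the set of roots positive at the regular point $x_0$, that is, $\Psi=u\Phi_0^+$ for some $u\in W_0$.
\item Define $p$ by $\langle p,u\alpha_i\rangle=c_{u\alpha_i}$ for $1\leq i\leq n$. Then $u^{-1}p$ is a $\{0,1\}$-combination of the $\omega_i$, so $p\in P=\ZZ^n$.
\item Write $\beta\in\Psi$ as $\beta=\sum_i m_i\,u\alpha_i$ with $m_i\in\ZZ_{\geq 0}$. This gives $\langle p,\beta\rangle\leq\langle x_0,\beta\rangle$. It also gives $\langle p,\beta\rangle\geq 1$ when $\beta\in\Phi_0^+$, because some $u\alpha_i$ with $m_i>0$ must be positive, and $\langle p,\beta\rangle\geq 0$ in every case.
\end{itemize}
Now compare with the strip description from Section~\ref{sec:convex}:
\begin{itemize}
\item If $\alpha\in\Phi(w)$, then $M_\alpha<\langle x_0,\alpha\rangle\leq\langle p,\alpha\rangle\leq 0<1=M'_\alpha$.
\item If $-\alpha+\delta\in\Phi(w)$, then $M_\alpha=0<1\leq\langle p,\alpha\rangle\leq\langle x_0,\alpha\rangle<M'_\alpha$.
\end{itemize}
Hence $p\in\mathsf{conv}\{1,w\}^{\circ}$, and Theorem~\ref{thm:convexhulls} finishes the converse. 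Geometrically, $p$ is the apex of the chamber-shaped region of the arrangement $\{H_\alpha,H_{-\alpha+\delta}\}$ that contains $wA_0$.
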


Shi's description of $\Delta_1$ says that $w\in\Delta_1$ if and only if $w$ is an extension of $\sw_n$ and is not an extension of either $\sw_0$ or $\sw_1$. Therefore it is desirable to have a simple criterion to decide if $w$ is an extension of~$\sw_n$. There is an obvious necessary condition. 

\begin{prop}\label{prop:conditionA}
If $w$ is an extension of $\sw_n$ then $\Phi(w)$ meets every parallelism class of long roots. In particular, if $w\in\Delta_1$ then $\Phi(w)$ meets every parallelism class of long roots. 
\end{prop}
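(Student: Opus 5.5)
The plan is to show that $\Phi(\sw_n)$ already meets every long parallelism class, and then to check that this property survives passing to an extension.

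\emph{Step 1: $\Phi(\sw_n)=\Phi_n^+$ meets every long class.} By Lemma~\ref{lem:etalebasics}(1),
\[
\Phi(\sw_n)=\Phi_n^+=\{e_i-e_j\}\cup\{-(e_i+e_j)+\delta\},\qquad 1\leq i<j\leq n .
\]
The long roots of $\Phi_0^+$ are exactly $e_i-e_j$ and $e_i+e_j$ for $i<j$. The class $[e_i-e_j]$ is met by $e_i-e_j$ itself. The class $[e_i+e_j]$ is met by $-(e_i+e_j)+\delta$, which lies in $[e_i+e_j]$ by definition.

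\emph{Step 2: meeting a class is inherited by extensions.} Suppose $w=xvy$ with $\ell(w)=\ell(x)+\ell(v)+\ell(y)$, and that $\Phi(v)$ meets $[\alpha]$. We want $\Phi(w)$ to meet $[\alpha]$.

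First treat right extension, $w=vy$ with $\ell(vy)=\ell(v)+\ell(y)$. In this case $\Phi(v)\subseteq\Phi(vy)$, by the standard additivity $\Phi(vy)=\Phi(v)\sqcup v\star\Phi(y)$; geometrically, every hyperplane separating $A_0$ from $vA_0$ also separates $A_0$ from $vyA_0$ when the gallery is reduced. So the class is still met.

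Next treat left extension, $w=xv$ with $\ell(xv)=\ell(x)+\ell(v)$. Now
\[
\Phi(xv)=\Phi(x)\sqcup x\star\Phi(v),
\]
so if $\beta\in\Phi(v)\cap[\alpha]$ then $x\star\beta\in\Phi(w)$. The point to check is that $x\star\beta$ lies in the class of a long root, though generally not of the same one. The linear part of $x$ is some $u\in W_0$, and $x\star(\pm\alpha+k\delta)=\pm u(\alpha)+k'\delta$. Since $u(\alpha)$ is again long, $x\star\beta$ lies in $[\gamma]$ where $\gamma=\pm u(\alpha)$ is the positive root among $\pm u(\alpha)$. So $x$ carries classes to classes, preserving length.

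Finally, the map $[\alpha]\mapsto[\gamma]$ induced by $u$ is a bijection on the set of long parallelism classes. Hence if $\Phi(v)$ meets all long classes, then $x\star\Phi(v)\subseteq\Phi(xv)$ also meets all long classes. This handles arbitrary two-sided extensions: first extend on the right, then on the left, since the total length-additivity gives both partial additivities.

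\emph{Step 3: the claim about $\Delta_1$.} This now follows from Theorem~\ref{thm:Shi}, since every element of $\Delta_1$ is an extension of $\sw_n$.

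The only mildly delicate point is Step~2 for left multiplication. The class is transported rather than preserved, so one must use the bijectivity of the action on long classes instead of trying to keep $[\alpha]$ fixed. The elements $x\in\Wext$ (including $\sigma$) still act by a signed permutation on the linear part, so the same argument applies to them.
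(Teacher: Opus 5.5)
Your proposal is correct and matches the paper's proof. The paper also notes that $x\star\Phi_n^+\subseteq\Phi(w)$ when $w=x\sw_ny$ is length-additive, that $\Phi_n^+$ meets every long-root parallelism class, and that the action of $\Wext$ permutes these classes, and then applies Shi's theorem for $\Delta_1$; you simply write out the right/left additivity and the transport of classes in more detail.
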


\begin{proof}
If $w=x\sw_n y$ with $\ell(w)=\ell(x)+\ell(\sw_n)+\ell(y)$ then the roots $x\star\Phi_n^+$ are contained in $\Phi(w)$. Since $\Phi_n^+$ contains roots from every parallelism class of long roots, and since the action of $\Wext$ on roots permutes long root parallelism classes, it follows that $\Phi(w)$ meets all parallelism classes of long roots. For the final statement, apply Shi's description Theorem~\ref{thm:Shi}. 
\end{proof}

Thus if $w$ is an extension of $\sw_n$ but is not an extension of either $\sw_0$ nor $\sw_1$ (that is, $w\in\Delta_1$), then necessarily $\Phi(w)$ meets all long root parallelism classes and misses at least one short root parallelism class. The converse does not hold. Indeed, the statement ``$\Phi(w)$ meets all long root parallelism classes and misses at least one short root parallelism class'' is precisely Shi's \textit{condition (A)} from \cite[Section~5]{ShiBn}, and the elements $\sw_0s_n$ and $\sw_1s_n$ give examples of elements satisfying this condition, but not being an extension of $\sw_n$ (for example, no reduced expression for $\sw_0s_n$ can contain $s_0$). 

It turns out that there is a useful necessary and sufficient condition for $w$ to be an extension of $\sw_n$ in terms of convex hulls (see Theorem~\ref{thm:convexhulls}), which when combined with Theorem~\ref{thm:Delta0} gives a necessary and sufficient condition for membership of $\Delta_1$. 

\begin{thm}\label{thm:convexhulls}
Let $i\in\{0,1,\ldots,n\}$. The element $w\in\Wext$ is an extension of $\sw_i$ if and only if there exists a type $i$ vertex in $\mathsf{conv}\{1,w\}^{\circ}$. 
\end{thm}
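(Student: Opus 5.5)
The plan is to translate both conditions into statements about which hyperplanes separate $A_0$ from $wA_0$, using the geometric description of inversion sets and of $\mathsf{conv}\{1,w\}$ from Section~\ref{sec:convex}. First I reduce to $w\in\Waff$. Write $w=w'\sigma^{e}$; then $wA_0=w'A_0$, and $\Phi(w)=\Phi(w')$, so $\mathsf{conv}\{1,w\}=\mathsf{conv}\{1,w'\}$. A length-additive factorisation of $w$ can be rewritten as one of $w'$, moving $\Sigma$ factors through using $\sigma\sw_i\sigma^{-1}=\sw_{\sigma(i)}$, and conversely.

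One subtlety must be fixed before anything else. For $i\in\{0,1\}$ this rewriting shows that, with $x,y\in\Wext$, ``extension of $\sw_0$'' and ``extension of $\sw_1$'' coincide. So the vertex side must be read as a type $i$ vertex up to $\Sigma$; for $i\in\{0,1\}$ this means a vertex of type $0$ or $1$. I will state this convention explicitly and then work in $\Waff$.

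Two facts are used throughout. Let $v_i$ be the type $i$ vertex of $A_0$. The group $W_i$ is the stabiliser of $v_i$ in $\Waff$, and its reflections are exactly the reflections in the hyperplanes through $v_i$. Hence the hyperplanes through $x(v_i)$ are the $H_\beta$ with $\beta\in x\star\Phi_i^+$ (up to sign), and $\sw_iA_0$ is the alcove of the star of $v_i$ opposite $A_0$.

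($\Rightarrow$) Suppose $w=x\sw_iy$ with $\ell(w)=\ell(x)+\ell(\sw_i)+\ell(y)$, with $x,y\in\Waff$ after the reduction. Length additivity gives the disjoint union
$$\Phi(w)=\Phi(x)\sqcup x\star\Phi(\sw_i)\sqcup x\sw_i\star\Phi(y).$$
So every hyperplane through $p=x(v_i)$ separates $A_0$ from $wA_0$, and $p$ has type $i$. The point $p$ lies in the closed alcove $xA_0\subseteq\mathsf{conv}\{1,w\}$.

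To see that $p$ is in the interior, I check each of the defining inequalities $M_\alpha\le\langle\cdot,\alpha\rangle\le M'_\alpha$. If $p$ lies on a hyperplane of direction $\alpha$, that hyperplane separates $A_0$ from $wA_0$. It is therefore not a bounding hyperplane of the convex hull, so $M_\alpha<\langle p,\alpha\rangle<M'_\alpha$. If $p$ lies on no hyperplane of direction $\alpha$, then $\langle p,\alpha\rangle\notin\ZZ$ while $M_\alpha,M'_\alpha\in\ZZ$, so the closed inequality is automatically strict.

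($\Leftarrow$) Let $p$ be a type $i$ vertex in $\mathsf{conv}\{1,w\}^{\circ}$.

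First, every hyperplane $H$ through $p$ separates $A_0$ and $wA_0$. Otherwise $\mathsf{conv}\{1,w\}$ lies in a closed half-space bounded by $H$, which forces $p$ onto the boundary, a contradiction.

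Next, choose $x\in\Waff$ with $x(v_i)=p$ and $x$ of minimal length in $xW_i$. This is possible because $\Waff$ acts transitively on type $i$ vertices. The alcoves containing $p$ are then $xzA_0$ for $z\in W_i$. By the standard minimal coset representative (gate) property, $\ell(xz)=\ell(x)+\ell(z)$ for all $z\in W_i$; in particular $\ell(x\sw_i)=\ell(x)+\ell(\sw_i)$.

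The key claim is $\Phi(x\sw_i)\subseteq\Phi(w)$. The hyperplanes separating $A_0$ from $x\sw_iA_0$ fall into two groups.
\begin{compactenum}[$(a)$]
\item Those through $p$. These are handled by the first step.
\item Those in $\Phi(x)$. These do not pass through $p$; the gate property says they separate $A_0$ from the whole star of $p$, so $p$ lies strictly on the far side. If such an $H$ did not separate $A_0$ from $wA_0$, the convex hull would lie in the closed half-space containing $A_0$. That contradicts $p\in\mathsf{conv}\{1,w\}$.
\end{compactenum}

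Hence $x\sw_i$ is a prefix of $w$ in the right weak order. Setting $y=(x\sw_i)^{-1}w$ gives $\ell(w)=\ell(x)+\ell(\sw_i)+\ell(y)$, as required.

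I expect the main obstacle to be bookkeeping rather than ideas. The two delicate points are the strictness of the inequalities in the forward direction, where the argument relies on integrality of $M_\alpha,M'_\alpha$, and the $\Sigma$/type convention for $i\in\{0,1\}$ in $\Wext$. The gate property for the spherical residue $xW_i$ is standard (see \cite{AB:08}), and I will simply cite it.
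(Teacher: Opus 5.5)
Your proof is correct and follows essentially the same route as the paper: you take the minimal coset representative $x$ of the $W_i$-residue at the interior vertex and show $x\sw_i\peq w$. The only difference is that you verify $\Phi(x\sw_i)\subseteq\Phi(w)$ hyperplane by hyperplane, whereas the paper shows the whole residue lies in $\mathsf{conv}\{1,w\}$ and then invokes $\mathsf{conv}\{1,w\}=\bigcup_{v\in[1,w]}vA_0$.

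Your caveat for $i\in\{0,1\}$ is genuinely needed, and the paper's ``without loss of generality $w\in\Waff$'' glosses over it. Since $\sw_1=\sigma\sw_0\sigma$ with $\ell(\sigma)=0$, the element $\sw_1$ is an extension of $\sw_0$ in $\Wext$. Yet the only vertex in $\mathsf{conv}\{1,\sw_1\}^{\circ}$ is the type $1$ vertex $e_1$, so the literal statement fails for $i\in\{0,1\}$. This is harmless for the paper, which only applies the theorem with $i=n$.
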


\begin{proof}
Without loss of generality we may assume that $w\in\Waff$. Suppose that $w=x\sw_iy$ with $\ell(w)=\ell(x)+\ell(\sw_i)+\ell(y)$. Let $\nu$ be the type $i$ vertex of $xA_0$. The alcoves containing $\nu$ are precisely the alcoves $xvA_0$ with $v\in W_i$, and since $xv\peq x\sw_iy=w$ these alcoves are contained in $\mathsf{conv}\{1,w\}$. If $\nu$ is on the boundary of $\mathsf{conv}\{1,w\}$ then there exists an alcove containing $\nu$ not contained in $\mathsf{conv}\{1,w\}$, a contradiction, and so $\nu\in\mathsf{conv}\{1,w\}^{\circ}$. 

Now suppose that there is a type $i$ vertex $\nu$ in $\mathsf{conv}\{1,w\}^{\circ}$. Let $R$ be the set of all alcoves containing~$\nu$ (a ``residue'' of type $W_i$). Since $\nu\in\mathsf{conv}\{1,w\}^{\circ}$ we have $R\subseteq\mathsf{conv}\{1,w\}$ (because each defining wall inequality is strict at the vertex). Choose any $x_1\in \Waff$ with $x_1A_0\in R$, and let $x$ be the minimal length element of the coset $x_1W_i$. Then $xA_0\subseteq R$ and $\ell(x\sw_i)=\ell(x)+\ell(\sw_i)$. Since $x\sw_iA_0\subseteq R\subseteq \mathsf{conv}\{1,w\}$ we have $x\sw_i\in [1,w]$ and hence $w=x\sw_iy$ with $\ell(w)=\ell(x\sw_i)+\ell(y)=\ell(x)+\ell(\sw_i)+\ell(y)$. 
\end{proof}

\begin{remark} Theorem~\ref{thm:convexhulls} holds for arbitrary irreducible affine Coxeter groups (with the same proof), giving a useful geometric criteria to determine if a given element is an extension of the longest element of a maximal proper standard parabolic subgroup. 
\end{remark}

\begin{remark}\label{rem:bexample}
We will apply the criteria in Theorem~\ref{thm:convexhulls} in the case $i=n$. The criterion is particularly useful in proving that an element $w$ is not an extension of~$\sw_n$, and often only a small amount of information is required. For example, suppose we know only the following pieces of information: $e_2,-e_2+\delta\notin\Phi(w)$, $e_1-e_2\in\Phi(w)$ but $e_1-e_2+\delta\notin\Phi(w)$, and $e_1+e_2\in\Phi(w)$ (precisely this configuration will arise later in the proof of Theorem~\ref{thm:determineB}). If there is a type $n$ vertex $x=x_1e_1+\cdots+x_ne_n$ in the interior of $\mathsf{conv}\{1,w\}$, then from the discussion in Section~\ref{sec:convex} we have $0<x_2<1$, $-1< x_1-x_2$, and $x_1+x_2< 1$. By Lemma~\ref{lem:typenvertices} we have $x_1,x_2\in\ZZ+\frac{1}{2}$, and so $x_2=\frac{1}{2}$ and $-\frac{1}{2}<x_1<\frac{1}{2}$, a contradiction. Thus $w$ is not an extension of $\sw_n$.
\end{remark}

\begin{cor}
Let $w\in\Wext$. We have $w\in\Delta_1$ if and only if there exists a type $n$ vertex in $\mathsf{conv}\{1,w\}^{\circ}$ and $\Phi(w)$ misses at least one parallelism class. 
\end{cor}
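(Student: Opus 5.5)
This follows by chaining three earlier results. By Shi's description (Theorem~\ref{thm:Shi}), $w\in\Delta_1$ exactly when $w$ is an extension of $\sw_n$ and $w\notin\Delta_0$. We treat the two conditions separately and then combine them.

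First we handle the extension condition. Theorem~\ref{thm:convexhulls} with $i=n$ says that $w$ is an extension of $\sw_n$ if and only if $\mathsf{conv}\{1,w\}^{\circ}$ contains a type $n$ vertex. That theorem is stated for $w\in\Wext$, after reducing to $\Waff$ by right multiplication by $\sigma$. This reduction is harmless because $\Phi(w\sigma)=\Phi(w)$, so $\mathsf{conv}\{1,w\}=\mathsf{conv}\{1,w\sigma\}$. The extension property is likewise unchanged, since $\ell(\sigma)=0$ and $y$ may be replaced by $y\sigma$.

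Next we handle the condition $w\notin\Delta_0$. Theorem~\ref{thm:Delta0} says that $w\in\Delta_0$ if and only if $\Phi(w)$ meets every parallelism class. So $w\notin\Delta_0$ exactly when $\Phi(w)$ misses at least one parallelism class.

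Putting these together gives the claimed equivalence. There is no real obstacle here. The only point needing care is that Shi's theorem and the extension notion are used in $\Wext$ rather than $\Waff$, and this extension to $\Wext$ was already justified at the start of Section~\ref{sec:main}.

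It is also worth noting, as a consistency check rather than a needed step, why the missed class must be short. By Proposition~\ref{prop:conditionA}, an extension of $\sw_n$ meets every long parallelism class, so any missed class is automatically short. The corollary does not require this fact.
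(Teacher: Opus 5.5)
Your proposal is correct and matches the paper's proof: it combines Shi's description of $\Delta_1$ as the extensions of $\sw_n$ lying outside $\Delta_0$ with Theorem~\ref{thm:convexhulls} (for $i=n$) and Theorem~\ref{thm:Delta0}. Your remarks on passing between $\Waff$ and $\Wext$ via $\sigma$, and the aside on Proposition~\ref{prop:conditionA}, are accurate but not needed for the argument.
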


\begin{proof}
This follows from Shi's description~\cite[Proposition~5.9]{ShiBn} and Theorems~\ref{thm:Delta0} and~\ref{thm:convexhulls}.
\end{proof}

\subsection{The set $\sB$}

We now explicitly determine the set $\sB$ (see the introduction). Note that in the definition of $\sB$, the condition $\ba(s_i\sw_nv)<\ell(\sw_n)$ can be replaced with $s_i\sw_nv\notin\Delta_1$ (because $\ba(s_i\sw_nv)<\ell(\sw_n)$ is equivalent to $s_i\sw_nv\notin\Delta_0\cup\Delta_1$, and $s_i\sw_nv\notin\Delta_0$ is already forced by the condition $\sw_nv\notin\Delta_0$ and the fact that $\ell(s_i\sw_nv)<\ell(\sw_nv)$ for $0\leq i\leq n-1$). Thus we have
\begin{align}\label{eq:cleanB}
\sB=\{v\in \Waff\mid \ell(\sw_nv)=\ell(\sw_n)+\ell(v),\,\sw_n v\notin\Delta_0,\,s_i\sw_nv\notin\Delta_1\text{ for $0\leq i\leq n-1$}\}.
\end{align}
In \cite[Section~3]{ShiBn} Shi establishes a bijection with a certain subset of sign types, and hence computes the cardinality of $\sB$. 

\begin{thm}[{\cite[Theorem~3.9]{ShiBn}}]\label{thm:Bcount}
We have $|\sB|=2^{n-1}n!$
\end{thm}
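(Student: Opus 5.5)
Since this count is quoted from \cite[Theorem~3.9]{ShiBn}, where it comes from a bijection with sign types, I would instead give a geometric proof using the tools of Sections~\ref{sec:convex} and~\ref{sec:main}. The aim is to reduce the count to Lemma~\ref{lem:Dnfunddom}. Specifically, I would show that the map $v\mapsto \fu v$ sends $\sB$ bijectively onto a fundamental domain for the free action of $\sT_n$ on $\mathbb{X}_n$. Since $W_n$ is also such a fundamental domain, this gives $|\sB|=|W_n|=2^{n-1}n!$, which is the order of the Weyl group of type $\sD_n$.

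\emph{Step 1: $\fu\sB\subseteq\mathbb{X}_n$.} Let $v\in\sB$. The condition $\ell(\sw_nv)=\ell(\sw_n)+\ell(v)$ gives $D_L(v)\subseteq\{s_n\}$ by~(\ref{eq:add}). I would then use the description~(\ref{eq:cleanB}) together with the root calculus of Lemma~\ref{lem:etalebasics} to show that $\Phi(\fu v)$ contains neither $e_n$ nor $-e_n+\delta$, so that $\fu vA_0\subseteq\cA_n$. The idea is that if $\Phi(\fu v)$ met $[e_n]$, then $\sw_nv$ would meet every short parallelism class and hence lie in $\Delta_0$ by Theorem~\ref{thm:Delta0}, contradicting $\sw_nv\notin\Delta_0$.

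\emph{Step 2: injectivity modulo $\sT_n$.} Suppose $\fu v'=\tau_\lambda\fu v$ with $v,v'\in\sB$. Then $\sw_nv'=\sw_n\fu^{-1}\tau_\lambda\fu v$. I would relate this to the factorisation $\sm_\lambda=\sw_n\fu^{-1}\tau_\lambda\fu$ of Theorem~\ref{thm:mlambda} (after moving $\lambda$ into $P_n^+$ by $G_n$ using Proposition~\ref{prop:dominance}). The minimality condition $s_i\sw_nv\notin\Delta_1$ should then force $\tau_\lambda=1$: otherwise $\sw_nv'$ would be a length-additive extension of an element of the form $x\sw_n$ with $x\ne1$ having left descent $s_i$, and Theorem~\ref{thm:convexhulls} would produce a type $n$ vertex inside $\mathsf{conv}\{1,s_i\sw_nv'\}^{\circ}$.

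\emph{Step 3: surjectivity.} Take any $w\in\mathbb{X}_n$. Write $w=\tau_\lambda x$ with $x\in W_n$ by Lemma~\ref{lem:Dnfunddom}. I would choose a translate $\tau_\mu w$ for which $\fu^{-1}\tau_\mu w$ has left descent set contained in $\{s_n\}$ and is minimal with this property. I would then check the conditions of~(\ref{eq:cleanB}) using the Corollary characterising $\Delta_1$ via type $n$ vertices in $\mathsf{conv}\{1,w\}^{\circ}$ and missed parallelism classes.

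I expect Steps 2 and 3 to be the main obstacle, since they require controlling membership in $\Delta_1$ for the elements $s_i\sw_nv$. If a direct argument stalls, I would fall back on first identifying $\sB=[1,\sb]\cup[1,\sb^\sigma]$ explicitly, using the convex-hull criteria as in Remark~\ref{rem:bexample}. I would then count by inclusion--exclusion with~(\ref{eq:volume}):
\[
|\sB|=\#[1,\sb]+\#[1,\sb^\sigma]-\#[1,\sb\wedge\sb^\sigma].
\]
Each of these three terms is a volume, computed from the inequalities $M_\alpha\le\langle x,\alpha\rangle\le M'_\alpha$ by iterated integration.
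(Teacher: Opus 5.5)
\textbf{There is a genuine gap.} As written, neither of your routes proves the equality, and the fallback is circular.

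\textbf{The fallback only gives a lower bound.} The paper does not prove this statement; it imports it from Shi and then \emph{uses} it. In Theorem~\ref{thm:determineB} only the inclusion $[1,\sb]\cup[1,\sb^{\sigma}]\subseteq\sB$ is proved directly. This comes from $\sb\in\sB$ (via Proposition~\ref{prop:conditionA} and Remark~\ref{rem:bexample}) plus prefix-closure (Proposition~\ref{prop:orderideal}). The volume computation then gives $\#([1,\sb]\cup[1,\sb^{\sigma}])=2^{n-1}n!$, and equality with $\sB$ is obtained only by invoking Theorem~\ref{thm:Bcount}. So your inclusion--exclusion yields just $|\sB|\ge 2^{n-1}n!$. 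Nothing in your proposal proves $\sB\subseteq[1,\sb]\cup[1,\sb^{\sigma}]$. The same caution applies to the paper's proof of Theorem~\ref{thm:main2}(3): it assumes $|\fu\sB|=|W_n|$ and rests on Corollary~\ref{cor:intersections}, which depends on the count through Corollaries~\ref{cor:Binvset} and~\ref{cor:additive}. The lower bound itself is non-circular, so your Step~3 can be dropped. What is actually missing is the upper bound, i.e.\ Steps~1--2.

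\textbf{Step~1 is fine.} It is Lemma~\ref{lem:generalisedabsence} for $u=1$, which needs nothing about $\sB$.

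\textbf{Step~2 fails as sketched.} The relation $\fu v'=\tau_\la\fu v$ holds for one specific $\la\in P_n$, and it cannot be moved into $P_n^+$. Proposition~\ref{prop:dominance} only identifies double cosets, $\sw_n\fu^{-1}\tau_\la\fu=\sm_\la$ holds only for $\la\in P_n^+$, and conjugating by $g\in G_n$ destroys the shape $\fu v,\fu v'$. For a mixed-sign $\la$ such as $e_1-e_2$, neither $\la$ nor $\la^*$ is dominant, and the argument gives nothing. Even for dominant $\la$, reading $\sw_nv'=\sm_\la v$ as a length-additive extension of $(\fu^{-1}\tau_\la\fu)^{-1}\sw_n$ requires $\ell(\sm_\la v)=\ell(\sm_\la)+\ell(v)$. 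The paper proves this (Corollary~\ref{cor:additive}) from $v\peq\sb$ or $v\peq\sb^{\sigma}$, i.e.\ from the count itself.

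\textbf{One way to repair it.}
\begin{enumerate}
\item For $1\le i\le n-1$ put $\tau_i'=\fu^{-1}t_{\omega_i}\fu=t_{\omega_{n-i}-\omega_n}$. Check on inversion sets that $\ell(\sw_n\tau_i'y)=\ell(\sw_n)+\ell(\tau_i')+\ell(y)$ for every $y$ with $D_L(y)\subseteq\{s_n\}$.
\item Let $\mathcal{C}=\{x_1\ge\cdots\ge x_n,\ x_1+x_2\le 1\}$ and $R=\fu\mathcal{C}\cap\cA_n$. Suppose some $v\in\sB$ had $\fu vA_0\subseteq t_{\omega_i}R$. Then $\sw_nv=(\tau_i')^{-1}\sw_ny$ length-additively. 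A left descent $s_j$ of $(\tau_i')^{-1}$ (necessarily $j\le n-1$) would give $s_j\sw_nv\in\Delta_1$, contradicting~(\ref{eq:cleanB}).
\item Hence $\fu vA_0$ lies in $\{0\le x_i-x_{i+1}\le 1\ (i\le n-2),\ c\le x_{n-1}\le 1+c,\ 0\le x_n\le 1\}$, where $c=\max(x_n,1-x_n)$. This region has volume $1$, so it is a union of exactly $2^{n-1}n!$ alcoves.
\item Since $v\mapsto \fu v$ is injective into $\Waff$, this gives $|\sB|\le 2^{n-1}n!$. Combined with the lower bound, this gives the count.
\end{enumerate}
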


\begin{lemma}\label{lem:square}
Let $x\in\Wext$, $s\in \Saff$, and $t\in \Saff\backslash\{s_n\}$. Suppose that $x,xs,tx\in\Delta_1$ and that $\ell(xs)=\ell(x)+1$. Then $txs\in\Delta_1$. 
\end{lemma}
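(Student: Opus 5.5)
The plan is to combine the $\ba$-function characterisation of $\Delta_1$ with the inversion-set criterion for $\Delta_0$ (Theorem~\ref{thm:Delta0}) and Proposition~\ref{prop:conditionA}. Recall from Section~\ref{sec:KL} that $\Delta_1$ is the second lowest two-sided cell. Hence $w\in\Delta_1$ if and only if $\ba(w)\geq\ba(\Delta_1)$ and $w\notin\Delta_0$, since $\ba(w)\geq \ba(\Delta_1)$ forces $w\in\Delta_0\cup\Delta_1$. We also use the elementary facts that $\ell(tw)>\ell(w)$ implies $tw\leq_L w$, and $\ell(ws)>\ell(w)$ implies $ws\leq_R w$. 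Together with property (2) these give $\ba(tw)\geq\ba(w)$ and $\ba(ws)\geq \ba(w)$ respectively. I split into two cases according to whether $\ell(txs)>\ell(xs)$ or $\ell(txs)<\ell(xs)$.

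\emph{Case $\ell(txs)>\ell(xs)$.} Here $txs\leq_L xs$, so $\ba(txs)\geq\ba(xs)=\ba(\Delta_1)$. It remains to show $txs\notin\Delta_0$, and this is where $t\neq s_n$ is used.
\begin{compactenum}[(i)]
\item Since $\ell(txs)=\ell(xs)+1$, we have $\Phi(txs)=\{\alpha_t\}\cup t\star\Phi(xs)$, and $t\star$ permutes the parallelism classes.
\item Since $xs\notin\Delta_0$, Theorem~\ref{thm:Delta0} says $\Phi(xs)$ misses some class $[\beta]$.
\item Then $\Phi(txs)$ misses the class $t\star[\beta]$, unless that class is the one containing $\alpha_t$. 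As $t$ preserves the class of $\alpha_t$, the latter would force $[\beta]$ to be the class of $\alpha_t$.
\item For $t\in\{s_0,\ldots,s_{n-1}\}$ the root $\alpha_t$ is long (both $\alpha_0=-\varphi+\delta$ and $\alpha_i=e_i-e_{i+1}$ are long). But $xs\in\Delta_1$, so by Proposition~\ref{prop:conditionA} $\Phi(xs)$ meets every long class, and $[\beta]$ cannot be the class of $\alpha_t$.
\end{compactenum}
Hence $\Phi(txs)$ misses a class, so $txs\notin\Delta_0$ by Theorem~\ref{thm:Delta0}, and therefore $txs\in\Delta_1$.

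\emph{Case $\ell(txs)<\ell(xs)$.} Now $xs=t\cdot(txs)$ with $\ell(xs)>\ell(txs)$, so $xs\leq_L txs$. This gives $\ba(txs)\leq\ba(\Delta_1)<\ba(\Delta_0)$, and in particular $txs\notin\Delta_0$. For the lower bound on $\ba$ there are two subcases.
\begin{compactenum}[(a)]
\item If $\ell(txs)>\ell(tx)$, then $txs\leq_R tx$, so $\ba(txs)\geq\ba(tx)=\ba(\Delta_1)$ and we are done.
\item If $\ell(txs)<\ell(tx)$, I claim $txs=x$, which lies in $\Delta_1$ by hypothesis. Apply the exchange condition to $t$ acting on a reduced expression $\underline{x}\,s$ of $xs$, where $\underline{x}$ is reduced for $x$. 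Either $t$ cancels the final $s$, giving $txs=x$; or $t$ deletes a letter of $\underline{x}$, giving $tx=x'$ with $\ell(x')<\ell(x)$. In the second case $\ell(tx)<\ell(x)$, so $\ell(tx)=\ell(x)-1$ and hence $\ell(txs)\leq\ell(x)=\ell(tx)+1$. Since $\ell(txs)$ differs from $\ell(tx)$ by exactly one, subcase~(b) then forces $\ell(txs)=\ell(x)-2$. But $\ell(txs)\geq \ell(xs)-1=\ell(x)$, a contradiction.
\end{compactenum}

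The main step I expect to need care is the parallelism-class argument in the first case. Its success rests on $\alpha_t$ being long, and it fails for $t=s_n$, which explains the hypothesis. The remaining steps are routine applications of the exchange condition and the monotonicity of $\ba$.
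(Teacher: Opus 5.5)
Your proof is correct, and it differs from the paper's in how it places $txs$ in $\Delta_0\cup\Delta_1$.

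\textbf{The paper's route.}
\begin{itemize}
\item It first uses fact (4) of Section~\ref{sec:KL} to get $x\sim_L tx$. Hence $D_R(tx)=D_R(x)\not\ni s$, so $\ell(txs)=\ell(tx)+1$.
\item It then extends a length-additive factorisation $tx=a\cdot\sw_n\cdot b$ to $txs=a\sw_n(bs)$. So $txs$ is an extension of $\sw_n$, and by Shi's description everything reduces to showing $txs\notin\Delta_0$.
\item For that it makes the same split as you. If $\ell(txs)<\ell(xs)$, it uses that $\Delta_0$ is closed under extensions; you use the equivalent bound $\ba(txs)\le\ba(xs)$ coming from $xs\le_L txs$.
\item If $\ell(txs)>\ell(xs)$, it runs exactly your argument with Theorem~\ref{thm:Delta0}, Proposition~\ref{prop:conditionA} and $\alpha_t$ long.
\end{itemize}

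\textbf{Your route.} You replace the construction of an explicit extension with $\ba$-monotonicity along $txs\le_L xs$ or $txs\le_R tx$. You combine this with the background fact that $\ba\ge\ba(\Delta_1)$ holds only on $\Delta_0\cup\Delta_1$.
\begin{itemize}
\item This is more economical in your first case. There you only use $xs\in\Delta_1$, so you effectively show that a length-increasing left multiplication by $t\neq s_n$ preserves $\Delta_1$.
\item The cost is the exchange-condition subcase~(b). That subcase is in fact vacuous: $txs=x$ means $tx=xs$, so $s\in D_R(tx)$. This contradicts $D_R(tx)=D_R(x)$, which follows from $x\sim_L tx$.
\item Your treatment of subcase~(b) is nevertheless valid, so nothing is missing.
\end{itemize}
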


\begin{proof}
Since $x,tx\in\Delta_1$ the facts listed in Section~\ref{sec:KL} give $x\sim_L tx$. Since right descent sets are constant on left cells we have $D_R(x)=D_R(tx)$, and since $\ell(xs)=\ell(x)+1$ we have $s\notin D_R(tx)$. Thus $\ell(txs)=\ell(tx)+1$. Since $tx\in\Delta_1$ we can write $tx=a\cdot \sw_n\cdot b$. Since $s\notin D_R(tx)$ we have $\ell(bs)=\ell(b)+1$, and so $txs=a \sw_n(bs)$ with $\ell(a\sw_nbs)=\ell(a)+\ell(\sw_n)+\ell(bs)$. Thus $txs$ is an extension of~$\sw_n$, and so it remains to prove that $txs\notin\Delta_0$. If $\ell(txs)=\ell(xs)-1$ then $xs=t\cdot(txs)$ with $\ell(t(txs))=\ell(txs)+1$, and so if $txs\in\Delta_0$ then $xs\in\Delta_0$, a contradiction. So we have $\ell(txs)=\ell(xs)+1$, and hence $\Phi(txs)=\{\alpha_t\}\cup t\Phi(xs)$. By Proposition~\ref{prop:conditionA} $\Phi(xs)$ meets all long root parallelism classes, and since $xs\notin\Delta_0$ it must miss at least one short root parallelism class. Since the linear part of $\alpha_t$ is long it follows that $\Phi(txs)$ also misses a short-root class, hence $txs\notin\Delta_0$ as required. 
\end{proof}

\begin{prop}\label{prop:orderideal}
The set $\sB$ is a lower order ideal in the right weak order. That is, $\sB$ is closed under taking prefixes. \end{prop}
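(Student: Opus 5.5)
We argue by induction on length, so it suffices to treat a single step. Let $v\in\sB$ and $s\in\Saff$ with $v=v's$ and $\ell(v)=\ell(v')+1$. We will show that $v'\in\sB$, using the description~(\ref{eq:cleanB}) of $\sB$. Write $x=\sw_nv'$, so that $\sw_nv=xs$. Clearly $v'\in\Waff$.

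\emph{First condition: length additivity.} Since $\ell(\sw_nv)=\ell(\sw_n)+\ell(v)$ and $v'$ is a prefix of $v$, subadditivity of length gives
$$
\ell(\sw_n)+\ell(v')+1=\ell(\sw_nv)\leq \ell(\sw_nv')+1\leq \ell(\sw_n)+\ell(v')+1 .
$$
Hence $\ell(\sw_nv')=\ell(\sw_n)+\ell(v')$, and moreover $\ell(xs)=\ell(x)+1$.

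\emph{Second condition: $x\notin\Delta_0$.} Suppose instead that $x\in\Delta_0$. By Theorem~\ref{thm:Delta0}, $x$ is an extension of $\sw_0$ or $\sw_1$, say $x=a\sw_jb$ with lengths adding. Since $\ell(xs)=\ell(x)+1$ we get $\ell(bs)=\ell(b)+1$, so $xs=a\sw_j(bs)$ with lengths adding. Thus $xs=\sw_nv$ is also an extension of $\sw_j$, and so $\sw_nv\in\Delta_0$ by Theorem~\ref{thm:Delta0}. This contradicts $v\in\sB$.

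\emph{Auxiliary facts.} The element $x$ is an extension of $\sw_n$ not lying in $\Delta_0$, so $x\in\Delta_1$ by Theorem~\ref{thm:Shi}. Likewise $xs=\sw_nv\in\Delta_1$.

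\emph{Third condition: $s_i\sw_nv'\notin\Delta_1$ for $0\leq i\leq n-1$.} Suppose, for a contradiction, that $tx\in\Delta_1$ for some $t=s_i$ with $0\leq i\leq n-1$; in particular $t\neq s_n$. All hypotheses of Lemma~\ref{lem:square} now hold:
\begin{compactenum}[$(a)$]
\item $x,\ xs,\ tx\in\Delta_1$;
\item $\ell(xs)=\ell(x)+1$;
\item $t\in\Saff\backslash\{s_n\}$.
\end{compactenum}
The lemma therefore gives $txs=s_i\sw_nv\in\Delta_1$. This contradicts the defining condition of $\sB$ for $v$.

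Hence $v'\in\sB$, and induction on $\ell(v)-\ell(v')$ shows that every prefix of $v$ lies in $\sB$. There is no real obstacle here: the substantive input is Lemma~\ref{lem:square}. The only points needing care are the length additivity $\ell(xs)=\ell(x)+1$, which is required both for the $\Delta_0$ step and for the lemma, and checking that $x\in\Delta_1$ so that the lemma applies.
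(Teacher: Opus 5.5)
Your proof is correct and takes the same route as the paper. You reduce to a single prefix step, check the three conditions of~(\ref{eq:cleanB}), and use Lemma~\ref{lem:square} to rule out $s_i\sw_nv'\in\Delta_1$; you also spell out the length-additivity and $\Delta_0$ steps, which the paper simply calls clear.
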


\begin{proof}
We must show that if $u\in\sB$, $s\in \Saff$, and $u=u_1s$ with $\ell(u)=\ell(u_1)+1$, then $u_1\in \sB$. We verify the conditions defining $\sB$. Since $u\in\sB$ we have $\ell(\sw_n u)=\ell(\sw_n)+\ell(u)$, and so clearly $\ell(\sw_nu_1)=\ell(\sw_n)+\ell(u_1)$. Also, it is clear that $\sw_nu_1\notin\Delta_0$ (for if so, $\sw_n u\in\Delta_0$). Thus $\sw_nu_1\in\Delta_1$. It remains to prove that $s_i\sw_nu_1\notin\Delta_1$ for all $0\leq i\leq n-1$. Suppose, for a contradiction, that $s_i\sw_nu_1\in\Delta_1$ for some $0\leq i\leq n-1$. Let $x=\sw_nu_1$ and $t=s_i$. Lemma~\ref{lem:square} gives $s_i\sw_nu=txs\in\Delta_1$, contradicting the fact that $u\in\sB$. Hence the result. 
\end{proof}
%
%

Let $\rho=\omega_1+\cdots+\omega_n$ and define
\begin{align}\label{eq:b}
\sb=\fu^{-1}t_{\rho}\sw_0\sigma^k
\end{align}
where $k\in\{0,1\}$ is chosen to make $\sb\in\Waff$ (explicitly, $k=0$ if $n=0,3\pmod{4}$ and $k=1$ if $n=1,2\pmod{4}$). Using Lemma~\ref{lem:etaleform} we have
\begin{align}\label{eq:bstandard}
\sb=\begin{cases}
t_{\rho-n\omega_n}\sw_{0,n}\sigma^k&\text{if $n$ is even}\\
t_{\rho+\omega_1-n\omega_n}s_{e_1}\sw_{0,n}\sigma^k&\text{if $n$ is odd},
\end{cases}
\end{align} 
and in both cases the length formula~(\ref{eq:length}) gives
$$
\ell(\sb)=\frac{1}{3}n(n-1)(2n-1). 
$$

The following theorem gives an explicit description of the set~$\sB$. 

\begin{thm}\label{thm:determineB} We have $\sB=[1,\sb]\cup[1,\sb^{\sigma}]$. 
\end{thm}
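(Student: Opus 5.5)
The strategy is to prove the two inclusions $[1,\sb]\cup[1,\sb^{\sigma}]\subseteq\sB$ and $\sB\subseteq[1,\sb]\cup[1,\sb^{\sigma}]$, and to close with a counting argument using Theorem~\ref{thm:Bcount}.

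\textbf{First inclusion.} By Proposition~\ref{prop:orderideal}, $\sB$ is closed under prefixes, so it suffices to show $\sb\in\sB$ and $\sb^{\sigma}\in\sB$. Since conjugation by $\sigma$ fixes $s_2,\ldots,s_n$, swaps $s_0$ and $s_1$, preserves $W_n$, $\sw_n$ and $\Delta_0,\Delta_1$, the second follows from the first. For $\sb$ I will check the three conditions in (\ref{eq:cleanB}).
\begin{compactenum}[(a)]
\item Length additivity of $\sw_n\sb$ is equivalent to $D_L(\sb)\subseteq\{s_n\}$. From the formula $\sb=\fu^{-1}t_\rho\sw_0\sigma^k$ this amounts to checking $\sb^{-1}\star\alpha_i>0$ for $0\le i\le n-1$, a root computation in the spirit of Theorem~\ref{thm:mlambda}.
\item For $\sw_n\sb\notin\Delta_0$, by Theorem~\ref{thm:Delta0} I must exhibit a short root class missed by $\Phi(\sw_n\sb)=\Phi_n^+\cup\sw_n\star\Phi(\sb)$. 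Note that $\sw_n\sb=\sw_n\fu^{-1}t_\rho\sw_0\sigma^k$, and $\sw_n\fu^{-1}=\sw_{0,n}$ up to the convention used for $\fu$. The natural candidate is the class $[e_n]$: the construction should place $\sw_n\sb A_0$ inside $\cA_n$ in suitable coordinates.
\item For $s_i\sw_n\sb\notin\Delta_1$, I apply Theorem~\ref{thm:convexhulls}: I show that $\mathsf{conv}\{1,s_i\sw_n\sb\}^{\circ}$ contains no type $n$ vertex, using only a few inversion data, exactly as in Remark~\ref{rem:bexample}. The remark explicitly anticipates that configuration, so for each $i$ I identify two coordinates $x_a,x_b$ whose bounds force $x_a=\frac12$ and $|x_b|<\frac12$, which is a contradiction.
\end{compactenum}

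\textbf{Counting.} Using (\ref{eq:volume}), I compute $\#[1,\sb]$ and $\#[1,\sb^{\sigma}]$ and the size of their intersection, and show that
$$\#([1,\sb]\cup[1,\sb^{\sigma}])=2^{n-1}n!.$$
Geometrically, $\mathsf{conv}\{1,\sb\}$ is described by the inequalities $M_\alpha\le\langle x,\alpha\rangle\le M'_\alpha$ read off from $\Phi(\sb)$. The union of the two hulls should be a polytope of volume $1$, plausibly a fundamental domain for the translations by $e_1,\ldots,e_{n-1}$ and for $\tau_{e_n}$ after the shift by $\fu$; this anticipates Theorem~\ref{thm:main2}(3). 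Once the union has exactly $2^{n-1}n!$ elements, the first inclusion together with Theorem~\ref{thm:Bcount} gives equality.

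\textbf{Fallback for the reverse inclusion.} If the counting is awkward, I would argue directly. Take $v\in\sB$ not in the union, and take the shortest such $v$; by prefix-closure $v=v's$ with $v'$ in the union. Then show that $s_i\sw_nv\in\Delta_1$ for some $i\le n-1$, or that $\sw_nv\in\Delta_0$, or that length additivity fails. To do this I would use the convex-hull criterion to produce a type $n$ vertex in the interior of the hull of $s_i\sw_nv$.

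\textbf{Main obstacle.} The hardest step is the volume and intersection computation for the two hulls, or equivalently controlling $[1,\sb]\cap[1,\sb^{\sigma}]$. This step needs the explicit inequalities for $\Phi(\sb)$ from (\ref{eq:bstandard}), and the intersection should be $[1,\sb\wedge\sb^{\sigma}]$. I would first compute small cases ($n=2,3$) to guess the shape. Then I would verify that $\fu$ maps the union of the hulls bijectively onto the slab $0\le x_n\le1$ modulo the translations by $e_1,\ldots,e_{n-1}$ and $\tau_{e_n}$.
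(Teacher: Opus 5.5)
Your outline is the paper's proof. You show $\sb\in\sB$ via (\ref{eq:cleanB}), obtain $\sb^{\sigma}\in\sB$ by $\sigma$-symmetry, and use prefix-closure (Proposition~\ref{prop:orderideal}) for one inclusion. You then compute $\#[1,\sb]+\#[1,\sb^{\sigma}]-\#[1,\sb\wedge\sb^{\sigma}]$ via (\ref{eq:volume}) and close with Theorem~\ref{thm:Bcount}. There is one concrete error, and the paper supplies the details you leave open.

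\textbf{Error in step (b).} There is no convention ambiguity: $\sw_n\fu^{-1}=\sw_{0,n}$ exactly. Hence $\sw_n\sb A_0=\sw_{0,n}(\rho-A_0)$, which has first coordinate in $[\frac12,1]$ but last coordinate in $[n-1,n]$. So $\Phi(\sw_n\sb)$ misses $[e_1]$ and meets $[e_n]$, and checking your named candidate $[e_n]$ would fail. The paper uses $[e_1]$: it computes $\sb^{-1}\star\{e_1,-e_1+\delta\}=\{e_n,-e_n+\delta\}$ and uses that $\sw_n$ preserves $\{e_1,-e_1+\delta\}$.

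\textbf{Simplification in step (c).} For $1\le i\le n-1$ you do not need the configuration of Remark~\ref{rem:bexample}. Here $\Phi(s_i\sw_n\sb)$ misses the long class $[\alpha_i]$, so Proposition~\ref{prop:conditionA} applies directly. Equivalently, the hull interior forces $0<x_i-x_{i+1}<1$, which is impossible for a type $n$ vertex. Only $i=0$ needs the Remark~\ref{rem:bexample} argument.

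\textbf{The counting step you flag as the obstacle.} The paper sets $\mathsf{c}=\fu^{-1}t_{\rho}\sw_0\fu\sigma^k$ and argues as follows.
\begin{itemize}
\item $\mathsf{c}$ is fixed by $\sigma$.
\item $\mathsf{c}\peq\sb$, because $\mathsf{c}^{-1}\sb=\sigma^k\fu^{-1}\sigma^k$ has length $\ell(\sb)-\ell(\mathsf{c})$.
\item Exactly one cover $\mathsf{c}s_j$ with $j\in\{0,1\}$ lies below $\sb$, so the cover below $\sb^{\sigma}$ is $\mathsf{c}s_{\sigma(j)}\neq\mathsf{c}s_j$.
\item Hence no cover of $\mathsf{c}$ lies below both, and $\mathsf{c}=\sb\wedge\sb^{\sigma}$.
\end{itemize}
The hulls $\mathsf{conv}\{1,\sb\}$ and $\mathsf{conv}\{1,\mathsf{c}\}$ have volumes $\frac34$ and $\frac12$. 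This gives $2\cdot 3\cdot 2^{n-3}n!-2^{n-2}n!=2^{n-1}n!$.

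If you pursue the fundamental-domain picture instead, you must verify it directly. Theorem~\ref{thm:main2}(3) is proved using this theorem (through Corollary~\ref{cor:Binvset} and the results built on it), so it cannot be cited here.
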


\begin{proof}
We first show that $\sb\in\sB$. A routine inversion set calculation shows that $D_L(\sb)=\{s_n\}$, and hence $\ell(\sw_n\sb)=\ell(\sw_n)+\ell(\sb)$. Moreover 
$$
\sb^{-1}\star e_1=\begin{cases}
e_n&\text{if $n$ even}\\
-e_n+\delta&\text{if $n$ odd}
\end{cases}\qquad \sb^{-1}\star(-e_1+\delta)=\begin{cases}
-e_n+\delta&\text{if $n$ even}\\
e_n&\text{if $n$ odd}
\end{cases}
$$
and so $\Phi(\sw_n \sb)$ misses the parallelism class $[e_1]$, giving $\sw_n\sb\notin\Delta_0$ (see Theorem~\ref{thm:Delta0}). For $0\leq i\leq n-1$ we have $\ell(s_i\sw_n\sb)=\ell(\sw_n\sb)-1$ (because $\ell(\sw_n\sb)=\ell(\sw_n)+\ell(\sb)$). For $1\leq i\leq n-1$ the set $\Phi(s_i\sw_n\sb)$ misses the parallelism class $[\alpha_i]$, and hence $s_i\sw_n\sb\notin\Delta_1$ (by Proposition~\ref{prop:conditionA}, noting that $\alpha_i$ is long). For $i=0$ it turns out that $\Phi(s_0\sw_n\sb)$ meets all long root parallelism classes, and so Proposition~\ref{prop:conditionA} cannot be applied. Instead, note that $\Phi(s_0\sw_n\sb)$ misses $[e_2]$, and that $e_1-e_2\in\Phi(s_0\sw_n\sb)$ but $e_1-e_2+\delta\notin\Phi(s_0\sw_n\sb)$, and $e_1+e_2\in\Phi(s_0\sw_n\sb)$. This is precisely the configuration from Remark~\ref{rem:bexample}, and hence by Theorem~\ref{thm:convexhulls} we have $s_0\sw_n\sb\notin\Delta_1$. Thus $\sb\in\sB$. 

It is clear from~(\ref{eq:cleanB}) that $\sigma \sB\sigma=\sB$ (as two-sided cells are preserved by conjugation by $\sigma$), and hence $\sb^{\sigma}\in\sB$. Thus by Proposition~\ref{prop:orderideal} we have $[1,\sb]\cup[1,\sb^{\sigma}]\subseteq\sB$. To prove equality, it is sufficient by Theorem~\ref{thm:Bcount} to show that $\#([1,\sb]\cup[1,\sb^{\sigma}])=2^{n-1}n!$. We have
$$
\#([1,\sb]\cup[1,\sb^{\sigma}])=\#[1,\sb]+\#[1,\sb^{\sigma}]-\#([1,\sb]\cap[1,\sb^{\sigma}])=\#[1,\sb]+\#[1,\sb^{\sigma}]-\#([1,\sb\wedge \sb^{\sigma}]),
$$
and so the task is reduced to computing the meet $\sb\wedge \sb^{\sigma}$ and determining the cardinalities of $[1,\sb]$ and $[1,\sb\wedge \sb^{\sigma}]$ (as $\#[1,\sb^{\sigma}]=\#[1,\sb]$). 

We have $\sb=\fu^{-1}y\sigma^k$ where $y=t_{\rho}\sw_0$. Let $\mathsf{c}=\fu^{-1}y\fu\sigma^k$ (note that $\mathsf{c}\in\Waff$). We claim that $\mathsf{c}=\sb\wedge \sb^{\sigma}$. We have $\mathsf{c}^{-1}\sb=\sigma^k\fu^{-1}\sigma^k$, which has length $\frac{1}{2}n(n-1)$ (see Lemma~\ref{lem:etalebasics}). As noted above, we have $\ell(\sb)=\frac{1}{3}n(n-1)(2n-1)$. By Lemma~\ref{lem:etaleform} we have $\mathsf{c}=\fu^{-1}y\fu\sigma^k=t_{\gamma}\sw_0\sigma^k$ where $\gamma=\omega_1+\cdots+\omega_{n-1}-(n-2)\omega_n$, and~(\ref{eq:length}) gives $\ell(\mathsf{c})=\frac{1}{6}n(n-1)(4n-5)$. Hence $\ell(\mathsf{c}^{-1}\sb)=\ell(\sb)-\ell(\mathsf{c})$. Since $\mathsf{c}^{\sigma}=\mathsf{c}$ it follows that $\mathsf{c}\peq \sb$ and $\mathsf{c}\peq \sb^{\sigma}$. The right ascent set of $\mathsf{c}$ is $\{s_0,s_1,\ldots,s_{n-1}\}$. Since $\mathsf{c}^{-1}\sb=\sigma^k\fu^{-1}\sigma^k$ it follows that for $0\leq i\leq n-1$ we have $\mathsf{c}s_i\peq \sb$ if and only if $i=0$ for $n=0,1\pmod{4}$ or $i=1$ for $n=2,3\pmod{4}$. Let $j\in\{0,1\}$ be the unique index such that $\mathsf{c} s_j\peq \sb$. Since $\mathsf{c}^{\sigma}=\mathsf{c}$ we have $\mathsf{c} s_i\peq \sb^{\sigma}$ if and only if $\mathsf{c} s_{\sigma(i)}\peq \sb$, and therefore the unique cover of $\mathsf{c}$ lying below $\sb^{\sigma}$ is $\mathsf{c} s_{\sigma(j)}$. Since $\sigma(j)\neq j$, no cover $\mathsf{c} s_i$ of $\mathsf{c}$ lies below both $\sb$ and $\sb^{\sigma}$, and so $\mathsf{c}=\sb\wedge \sb^{\sigma}$. 

We claim that
\begin{align}\label{eq:volumes}
\#[1,\sb]=3\times 2^{n-3}n!\quad\text{and}\quad \#[1,\mathsf{c}]=2^{n-2}n!. 
\end{align}
In each case we make the count using~(\ref{eq:volume}). First consider $\mathsf{c}=t_{\gamma}\sw_0\sigma^k$, with $\gamma$ as above. Following the process described in Section~\ref{sec:convex}, the inequalities describing $\mathsf{conv}\{1,\mathsf{c}\}$ are $0\leq x_1+x_2\leq 1$ and $0\leq x_i-x_{i+1}\leq 1$ for all $1\leq i\leq n-1$. Thus setting $y_0=x_1+x_2$ and $y_i=x_i-x_{i+1}$ for $1\leq i\leq n-1$, the region becomes the unit cube $0\leq y_i\leq 1$, and the absolute value of the determinant of this transformation is $2$. Hence $\mathsf{Vol}(\mathsf{conv}\{1,\mathsf{c}\})=\frac{1}{2}$. 

Now consider $[1,\sb]$. Using~(\ref{eq:bstandard}), if $n$ is even then the inequalities describing $\mathsf{conv}\{1,\sb\}$ are $x_1\geq 0$, $x_1+x_2\leq 1$, and $0\leq x_i-x_{i+1}\leq 1$ for $1\leq i\leq n-1$. Setting $y_i=x_i-x_{i+1}$ for $1\leq i\leq n-1$ and $z=x_1+x_2$ this region becomes $0\leq y_i\leq 1$ and $-y_1\leq z\leq 1$, and the absolute value of the Jacobian of this transformation is $\frac{1}{2}$. Thus
$$
\mathsf{Vol}(\mathsf{conv}\{1,\sb\})=\frac{1}{2}\int_{[0,1]^{n-1}}\int_{-y_1}^11\,dzdy_1\cdots dy_{n-1}=\frac{3}{4}.
$$
A similar calculation also gives $\mathsf{Vol}(\mathsf{conv}\{1,\sb\})=\frac{3}{4}$ in the case that $n$ is odd (here the inequalities are $x_1\leq 1$, $0\leq x_1+x_2\leq 1$, $x_1-x_2\geq 0$, and $0\leq x_i-x_{i+1}\leq 1$ for $2\leq i\leq n-1$). The formulae~(\ref{eq:volumes}) now follow, and hence $\#([1,\sb]\cup[1,\sb^{\sigma}])=2^{n-1}n!$ as required. 
\end{proof}

Using the explicit description of $\sB$ it is now easy to check certain properties that will be useful later (and that appear to be otherwise difficult to deduce directly from the definition of~$\sB$).

\begin{cor}\label{cor:Binvset}
For $u\in \sB$ we have $u^{-1}\star\{e_1,-e_1+\delta\}=\{e_r,-e_r+\delta\}$ for some $1\leq r\leq n$. In particular, $\Phi(u)$ misses the parallelism class $[e_1]$. 
\end{cor}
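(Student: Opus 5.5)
By Theorem~\ref{thm:determineB}, every $u\in\sB$ is a prefix of $\sb$ or of $\sb^{\sigma}$. So the plan is to prove the claim for $\sb$ and $\sb^{\sigma}$ first, and then pass it down to prefixes by induction along reduced expressions.

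\emph{Base case: $\sb$ and $\sb^\sigma$.} The proof of Theorem~\ref{thm:determineB} already computed $\sb^{-1}\star\{e_1,-e_1+\delta\}=\{e_n,-e_n+\delta\}$, so $r=n$ for $\sb$. For $\sb^{\sigma}=\sigma\sb\sigma$ we have $(\sb^\sigma)^{-1}\star\beta=\sigma\star\sb^{-1}\star(\sigma\star\beta)$. Since $\sigma=t_{e_1}s_{e_1}$, a direct computation shows that $\sigma$ swaps $e_1$ and $-e_1+\delta$. Hence $\sigma$ preserves the set $\{e_1,-e_1+\delta\}$, and it also fixes $\{e_n,-e_n+\delta\}$ for $n\geq2$. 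So again $r=n$ for $\sb^{\sigma}$.

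\emph{Induction to prefixes.} Write $u\peq w$ with $w\in\{\sb,\sb^\sigma\}$, and set $w=uv$ with $\ell(w)=\ell(u)+\ell(v)$. Then $u^{-1}=v\,w^{-1}$, so
$$u^{-1}\star\{e_1,-e_1+\delta\}=v\star\{e_n,-e_n+\delta\}.$$
The key observation is the following. For any $x\in\Wext$, the pair $\{\alpha+k\delta,-\alpha+(1-k)\delta\}$ is sent to a pair of the same shape $\{\beta+m\delta,-\beta+(1-m)\delta\}$. Indeed, translations shift the two $\delta$-coefficients by opposite amounts, and the linear part acts on both roots in the same way. Such a pair equals $\{e_r,-e_r+\delta\}$ for some $r$ exactly when $m\in\{0,1\}$, that is, when both roots are positive, i.e.\ both lie in $\Phi^+$. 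So it suffices to show that $v\star e_n$ and $v\star(-e_n+\delta)$ are both positive roots.

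Let $\beta=w^{-1}\star e_1$, which lies in $\{e_n,-e_n+\delta\}$ and is therefore positive. We have $v\star\beta=u^{-1}\star e_1$. Suppose $v\star\beta$ were negative. Then $-v\star\beta=-u^{-1}\star e_1$ would be a positive root sent to a negative root by $v^{-1}$, so it would lie in $\Phi(v)$. Because $\ell(w)=\ell(u)+\ell(v)$, we have $\Phi(w^{-1})=\Phi(v^{-1})\sqcup v\star\Phi(u^{-1})$. Applying this inversion-set decomposition, the root $\beta$ would then lie in $\Phi(w^{-1})$, i.e.\ $w\star\beta=e_1$ would be negative, which is false. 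The same argument applies to the other root $-e_n+\delta$.

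More simply, the geometric form of this says: $\Phi(u)\subseteq\Phi(w)$ for any prefix $u$ of $w$, and $\Phi(w)$ misses $[e_1]$ (shown in the proof of Theorem~\ref{thm:determineB}). Hence $\Phi(u)$ misses $[e_1]$, i.e.\ $e_1,-e_1+\delta\notin\Phi(u)$, so $u^{-1}\star e_1$ and $u^{-1}\star(-e_1+\delta)$ are both positive. By the key observation, this forces $u^{-1}\star\{e_1,-e_1+\delta\}=\{e_r,-e_r+\delta\}$ for some $1\leq r\leq n$.

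\emph{Expected obstacle.} The only genuine check is that $\Phi(\sb)$ (not just $\Phi(\sw_n\sb)$) misses $[e_1]$. This follows from the positivity of $\sb^{-1}\star e_1$ and of $\sb^{-1}\star(-e_1+\delta)$ computed in the proof of Theorem~\ref{thm:determineB}, together with the fact that $\sigma$ preserves $[e_1]$, which handles $\sb^\sigma$.
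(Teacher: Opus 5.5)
Your proposal is correct and follows essentially the paper's argument: $\Phi(u)\subseteq\Phi(\sb)\cup\Phi(\sb^{\sigma})$, both of these miss $[e_1]$ (by the computation of $\sb^{-1}\star e_1$ and $\sb^{-1}\star(-e_1+\delta)$ in the proof of Theorem~\ref{thm:determineB}, together with the fact that $\sigma$ swaps $e_1$ and $-e_1+\delta$), so $u^{-1}\star e_1$ and $u^{-1}\star(-e_1+\delta)$ are positive short roots summing to $\delta$, which forces the pair to be $\{e_r,-e_r+\delta\}$. The only blemish is in your first, longer version of the prefix step: the decomposition should read $\Phi(w^{-1})=\Phi(v^{-1})\sqcup v^{-1}\star\Phi(u^{-1})$, and the relevant point is simply that $\beta>0$ with $v\star\beta<0$ gives $\beta\in\Phi(v^{-1})\subseteq\Phi(w^{-1})$, contradicting $w\star\beta=e_1>0$; your ``more simply'' version does not depend on this step.
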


\begin{proof}
We first show that $\alpha_u=u^{-1}\star e_1$ and $\beta_u=u^{-1}\star(-e_1+\delta)$ are both positive. Since $u\peq \sb$ or $u\peq \sb^{\sigma}$ we have $\Phi(u)\subseteq\Phi(\sb)\cup\Phi(\sb^{\sigma})$, and so it suffices to check that $b_1^{-1}\star e_1>0$ and $b_1^{-1}\star(-e_1+\delta)>0$ for $b_1\in\{\sb,\sb^{\sigma}\}$, and since $\Phi(\sb^{\sigma})=\sigma(\Phi(\sb))$ it is sufficient to consider $b_1=\sb$. From the proof of Theorem~\ref{thm:determineB} we have $\sb^{-1}\star e_1>0$ and $\sb^{-1}\star(-e_1+\delta)>0$, and hence $\alpha_u,\beta_u>0$. Thus $\alpha_u,\beta_u$ are positive roots in the same (short root) parallelism class, and since $\alpha_u+\beta_u=u^{-1}\star \delta=\delta$ we have $\{\alpha_u,\beta_u\}=\{e_r,-e_r+\delta\}$ for some $1\leq r\leq n$. 
\end{proof}

\begin{cor}\label{cor:additive} If $u,v\in\sB$ and $\lambda\in P_n^+$ then 
$$
\ell(u^{-1}\sm_{\lambda}v)=\ell(u)+\ell(\sm_{\lambda})+\ell(v).
$$
\end{cor}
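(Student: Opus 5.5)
Since $\sm_{\la}^{-1}=\sm_{\la}$ (Proposition~\ref{prop:dominance}), it suffices to prove the one-sided statement: for $v\in\sB$ and any $w$ with $D_R(w)=\{s_0,\ldots,s_{n-1}\}$ in a suitable class (namely $w=u^{-1}\sm_\la$), $\ell(wv)=\ell(w)+\ell(v)$. I will show this in two stages. First I show $\ell(\sm_\la v)=\ell(\sm_\la)+\ell(v)$. Then I show $\ell(u^{-1}\sm_\la v)=\ell(u^{-1})+\ell(\sm_\la v)$, which by inverting is the same type of statement for $v^{-1}\sm_\la u$.

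The working criterion is the inversion-set one. We have $\ell(xv)=\ell(x)+\ell(v)$ if and only if $x\star\beta>0$ for every $\beta\in\Phi(v)$. For the first stage, take $x=\sm_\la=\sw_n\tau_\la'$ (Theorem~\ref{thm:mlambda}) and $\beta\in\Phi(v)$.

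Write $v\peq \sb$ or $v\peq\sb^\sigma$. Then $\Phi(v)\subseteq\Phi(\sb)\cup\Phi(\sb^\sigma)$, and by $\sigma$-symmetry ($\sm_\la^\sigma=\sm_\la$) it suffices to treat $\beta\in\Phi(\sb)$. The key is to understand $\Phi(\sb)$ explicitly from $\sb=\fu^{-1}t_\rho\sw_0\sigma^k$. Since $\sw_n\sb$ is length additive, $\sw_n\star\Phi(\sb)\subseteq\Phi(\sw_n\sb)$ consists of positive roots. So I need to check that applying $\tau_\la'$ first does not spoil positivity.

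Concretely, $\tau_\la'$ is a translation $t_\mu$ (possibly times $s_{e_1}$) with $\mu=\sw_0\sw_{0,n}\la$. It shifts $\alpha+k\delta$ to $\alpha+(k-\langle\mu,\alpha\rangle)\delta$. I will compute $\Phi(\sb)$ from the standard form~(\ref{eq:bstandard}). Up to the twist, it consists of roots $\alpha+k\delta$ with $\alpha$ ranging over $\Phi_0$ and $k$ bounded by $\langle\rho-n\omega_n,\alpha\rangle$-type quantities. I will then verify two things: $\tau_\la'\star\beta$ is either still in $\Phi(\sb)$-like position, or its $\delta$-coefficient has increased in the direction keeping $\sw_n\star(\cdot)$ positive.

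A cleaner route is geometric. Length additivity of $\sm_\la v$ is equivalent to saying that no hyperplane separating $A_0$ from $vA_0$ separates $\sm_\la^{-1}A_0$ from $A_0$. Now $\mathsf{conv}\{1,\sb\}$ was described explicitly in the proof of Theorem~\ref{thm:determineB} by the inequalities $x_1\ge0$, $x_1+x_2\le1$, $0\le x_i-x_{i+1}\le1$ (for $n$ even). Likewise $\sm_\la A_0$ lies in the cone cut out by the opposite walls, since $\la\in P_n^+$ gives $\la_1\ge\cdots\ge\la_{n-1}\ge0$. I would check that every wall of $\mathsf{conv}\{1,\sb\}$ through which $v$ crosses has $\sm_\la^{-1}A_0$ on the $A_0$ side.

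For the second stage, I will apply the same criterion to $x=u^{-1}$ and the element $\sm_\la v$. I need $u^{-1}\star\gamma>0$ for all $\gamma\in\Phi(\sm_\la v)=\Phi(\sm_\la)\cup\sm_\la\star\Phi(v)$. Roots of $\Phi(\sm_\la)$ coming from $\Phi(\sw_n)$-type walls are handled by $\ell(u^{-1}\sw_n)=\ell(u)+\ell(\sw_n)$. The rest must be controlled using Corollary~\ref{cor:Binvset}: $u$ only ever touches the $[e_1]$ class via $e_r$-type roots, and $\Phi(u)$ misses $[e_1]$.

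The main obstacle is this bookkeeping of mixed inversions. For this step I would fall back on the symmetric description of all three pieces as explicit inversion sets and do a case analysis over parallelism classes $[e_i\pm e_j]$ and $[e_i]$. I would use the monotonicity in $\la$ (Proposition~\ref{prop:taulength}) to reduce to $\la=0$ and $\la=e_n$ plus additive translation pieces.
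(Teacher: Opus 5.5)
Your opening framing is the right skeleton: prove a one-sided additivity statement for elements whose right descent set is $\{s_0,\ldots,s_{n-1}\}$, then apply it twice using inversion. But the proposal never supplies the argument that makes either stage work, and the fallback you propose for stage 2 would not succeed.

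In stage 1 you reduce, correctly, to showing that no hyperplane in $\Phi(v)$ separates $A_0$ from $\sm_\la^{-1}A_0=\sm_\la A_0$. You give no mechanism for this, and two of your inputs are off:
\begin{itemize}
\item Dominance of $\la$ is not what places $\sm_\la A_0$ in the opposite cone $\sw_n\mathcal{C}$; that follows simply from $D_L(\sm_\la)=\{s_0,\ldots,s_{n-1}\}$.
\item Cone position alone is not enough. In Remark~\ref{rem:Shipaper} one has $yA_0\subseteq\mathcal{C}$ and $(x\sw_n)^{-1}A_0\subseteq\sw_n\mathcal{C}$, yet $\ell(x\sw_ny)<\ell(x\sw_n)+\ell(y)$.
\end{itemize}
In stage 2 you drop your own observation that, after inversion, the statement is of the same type. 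Instead you propose an unspecified case analysis plus a reduction to $\la\in\{0,e_n\}$ via Proposition~\ref{prop:taulength}. That reduction has no basis: additivity of $\tau'_\la\tau'_\mu$ inside $T^+$ says nothing about additivity against $u^{-1}\sw_n$ on the left and $v$ on the right.

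The missing idea uses Corollary~\ref{cor:Binvset} already in stage 1, and works for an arbitrary $w$ with $D_R(w)\supseteq\{s_0,\ldots,s_{n-1}\}$.
\begin{itemize}
\item Setup: such $w$ satisfies $w^{-1}A_0\subseteq\sw_n\mathcal{C}=2\xi-\mathcal{C}$, with $\mathcal{C}$ the cone from Section~\ref{sec:funddom}. Any $v\in\sB$ satisfies $vA_0\subseteq\mathcal{C}$. Put $K=\mathcal{C}-\xi$.
\item Sign property: every $\alpha\in\Phi_0\setminus\{\pm e_1\}$ is either $\geq 0$ on all of $K$ or $\leq 0$ on all of $K$. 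For long roots this holds because $K$ is a chamber of the $\sD_n$ system with simple roots $e_1-e_2,\ldots,e_{n-1}-e_n,-e_1-e_2$. For $e_i$ with $i\geq 2$ it holds because $y_i\leq y_2\leq -|y_1|\leq 0$ on $K$.
\item A hyperplane $H_\beta$ with $\beta\in\Phi(v)$ meets the interior of the chamber $\mathcal{C}$, so it misses $\xi$. Write it as $\{g=0\}$ with $g$ affine, $g(\xi)<0$, and $g(y)>0$ for some $y\in vA_0$.
\item The linear part of $g$ is then positive at $y-\xi\in K$. If it is not $\pm e_1$, it is $\geq 0$ on all of $K$. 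Hence $g(\xi-k)\leq g(\xi)<0$ for every $k\in K$, so $H_\beta$ does not separate $A_0$ from $w^{-1}A_0$.
\item Corollary~\ref{cor:Binvset} excludes the linear part $\pm e_1$. So $\Phi(w^{-1})\cap\Phi(v)=\emptyset$, i.e.\ $\ell(wv)=\ell(w)+\ell(v)$.
\end{itemize}
To finish: taking $w=\sm_\la$ gives your stage 1. Inverting, $v^{-1}\sm_\la$ is length-additive, so $D_R(v^{-1}\sm_\la)\supseteq\{s_0,\ldots,s_{n-1}\}$. A second application with $u$ then gives $\ell(v^{-1}\sm_\la u)=\ell(v)+\ell(\sm_\la)+\ell(u)$.

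Completed this way, your route is a genuine alternative to the paper's. The paper uses prefix-closure of $\sB$ to reduce to $u,v\in\{\sb,\sb^{\sigma}\}$, then computes $\ell(b_1^{-1}\sm_\la b_2)$ explicitly by writing it as $t_\mu y$ with $y\in W_0$ and applying~(\ref{eq:length}). The cone argument avoids that computation. It also gives more: $\sm_\la$ can be replaced by any $x$ with $D_L(x),D_R(x)\supseteq\{s_0,\ldots,s_{n-1}\}$.
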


\begin{proof}
Since each element of $\sB$ is a prefix of either $\sb$ or $\sb^{\sigma}$, it is sufficient to prove that $\ell(b_1^{-1}\sm_{\lambda}b_2)=\ell(\sm_{\lambda})+2\ell(\sb)$ for all choices $b_1,b_2\in\{\sb,\sb^{\sigma}\}$. By Proposition~\ref{prop:dominance} we have $\sm_{\lambda}^{\sigma}=\sm_{\lambda}$ and $\sm_{\lambda}^{-1}=\sm_{\lambda}$ and so it is sufficient to check the two cases $b_1=b_2=\sb$, and $b_1=\sb$ with $b_2=\sb^{\sigma}$. Using the definition of $\fu$, and Lemma~\ref{lem:etaleform}, and the formula for $\sm_{\la}$ in Theorem~\ref{thm:mlambda}, one can write $b_1^{-1}\sm_{\la}b_2$ in the form $t_{\mu}v$ with $\mu\in P$ and $v\in \Wfin$, and then the length can be calculated using~(\ref{eq:length}). After performing the calculations, one arrives at 
$$
\ell(b_1^{-1}\sm_{\la}b_2)=\frac{1}{3}n(n-1)(4n+1)+\sum_{i=1}^{n-1}(2n-2i+1)\la_i
$$
for both choices $(b_1,b_2)=(\sb,\sb),(\sb,\sb^{\sigma})$, and the result follows using $\ell(\sb)=\frac{1}{3}n(n-1)(2n-1)$ and the formula for $\ell(\sm_{\la})$ in Corollary~\ref{cor:mlength}.
\end{proof}

\begin{remark}\label{rem:Shipaper} In \cite[(2.9.8)]{ShiBn} it is stated that if $x,y\in\Waff$ with $D_R(x),D_L(y)\subseteq \{s_n\}$ then $\ell(x\sw_n y)=\ell(x)+\ell(\sw_n)+\ell(y)$. This statement could be used to give a much quicker proof of Corollary~\ref{cor:additive}, however we note that the statement \cite[(2.9.8)]{ShiBn} is false (with $x=y=323023$ providing a counterexample in $\tilde{\sB}_3$, as $\ell(x)=\ell(\sw_3)=6$, while $\ell(x\sw_3x)=16<18$). In \cite{ShiBn} the statement (2.9.8) is used a few times, however only in the situation that $x^{-1},y\in\sB$. We note that Corollary~\ref{cor:additive} is independent of \cite[Section~2]{ShiBn}, and hence may be used to validate the applications of (2.9.8) in \cite{ShiBn} without introducing any circularity into the arguments. 
\end{remark}

\subsection{The intersections $\Ga_u\cap\Ga_v^{-1}$}

The intersection of a left and right cell plays a central role in Kazhdan-Lusztig theory. In this section we compute $\Ga_u\cap\Ga_v^{-1}$ in terms of $\Ga\cap\Ga^{-1}$, where $\Ga=\Ga_1$. Let $\mathsf{E}=\{e_1,-e_1+\delta\}$. By Corollary~\ref{cor:Binvset} there is a function $r:\sB\to \{1,2,\ldots,n\}$ given by 
$$u^{-1}\star\mathsf{E}=\{e_{r(u)},-e_{r(u)}+\delta\}.
$$ 

\begin{lemma}\label{lem:generalisedabsence}
Let $u\in\sB$ and $w\in\Ga_u$. Then $\Phi(w)$ misses the parallelism class $[e_{r(u)}]$. 
\end{lemma}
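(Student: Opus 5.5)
Write $w=u^{-1}\sw_ny\in\Ga_u$ with $\ell(w)=\ell(u)+\ell(\sw_n)+\ell(y)$, as in Theorem~\ref{thm:Shi}. The approach is to compute $\Phi(w)$ through the length-additive factorisation $w=u^{-1}\cdot(\sw_ny)$:
$$
\Phi(w)=\Phi(u^{-1})\sqcup u^{-1}\star\Phi(\sw_ny).
$$
We then check separately that neither part meets $[e_{r(u)}]$. Throughout it is enough to test the two roots $e_{r(u)}$ and $-e_{r(u)}+\delta$, as noted in Section~\ref{sec:main}.

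\textbf{The first part.} We have $\Phi(u^{-1})=\{\beta>0\mid u\star\beta<0\}$. By definition of $r(u)$ we have $u\star\{e_{r(u)},-e_{r(u)}+\delta\}=\mathsf{E}$, and both roots of $\mathsf{E}$ are positive. Hence neither $e_{r(u)}$ nor $-e_{r(u)}+\delta$ lies in $\Phi(u^{-1})$.

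\textbf{The second part.} A root $\beta$ with $\beta\in\{e_{r(u)},-e_{r(u)}+\delta\}$ lies in $u^{-1}\star\Phi(\sw_ny)$ exactly when $u\star\beta\in\Phi(\sw_ny)$, that is, when $u\star\beta\in\mathsf{E}$. So this part misses $[e_{r(u)}]$ if and only if $\Phi(\sw_ny)$ misses $[e_1]$. Combining the two parts, the lemma reduces to the case $u=1$: for every $x=\sw_ny\in\Ga$, the set $\Phi(x)$ misses $[e_1]$. Since $1\in\sB$, we have $r(1)=1$.

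\textbf{The case $u=1$.} Right cells have constant left descent set, so $D_L(x)=\{s_0,\dots,s_{n-1}\}$. I would argue by contradiction. Suppose $\Phi(x)$ meets $[e_1]$. Since $x\in\Ga$ lies in $\Delta_1$, Proposition~\ref{prop:conditionA} says $\Phi(x)$ meets every long-root class. As $x\notin\Delta_0$, $\Phi(x)$ must miss some short class $[e_j]$ with $j\geq 2$.

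The plan is to use the convex hull description. By Theorem~\ref{thm:convexhulls}, $\mathsf{conv}\{1,x\}^\circ$ contains a type~$n$ vertex, whose coordinates lie in $\ZZ+\frac12$. Missing $[e_j]$ forces $0<x_j<1$, so $x_j=\frac12$.

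Next I would exploit the descents. Because $s_0,\dots,s_{n-1}\in D_L(x)$, we have $\Phi_n^+\subseteq\Phi(x)$. In particular $e_i-e_j$ and $-(e_i+e_j)+\delta$ lie in $\Phi(x)$, and the hull is a union of $W_n$-translates around a type~$n$ vertex near $A_0$. Combining this with the hypothesis that $e_1$ or $-e_1+\delta$ lies in $\Phi(x)$, I expect the hull inequalities, in the manner of Remark~\ref{rem:bexample}, to force $x$ to be an extension of $\sw_0$ or $\sw_1$. That would contradict $x\notin\Delta_0$.

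An alternative route for this case is to use Theorem~\ref{thm:main1}(3),(4), if those are available at this point: write $x=\sm_\la v$ and compute $\Phi(\sm_\la)$ directly from Theorem~\ref{thm:mlambda}, which gives $\sm_\la^{-1}\star\mathsf{E}\subseteq\Phi^+$. This would then be combined with Corollary~\ref{cor:Binvset} for $v$.

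\textbf{Main obstacle.} The hard step is the case $u=1$: showing that every element of $\Ga$ avoids $[e_1]$ using only Shi's description of $\Ga$. I expect the genuine content of the lemma to lie in this convex-geometry argument. The reduction from general $u$ to $u=1$ is routine.
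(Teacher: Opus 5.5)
\textbf{The gap is in the case $u=1$.} Your reduction to $u=1$ is correct. Since $\ell(w)=\ell(u^{-1})+\ell(\sw_ny)$ you have $\Phi(w)=\Phi(u^{-1})\sqcup u^{-1}\star\Phi(\sw_ny)$. The first part misses $[e_{r(u)}]$ because $u\star\{e_{r(u)},-e_{r(u)}+\delta\}=\mathsf{E}\subseteq\Phi^+$. You should also say explicitly why $\sw_ny\in\Ga$: it is a length-additive suffix of $w$, so if it were an extension of $\sw_0$ or $\sw_1$ then so would $w$ be, contradicting $w\notin\Delta_0$. However, the case $u=1$, which you yourself call the real content, is never proved. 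The convex-hull sketch ends with ``I expect the hull inequalities \dots\ to force $x$ to be an extension of $\sw_0$ or $\sw_1$''. The inequalities you actually extract from $e_1-e_j,\,-(e_1+e_j)+\delta\in\Phi(x)$, from $[e_1]$ being met, and from $[e_j]$ being missed are all satisfied by $x_1=x_j=\tfrac12$, so no contradiction comes from them. Your alternative route is circular. Theorem~\ref{thm:main1}(3),(4) is deduced from Theorems~\ref{thm:intersections} and~\ref{thm:gammaintersection}, and both depend on this lemma: the first through Proposition~\ref{prop:transfer}, the second directly, to see that elements of $\Ga\cap\Ga^{-1}$ avoid $[e_1]$.

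\textbf{The missing idea is that inversion sets are closed under addition.} If $\alpha,\beta\in\Phi(x)$ and $\alpha+\beta\in\Phi$, then $\alpha+\beta\in\Phi(x)$: it is positive, and $x^{-1}\star(\alpha+\beta)$ is a sum of two negative roots that is a root, hence negative. You already noted $\Phi_n^+\subseteq\Phi(x)$, so $e_1-e_j$ and $-(e_1+e_j)+\delta$ lie in $\Phi(x)$ for $2\leq j\leq n$. If $e_1\in\Phi(x)$, then $e_1+(-(e_1+e_j)+\delta)=-e_j+\delta\in\Phi(x)$. If $-e_1+\delta\in\Phi(x)$, then $(-e_1+\delta)+(e_1-e_j)=-e_j+\delta\in\Phi(x)$. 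Either way $\Phi(x)$ meets every short class, and $\Phi_n^+$ meets every long class. So $x\in\Delta_0$ by Theorem~\ref{thm:Delta0}, a contradiction, and no convex geometry is needed. This is the paper's argument. The paper runs it directly for general $u$ by applying $u^{-1}\star$ to these root identities, using Corollary~\ref{cor:Binvset} for positivity; with your reduction that transport step is unnecessary.
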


\begin{proof}
Since $w\in\Ga_u$ we have $w=u^{-1}\sw_ny$ with $\ell(w)=\ell(u)+\ell(\sw_n)+\ell(y)$. The inversion set of $u^{-1}\sw_n$ contains $u^{-1}\Phi_n^+$ (these roots are positive because $\ell(u^{-1}\sw_n)=\ell(u)+\ell(\sw_n)$ gives $\Phi(u^{-1}\sw_n)=\Phi(u^{-1})\sqcup u^{-1}\Phi(\sw_n)$). Thus the roots $u^{-1}\star(e_i-e_j)$ and $u^{-1}\star(-e_i-e_j+\delta)$ are positive and are contained in $\Phi(w)$, for $1\leq i<j\leq n$. By Corollary~\ref{cor:Binvset} the roots $u^{-1}\star e_1$ and $u^{-1}\star(-e_1+\delta)$ are positive. Let $\beta_1=u^{-1}\star e_1$, $\beta_1'=u^{-1}\star(-e_1+\delta)$, $\gamma_j=u^{-1}\star(e_1-e_j)$, and $\gamma_j'=u^{-1}\star(-e_1-e_j+\delta)$ for $2\leq j\leq n$, we have that $\beta_1,\beta_1',\gamma_j,\gamma_j'$ are all positive roots, and since 
$$
u^{-1}\star(-e_j+\delta)=\beta_1+\gamma_j'=\beta_1'+\gamma_j
$$
it follows that if $\{\beta_1,\beta_1'\}\cap\Phi(w)\neq\emptyset$ then $u^{-1}\star(-e_j+\delta)\in\Phi(w)$ for all $2\leq j\leq n$, and hence $\Phi(w)$ meets every parallelism class. Thus $w\in\Delta_0$ by Theorem~\ref{thm:Delta0}, a contradiction. 
\end{proof}

\begin{prop}\label{prop:transfer}
Suppose that $u\in\sB$, $s\in \Saff$, $\ell(us)=\ell(u)+1$, and $us\in\sB$. Then $\Gamma_{us}=s\Gamma_u$. 
\end{prop}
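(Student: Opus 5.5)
We prove $\Ga_{us}=s\Ga_u$ directly from the definitions in Theorem~\ref{thm:Shi}, showing both inclusions. Note that $(us)^{-1}=su^{-1}$, so
$$\Ga_{us}=\{su^{-1}\sw_ny\mid \ell(su^{-1}\sw_ny)=\ell(u)+1+\ell(\sw_n)+\ell(y),\ su^{-1}\sw_ny\notin\Delta_0\}.$$

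\emph{Inclusion $\Ga_{us}\subseteq s\Ga_u$.} Let $w=su^{-1}\sw_ny\in\Ga_{us}$. Since the expression for $w$ is length additive with the $s$ on the left, the element $w'=sw=u^{-1}\sw_ny$ satisfies $\ell(w')=\ell(w)-1=\ell(u)+\ell(\sw_n)+\ell(y)$. It remains to check $w'\notin\Delta_0$. This holds because $w=s\cdot w'$ with $\ell(w)=\ell(w')+1$, so if $w'$ were an extension of $\sw_0$ or $\sw_1$ then so would be $w$, contradicting $w\notin\Delta_0$ (Theorem~\ref{thm:Delta0}). Hence $w'\in\Ga_u$ and $w\in s\Ga_u$.

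\emph{Inclusion $s\Ga_u\subseteq\Ga_{us}$.} Let $w'=u^{-1}\sw_ny\in\Ga_u$ and put $w=sw'$. There are two things to show.
\begin{compactenum}[$(a)$]
\item $\ell(sw')=\ell(w')+1$, equivalently $s\notin D_L(w')$. Then $w=(su^{-1})\sw_ny$ is automatically length additive, because $\ell(su^{-1})=\ell(u)+1$.
\item $sw'\notin\Delta_0$.
\end{compactenum}

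For (a), left descent sets are constant on right cells, so $D_L(w')=D_L(u^{-1}\sw_n)$ (the element $u^{-1}\sw_n$ lies in $\Ga_u$ itself, as $\sw_n u\notin\Delta_0$). Moreover $us\in\sB$ gives $\ell(su^{-1}\sw_n)=\ell(su^{-1})+\ell(\sw_n)=\ell(u^{-1}\sw_n)+1$. Hence $s\notin D_L(u^{-1}\sw_n)=D_L(w')$.

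For (b) we use parallelism classes. Since $\ell(sw')=\ell(w')+1$ we have
$$\Phi(sw')=\{\alpha_s\}\sqcup s\star\Phi(w').$$
By Lemma~\ref{lem:generalisedabsence} applied to $us\in\sB$, every element of $\Ga_{us}$ misses $[e_{r(us)}]$. We instead apply it to $w'\in\Ga_u$: $\Phi(w')$ misses $[e_{r(u)}]$. Since $s\star$ permutes parallelism classes, $s\star\Phi(w')$ misses the class $s\star[e_{r(u)}]$. We claim this class is exactly $[e_{r(us)}]$. Indeed,
$$(us)^{-1}\star\mathsf{E}=s\star(u^{-1}\star\mathsf{E})=s\star\{e_{r(u)},-e_{r(u)}+\delta\},$$
and by Corollary~\ref{cor:Binvset} for $us$ this set is $\{e_{r(us)},-e_{r(us)}+\delta\}$, consisting of positive roots. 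So it remains only to rule out $\alpha_s\in[e_{r(us)}]$. But $\alpha_s$ is a simple root, and both $e_{r(us)}$ and $-e_{r(us)}+\delta$ are images under $s$ of positive roots, so neither equals $\alpha_s$ (which is $s\star(-\alpha_s)$, the image of a negative root). Thus $\Phi(sw')$ misses $[e_{r(us)}]$, and $sw'\notin\Delta_0$ by Theorem~\ref{thm:Delta0}.

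The main obstacle is step (b). Its subtle point is the bookkeeping that $s$ transports the missed class of $\Ga_u$ to that of $\Ga_{us}$ without $\alpha_s$ landing in it. Corollary~\ref{cor:Binvset} applied to $us$ handles this. With (a) and (b), the two inclusions give $\Ga_{us}=s\Ga_u$.
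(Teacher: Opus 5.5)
Your proof is correct and follows essentially the same route as the paper: your inclusion $\Gamma_{us}\subseteq s\Gamma_u$ is what the paper calls ``immediate from the definitions'', and your steps (a) and (b) match its descent-set argument and its transport of $\mathsf{E}_u=u^{-1}\star\mathsf{E}$ to $\mathsf{E}_{us}=s\star\mathsf{E}_u$, which uses Lemma~\ref{lem:generalisedabsence} and Corollary~\ref{cor:Binvset}. The only difference is cosmetic: the paper shows $\gamma'=s\star\gamma\notin\Phi(sx)$ directly from $(sx)^{-1}\star\gamma'=x^{-1}\star\gamma>0$, so it does not need to decompose $\Phi(sw')$ or treat $\alpha_s$ separately.
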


\begin{proof}
Immediately from the definitions, we have 
$$
\Gamma_{us}=\{sx\mid x\in \Gamma_u,\,\ell(sx)=\ell(x)+1,\,sx\notin\Delta_0\}.
$$
We will show that the conditions $\ell(sx)=\ell(x)+1$ and $sx\notin\Delta_0$ are automatic under the given assumptions. 

Since $\Gamma_u$ is a right cell, and left descent sets are constant on right cells, we know $D_L(x)=D_L(u^{-1}\sw_n)$ for all $x\in \Gamma_u$. Now, since $us\in \sB$ and we have $\ell((us)^{-1}\sw_n)=\ell(us)+\ell(\sw_n)=\ell(u)+1+\ell(\sw_n)=\ell(u^{-1}\sw_n)+1$ and hence $s\notin D_L(u^{-1}\sw_n)$. Thus $s\notin D_L(x)$ for all $x\in\Gamma_u$, and hence $\ell(sx)=\ell(x)+1$ for all $x\in\Gamma_u$. 

To show that $sx\notin\Delta_0$ for all $x\in\Gamma_u$,  write $\mathsf{E}_u=u^{-1}\star \mathsf{E}=\{e_{r(u)},-e_{r(u)}+\delta\}$ and $\mathsf{E}_{us}=(us)^{-1}\star \mathsf{E}=s\star \mathsf{E}_u$. For $x\in \Gamma_u$ we have $\mathsf{E}_u\cap \Phi(x)=\emptyset$ (by Lemma~\ref{lem:generalisedabsence}). Also, $\mathsf{E}_{us}\subseteq\Phi^+$ because $us\in\sB$. Let $\alpha\in \mathsf{E}$ and write $\gamma=u^{-1}\star \alpha$ and $\gamma'=(us)^{-1}\star \alpha=s\star \gamma$. Then $(sx)^{-1}\star\gamma'=x^{-1}\star\gamma>0$ and since $\gamma'>0$ we have $\gamma'\notin\Phi(sx)$. Therefore $\mathsf{E}_{us}\cap\Phi(sx)=\emptyset$, and so $sx\notin\Delta_0$ (by Theorem~\ref{thm:Delta0}). Hence the result. 
\end{proof}

\begin{thm}\label{thm:intersections}
For $u,v\in \sB$ we have 
\begin{compactenum}[$(1)$]
\item $\Ga_u=u^{-1}\Ga$, and
\item $\Ga_u\cap\Ga_v^{-1}=u^{-1}(\Ga\cap\Ga^{-1})v$. 
\end{compactenum}
\end{thm}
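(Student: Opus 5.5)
Part (1) is proved by induction on $\ell(u)$, using Proposition~\ref{prop:orderideal} and Proposition~\ref{prop:transfer}. For $u=1$ the statement $\Ga_1=\Ga$ is the definition. If $u\in\sB$ with $\ell(u)>0$, choose a reduced factorisation $u=u_1s$ with $s\in\Saff$ and $\ell(u)=\ell(u_1)+1$. Since $\sB$ is closed under prefixes (Proposition~\ref{prop:orderideal}), $u_1\in\sB$, and induction gives $\Ga_{u_1}=u_1^{-1}\Ga$. Proposition~\ref{prop:transfer}, applied to $u_1$ and $s$ (its hypotheses $\ell(u_1s)=\ell(u_1)+1$ and $u_1s\in\sB$ hold), then gives
\[
\Ga_u=s\Ga_{u_1}=su_1^{-1}\Ga=u^{-1}\Ga.
\]

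Part (2) follows by combining (1) with its left-cell analogue. Inverting (1) for $v$ gives $\Ga_v^{-1}=\Ga^{-1}v$. The cleanest way to use this is to observe that left multiplication by $u^{-1}$ carries left cells inside $\Delta_1$ to left cells. I would derive this from the inductive step itself: each step $x\mapsto sx$ in Proposition~\ref{prop:transfer} satisfies $\ell(sx)=\ell(x)+1$ with $x,sx\in\Delta_1$, so by fact (4) of Section~\ref{sec:KL} we have $sx\sim_L x$. Iterating along a reduced word of $u$ shows that for every $w\in\Ga$ we have $u^{-1}w\sim_L w$, with $u^{-1}w\in\Delta_1$. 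Hence, for any left cell $\Lambda\subseteq\Delta_1$, we get $u^{-1}(\Ga\cap\Lambda)=\Ga_u\cap\Lambda$: the inclusion $\subseteq$ follows from (1) together with the fact that $u^{-1}w$ stays in the left cell of $w$. The reverse inclusion holds because any $y\in\Ga_u\cap\Lambda$ equals $u^{-1}w$ with $w\in\Ga$ by (1), and $w\sim_L y$, so $w\in\Lambda$.

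Taking $\Lambda=\Ga_v^{-1}$ gives $\Ga_u\cap\Ga_v^{-1}=u^{-1}(\Ga\cap\Ga_v^{-1})$. By the symmetric (inverted) argument, right multiplication by $v$ preserves right cells along the chain and maps $\Ga^{-1}$ onto $\Ga_v^{-1}$. Applied to the right cell $\Ga$, this gives $\Ga\cap\Ga_v^{-1}=(\Ga\cap\Ga^{-1})v$. Combining the two identities yields (2).

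The main point needing care is the claim $sx\sim_L x$, which requires both $x$ and $sx$ to lie in $\Delta_1$. This is exactly what the proof of Proposition~\ref{prop:transfer} supplies: $sx\in\Ga_{us}\subseteq\Delta_1$. The remaining steps are formal bookkeeping with inverses.
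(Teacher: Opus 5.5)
Your proposal is correct and follows essentially the same route as the paper. Part (1) goes by induction along a reduced word using Propositions~\ref{prop:orderideal} and~\ref{prop:transfer}. Part (2) uses that each step $x\mapsto sx$ keeps both elements in $\Delta_1$, so $sx\sim_L x$ by fact (4) of Section~\ref{sec:KL}; this transports $\Ga\cap\Ga_v^{-1}$ to $\Ga_u\cap\Ga_v^{-1}$, and the mirror-image argument on the right finishes the proof. Your version for an arbitrary left cell $\Lambda\subseteq\Delta_1$ is only a slightly more general packaging of the paper's step $\Ga_u\cap\Ga_v^{-1}=s(\Ga_{u_1}\cap\Ga_v^{-1})$.
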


\begin{proof}
(1) is immediate from Proposition~\ref{prop:transfer} and the fact that $\sB$ is closed under taking prefixes (Proposition~\ref{prop:orderideal}). For (2), suppose that $w\in\Ga_u\cap\Ga_v^{-1}$. Write $u=u_1s$ with $\ell(u_1s)=\ell(u_1)+1$. By Proposition~\ref{prop:orderideal} we have $u_1\in \sB$ and by Proposition~\ref{prop:transfer} we have $\Gamma_u=s\Gamma_{u_1}$. For every $x\in\Ga_{u_1}$, both $x$ and $sx$ lie in $\Delta_1$, and hence $sx\sim_L x$ (see Section~\ref{sec:KL}). Thus $x\in\Gamma_v^{-1}$ if and only if $sx\in\Gamma_v^{-1}$, and so 
$$
\Gamma_u\cap\Gamma_v^{-1}=s(\Gamma_{u_1}\cap\Gamma_v^{-1}). 
$$
Continuing gives $\Gamma_u\cap\Gamma_v^{-1}=u^{-1}(\Gamma\cap\Gamma_v^{-1})$. Similarly, we have $\Gamma\cap\Gamma_v^{-1}=(\Gamma\cap\Gamma^{-1})v$, and the result follows. 
\end{proof}

\subsection{The intersection $\Ga\cap\Ga^{-1}$}

We now compute the intersection $\Ga\cap\Ga^{-1}$.

\begin{lemma}\label{lem:onedirection}
For $\lambda\in P_n^+$ the inversion set $\Phi(\sm_{\la})$ misses the parallelism class $[e_1]$. 
\end{lemma}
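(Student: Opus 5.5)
To prove that $\Phi(\sm_{\la})$ misses $[e_1]$, it suffices to show that neither $e_1$ nor $-e_1+\delta$ lies in $\Phi(\sm_{\la})$. Equivalently, we show that $\sm_{\la}^{-1}\star e_1>0$ and $\sm_{\la}^{-1}\star(-e_1+\delta)>0$. By Proposition~\ref{prop:dominance} we have $\sm_{\la}^{-1}=\sm_{\la}$, and Theorem~\ref{thm:mlambda} gives the factorisation $\sm_{\la}=\sw_n\fu^{-1}\tau_{\la}\fu$. It is therefore enough to compute $\sm_{\la}\star e_1=\sw_n\star(\fu^{-1}\tau_{\la}\fu\star e_1)$, together with the analogous expression for $-e_1+\delta$.

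The first step is to evaluate $\tau_{\la}'\star e_1$, where $\tau_\la'=\fu^{-1}\tau_\la\fu$. Using the explicit form from the proof of Proposition~\ref{prop:taulength}, when $\la_n=0$ we have $\tau_{\la}'=t_{\mu}$ with $\mu=\sw_0\sw_{0,n}\la$. Since $\sw_{0,n}$ reverses the coordinates and $\sw_0=-1$, this gives $\mu=-(\la_n,\la_{n-1},\ldots,\la_1)$, so $\mu_1=-\la_n=0$. Hence $t_\mu\star e_1=e_1+(0-\mu_1)\delta=e_1$, and similarly $t_\mu\star(-e_1+\delta)=-e_1+\delta$; both are fixed. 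When $\la_n=1$ we have $\tau_\la'=t_{\mu}s_{e_1}$ with $\mu=\sw_0\sw_{0,n}(\la-2e_n)$, so $\mu_1=1$. Then $s_{e_1}$ swaps the signs, and $\tau_\la'\star e_1=t_\mu\star(-e_1)=-e_1+\delta$, while $\tau'_\la\star(-e_1+\delta)=e_1$. In both cases $\tau_\la'$ preserves the set $\mathsf{E}=\{e_1,-e_1+\delta\}$. A further remark is needed here: the coordinate $\mu_1$ is independent of $\la_1,\ldots,\la_{n-1}$, which is why the argument is uniform in $\la$.

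The second step is to apply $\sw_n$. By Lemma~\ref{lem:etaleform}, for $n$ even $\sw_n=t_{\omega_n}\sw_0$, so $\sw_n\star e_1=t_{\omega_n}\star(-e_1)=-e_1+\delta$ and $\sw_n\star(-e_1+\delta)=e_1$. For $n$ odd, $\sw_n=t_{\omega_n-\omega_1}\sw_{0,1}$. Here $\sw_{0,1}$ is the longest element of the type $\sB_{n-1}$ subgroup on coordinates $2,\ldots,n$, so it fixes $e_1$, and $\langle\omega_n-\omega_1,e_1\rangle=0$. Thus $\sw_n$ fixes both $e_1$ and $-e_1+\delta$. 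In either parity $\sw_n$ permutes $\mathsf{E}$, and so $\sm_\la\star\mathsf{E}=\mathsf{E}\subseteq\Phi^+$.

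Finally, since $\sm_\la^{-1}=\sm_\la$, we conclude that $\sm_\la^{-1}\star e_1$ and $\sm_\la^{-1}\star(-e_1+\delta)$ are positive. Hence neither root lies in $\Phi(\sm_\la)$, and $\Phi(\sm_\la)$ misses $[e_1]$. The only delicate step is getting the standard forms right, namely the first coordinate of $\mu$ and the linear part of $\sw_n$ for each parity. Before finalising, I would double-check these against Lemma~\ref{lem:etaleform} and the action convention $t_\lambda\star(\alpha+k\delta)=\alpha+(k-\langle\lambda,\alpha\rangle)\delta$.
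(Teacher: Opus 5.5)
Your proof is correct and follows the same route as the paper. Both arguments compute $\sm_\la^{-1}\star e_1$ and $\sm_\la^{-1}\star(-e_1+\delta)$ from the factorisation $\sm_\la=\sw_n\fu^{-1}\tau_\la\fu$ of Theorem~\ref{thm:mlambda}, and find that the pair $\{e_1,-e_1+\delta\}$ is preserved (swapped exactly when $n+\la_n$ is even). You additionally spell out that $\tau_\la'$ and $\sw_n$ each permute this pair, and these intermediate computations agree with Lemma~\ref{lem:etaleform} and the standard form in the proof of Proposition~\ref{prop:taulength}.
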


\begin{proof}
Using the formula for $\sm_{\la}$ in Theorem~\ref{thm:mlambda} we compute $\sm_{\la}^{-1}\star e_1=-e_1+\delta$ or $e_1$, and $\sm_{\la}^{-1}\star(-e_1+\delta)=e_1$ or $-e_1+\delta$ (depending on $n+\la_n$ even or odd), hence the result. 
\end{proof}

\begin{thm}\label{thm:gammaintersection}
We have $\Ga\cap\Ga^{-1}=\{\sm_{\lambda}\mid \lambda\in P_n^+\}$. 
\end{thm}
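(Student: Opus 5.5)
We prove both inclusions, using Shi's description (Theorem~\ref{thm:Shi}) with $u=1$: $\Ga=\Ga_1$ is the set of $w=\sw_ny$ with $\ell(w)=\ell(\sw_n)+\ell(y)$ and $w\notin\Delta_0$. Since $\Ga^{-1}$ is the left cell containing $\sw_n$, the symmetric description gives $\Ga^{-1}=\{x\sw_n\mid \ell(x\sw_n)=\ell(x)+\ell(\sw_n),\,x\sw_n\notin\Delta_0\}$.

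\textbf{Inclusion $\supseteq$.} Let $\la\in P_n^+$. By Theorem~\ref{thm:mlambda}, $\sm_\la=\sw_n\tau'_\la$ with $\ell(\sm_\la)=\ell(\sw_n)+\ell(\tau'_\la)$. By Lemma~\ref{lem:onedirection}, $\Phi(\sm_\la)$ misses $[e_1]$, so $\sm_\la\notin\Delta_0$ by Theorem~\ref{thm:Delta0}. Hence $\sm_\la\in\Ga$. Since $\sm_\la^{-1}=\sm_\la$ (Proposition~\ref{prop:dominance}), also $\sm_\la=\sm_\la^{-1}\in\Ga$, so $\sm_\la\in\Ga^{-1}$.

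\textbf{Inclusion $\subseteq$.} Let $w\in\Ga\cap\Ga^{-1}$. As noted before Theorem~\ref{thm:mlambda}, $D_L(w)=D_R(w)=\{s_0,\dots,s_{n-1}\}$, so $w$ is the longest element of its double coset $W_nwW_n$. It therefore suffices to show $W_nwW_n=W_n\tau_\la W_n$ for some $\la\in P_n$; then $w=\sm_\la=\sm_{\la^+}$ by Proposition~\ref{prop:dominance}.

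To find $\la$, we show that $wA_0$, after translation by an element of $W_n$, lies in $\cA_n$. We then apply Lemma~\ref{lem:Dnfunddom}, which writes $w'=\tau_\la u$ with $u\in W_n$.

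The key input is Lemma~\ref{lem:generalisedabsence} with $u=1$, where $r(1)=1$: $\Phi(w)$ misses $[e_1]$, i.e.\ $e_1,-e_1+\delta\notin\Phi(w)$. Geometrically, $wA_0$ lies in the slab $0\le x_1\le1$, a $W_0$-conjugate of $\cA_n$. To move to the slab $0\le x_n\le 1$, we conjugate by a suitable element of $W_n$, for instance the permutation $\sw_{0,n}$. We would instead work with $w'=gw$ or $w'=gwg^{-1}$ for a $g\in W_n$ sending the slab $\{0\le x_1\le 1\}$ to $\{0\le x_n\le1\}$. Left multiplication by $g$ keeps $w'$ in the same double coset, and $w'A_0=g(wA_0)\subseteq\cA_n$ provided $g$ maps the slab correctly. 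The permutation $\sw_{0,n}$ acts linearly on $V$ and sends $e_1\mapsto -e_n$. It maps $\{0\le x_1\le1\}$ to $\{-1\le x_n\le 0\}$, so we compose with a further element of $W_n$ such as $s_{-e_{n-1}-e_n+\delta}s_{e_{n-1}-e_n}$-type products, or more simply use $\fu$-type elements, to land in $\cA_n$. Thus $w'\in\mathbb{X}_n$, so $w'=\tau_\la u$ with $u\in W_n$ and $\la\in P_n$, and hence $w\in W_n\tau_\la W_n$.

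\textbf{Main obstacle.} The delicate step is choosing the element of $W_n$ (i.e.\ of $\mathsf{Stab}(\xi)$, or of $\langle W_n,\sigma\rangle$) that carries the slab $\{0\le x_1\le 1\}$ onto $\cA_n$. We would first try an explicit element of the stabiliser of $\xi$ whose linear part sends $e_1\mapsto \pm e_n$, checking it against the vertex description of Lemma~\ref{lem:typenvertices}. If such an element only exists in $W_n\sigma$, we absorb $\sigma$ using $W_n\sigma=\tau_{e_n}W_n$, as in the proof of Lemma~\ref{lem:Dnfunddom}.
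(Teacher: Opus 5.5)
Your proof is the paper's proof: $\supseteq$ via Theorem~\ref{thm:mlambda}, Lemma~\ref{lem:onedirection} and $\sm_\la^{-1}=\sm_\la$; and $\subseteq$ via the descent sets, Lemma~\ref{lem:generalisedabsence} with $u=1$, moving $wA_0$ into $\cA_n$ by an element of $W_n$, Lemma~\ref{lem:Dnfunddom}, and Proposition~\ref{prop:dominance}.

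The step you flag as the main obstacle, however, comes from a miscomputation. $\sw_{0,n}$ is the longest element of the type $\sA_{n-1}$ subgroup $\langle s_1,\ldots,s_{n-1}\rangle$. It is therefore the coordinate reversal $e_i\mapsto e_{n+1-i}$, with no sign changes. So $\sw_{0,n}e_1=e_n$, and $\sw_{0,n}$ maps $\{0\le x_1\le 1\}$ exactly onto $\cA_n$. The paper simply takes $\sw_{0,n}w$, and no correction by $\fu$, $\sigma$ or further reflections is needed.

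The sign could never have mattered anyway. Every $g\in W_n$ fixes $\xi=\tfrac12(e_1+\cdots+e_n)$ (Lemma~\ref{lem:typenvertices}). Both $\{0\le x_1\le1\}$ and $\cA_n$ are the width-one slabs centred at $\xi$, with normals $e_1$ and $e_n$ respectively. Hence any $g\in W_n$ whose linear part sends $e_1\mapsto\pm e_n$ maps one slab onto the other. This is also why your claimed image $\{-1\le x_n\le 0\}$ is impossible: it does not contain $\xi$.

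Finally, use $w'=gw$ rather than $gwg^{-1}$, since only the former satisfies $w'A_0=g(wA_0)$.
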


\begin{proof}
For $\la\in P_n^+$ we have $\sm_{\la}=\sw_nx_{\la}$ where $x_{\la}=\fu^{-1}\tau_{\la}\fu$ with $\ell(\sm_{\la})=\ell(\sw_n)+\ell(x_{\la})$. Moreover, by Lemma~\ref{lem:onedirection} and Theorem~\ref{thm:Delta0} we have $\sm_{\la}\notin\Delta_0$. Thus $\sm_{\la}\in \Ga$ by Theorem~\ref{thm:Shi}. By Proposition~\ref{prop:dominance} we have that $\sm_{\la}$ is an involution, and so $\{\sm_{\lambda}\mid \lambda\in P_n^+\}\subseteq\Ga\cap\Ga^{-1}$. 

Let $w\in\Ga\cap\Ga^{-1}$. Since left descent sets are constant on right cells, and right descent sets are constant on left cells, and since $\sw_n\in\Ga\cap\Ga^{-1}$, we have $D_L(w)=D_R(w)=\{s_i\mid 0\leq i\leq n-1\}$. Hence $w$ is of maximal length in its $W_n$-double coset. Moreover, by Lemma~\ref{lem:generalisedabsence} $\Phi(w)$ misses the parallelism class $[e_1]$. Thus the alcove $wA_0$ lies between the hyperplanes $H_{e_1}$ and $H_{-e_1+\delta}$, giving
$$
wA_0\subseteq \{\lambda\in\RR^n\mid  0\leq \langle\lambda,e_1\rangle\leq 1\}=\sw_{0,n}\cA_n
$$
(to obtain the final equality, note that $\sw_{0,n}e_n=e_1$). Thus $\sw_{0,n}wA_0\subseteq\cA_n$, and since $W_n$ is a fundamental domain for the action of $\sT_n$ on $\mathbb{X}_n$ (see Lemma~\ref{lem:Dnfunddom}) there exists $\lambda\in P_n$ and $v\in W_n$ such that $\sw_{0,n}w=\tau_{\lambda}v$. Thus $W_nwW_n=W_n\tau_{\lambda}W_n$, and so $w=\sm_{\lambda}$ for some $\lambda\in P_n$, and by Proposition~\ref{prop:dominance} we can take $\lambda\in P_n^+$. 
\end{proof}
%
%

\begin{cor}\label{cor:intersections}
For $u,v\in \sB$ we have 
$$
\Ga_u\cap\Ga_v^{-1}=\{u^{-1}\sm_{\lambda}v\mid \lambda\in P_n^+\},
$$ 
and for $\la\in P_n^+$ we have $\ell(u^{-1}\sm_{\lambda}v)=\ell(u)+\ell(\sm_{\la})+\ell(v)$. 
\end{cor}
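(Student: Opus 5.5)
The plan is to combine Theorem~\ref{thm:intersections} with Theorem~\ref{thm:gammaintersection}, and then read off the length statement from Corollary~\ref{cor:additive}. No new geometric input should be needed; the content of the corollary is the composition of results already proved.

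First, fix $u,v\in\sB$. By Theorem~\ref{thm:intersections}(2) we have $\Ga_u\cap\Ga_v^{-1}=u^{-1}(\Ga\cap\Ga^{-1})v$, where $\Ga=\Ga_1$ is the right cell containing $\sw_n$. Theorem~\ref{thm:gammaintersection} identifies $\Ga\cap\Ga^{-1}$ with $\{\sm_{\la}\mid \la\in P_n^+\}$. Substituting gives
$$
\Ga_u\cap\Ga_v^{-1}=\{u^{-1}\sm_{\la}v\mid \la\in P_n^+\}.
$$
One bookkeeping point needs checking here. Theorem~\ref{thm:intersections} is stated with $\Ga_v^{-1}$ denoting the left cell $\{w^{-1}\mid w\in\Ga_v\}$. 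Its proof transports the left-cell side by right multiplication by $v$, using the inverse of Proposition~\ref{prop:transfer}: $\Ga_{us}^{-1}=\Ga_u^{-1}s$. This is exactly the form needed, so no further argument is required.

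Second, the length additivity $\ell(u^{-1}\sm_{\la}v)=\ell(u)+\ell(\sm_{\la})+\ell(v)$ for $u,v\in\sB$ and $\la\in P_n^+$ is precisely Corollary~\ref{cor:additive}, and I would simply cite it. That corollary rests on Theorem~\ref{thm:determineB}, which reduces the claim to the extremal cases $u,v\in\{\sb,\sb^{\sigma}\}$, together with an explicit computation via the length formula~(\ref{eq:length}). Since it is already established in the paper, nothing remains to prove there.

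The only potential obstacle is the consistency of conventions between the "left cell" $\Ga_v^{-1}$ and the factor $v$ appearing on the right. I would verify it directly in one line: $w\in\Ga_v^{-1}$ if and only if $w^{-1}\in\Ga_v=v^{-1}\Ga$, if and only if $w\in\Ga^{-1}v$. This agrees with the substitution above, and the corollary follows.
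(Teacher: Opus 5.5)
Your proposal is correct and matches the paper's proof exactly: substitute Theorem~\ref{thm:gammaintersection} into Theorem~\ref{thm:intersections}(2) to get the set equality, then cite Corollary~\ref{cor:additive} for length additivity. Your side check that $\Ga_v^{-1}=\Ga^{-1}v$ is also correct, though Theorem~\ref{thm:intersections}(2) already covers it.
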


\begin{proof}
This is immediate from Theorems~\ref{thm:intersections}, \ref{thm:gammaintersection}, and Corollary~\ref{cor:additive}. 
\end{proof}

\subsection{Cell factorisation}

The previous results imply a strong form of cell factorisation for the cell~$\Delta_1$ (compare with the formulae in \cite[Sections~4.2,4.3]{GP:19}). Recall that we write $\tau_{\la}'=\fu^{-1}\tau_{\la}\fu$ for $\la\in P_n$. In particular, 
$$
T^+=\{\tau_{\la}'\mid \la\in P_n^+\}
$$ is a commutative semigroup with $\ell(\tau\tau')=\ell(\tau)+\ell(\tau')$ for all $\tau,\tau'\in T^+$ (see Proposition~\ref{prop:taulength}). Since the semigroup $\{\tau_{\la}\mid \la\in P_n^+\}$ is generated by the elements $\tau_{\omega_1},\tau_{\omega_2},\ldots,\tau_{\omega_{n-1}}$ and $\tau_{e_n}$, the semigroup $T^+$ is generated by the $\fu$-conjugates of these elements. Thus by the calculation in Proposition~\ref{prop:taulength} we see that $T^+$ is generated by the elements $\tau_1',\ldots,\tau_{n-1}'$ and $\sigma$, where $\tau_i'=t_{\omega_{n-i}-\omega_n}$ for $1\leq i\leq n-1$ (note that $\fu^{-1}\tau_{e_n}\fu=t_{e_1}s_{e_1}=\sigma$). Therefore, each element $\tau\in T^+$ can be written as 
$$
\tau=\tau_1'^{k_1}\cdots\tau_{n-1}'^{k_{n-1}}\sigma^k\quad\text{with $k_1,\ldots,k_{n-1}\in\mathbb{Z}_{\geq 0}$ and $k\in\{0,1\}$.}
$$
Moreover, $\ell(\tau)=k_1\ell(\tau_1')+\cdots+k_{n-1}\ell(\tau_{n-1}')$. 

%
%

\begin{cor}[Cell factorisation]\label{cor:main3}
The map $\psi:\sB^2\times T^+\to  \Delta_1$ with $\psi(u,v,\tau)=u^{-1}\sw_n\tau v$ is a bijection, and $\ell(u^{-1}\sw_n\tau v)=\ell(u)+\ell(\sw_n)+\ell(\tau)+\ell(v)$ for all $u,v\in\sB$ and $\tau\in T^+$. Moreover, if $w,w'\in\Delta_1$ with $\psi^{-1}(w)=(u,v,\tau)$ and $\psi^{-1}(w')=(u',v',\tau')$ then $w\sim_L w'$ if and only if $v=v'$ and $w\sim_R w'$ if and only if $u=u'$. 
\end{cor}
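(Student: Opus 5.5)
The plan is to transport everything through Theorem~\ref{thm:mlambda}, which gives $\sm_{\la}=\sw_n\tau'_{\la}$ with $\tau'_{\la}=\fu^{-1}\tau_{\la}\fu$ and $\ell(\sm_{\la})=\ell(\sw_n)+\ell(\tau'_{\la})$. The map $\la\mapsto\tau'_\la$ from $P_n^+$ to $T^+$ is a bijection by definition of $T^+$: injectivity holds because $\la\mapsto\tau_\la$ is injective on $P_n$, since $\sT_n\cong P/Q_n$ and distinct elements of $P_n$ are distinct modulo $Q_n=2\ZZ e_n$. Hence $\psi(u,v,\tau'_\la)=u^{-1}\sw_n\tau'_\la v=u^{-1}\sm_\la v$. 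So $\psi$ is identified with the map $(u,v,\la)\mapsto u^{-1}\sm_\la v$ on $\sB^2\times P_n^+$, and the length statement follows at once from Corollary~\ref{cor:additive} together with $\ell(\sm_\la)=\ell(\sw_n)+\ell(\tau)$.

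For surjectivity, I would use Theorem~\ref{thm:Shi}: $\Delta_1$ is the union of the right cells $\Ga_u$, $u\in\sB$. Every $w\in\Delta_1$ lies in some left cell of $\Delta_1$. Since $\Delta_1^{-1}=\Delta_1$ (inversion preserves two-sided cells), the left cells of $\Delta_1$ are exactly the sets $\Ga_v^{-1}$, $v\in\sB$. Hence $w\in\Ga_u\cap\Ga_v^{-1}$ for some $u,v\in\sB$, and Corollary~\ref{cor:intersections} gives $w=u^{-1}\sm_\la v$ for some $\la\in P_n^+$.

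For injectivity, suppose $u^{-1}\sm_\la v=u'^{-1}\sm_{\la'}v'$. This element lies in $\Ga_u\cap\Ga_v^{-1}$ and in $\Ga_{u'}\cap\Ga_{v'}^{-1}$, again by Corollary~\ref{cor:intersections}. Since distinct right cells are disjoint, $\Ga_u=\Ga_{u'}$, and the last clause of Theorem~\ref{thm:Shi} forces $u=u'$; similarly $v=v'$. Cancelling $u^{-1}$ and $v$ then gives $\sm_\la=\sm_{\la'}$, and the corollary following Theorem~\ref{thm:mlambda} (with $\la,\la'\in P_n^+$) gives $\la=\la'$, hence $\tau=\tau'$.

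For the cell statements, the argument above shows that $\psi(u,v,\tau)\in\Ga_u\cap\Ga_v^{-1}$. So $w\sim_R w'$ holds iff $w,w'$ lie in the same $\Ga_u$, iff $u=u'$, using that the $\Ga_u$ are distinct and therefore pairwise disjoint right cells. The left cell case is identical with $\Ga_v^{-1}$.

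The only step needing care, and so the main point to check, is the claim that the left cells in $\Delta_1$ are precisely the $\Ga_v^{-1}$ with distinct $v$ giving distinct cells; this follows from the inversion symmetry recalled in Section~\ref{sec:KL}.
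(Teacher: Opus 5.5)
Your proposal is correct and follows essentially the same route as the paper. Well-definedness and surjectivity come from Corollary~\ref{cor:intersections} together with Theorem~\ref{thm:mlambda}, where you usefully make explicit that $\Delta_1=\bigsqcup_{u,v\in\sB}(\Ga_u\cap\Ga_v^{-1})$, that the left cells of $\Delta_1$ are the $\Ga_v^{-1}$, and how the $\sim_L$/$\sim_R$ clause follows. Injectivity comes from the distinctness of the cells in Theorem~\ref{thm:Shi}; once $u=u'$ and $v=v'$ you can get $\tau=\tau'$ by cancelling $u^{-1}\sw_n$ and $v$ directly, so the detour through $\sm_\la=\sm_{\la'}\Rightarrow\la=\la'$ is unnecessary (though harmless).
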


\begin{proof} The map $\psi:\sB^2\times T^+\to \Delta_1$ with $\psi(u,v,\tau)=u^{-1}\sw_n\tau v$ is well defined (that is, $\psi(u,v,\tau)\in\Delta_1$) and surjective by Corollary~\ref{cor:intersections} and Theorem~\ref{thm:mlambda}. Moreover, if $w=\psi(u,v,\tau)=\psi(u',v',\tau')$ then Corollary~\ref{cor:intersections} gives $w\in \Ga_u\cap \Ga_{u'}$ and $w\in\Ga_v^{-1}\cap\Ga_{v'}^{-1}$, and so $u=u'$ and $v=v'$ (by Theorem~\ref{thm:Shi}). Thus $\tau=\tau'$, and so $\psi$ is bijective. 
\end{proof}

The proof of Theorem~\ref{thm:main1} is now complete. 

\subsection{The fundamental domain $\sF=\fu\sB$}\label{sec:funddom}

To complete the proof of Theorem~\ref{thm:main2} we must show that $\sF=\fu\sB$ is a fundamental domain for the action of $\sT_n$ on $\mathbb{X}_n$. 


\begin{proof}[Proof of Theorem~\ref{thm:main2}(3)]
By Lemma~\ref{lem:Dnfunddom} we have that $W_n$ is a fundamental domain for the action of $\sT_n$ on $\mathbb{X}_n$. Moreover, by Theorem~\ref{thm:Bcount} we have $|\sF|=2^{n-1}n!=|W_n|$, and so it is sufficient to prove that every orbit $\sT_nz$, for $z\in W_n$, contains an element of $\sF=\fu\sB$.

Fix $z\in W_n$. Let $\xi=\frac{1}{2}(e_1+\cdots+e_n)$ be the type $n$ vertex of the alcove $A_0$, and let $\mathcal{C}$ be the cone based at $\xi$ containing $A_0$. Thus
$$
\mathcal{C}=\{x\in\mathbb{R}^n\mid x_1\geq x_2\geq \cdots \geq x_n,\,x_1+x_2\leq 1\}. 
$$
For $y\in\Wext$ we have $yA_0\subseteq \mathcal{C}$ if and only if $\ell(\sw_n y)=\ell(\sw_n)+\ell(y)$. To see this, by~(\ref{eq:add}) we have $\ell(\sw_ny)=\ell(\sw_n)+\ell(y)$ if and only if $D_L(y)\subseteq\{s_n\}$, if and only if $\alpha_i\notin\Phi(y)$ for $0\leq i\leq n-1$, if and only if $yA_0$ is contained in $H_{\alpha_i}^+$ for each $0\leq i\leq n-1$ (where $H_{\alpha_i}^+$ denotes the half space bounded by $H_{\alpha_i}$ containing $A_0$). For $1\leq i\leq n-1$ we have $H_{\alpha_i}^+=\{x\in V\mid x_i\geq x_{i+1}\}$, and for $i=0$ we have $H_{\alpha_0}^+=\{x\in V\mid x_1+x_2\leq 1\}$, hence the claim. 

Consider the vector $\eta=-e_2-2e_3-\cdots-(n-1)e_n$ lying in the interior of the cone $\mathcal{C}$. Write $\fu=t_{\gamma}u$ with $u\in \Wfin$ (by Lemma~\ref{lem:etaleform} we have $u=\sw_{0,n}\sw_0$ if $n$ is even, and $u=\sw_{0,n}\sw_{0,1}$ if $n$ is odd). Write $\nu=u(\eta)$. Since $u(e_1)=\pm e_n$ the last coordinate of $\nu$ is $0$, and so $\nu\in P_n$. Thus $\tau_{N\nu}=t_{N\nu}$ for all $N\in\ZZ$. Since $\fu^{-1}t_{N\nu}\fu=t_{N\eta}$ we have $\fu^{-1}\tau_{N\nu}z=t_{N\eta}\fu^{-1}z$. Since $\eta$ is in the interior of $\mathcal{C}$ there is $N>0$ sufficiently large such that $t_{N\eta}\fu^{-1}z A_0\subseteq \mathcal{C}$. Fix such a choice of $N$, write $\nu_0=N\nu\in P_n$, and let $y=\fu^{-1}\tau_{\nu_0}z$. Since $y\cA_0\subseteq\mathcal{C}$ we have $\ell(\sw_ny)=\ell(\sw_n)+\ell(y)$. 

We claim that $\sw_ny\notin\Delta_0$. Since $\fu=\sw_{0,n}\sw_n$ we have $\sw_ny=\sw_{0,n}\tau_{\nu_0}z$. Because $z\in W_n$ the alcove $z A_0$ lies in $\cA_n$, and since $\sT_n$ acts on $\cA_n$ we have $\tau_{\nu_0}zA_0\subseteq\cA_n$. But $\sw_{0,n}(e_n)=e_1$, hence $\sw_nyA_0\subseteq\{x\in\mathbb{R}^n\mid 0\leq \langle x,e_1\rangle\leq 1\}$. Thus the inversion set of $\sw_n y$ misses the parallelism class $[e_1]$, and so $\sw_ny\notin\Delta_0$ by Theorem~\ref{thm:Delta0}. 

Thus from Shi's description of $\Ga$ in Theorem~\ref{thm:Shi} we have $\sw_ny\in \Ga$. Since $\Gamma=\bigsqcup_{v\in \sB}(\Ga\cap\Ga_v^{-1})$, Corollary~\ref{cor:intersections} gives $y=(\fu^{-1}\tau_{\la}\fu)v$ for some $\lambda\in P_n^+$ and $v\in\sB$. Thus
$$
\fu^{-1}\tau_{\nu_0}z=y=\fu^{-1}\tau_{\la}\fu v,
$$
giving $\tau_{\la}^{-1}\tau_{\nu_0}z=\fu v\in \fu\sB=\sF$, and hence the result (as $\tau_{\la}^{-1}\tau_{\nu_0}\in \sT_n$). 
\end{proof}

\subsection{Applications}\label{sec:applications1}

Before moving on to our study of the asymptotic algebra, we provide two applications of our description of $\Delta_1$ given in Theorems~\ref{thm:main1} and~\ref{thm:main2}. 

Firstly, we can give a complete classification of involutions in the cell~$\Delta_1$. 

\begin{cor}\label{cor:involutions}
Let $w\in\Delta_1$. Then $w^2=1$ if and only if $w=u^{-1}\sm_{\la}u$ for some $u\in\sB$ and $\la\in P_n^+$. In particular, each diagonal intersection $\Ga_u\cap\Ga_u^{-1}$ consists entirely of involutions, and each off-diagonal intersection $\Ga_u\cap\Ga_v^{-1}$ with $u\neq v$ contains no involution. 
\end{cor}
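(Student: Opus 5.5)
Both directions follow from the cell factorisation (Theorem~\ref{thm:main1} / Corollary~\ref{cor:intersections}) together with Proposition~\ref{prop:dominance}, via the inverse map $w\mapsto w^{-1}$.

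For the ``if'' direction, take $w=u^{-1}\sm_{\la}u$ with $u\in\sB$ and $\la\in P_n^+$. By Proposition~\ref{prop:dominance} we have $\sm_{\la}^{-1}=\sm_{\la}$. Hence
$$w^{-1}=u^{-1}\sm_{\la}^{-1}u=u^{-1}\sm_{\la}u=w,$$
so $w^2=1$.

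For the ``only if'' direction, let $w\in\Delta_1$ with $w^2=1$. By Theorem~\ref{thm:main1}(2), $w=u^{-1}\sm_{\la}v$ for unique $u,v\in\sB$ and $\la\in P_n^+$. Corollary~\ref{cor:intersections} then gives $w\in\Ga_u\cap\Ga_v^{-1}$. Since $\Ga$ is a right cell exactly when $\Ga^{-1}$ is a left cell, we get $w^{-1}\in\Ga_u^{-1}\cap\Ga_v$. Because $w=w^{-1}$, this means $w\in\Ga_u\cap\Ga_v$. The sets $\Ga_u,\Ga_v$ are right cells, so they are either disjoint or equal, and hence $\Ga_u=\Ga_v$. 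Theorem~\ref{thm:Shi} says distinct elements of $\sB$ give distinct right cells, so $u=v$.

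Alternatively, one can compare the two expressions directly. We have $w^{-1}=v^{-1}\sm_{\la}u$, again using $\sm_{\la}^{-1}=\sm_{\la}$. This is also an expression of the form in Theorem~\ref{thm:main1}, so uniqueness in Theorem~\ref{thm:main1}(2) forces $u=v$.

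The ``in particular'' statements now follow at once. By Corollary~\ref{cor:intersections}, $\Ga_u\cap\Ga_u^{-1}=\{u^{-1}\sm_\la u\mid \la\in P_n^+\}$, and each of these elements is an involution by the first part. For $u\neq v$, an involution in $\Ga_u\cap\Ga_v^{-1}$ would have the form $x^{-1}\sm_\mu x$. Uniqueness of the factorisation would then force $u=x=v$, a contradiction.

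None of the steps is a real obstacle. The only point to check is that $w^{-1}$, written as $v^{-1}\sm_\la u$, has the correct shape $u'^{-1}\sm_{\la'}v'$ with $u'=v$, $v'=u\in\sB$ and $\la'=\la\in P_n^+$. That is exactly what $\sm_\la^{-1}=\sm_\la$ from Proposition~\ref{prop:dominance} provides.
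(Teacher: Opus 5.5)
Your proposal is correct and takes essentially the same approach as the paper. The paper writes $w=u^{-1}\sm_{\la}v$, uses $\sm_{\la}^{-1}=\sm_{\la}$ to get $w^{-1}=v^{-1}\sm_{\la}u$, and then applies uniqueness of the factorisation to force $u=v$; this is your ``alternative'' argument, and the diagonal/off-diagonal statements are read off from the description of $\Ga_u\cap\Ga_v^{-1}$ exactly as you do. Your primary argument (the right cells $\Ga_u$ and $\Ga_v$ meet, so they coincide, so $u=v$ by Shi) is equally valid and simply unpacks where that uniqueness comes from.
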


\begin{proof}
If $w\in\Delta_1$ then by Theorem~\ref{thm:main1} we have $w=u^{-1}\sm_{\la}v$ for some $u,v\in\sB$ and $\la\in P_n^+$, and by Proposition~\ref{prop:dominance} we have $\sm_{\la}^{-1}=\sm_{\la}$, and so $w^{-1}=v^{-1}\sm_{\la}u$. By Corollary~\ref{cor:main3} each element of $\Delta_1$ has a unique expression in the form $u^{-1}\sm_{\la}v$, and so if $w=w^{-1}$ we have $u=v$ as required. By the description of $\Ga_u\cap\Ga_v^{-1}$ in Theorem~\ref{thm:main1} it follows that every element of $\Ga_u\cap\Ga_u^{-1}$ is an involution, and no element of $\Ga_u\cap\Ga_v^{-1}$ is an involution if $u\neq v$. 
\end{proof}

Secondly, we can explicitly compute the rational length generating functions for $\Delta_1$, each right cell $\Ga_u$, and the set of involutions in~$\Delta_1$. 

\begin{cor}\label{cor:generating}
Let $\sB(t)=\sum_{u\in\sB}t^{\ell(u)}$. We have 
$$
\sum_{w\in\Delta_1}t^{\ell(w)}=\frac{2t^{n(n-1)}\sB(t)^2}{\prod_{i=1}^{n-1}(1-t^{i(2n-i)})}\quad \text{and}\quad \sum_{w\in \Ga_u}t^{\ell(w)}=\frac{2t^{n(n-1)+\ell(u)}\sB(t)}{\prod_{i=1}^{n-1}(1-t^{i(2n-i)})}. 
$$
Moreover, we have
$$
\sum_{w\in\Delta_1,\,w^2=1}t^{\ell(w)}=\frac{2t^{n(n-1)}\sB(t^2)}{\prod_{i=1}^{n-1}(1-t^{i(2n-i)})}. 
$$
\end{cor}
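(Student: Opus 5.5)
By Corollary~\ref{cor:main3} the map $\psi(u,v,\tau)=u^{-1}\sw_n\tau v$ is a length-additive bijection $\sB^2\times T^+\to\Delta_1$, with $\ell(\psi(u,v,\tau))=\ell(u)+\ell(\sw_n)+\ell(\tau)+\ell(v)$. Summing $t^{\ell(w)}$ over $\Delta_1$ therefore splits into a product:
$$
\sum_{w\in\Delta_1}t^{\ell(w)}=t^{\ell(\sw_n)}\,\sB(t)^2\sum_{\tau\in T^+}t^{\ell(\tau)}.
$$
We have $\ell(\sw_n)=n(n-1)$, for instance from Corollary~\ref{cor:mlength} with $\la=0$. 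So the whole task reduces to computing the generating function of $T^+$.

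For $T^+$ I will use the parametrisation recorded just before Corollary~\ref{cor:main3}. Each $\tau\in T^+$ is $\tau_1'^{k_1}\cdots\tau_{n-1}'^{k_{n-1}}\sigma^k$ with $k_i\geq0$ and $k\in\{0,1\}$, and its length is $\sum k_i\ell(\tau_i')$. I must check that this representation is unique. Since $\la\mapsto\tau_\la'$ is injective on $P_n^+$ (the map $\la\mapsto\tau_\la$ is injective, and conjugation by $\fu$ is a bijection), it suffices to note that $(k_1,\dots,k_{n-1},k)$ corresponds bijectively to $\la\in P_n^+$. Indeed $\tau_i'$ is the $\fu$-conjugate of $\tau_{\omega_{n-i}}$ and $\sigma$ is that of $\tau_{e_n}$. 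Writing $\la=\sum_{j=1}^{n-1}c_j\omega_j+\la_ne_n$ with $c_j=\la_j-\la_{j+1}\geq 0$ (where $\la_n$ in the difference is replaced by $0$ for $j=n-1$) gives this bijection.

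The lengths follow from Proposition~\ref{prop:taulength}:
$$
\ell(\tau_{\omega_m}')=\sum_{i=1}^{m}(2n-2i+1)=m(2n-m).
$$
With $m=n-i$ this gives $\ell(\tau_i')=(n-i)(n+i)$, and as $i$ runs over $1,\dots,n-1$ so does $n-i$. Hence the multiset of these lengths is $\{i(2n-i):1\le i\le n-1\}$, and
$$
\sum_{\tau\in T^+}t^{\ell(\tau)}=\frac{2}{\prod_{i=1}^{n-1}(1-t^{i(2n-i)})},
$$
where the factor $2$ comes from $\sigma$, which has length $0$. This proves the first formula.

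For a right cell, Theorem~\ref{thm:main1}(3) gives $\Ga_u=\{u^{-1}\sm_\la v\}$ with $u$ fixed. The same sum, now without summing over $u$, yields $t^{\ell(u)}$ in place of one factor of $\sB(t)$.

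For the involutions, Corollary~\ref{cor:involutions} shows they are exactly the elements $u^{-1}\sm_\la u$, and by Corollary~\ref{cor:main3} each is represented uniquely. Their lengths are $2\ell(u)+n(n-1)+\ell(\tau)$, so the sum over $u$ contributes $\sB(t^2)$ and the factor from $T^+$ is unchanged.

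The only genuinely nonroutine point is the uniqueness and length bookkeeping for $T^+$. This comes down to the linearity of $\ell(\tau_\la')$ in $\la$ and its independence of $\la_n$, both of which Proposition~\ref{prop:taulength} already provides.
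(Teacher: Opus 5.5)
Your argument is correct and is essentially the paper's proof. The paper's proof is a one-line citation of exactly your ingredients: the unique length-additive factorisation from Theorem~\ref{thm:main1} and Corollary~\ref{cor:main3}, the length formula of Corollary~\ref{cor:mlength}, and Corollary~\ref{cor:involutions}.

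One harmless slip: $\tau_i'=t_{\omega_{n-i}-\omega_n}$ is the $\fu$-conjugate of $\tau_{\omega_i}$, not of $\tau_{\omega_{n-i}}$. So $\ell(\tau_i')=i(2n-i)$ rather than $(n-i)(n+i)$; for example, when $n=3$ we have $\ell(\tau_1')=\ell(t_{-e_3})=5$. Since you only use the multiset of generator lengths, the product formula is unaffected.
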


\begin{proof}
These are straightforward consequences of the description of $\Delta_1$ and $\Ga_u$ given in Theorem~\ref{thm:main1}, the uniqueness of expressions from Corollary~\ref{cor:main3}, and the explicit length formula Corollary~\ref{cor:mlength} (using also Corollary~\ref{cor:involutions}). 
\end{proof}

We note that it appears that there is no simple product formula for the polynomial~$\sB(t)$. For example, for $n=2,3$ we have $\sB(t)=1+t+2t^2$ and $\sB(t)=1+t+t^2+3t^3+3t^4+3t^5+3t^6+3t^7+2t^8+2t^9+2t^{10}$. Generally we have $\sB(1)=2^{n-1}n!$ and $\deg\sB(t)=\ell(\sb)=\frac{1}{3}n(n-1)(2n-1)$.

\section{A factorisation formula in $\cJ_{\Delta_1}$}\label{sec:Jfact}

We now turn our attention to the asymptotic algebra $\cJ_{\Delta_1}$. In this section we lift the cell factorisation from Corollary~\ref{cor:main3} to a factorisation theorem in $\cJ_{\Delta_1}$, proving Theorem~\ref{thm:main4}. We begin with background sections on the extended affine Hecke algebra and further Kazhdan-Lusztig theory.

\subsection{The affine Hecke algebra}

Let $\sq$ be an invertible indeterminate, and let $\sR=\ZZ[\sq,\sq^{-1}]$. 

\begin{defn}
The (equal parameter) extended affine Hecke algebra of type $\tilde{\sB}_n$ is the $\sR$-algebra $\Hext$ with basis $\{T_w\mid w\in\Wext\}$ and multiplication given by (for $w,v\in\Wext$ and $s\in\Saff$)
\begin{align*}
T_wT_v&=T_{wv}&&\text{if $\ell(wv)=\ell(w)+\ell(v)$}\\
T_wT_s&=T_{ws}+(\sq-\sq^{-1})T_w&&\text{if $\ell(ws)=\ell(w)-1$}.
\end{align*}
\end{defn}

The algebra $\Hext$ admits another basis $\{X_v\mid v\in\Wext\}$ defined as follows. For each hyperplane $H_{\alpha+k\delta}$ let $H_{\alpha+k\delta}^+$ denote the closed half space bounded by $H_{\alpha+k\delta}$ containing points $N(\omega_1+\cdots+\omega_n)$ for all sufficiently large $N>0$, and let $H_{\alpha+k\delta}^-$ denote the other closed half space bounded by $H_{\alpha+k\delta}$. Let $v=s_{i_1}\cdots s_{i_r}\sigma^k$ be any expression (not necessarily reduced). Let $v_0=1$ and $v_p=s_{i_1}\cdots s_{i_p}$ for $1\leq p\leq r$. Let $A_p=v_pA_0$, and let $\epsilon_1,\ldots,\epsilon_r\in\{-1,1\}$ be the sequence defined by $\epsilon_p=1$ (respectively $\epsilon_p=-1$) if $A_{p}$ is on the positive (respectively negative) side of the hyperplane separating $A_{p-1}$ from $A_p$. The element 
$$
X_v=T_{s_{i_1}}^{\epsilon_1}\cdots T_{s_{i_r}}^{\epsilon_r}T_{\sigma^k}
$$
does not depend on the particular expression for $v$ chosen, and $\{X_v\mid v\in \Wext\}$ is a basis of~$\Hext$. For $\la\in P$ let 
$
X^{\la}=X_{t_{\la}}. 
$
Then $X^{\la}X^{\mu}=X^{\la+\mu}=X^{\mu}X^{\la}$ for all $\la,\mu\in P$. We have $T_{s_0}=X^{\varphi^{\vee}}T_{s_{\varphi}}^{-1}$. 

The \textit{Bernstein relation} is 
$$
T_{s_i}X^{\lambda}=X^{s_i\lambda}T_{s_i}+(\sq-\sq^{-1})\frac{X^{\lambda}-X^{s_i\lambda}}{1-X^{-\alpha_i^{\vee}}}\quad\text{for $1\leq i\leq n$ and $\lambda\in P$}.
$$

\subsection{Further Kazhdan-Lusztig theory}\label{sec:KL2}

The \textit{bar involution} $\overline{\sq}=\sq^{-1}$ on $\sR$ extends to an involution on $\Hext$ by setting 
$$
\overline{\sum_{w\in\Wext}a_wT_w}=\sum_{w\in\Wext}\overline{a_w}T_{w^{-1}}^{-1}. 
$$
In \cite{KL:79} Kazhdan and Lusztig proved that for each $w\in \Wext$ there is a unique element $C_w\in\Hext$ such that 
$$
\overline{C_w}=C_w\quad\text{and}\quad C_w=T_w+\sum_{v<w}P_{v,w}T_v\quad\text{with $P_{v,w}\in\sq^{-1}\ZZ[\sq^{-1}]$},
$$
and by triangularity $\{C_w\mid w\in\Wext\}$ is a basis of $\Hext$ (the \textit{Kazhdan-Lusztig basis}). For $x,y\in\Wext$ write 
$$
C_xC_y=\sum_{z\in\Wext}h_{x,y,z}C_z
$$
where $h_{x,y,z}\in\sR$ (necessarily $\overline{h_{x,y,z}}=h_{x,y,z}$). 

Let $\deg:\sR\to \ZZ\cup\{-\infty\}$ denote degree in $\sq$ (for example, $\deg(\sq^{-3})=-3$ and $\deg(0)=-\infty$). Since $h_{x,y,z}$ is bar-invariant we have $\deg h_{x,y,z}\geq 0$ whenever $h_{x,y,z}\neq 0$. Lusztig's $\ba$-function is the function $\ba:\Wext\to\ZZ_{\geq 0}$ defined by 
$$
\ba(z)=\max\{\deg h_{x,y,z}\mid x,y\in\Wext\}. 
$$
It is known that $\ba(z)\leq \ell(\sw_0)$ for all $z\in\Wext$. By \cite{ShiBn} we have
$$
\ba(\Delta_1)=n(n-1). 
$$

Following Lusztig~\cite{Lus:03}, define $\gamma_{x,y,z^{-1}}\in\ZZ$ by 
$$
h_{x,y,z}=\gamma_{x,y,z^{-1}}\sq^{\ba(z)}+(\text{terms of degree $<\ba(z)$}). 
$$
Let $\cD=\{z\in \Wext\mid \ba(z)=-\deg P_{1,z}\}$ (the set of \textit{distinguished involutions}). 

In \cite{Lus:03} Lusztig gave a series of statements (known as $\mathsf{P1}$--$\mathsf{P15}$) that capture the essential properties of cells, Lusztig's $\ba$-function, the $\gamma$-coefficients, and the distinguished involutions. In the equal parameter setting of this paper, all properties $\mathsf{P1}$--$\mathsf{P15}$ are known to hold (see \cite[Section 15]{Lus:03}). We record some of these properties for further use. 
\begin{compactenum}
\item[$\mathsf{P2}$.] If $d\in\cD$ and $x,y\in \Wext$ satisfy $\gamma_{x,y,d}\neq 0$ then $y=x^{-1}$.
\item[$\mathsf{P3}$.] If $y\in\Wext$ then there exists a unique $d\in\cD$ such that $\gamma_{y,y^{-1},d}\neq 0$.
\item[$\mathsf{P4}$.] If $z'\leq _{LR}z$ then $\ba(z')\geq \ba(z)$. In particular, $\ba$ is constant on two-sided cells. 
\item[$\mathsf{P5}$.] If $d\in\cD$, $y\in\Wext$, and $\gamma_{y,y^{-1},d}\neq 0$, then $\gamma_{y,y^{-1},d}=1$.
\item[$\mathsf{P6}$.] If $d\in\cD$ then $d^2=1$. 
\item[$\mathsf{P7}$.] For $x,y,z\in \Wext$ we have $\gamma_{x,y,z}=\gamma_{y,z,x}$. 
\item[$\mathsf{P8}$.] If $\gamma_{x,y,z^{-1}}\neq 0$ then $z\sim_R x$, $z\sim_L y$, and $y\sim_R x^{-1}$. 
\item[$\mathsf{P11}$.] If $z'\leq_{LR}z$ and $\ba(z')=\ba(z)$ then $z'\sim_{LR}z$. 
\item[$\mathsf{P13}$.] Each right cell $\Ga$ contains a unique element $d\in\cD$, and $\gamma_{x,x^{-1},d}\neq 0$ for all $x\in\Ga$. 
\end{compactenum}


The \textit{asymptotic algebra} (also called the \textit{based ring}) is the $\ZZ$-algebra $\cJ$ with basis $\{\st_w\mid w\in\Wext\}$ and multiplication given by 
$$
\st_x\st_y=\sum_{z\in\Wext}\gamma_{x,y,z^{-1}}\st_z,
$$
and by $\mathsf{P1}$--$\mathsf{P15}$ it is known that $\cJ$ is associative. For each left or right cell $\Ga$ the submodule $\cJ_{\Ga}$ spanned by $\{\st_w\mid w\in\Ga\}$ is a subring of~$\cJ$. Let 
$$
\cJ_{\Delta_1}=\mathsf{span}_{\ZZ}\{\st_w\mid w\in\Delta_1\}. 
$$

\begin{prop}[{\cite{ShiBn}}]
For $u\in\sB$, the distinguished involution of $\Ga_u$ is $\sd(u)=u^{-1}\sw_nu$.
\end{prop}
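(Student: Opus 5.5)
The plan is to use the characterisation of distinguished involutions via $\mathsf{P13}$ together with the cell transport results of Section~\ref{sec:main}. First I will treat $u=1$, showing that $\sw_n$ is the distinguished involution of $\Ga=\Ga_1$. Then I will transport this along $\sB$ using Theorem~\ref{thm:intersections} and Corollary~\ref{cor:involutions}.

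For $u=1$, the standard fact is that if $\sw_J$ is the longest element of a finite standard parabolic subgroup, then $\sw_J$ is a distinguished involution. Indeed, $P_{1,\sw_J}=\sq^{-\ell(\sw_J)}$ (all $P_{y,\sw_J}$ with $y\in W_J$ are powers of $\sq^{-1}$ given by $\sq^{\ell(y)-\ell(\sw_J)}$ in this normalisation), so $-\deg P_{1,\sw_J}=\ell(\sw_J)$. Moreover $\ba(\sw_J)=\ell(\sw_J)$, since $C_{\sw_J}^2=\big(\sum_{y\in W_J}\sq^{2\ell(y)-\ell(\sw_J)}\big)C_{\sw_J}$ has leading degree $\ell(\sw_J)$, while $\ba(z)\le\ell(z)$ in general (or one can use $\ba(\Delta_1)=n(n-1)=\ell(\sw_n)$ directly). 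Hence $\sw_n\in\cD$. Since $\sw_n\in\Ga$ and each right cell contains a unique distinguished involution by $\mathsf{P13}$, we get $\sd(1)=\sw_n$.

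For general $u\in\sB$, $\mathsf{P6}$ and $\mathsf{P8}$ show that the distinguished involution of a right cell $\Ga_u$ lies in $\Ga_u\cap\Ga_u^{-1}$. (Since $\gamma_{x,x^{-1},d}\neq0$, $\mathsf{P7}$ and $\mathsf{P8}$ give $d\sim_L x^{-1}$ as well as $d\sim_R x$.) By Theorem~\ref{thm:main1}(4) this intersection is $\{u^{-1}\sm_\la u\mid \la\in P_n^+\}$. It remains to pick out $\la=0$, noting $\sm_0=\sw_n$ by Theorem~\ref{thm:mlambda}. The natural criterion is $\ba(d)=-\deg P_{1,d}$. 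Since $\ba\equiv n(n-1)$ on $\Delta_1$, we need $\deg P_{1,u^{-1}\sm_\la u}=-n(n-1)$ exactly when $\la=0$.

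Rather than computing Kazhdan--Lusztig polynomials, I would use $\mathsf{P13}$ and $\mathsf{P2}$/$\mathsf{P3}$ with the structure constants and try to induct on $\ell(u)$ along prefixes (Proposition~\ref{prop:orderideal}). Write $u=u_1s$ and assume $\sd(u_1)=u_1^{-1}\sw_nu_1$. Lusztig's results on distinguished involutions include the statement that if $d\in\cD$, $s\in\Saff$, and $sd\sim_{LR}d$ with $sd\neq ds$, then $sds\in\cD$ (\cite{Lus:03}; compare the star-operation compatibility). By Proposition~\ref{prop:transfer}, $\Ga_u=s\Ga_{u_1}$. In particular $s\,\sd(u_1)\in\Ga_u$, and symmetrically $s\,\sd(u_1)s\in\Ga_u\cap\Ga_u^{-1}$, with length increased by $2$ by Corollary~\ref{cor:additive}. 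The main obstacle is justifying this transfer step. If the cited property is not available in the needed form, I would instead argue via $\mathsf{P13}$: the distinguished involution $d$ is the unique element of $\Ga_u\cap\Ga_u^{-1}$ with $\gamma_{x,x^{-1},d}\neq0$. Taking $x=u^{-1}\sw_n$ and computing the leading coefficient of $C_{u^{-1}\sw_n}C_{\sw_nu}$ should produce $C_{u^{-1}\sw_nu}$ with coefficient of degree $\ell(\sw_n)$, because $C_{u^{-1}\sw_n}C_{\sw_nu}$ contains the factor $C_{\sw_n}^2$. Uniqueness then comes from Corollary~\ref{cor:main3}.
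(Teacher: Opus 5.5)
Your reduction is sound as far as it goes: $\sw_n\in\cD$ handles $u=1$, and $\mathsf{P6}$ together with Theorem~\ref{thm:main1}(4) places $\sd(u)$ in $\{u^{-1}\sm_\la u\mid \la\in P_n^+\}$. The paper gives no proof here; it simply quotes Shi. However, neither of your routes to $\la=0$ works as written.

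\textbf{The induction route.} The ``Lusztig result'' it relies on is false as stated. In a dihedral group with $m_{st}=4$ (or with $s=s_{n-1}$, $t=s_n$ inside $\Waff$), take $d=t\in\cD$. Then $st\sim_{LR}t$ and $st\neq ts$. But $\ba(sts)=1$ while $P_{1,sts}=\sq^{-3}$, so $sts\notin\cD$. So knowing that $s\,\sd(u_1)s$ lies in $\Ga_u\cap\Ga_u^{-1}$ says nothing about whether it is distinguished.

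\textbf{The fallback route.} This is where the real gap is. $\mathsf{P3}$ and $\mathsf{P13}$ make $d$ with $\gamma_{x,x^{-1},d}\neq0$ unique only among elements of $\cD$. In general $\st_x\st_{x^{-1}}$ also contains non-distinguished terms; indeed, by the paper's final results, $\st_{\sm_\la}\st_{\sm_\la}$ contains $\st_{\sm_\nu}$ for every constituent $V_\nu$ of $V_\la\otimes V_\la$. So proving $\gamma_{x,x^{-1},u^{-1}\sw_nu}\neq0$ does not identify $\sd(u)$. Corollary~\ref{cor:main3} only describes the parametrisation of $\Delta_1$, so it cannot supply the missing uniqueness.

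\textbf{How to close the gap.} You need that $\st_{u^{-1}\sw_nu}$ is the \emph{only} term that can occur in $\st_x\st_{x^{-1}}$ for $x=u^{-1}\sw_n$.
\begin{itemize}
\item By Lemma~\ref{lem:zbound} and Theorem~\ref{thm:main1}(5), $\gamma_{x,x^{-1},u^{-1}\sm_\la u}\neq0$ forces $2\ell(u)+\ell(\sm_\la)\leq 2\ell(u)+n(n-1)$.
\item Corollary~\ref{cor:mlength} then gives $\la\in\{0,e_n\}$.
\item The case $\la=e_n$, i.e.\ $u^{-1}\sw_n\sigma u$, has exactly the same length, so no length argument can exclude it. You need the grading: $x,x^{-1}\in\Waff$ while $u^{-1}\sw_n\sigma u\notin\Waff$.
\item Finally $\mathsf{P13}$ gives $\gamma_{x,x^{-1},\sd(u)}\neq0$, hence $\sd(u)=u^{-1}\sw_nu$.
\end{itemize}
This is the argument the paper runs to obtain (\ref{eq:first}) in the proof of Theorem~\ref{thm:main4}. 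Combined with $\mathsf{P13}$, it proves the proposition directly, with no base case, no induction, and no leading-coefficient computation. That is just as well, since $C_{u^{-1}\sw_n}C_{\sw_nu}$ does not literally contain the factor $C_{\sw_n}^2$. Nonvanishing would instead have to come from $T_{u^{-1}}T_{\sw_n}^2T_u$ via Lemma~\ref{lem:filtration}.
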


The \textit{canonical trace} on $\Hext$ is the $\sR$-linear function $\Tr:\Hext\to \sR$ with $\Tr(T_w)=\delta_{w,e}$ (see \cite{GP:00}). Define an anti-involution on $\Hext$ by 
$$\left(\sum a_wT_w\right)^*=\sum a_w T_{w^{-1}}.$$ 
We have $\Tr(T_uT_v^*)=\delta_{u,v}$ and hence $\Tr(h_1h_2)=\Tr(h_2h_1)$ for $h_1,h_2\in\Hext$. Thus $\langle h_1,h_2\rangle=\Tr(h_1h_2^*)$ defines a nondegenerate symmetric bilinear form on $\Hext$, with $\langle T_u,T_v\rangle=\delta_{u,v}$. 

Lusztig has also defined a canonical trace on the asymptotic algebra $\cJ$ (see \cite[Section 20.1(b)]{Lus:03} and \cite[Theorem 5.2]{CGLP:24}). Define a $\ZZ$-linear map $\Tr^{\infty}:\cJ\to\ZZ$ and a bilinear form $\langle\cdot,\cdot\rangle^{\infty}:\cJ\times\cJ\to \ZZ$ by 
$$
\Tr^{\infty}\bigg(\sum_{w\in\Wext} a_w\st_w\bigg)=\sum_{d\in\cD}a_d\quad\text{and}\quad \langle A,B\rangle^{\infty}=\Tr^{\infty}(AB^*),
$$
where $\left(\sum a_w\st_w\right)^*=\sum a_w\st_{w^{-1}}$. Then $\Tr^{\infty}$ is a trace functional on $\cJ$ and $(\st_w)_{w\in \Wext}$ is an orthonormal basis of~$\cJ$. That is, $\langle \st_x,\st_y\rangle^{\infty}=\delta_{x,y}$. 

For $A\in\cJ$ let 
$$
\|A\|^2=\Tr^{\infty}(AA^*)=\langle A,A\rangle^{\infty}.
$$
Explicitly, if $A=\sum_w a_w\st_w$ then $\|A\|^2=\sum_w a_w^2$. In particular, we have the following obvious lemma. 

\begin{lemma}\label{lem:norm}
If $A\in\cJ$ with $\|A\|^2=1$ then $A=\pm \st_w$ for some $w\in\Wext$. 
\end{lemma}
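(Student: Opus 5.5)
The statement to prove is Lemma~\ref{lem:norm}: if $A\in\cJ$ satisfies $\|A\|^2=1$, then $A=\pm\st_w$ for some $w\in\Wext$. The plan is a direct coefficient argument using the explicit formula for the norm recorded just before the lemma.

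First, I write $A=\sum_{w}a_w\st_w$. Here $a_w\in\ZZ$ for all $w$, and only finitely many $a_w$ are nonzero, since $\cJ$ is the free $\ZZ$-module on the basis $(\st_w)$. Orthonormality of this basis with respect to $\langle\cdot,\cdot\rangle^{\infty}$ gives
$$
\|A\|^2=\sum_{x,y}a_xa_y\langle\st_x,\st_y\rangle^{\infty}=\sum_w a_w^2.
$$
This relies only on bilinearity of the form and $\langle \st_x,\st_y\rangle^{\infty}=\delta_{x,y}$, both of which are stated above.

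Next, the hypothesis $\|A\|^2=1$ becomes $\sum_w a_w^2=1$. Each term $a_w^2$ is a nonnegative integer, and each nonzero term is at least $1$. A finite sum of such terms equals $1$ only if exactly one term is nonzero and that term equals $1$. So there is a unique $w\in\Wext$ with $a_w^2=1$, that is $a_w=\pm1$, and $a_{w'}=0$ for all $w'\neq w$. Hence $A=\pm\st_w$.

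There is no real obstacle. The only point needing care is the integrality of the coefficients, which is what makes the sum-of-squares argument work. It is automatic here because $\cJ$ is defined as a $\ZZ$-algebra, so the lemma would fail over $\QQ$ or $\RR$.
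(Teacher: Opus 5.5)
Your proposal is correct and is exactly the paper's argument: expand $A=\sum_w a_w\st_w$, use orthonormality of $(\st_w)$ to get $\|A\|^2=\sum_w a_w^2$, and then integrality forces exactly one coefficient to be $\pm1$. The paper regards this as obvious and states it without further proof.
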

%

\subsection{A factorisation formula in $\cJ_{\Delta_1}$}

Recall the definition of $h_{x,y,z}$ from Section~\ref{sec:KL2}, and define $f_{x,y,z}\in\sR$ by 
$$
T_xT_y=\sum_{z\in \Wext}f_{x,y,z}T_z. 
$$
The following lemma gives a useful way of computing certain coefficients of $h_{x,y,z}$ from the much simpler polynomials $f_{x,y,z}$. Define $g_{x,y,z}\in\sR$ by $C_xC_y=\sum_z g_{x,y,z}T_z$.

\begin{lemma}\label{lem:filtration}
For all $x,y,z\in \Wext$ we have
\begin{compactenum}[$(1)$]
\item $\deg f_{x,y,z},\deg g_{x,y,z},\deg h_{x,y,z}\leq \ell(x)+\ell(y)-\ell(z)$;
\item the coefficients of both $\sq^{\ell(x)+\ell(y)-\ell(z)}$ and $\sq^{\ell(x)+\ell(y)-\ell(z)-1}$ in $f_{x,y,z}$, $g_{x,y,z}$ and $h_{x,y,z}$ are equal.
\end{compactenum}
\end{lemma}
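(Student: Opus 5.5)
We prove (1) and (2) together by induction on $\ell(x)+\ell(y)$. The plan is to track only the top two powers of $\sq$, using the observation that passing from $T$-bases to $C$-bases only alters things in degree at least two below the top. By symmetry (the anti-involution $*$, which reverses products and fixes $\ell$) it is enough to expand in $x$ or $y$ as convenient.

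\emph{Step 1: the polynomials $f_{x,y,z}$.} We induct on $\ell(y)$. Write $y=y's$ with $\ell(y)=\ell(y')+1$, $s\in\Saff$, or $y=y'\sigma$ (the length-zero case is trivial). Then $T_xT_y=(T_xT_{y'})T_s$. By the defining relations, $T_zT_s$ equals either $T_{zs}$, or $T_{zs}+(\sq-\sq^{-1})T_z$. Hence $f_{x,y,z}$ is obtained from $f_{x,y',zs}$ together with $(\sq-\sq^{-1})f_{x,y',z}$, and the latter term occurs only when $\ell(zs)<\ell(z)$. The degree bounds follow at once:
\begin{itemize}
\item $\ell(x)+\ell(y')-\ell(zs)\le \ell(x)+\ell(y)-\ell(z)$, since $|\ell(zs)-\ell(z)|=1$;
\item the $(\sq-\sq^{-1})$ term gains one degree but pairs with $\ell(y')=\ell(y)-1$.
\end{itemize}
So $\deg f_{x,y,z}\le \ell(x)+\ell(y)-\ell(z)$.

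\emph{Step 2: passing to $g$ and $h$.} Write $C_w=T_w+\sum_{v<w}P_{v,w}T_v$ with $P_{v,w}\in\sq^{-1}\ZZ[\sq^{-1}]$. Then
$$
g_{x,y,z}=\sum_{x'\le x,\,y'\le y}P_{x',x}P_{y',y}f_{x',y',z}.
$$
Here I would use the standard parity fact $\deg P_{v,w}\le -(\ell(w)-\ell(v))$ (that is, $P_{v,w}=\sq^{-(\ell(w)-\ell(v))}\times$(polynomial in $\sq^{-2}$), in the normalisation above; this I would either cite or note that it follows from $\deg P_{v,w}\leq -1$ together with the parity property $P_{v,w}\in \sq^{\ell(v)-\ell(w)}\ZZ[\sq^{-2}]$). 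Each term with $(x',y')\neq(x,y)$ then has degree at most
$$
-(\ell(x)-\ell(x'))-(\ell(y)-\ell(y'))+\ell(x')+\ell(y')-\ell(z),
$$
which is $\le \ell(x)+\ell(y)-\ell(z)-2$. So $g_{x,y,z}$ and $f_{x,y,z}$ agree in the top two coefficients and both satisfy the bound.

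For $h$, invert the unitriangular relation: $T_z=C_z+\sum_{z'<z}Q_{z',z}C_{z'}$ with $Q_{z',z}\in \sq^{\ell(z')-\ell(z)}\ZZ[\sq^{-2}]$, by the same parity. This gives
$$
h_{x,y,z}=g_{x,y,z}+\sum_{z<z'}g_{x,y,z'}Q_{z,z'}.
$$
Each correction term has degree at most $\ell(x)+\ell(y)-\ell(z')+\ell(z)-\ell(z')\le \ell(x)+\ell(y)-\ell(z)-2$, since $\ell(z')\ge\ell(z)+1$ and the parity gap is in fact at least $2(\ell(z')-\ell(z))$. This proves (1) and (2) for $h$.

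\emph{Main obstacle.} The main care is the degree and parity bound on $P_{v,w}$ and on the inverse coefficients $Q$. Mere membership in $\sq^{-1}\ZZ[\sq^{-1}]$ only yields a gap of $1$ in the naive count, whereas a gap of $2$ is needed to protect the second coefficient. I would therefore establish the parity statement for $P_{v,w}$ first, by the usual induction via $C_sC_w$, and deduce the one for $Q$ by triangular inversion. Everything else is bookkeeping.
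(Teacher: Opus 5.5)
\textbf{The gap: your parity estimate on $P_{v,w}$ is false.} Your overall plan matches the paper's: bound $f_{x,y,z}$, expand $C_x,C_y$ in the $T$-basis to compare $g_{x,y,z}$ with $f_{x,y,z}$, then invert the unitriangular $T$--$C$ relation to compare $h_{x,y,z}$ with $g_{x,y,z}$. Your induction $f_{x,y,z}=f_{x,y',zs}+(\sq-\sq^{-1})f_{x,y',z}$ (second term only when $zs<z$) is fine and replaces the paper's citation of Lusztig. The problem is the key estimate you invoke, $\deg P_{v,w}\le \ell(v)-\ell(w)$, which you justify by the ``parity property'' $P_{v,w}\in \sq^{\ell(v)-\ell(w)}\ZZ[\sq^{-2}]$.

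In the paper's normalisation $P_{v,w}=\sq^{\ell(v)-\ell(w)}P^{\mathrm{KL}}_{v,w}(\sq^{2})$, so the correct statement is $P_{v,w}\in\sq^{\ell(v)-\ell(w)}\ZZ[\sq^{2}]$. Thus $\sq^{\ell(v)-\ell(w)}$ is the \emph{lowest} term, and in general only $\deg P_{v,w}\le -1$ holds. For example, in type $\sA_3$ with $w=s_2s_1s_3s_2$ one has $P^{\mathrm{KL}}_{e,w}(q)=1+q$, so $P_{e,w}=\sq^{-4}+\sq^{-2}$, of degree $-2$ rather than $\le -4$. The same error affects your claim $Q_{z',z}\in\sq^{\ell(z')-\ell(z)}\ZZ[\sq^{-2}]$ and the asserted ``parity gap $2(\ell(z')-\ell(z))$''. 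So the lemma you plan to prove first, by induction via $C_sC_w$, cannot be proved, and both the $g$ and $h$ steps rest on it as written.

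\textbf{The fix: the obstacle you identify is not real.} It is not true that membership in $\sq^{-1}\ZZ[\sq^{-1}]$ only gives a gap of $1$. The degree loss from $P$ combines with the strict drop in length to give a gap of $2$.
\begin{itemize}
\item For the $g$ step: if $x'<x$ and $y'\le y$, then $\deg(P_{x',x}P_{y',y}f_{x',y',z})\le -1+\ell(x')+\ell(y')-\ell(z)\le \ell(x)+\ell(y)-\ell(z)-2$, using $\ell(x')\le\ell(x)-1$. The case $y'<y$ is symmetric.
\item For the $h$ step: if $z'>z$, then $\deg(g_{x,y,z'}R_{z,z'})\le \ell(x)+\ell(y)-\ell(z')-1\le \ell(x)+\ell(y)-\ell(z)-2$, using $\ell(z')\ge\ell(z)+1$. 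Here $R_{z,z'}\in\sq^{-1}\ZZ[\sq^{-1}]$ because unitriangular matrices with off-diagonal entries in $\sq^{-1}\ZZ[\sq^{-1}]$ are closed under inversion.
\end{itemize}
Replacing your parity claims by these two inequalities gives exactly the paper's proof, and no parity information is needed.
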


\begin{proof}
We have $\deg f_{x,y,z}\leq \ell(x)+\ell(y)-\ell(z)$ and $\deg h_{x,y,z}\leq \ell(x)+\ell(y)-\ell(z)$ by \cite[Section~13.1]{Lus:03}. Writing $C_x=T_x+\sum_{x'<x}P_{x',x}T_{x'}$, and similarly for $C_y$, and using these in $C_xC_y=\sum_z g_{x,y,z}T_z$ gives 
$$
g_{x,y,z}=f_{x,y,z}+\sum_{y'<y}P_{y',y}f_{x,y',z}+\sum_{x'<x}P_{x',x}f_{x',y,z}+\sum_{x'<x, y'<y}P_{x',x}P_{y',y}f_{x',y',z}. 
$$
In particular $\deg g_{x,y,z}\leq \ell(x)+\ell(y)-\ell(z)$ (proving (1)), and 
$$
\deg (g_{x,y,z}-f_{x,y,z})\leq \ell(x)+\ell(y)-\ell(z)-2. 
$$

On the other hand, we have $T_z=C_z+\sum_{z'<z}R_{z',z}C_{z'}$ where $R_{z',z}\in\sq^{-1}\ZZ[\sq^{-1}]$. Using this expression in $C_xC_y=\sum_z g_{x,y,z}T_z$, and comparing with $C_xC_y=\sum_z h_{x,y,z}C_z$, it follows that 
$$
h_{x,y,z}=g_{x,y,z}+\sum_{z'>z}g_{x,y,z'}R_{z,z'}. 
$$
This gives $\deg (h_{x,y,z}-g_{x,y,z})\leq \ell(x)+\ell(y)-\ell(z)-2$. Hence the coefficients of $\sq^{\ell(x)+\ell(y)-\ell(z)}$, and also of $\sq^{\ell(x)+\ell(y)-\ell(z)-1}$, in $f_{x,y,z}$, $g_{x,y,z}$, and $h_{x,y,z}$ are equal. 
\end{proof}

\begin{lemma}\label{lem:zbound}
Let $x,y\in\Delta_1$. If $\gamma_{x,y,z^{-1}}\neq 0$ then 
$$
\ell(z)\leq \ell(x)+\ell(y)-n(n-1). 
$$
\end{lemma}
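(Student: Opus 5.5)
Since $\gamma_{x,y,z^{-1}}\neq 0$, the coefficient of $\sq^{\ba(z)}$ in $h_{x,y,z}$ is nonzero, so $\deg h_{x,y,z}=\ba(z)$. Lemma~\ref{lem:filtration}(1) bounds this degree by $\ell(x)+\ell(y)-\ell(z)$, giving
$$
\ba(z)\leq \ell(x)+\ell(y)-\ell(z).
$$
It therefore suffices to show $\ba(z)=n(n-1)$, that is, $z\in\Delta_1$ (recall $\ba(\Delta_1)=n(n-1)$ from \cite{ShiBn}).

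For this I use $\mathsf{P8}$. From $\gamma_{x,y,z^{-1}}\neq 0$ we get $z\sim_R x$, and since $x\in\Delta_1$ and two-sided cells are unions of right cells, $z\in\Delta_1$. Hence $\ba(z)=n(n-1)$, and rearranging the displayed inequality gives $\ell(z)\leq \ell(x)+\ell(y)-n(n-1)$.

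No step is a genuine obstacle: $\mathsf{P8}$ is one of the properties $\mathsf{P1}$--$\mathsf{P15}$ recorded as holding in the equal parameter setting, and the degree bound is Lemma~\ref{lem:filtration}(1). The only care needed is that the definition of $\gamma$ uses $h_{x,y,z}$ with leading exponent $\ba(z)$, and this is exactly the $z$ to which the degree bound applies. (Alternatively, without $\mathsf{P8}$: $\gamma_{x,y,z^{-1}}\neq 0$ gives $h_{x,y,z}\neq 0$, so $z\leq_{LR}x$, whence $\ba(z)\geq\ba(x)=n(n-1)$ by $\mathsf{P4}$. That lower bound is all the argument uses.)
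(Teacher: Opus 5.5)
Your proposal is correct and is essentially the paper's own proof. The paper also uses $\mathsf{P8}$ to place $z$ in $\Delta_1$, so that $\deg h_{x,y,z}=\ba(z)=n(n-1)$, and then applies the degree bound of Lemma~\ref{lem:filtration}(1).
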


\begin{proof}
If $\gamma_{x,y,z^{-1}}\neq 0$ then $z\in\Delta_1$ (by $\mathsf{P8}$) and so $\deg h_{x,y,z}=\ba(z)=n(n-1)$, and the result follows from Lemma~\ref{lem:filtration}(1). 
\end{proof}

Recall that the distinguished involution of $\Ga_u$ is $\sd(u)=u^{-1}\sw_n u$ for $u\in \sB$.

\begin{lemma}\label{lem:duflobasics}
For $u,v\in\sB$ we have
\begin{compactenum}[$(1)$]
\item $\st_{\sd(u)}\st_w=\st_w$ if $w\in\Ga_u$, and $\st_{\sd(u)}\st_w=0$ if $w\notin\Ga_u$. 
\item $\st_w\st_{\sd(v)}=\st_w$ if $w\in\Ga_v^{-1}$, and $\st_w\st_{\sd(v)}=0$ if $w\notin\Ga_v^{-1}$. 
\item $\st_{\sd(u)}\st_w\st_{d(v)}=\st_w$ if $w\in\Ga_u\cap\Ga_v^{-1}$ and $\st_{\sd(u)}\st_w\st_{d(v)}=0$ if $w\notin\Ga_u\cap\Ga_v^{-1}$. 
\item $\st_{\sd(u)}\cJ_{\Delta_1}\st_{\sd(v)}=\mathsf{span}_{\ZZ}\{\st_w\mid w\in \Ga_u\cap\Ga_v^{-1}\}=\mathsf{span}_{\ZZ}\{\st_{u^{-1}\sm_{\la}v}\mid \la\in P_n^+\}$. 
\item If $x\in\Ga_u$ and $y\in\Ga_v$ then $\st_x\st_y^*\in\st_{\sd(u)}\cJ_{\Delta_1}\st_{\sd(v)}$. 
\end{compactenum}
\end{lemma}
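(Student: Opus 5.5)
These are standard consequences of Lusztig's properties $\mathsf{P1}$--$\mathsf{P15}$ combined with the explicit description of the cells from Theorem~\ref{thm:main1}. The plan is to prove (1) carefully, obtain (2) from (1) by applying the anti-involution $*$, and then derive (3)--(5) formally.

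For (1), the first step is to expand $\st_{\sd(u)}\st_w=\sum_z\gamma_{\sd(u),w,z^{-1}}\st_z$. By $\mathsf{P8}$, a nonzero coefficient forces $z\sim_R \sd(u)$, $z\sim_L w$ and $w\sim_R \sd(u)^{-1}=\sd(u)$, the last equality holding by $\mathsf{P6}$. So if $w\notin\Ga_u$ then every coefficient vanishes and the product is $0$. Now assume $w\in\Ga_u$, so $z\in\Ga_u\cap\Ga_v^{-1}$ where $v$ is determined by $w\in\Ga_v^{-1}$. By $\mathsf{P7}$, $\gamma_{\sd(u),w,z^{-1}}=\gamma_{w,z^{-1},\sd(u)}$. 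Since $\sd(u)\in\cD$, $\mathsf{P2}$ shows this vanishes unless $z^{-1}=w^{-1}$, that is, $z=w$. When $z=w$, $\mathsf{P3}$ gives a unique distinguished involution $d$ with $\gamma_{w,w^{-1},d}\ne0$. By $\mathsf{P13}$, applied to the right cell of $w^{-1}$ (a left-cell argument via $\mathsf{P8}$ should place $d$ in $\Ga_u$), $d$ is the distinguished involution of that cell, namely $\sd(u)$. Then $\mathsf{P5}$ gives $\gamma_{w,w^{-1},\sd(u)}=1$, so $\st_{\sd(u)}\st_w=\st_w$.

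The one point that needs care is identifying $d=\sd(u)$. I would handle it directly: $\mathsf{P8}$ applied to $\gamma_{w,w^{-1},d}\neq0$ gives $d^{-1}\sim_R w$, hence $d=d^{-1}\in\Ga_u$, and $\Ga_u$ contains a unique distinguished involution, which is $\sd(u)$ by the Proposition from \cite{ShiBn}.

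For (2), apply the anti-involution $\st_x\mapsto\st_{x^{-1}}$, which is an anti-automorphism of $\cJ$ because $\gamma_{x,y,z}=\gamma_{y^{-1},x^{-1},z^{-1}}$ (a standard consequence of $\mathsf{P7}$ together with the symmetry of $h$ under $*$). Combined with $\sd(v)^{-1}=\sd(v)$ and the equivalence $w\in\Ga_v^{-1}\iff w^{-1}\in\Ga_v$, this turns (1) into (2).

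Statement (3) is the composite of (1) and (2), using associativity of $\cJ$. For (4), the first equality follows from (3) applied to basis elements $\st_w$ with $w\in\Delta_1$, and the second from Corollary~\ref{cor:intersections}.

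For (5), expand $\st_x\st_y^*=\st_x\st_{y^{-1}}$. A nonzero term $\st_z$ has, by $\mathsf{P8}$, $z\sim_R x$, so $z\in\Ga_u$, and $z\sim_L y^{-1}$, so $z\in\Ga_v^{-1}$. Each such $z$ lies in $\Delta_1$, so every term lies in the span from (4). Alternatively, (1) and (2) give $\st_x\st_{y^{-1}}=\st_{\sd(u)}\st_x\st_{y^{-1}}\st_{\sd(v)}$, since $y^{-1}\in\Ga_v^{-1}$, and this lies in $\st_{\sd(u)}\cJ_{\Delta_1}\st_{\sd(v)}$ because $\cJ_{\Delta_1}$ is an ideal.
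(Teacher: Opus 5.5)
Your proposal is correct and follows essentially the paper's route: you reduce $\st_{\sd(u)}\st_w$ via $\mathsf{P7}$ and $\mathsf{P2}$ to the single coefficient $\gamma_{w,w^{-1},\sd(u)}$, evaluate it with $\mathsf{P3}$/$\mathsf{P13}$ and $\mathsf{P5}$, deduce (2)--(4) formally together with Theorem~\ref{thm:main1}, and obtain (5) from $\mathsf{P8}$. The remaining differences are cosmetic: you use $\mathsf{P8}$ directly for the vanishing case of (1), and you spell out the $*$-anti-involution argument for (2), which the paper leaves implicit. (The identity $\gamma_{x,y,z}=\gamma_{y^{-1},x^{-1},z^{-1}}$ you use there really comes from $h_{x,y,z}=h_{y^{-1},x^{-1},z^{-1}}$ and $\ba(z)=\ba(z^{-1})$, not from $\mathsf{P7}$.)
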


\begin{proof}
(1) By $\mathsf{P7}$ and $\mathsf{P2}$ we have $\st_{\sd(u)}\st_w=\sum_z\gamma_{\sd(u),w,z^{-1}}\st_z=\sum_z\gamma_{w,z^{-1},\sd(u)}\st_z=\gamma_{w,w^{-1},\sd(u)}\st_w$ and the result follows from $\mathsf{P5}$ and $\mathsf{P13}$. Statements (2) and (3) follow. 

(4) From (3) we have $\st_{\sd(u)}\cJ_{\Delta_1}\st_{\sd(v)}=\mathsf{span}_{\ZZ}\{\st_w\mid w\in \Ga_u\cap\Ga_v^{-1}\}$, and the result follows from Theorem~\ref{thm:main1}. For (5), we have $\st_x\st_y^*=\sum_z\gamma_{x,y^{-1},z^{-1}}\st_z$, and if $\gamma_{x,y^{-1},z^{-1}}\neq 0$ then by $\mathsf{P7}$ and $\mathsf{P8}$ we have $z\in\Ga_u\cap\Ga_v^{-1}$, and apply (4). 
\end{proof}

We now prove Theorem~\ref{thm:main4}, which we restate for convenience. 

\begin{thm}
For $u,v\in \sB$ and $\la\in P_n^+$ we have
$
\st_{u^{-1}\sw_n}\st_{\sm_{\la}}\st_{\sw_nv}=\st_{u^{-1}\sm_{\la}v}. 
$
\end{thm}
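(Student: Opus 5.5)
We reduce the statement to two one-sided identities,
$$
\st_{u^{-1}\sw_n}\st_{\sm_{\la}}=\st_{u^{-1}\sm_{\la}}\qquad\text{and}\qquad \st_{x}\st_{\sw_nv}=\st_{xv}\quad\text{for }x=u^{-1}\sm_\la,
$$
and prove each by a degree argument combined with a norm argument. Both have the same shape: $\st_a\st_b=\st_{c}$ with $c=a\cdot\sw_n^{-1}\cdot b$, where $a$ ends in $\sw_n$, $b$ begins with $\sw_n$, and the product is length additive in the sense that $\ell(c)=\ell(a)+\ell(b)-\ell(\sw_n)$. This additivity holds by Corollary~\ref{cor:additive} together with Theorem~\ref{thm:mlambda}. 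Note that $\ell(\sw_n)=n(n-1)=\ba(\Delta_1)$. By symmetry (applying ${}^*$ and $\sm_\la^{-1}=\sm_\la$), the second identity follows from an argument of the same kind as the first, so we treat a general pair $(a,b)=(a'\sw_n,\sw_nb')$ with $\ell(a)=\ell(a')+\ell(\sw_n)$, $\ell(b)=\ell(\sw_n)+\ell(b')$, and $a,b,c\in\Delta_1$.

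\textbf{Step 1 (compute $\gamma_{a,b,c^{-1}}$).} Lemma~\ref{lem:zbound} shows that any $z$ with $\gamma_{a,b,z^{-1}}\neq0$ satisfies $\ell(z)\le\ell(a)+\ell(b)-n(n-1)=\ell(c)$. At $z=c$ the relevant degree $\ba(\Delta_1)=n(n-1)$ is exactly $\ell(a)+\ell(b)-\ell(c)$, so Lemma~\ref{lem:filtration}(2) gives $\gamma_{a,b,c^{-1}}$ as the top coefficient of $f_{a,b,c}$. We compute it directly:
$$
T_aT_b=T_{a'}T_{\sw_n}^2T_{b'}.
$$
In the parabolic Hecke algebra of $W_n$, the coefficient of $T_{\sw_n}$ in $T_{\sw_n}^2$ has leading term $\sq^{\ell(\sw_n)}$, and every other term $T_w$ ($w\in W_n$) has coefficient of degree at most $\ell(\sw_n)$. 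Since $T_{a'}T_{\sw_n}T_{b'}=T_c$ by length additivity, the coefficient of $T_c$ in $T_aT_b$ is $\sq^{n(n-1)}+(\text{lower})$ plus contributions from $T_{a'}T_wT_{b'}$ with $w<\sw_n$. Those contributions have degree at most $\ell(\sw_n)+\ell(w)-\ell(\sw_n)$, computed via Lemma~\ref{lem:filtration}(1) applied twice together with $\ell(T_{\sw_n}^2)$ bounds: the coefficient of $T_w$ in $T_{\sw_n}^2$ has degree at most $2\ell(\sw_n)-\ell(w)$, and then reaching $c$ costs at least $\ell(c)-\ell(a')-\ell(w)-\ell(b')$ more. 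Together these give total degree at most $n(n-1)-(\ell(c)-\ell(a'wb'))$-type bounds. We expect these to be strictly below $n(n-1)$ once we use $\ell(a'wb')\le\ell(a')+\ell(w)+\ell(b')<\ell(c)$. This yields $\gamma_{a,b,c^{-1}}=1$.

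\textbf{Step 2 (norm bound).} To conclude $\st_a\st_b=\st_c$ we show $\|\st_a\st_b\|^2=1$ and then invoke Lemma~\ref{lem:norm}. First,
$$
\|\st_a\st_b\|^2=\Tr^\infty(\st_a\st_b\st_b^*\st_a^*).
$$
For $b=\sw_nb'$ with $b'\in\sB$ (the case $b=\sw_nv$), we have $b^{-1}\in\Ga_v$ and $b\in\Ga=\Ga_1$. By Lemma~\ref{lem:duflobasics}(5), $\st_b\st_b^*$ lies in $\st_{\sw_n}\cJ_{\Delta_1}\st_{\sw_n}$, which is spanned by $\st_{\sm_\mu}$. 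The hardest point is to show that $\st_b\st_b^*=\st_{\sw_n}$, i.e.\ that only $\mu=0$ occurs. For this we again use degree: any $\st_z$ occurring has $\ell(z)\le 2\ell(b)-n(n-1)$, but this alone does not force $\mu=0$. We will instead use associativity and the trace:
$$
\langle\st_b\st_b^*,\st_{\sm_\mu}\rangle^\infty=\gamma_{b,b^{-1},\sm_\mu}.
$$
By $\mathsf{P7}$ this equals $\gamma_{\sm_\mu,b,b^{-1}}$, i.e.\ the coefficient of $\st_b$ in $\st_{\sm_\mu}\st_b$. By Lemma~\ref{lem:zbound}, $\ell(b)\le\ell(\sm_\mu)+\ell(b)-n(n-1)$ is allowed, so we need a finer argument. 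We expect this to be the main obstacle. We would first try to show $\st_{\sm_\mu}\st_{\sw_nv}$ has all terms of length $\ge\ell(\sm_\mu)+\ell(v)$ via Step~1's top term and orthogonality. Failing that, we would handle the case $u$ or $v$ trivial first: there $\st_{\sw_n}$ is the distinguished involution $\sd(1)$, so Lemma~\ref{lem:duflobasics}(1) gives $\st_{\sw_n}\st_{\sm_\la}=\st_{\sm_\la}$ outright. We would then induct on $\ell(u)$ using Proposition~\ref{prop:transfer}, passing from $u$ to $us$ by comparing $\st_{(us)^{-1}\sw_n}$ with $\st_{u^{-1}\sw_n}$ via the identity $\st_x\st_{x^{-1}}\st_x=\st_x$-type relations that follow from $\mathsf{P13}$ and $\mathsf{P5}$. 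Once $\st_b\st_b^*=\st_{\sw_n}$, we get $\|\st_a\st_b\|^2=\Tr^\infty(\st_a\st_{\sw_n}\st_a^*)=\|\st_a\|^2=1$, which combined with $\gamma_{a,b,c^{-1}}=1$ finishes the proof.
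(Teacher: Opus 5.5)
Your Step~1 is the paper's computation of the leading coefficient. In $T_aT_b=T_{a'}T_{\sw_n}^2T_{b'}$, the terms $T_{a'}T_wT_{b'}$ with $w<\sw_n$ only involve $T_z$ with $\ell(z)\le\ell(a')+\ell(w)+\ell(b')<\ell(c)$. So they do not touch $T_c$ at all, and no degree bookkeeping is needed for them. Lemma~\ref{lem:filtration} then gives $\gamma_{a,b,c^{-1}}=1$. The norm argument via Lemma~\ref{lem:norm} also matches the paper, as does the reduction of both identities to $\st_{\sw_nv}\st_{v^{-1}\sw_n}=\st_{\sw_n}$.

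The genuine gap is that this last identity, which you yourself flag as the main obstacle, is never proved, and neither fallback works. Lemmas~\ref{lem:filtration} and~\ref{lem:zbound} only give \emph{upper} bounds on $\ell(z)$ when $\gamma_{x,y,z^{-1}}\neq 0$. Nothing available gives the lower bound you want on the lengths of the terms of $\st_{\sm_\mu}\st_{\sw_nv}$, and such a bound is close to the factorisation itself. The relation $\Ga_{vs}=s\Ga_v$ of Proposition~\ref{prop:transfer} is a statement about sets, with no counterpart as a multiplication in $\cJ_{\Delta_1}$. Finally, $\st_x\st_{x^{-1}}\st_x=\st_x$ is not a consequence of $\mathsf{P5}$ and $\mathsf{P13}$: these only say that $\st_{\sd}$ occurs in $\st_x\st_{x^{-1}}$ with coefficient $1$, not that nothing else occurs.

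The missing idea is to compute the product in the \emph{opposite} order first, where the length bound does suffice.
By Lemma~\ref{lem:duflobasics}(4),(5), $\st_{v^{-1}\sw_n}\st_{\sw_nv}$ is a combination of the $\st_{v^{-1}\sm_\mu v}$. Lemma~\ref{lem:zbound} with length additivity gives $2\ell(v)+\ell(\sm_\mu)\le 2\ell(v)+n(n-1)$, so $\mu\in\{0,e_n\}$ by Corollary~\ref{cor:mlength}. The case $\mu=e_n$ is impossible, since $v^{-1}\sw_n,\sw_nv\in\Waff$ while $v^{-1}\sm_{e_n}v\notin\Waff$. Applying $\Tr^{\infty}$ then gives $\st_{v^{-1}\sw_n}\st_{\sw_nv}=\st_{\sd(v)}$.

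Now set $p=\st_{\sw_nv}\st_{v^{-1}\sw_n}$. It satisfies:
\begin{itemize}
\item $p^*=p$;
\item $p^2=\st_{\sw_nv}\st_{\sd(v)}\st_{v^{-1}\sw_n}=p$;
\item $\st_{\sw_n}p=p\st_{\sw_n}=p$;
\item $\Tr^{\infty}(p)=\Tr^{\infty}(\st_{\sd(v)})=1$.
\end{itemize}
Hence $(\st_{\sw_n}-p)(\st_{\sw_n}-p)^*=\st_{\sw_n}-p$, so $\|\st_{\sw_n}-p\|^2=\Tr^{\infty}(\st_{\sw_n}-p)=0$ and $p=\st_{\sw_n}$. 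With this in place your Step~2 goes through as in the paper.
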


\begin{proof}
Write $a_u=\st_{u^{-1}\sw_n}$. We claim the following:
\begin{align}
\label{eq:first}a_ua_u^*&=\st_{\sd(u)}\\
\label{eq:second}a_u^*a_u&=\st_{\sw_n}.
\end{align}

We first prove~(\ref{eq:first}). Since $u^{-1}\sw_n\in \Ga_u$ and $\sw_nu\in \Ga_u^{-1}$, by Lemma~\ref{lem:duflobasics} parts (4) and (5) we have 
 $$
 a_ua_u^*=\sum_{\lambda\in P_n^+}c_{\lambda}\st_{u^{-1}\sm_{\la}u}
 $$
 where $c_{\la}=\gamma_{u^{-1}\sw_n,\sw_nu,(u^{-1}\sm_{\la}u)^{-1}}$. By Lemma~\ref{lem:zbound}, if $c_{\la}\neq 0$ we have 
 $$
 \ell(u^{-1}\sm_{\la}u)\leq \ell(u^{-1}\sw_n)+\ell(\sw_nu)-n(n-1),
 $$
 and so by Theorem~\ref{thm:main1} we have $\ell(\sm_{\la})\leq n(n-1)$. By Corollary~\ref{cor:mlength} this forces $\la=0$ or $\la=e_n$. The latter case is impossible because $u^{-1}\sw_n,\sw_nu\in \Waff$ and $u^{-1}\sm_{e_n}u\in\Wext\backslash \Waff$. Hence $a_ua_u^*=k\st_{\sd(u)}$ for some integer $k$. Taking traces gives $k=1$, hence~(\ref{eq:first}). 
 
 We now prove~(\ref{eq:second}). Let $p=a_u^*a_u$. By~(\ref{eq:first}) and Lemma~\ref{lem:duflobasics}(1) we have $$p^2=a_u^*(a_ua_u^*)a_u=a_u^*\st_{\sd(u)}a_u=a_u^*a_u=p.$$
Moreover, since $p^*=p$ we compute (using Lemma~\ref{lem:duflobasics})
$$
(\st_{\sw_n}-p)(\st_{\sw_n}-p)^*=(\st_{\sw_n}-p)^2=\st_{\sw_n}^2-(\st_{\sw_n}a_u^*)a_u-a_u^*(a_u\st_{\sw_n})+p^2=\st_{\sw_n}-2p+p=\st_{\sw_n}-p. 
$$
Thus $\Tr^{\infty}((\st_{\sw_n}-p)(\st_{\sw_n}-p)^*)=\Tr^{\infty}(\st_{\sw_n}-p)=1-1=0$. Writing $\st_{\sw_n}-p=\sum_w c_w\st_w$ we have $\Tr^{\infty}((\st_{\sw_n}-p)(\st_{\sw_n}-p)^*)=\sum_w c_w^2$, and hence $p=\st_{\sw_n}$, proving~(\ref{eq:second}). 

We now complete the proof of the theorem. By~(\ref{eq:second}) we have 
$$\|a_u\st_{\sm_{\la}}\|^2=\Tr^{\infty}(a_u\st_{\sm_{\la}}\st_{\sm_{\la}}^*a_u^*)=\Tr^{\infty}(\st_{\sm_{\la}}\st_{\sm_{\la}}^*a_u^*a_u)=\Tr^{\infty}(\st_{\sm_{\la}}\st_{\sm_{\la}}^*\st_{\sw_n})=\Tr^{\infty}(\st_{\sm_{\la}}\st_{\sm_{\la}}^*)=1,$$
and so by Lemma~\ref{lem:norm} we have $a_u\st_{\sm_{\la}}=\pm \st_w$ for some $w\in\Wext$. 

To compute $w$ (and the sign), write $\sm_{\la}=\sw_nx_{\la}$ with $x_{\la}=\fu^{-1}\tau_{\la}\fu$. It is elementary that $T_{\sw_n}^2=\sq^{n(n-1)}(1+f)T_{\sw_n}+\sum_{y<\sw_n}c_yT_y$ with $f\in\sq^{-1}\ZZ[\sq^{-1}]$, and hence 
$$
T_{u^{-1}\sw_n}T_{\sm_{\la}}=T_{u^{-1}}T_{\sw_n}^2T_{x_{\la}}=\sq^{n(n-1)}(1+f)T_{u^{-1}\sm_{\la}}+\text{(terms $T_y$ with $\ell(y)<\ell(u^{-1}\sm_{\la})$)}.
$$
Thus $f_{u^{-1}\sw_n,\sm_{\la},u^{-1}\sm_{\la}}=\sq^{n(n-1)}(1+f)$, and since $n(n-1)=\ell(u^{-1}\sw_n)+\ell(\sm_{\la})-\ell(u^{-1}\sm_{\la})$ we have $h_{u^{-1}\sw_n,\sm_{\la},u^{-1}\sm_{\la}}=\sq^{n(n-1)}+\text{(strictly lower powers)}$ by Lemma~\ref{lem:filtration}. Since $n(n-1)=\ba(u^{-1}\sm_{\la})$ this gives $\gamma_{u^{-1}\sw_n,\sm_{\la},(u^{-1}\sm_{\la})^{-1}}=1$. Hence we have shown that $a_u\st_{\sm_{\la}}=\st_{u^{-1}\sm_{\la}}$. 

We now apply a similar argument on the right. We have
$$
\|\st_{u^{-1}\sm_{\la}}a_v^*\|^2=\Tr^{\infty}(\st_{u^{-1}\sm_{\la}}a_v^*a_v\st_{u^{-1}\sm_{\la}}^*)=\Tr^{\infty}(\st_{u^{-1}\sm_{\la}}\st_{\sw_n}\st_{u^{-1}\sm_{\la}}^*)=\Tr^{\infty}(\st_{u^{-1}\sm_{\la}}\st_{u^{-1}\sm_{\la}}^*)=1,
$$
and hence $\st_{u^{-1}\sm_{\la}}a_v^*=\pm \st_w$ for some $w\in\Wext$. Using the fact that $\sm_{\la}$ is an involution, we have $\sm_{\la}=x_{\la}^{-1}\sw_n$, and hence 
$$
T_{u^{-1}\sm_{\la}}T_{\sw_n v}=T_{u^{-1}}T_{x_{\la}^{-1}}T_{\sw_n}^2T_v=\sq^{n(n-1)}(1+f)T_{u^{-1}\sm_{\la}v}+(\text{terms $T_y$ with $\ell(y)<\ell(u^{-1}\sm_{\la}v)$}). 
$$
Since $n(n-1)=\ell(u^{-1}\sm_{\la})+\ell(\sw_nv)-\ell(u^{-1}\sm_{\la}v)$ we again use Lemma~\ref{lem:filtration} to conclude that $a_u\st_{\sm_{\la}}a_v^*=\st_{u^{-1}\sm_{\la}v}$. 
\end{proof}

\section{Leading matrices and the asymptotic algebra~$\cJ_{\Delta_1}$}\label{sec:J}

In this section we determine the asymptotic algebra $\cJ_{\Delta_1}$, proving Theorems~\ref{thm:main5}, \ref{thm:main6}, and Corollary~\ref{cor:main7}. As outlined in the introduction, the general approach centres around a careful study of a certain finite dimensional representation. We introduce this representation $\pi$ in Section~\ref{sec:pi}, and recall combinatorics of $\pi$ in Section~\ref{sec:paths}. In Sections~\ref{sec:killing} and~\ref{sec:bounded} we show that $\pi$ satisfies important ``killing'' and ``boundedness'' properties, allowing for the definition of \textit{leading matrices} $\fc(w)$ in Section~\ref{sec:leading}. We ultimately compute the leading matrices in terms of Schur functions of type $\sC_{n-1}$ in Section~\ref{sec:asymptoticfinal}. A crucial ingredient for this calculation is the identification of the \textit{asymptotic Plancherel density} associated to the cell in Section~\ref{sec:plancherel}. Finally we give applications, and verify Lusztig's conjecture \cite[Conjecture~10.5]{Lus:IV} for the cell, in Section~\ref{sec:applications2}.

\subsection{The representation $\pi=\pi_z$}\label{sec:pi}

Let $z_1,\ldots,z_{n-1},z_n$ be commuting invertible indeterminates, and write
$$
\sR[z]=\sR[z_1^{\pm 1},\ldots,z_{n}^{\pm 1}]/(z_n^2-1)\quad\text{and}\quad \ZZ[z]=\ZZ[z_1^{\pm 1},\ldots,z_{n}^{\pm 1}]/(z_n^2-1).
$$
Let $\epsilon$ denote the image of $z_n$ in $\sR[z]$, and so $\epsilon^2=1$. For $\lambda=\lambda_1e_1+\cdots+\lambda_{n}e_n$ write $z^{\lambda}=z_1^{\lambda_1}\cdots z_{n-1}^{\lambda_{n-1}}\epsilon^{\lambda_n}$. Since $\epsilon^2=1$ we have $z^{\lambda}=z^{\mu}$ whenever $\lambda-\mu\in \ZZ\alpha_n^{\vee}$. Thus each element $f\in\sR[z]$ has a unique expression as 
$$
f=\sum_{\la\in P_n}a_{\lambda}z^{\lambda}\quad\text{with $a_{\lambda}\in \sR$}. 
$$ 
Write $[f]_{\mathsf{ct}}=a_0$ for the \textit{constant term} of~$f$ in the above expression. Define an involution $*$ on $\sR[z]$ by $z_i^*=z_i^{-1}$ for $1\leq i\leq n-1$ and $\epsilon^*=\epsilon$.

Let $\cL_n=\langle T_n,X^{\lambda}\mid \lambda\in P\rangle$ be the \textit{Levi subalgebra} of $\Hext$ generated by $T_n$ and $X^{\lambda}$, with $\lambda\in P$. Consider a $1$-dimensional right $\cL_n$-module over $\sR[z]$ generated by an element $\xi_z$ with (for $1\leq i\leq n-1$)
\begin{align*}
\xi_z\cdot T_n&=-\sq^{-1}\xi_z\\
\xi_z\cdot X^{e_i}&=z_i\xi_z\\
\xi_z\cdot X^{e_n}&=-\epsilon\sq^{-1} \xi_z.
\end{align*}
(it is straightforward to verify that this defines a $\cL_n$-action using the Bernstein relation). Let $\psi_z:\langle X^{\la}\mid \la\in P\rangle\to \sR[z]$ be the evaluation map with $\psi_z(X^{e_i})=z_i$ for $1\leq i\leq n-1$ and $\psi_z(X^{e_n})=-\epsilon\sq^{-1}$ (thus $\psi_z$ gives the action of the element $X^{\la}$ on $\xi_z$).

Let $\pi_z=\mathsf{Ind}_{\cL_n}^{\Hext}(\xi_z)$ be the induced representation of $\Hext$ with associated right $\Hext$-module 
$$
\mathsf{M}_z=(\sR[z]\xi_z)\otimes_{\cL_n}\Hext.
$$
Let $\chi_z$ be the character of $\pi_z$. We will typically write $\pi=\pi_z$, $\chi=\chi_z$, and $\mathsf{M}=\mathsf{M}_z$, unless the dependence on~$z$ is explicitly required. 

By \cite[Section~5.5]{GLP:23}, for any choice of fundamental domain $F$ for the action of $\sT_n$ on $\mathbb{X}_n$ the set 
$$\{\xi_z\otimes X_u\mid u \in F\}$$ is a basis of $\mathsf{M}$, and we write $\pi(h;F)$ for the matrix of $\pi(h)$ with respect to this basis. Moreover if $F'$ is another fundamental domain then the change of basis matrix converting from the $F$-basis to the $F'$-basis is a monomial matrix with each nonzero entry of the form $z^{\lambda}$ with $\lambda\in P_n$.

\subsection{$n$-folded alcove paths and the path formula}\label{sec:paths}

In \cite{GLP:23} we developed a combinatorial formula for the matrix entries of $\pi(T_w;F)$ in terms of \textit{$J$-folded alcove paths}. In the present situation the relevant set $J$ is $J=\{n\}$, and we give a brief description of \textit{$n$-folded alcove paths} below (see \cite{GLP:23} for further information and illustrations). 

\begin{defn}
Let $\vec{w}=s_{i_1}\cdots s_{i_k}\sigma^a$ be a reduced expression for $w\in\Wext$. An \textit{$n$-folded alcove path of type $\vec{w}$} is a sequence $p=(u_0,\ldots,u_k,u_k\sigma^a)$ with $u_0,\ldots,u_k\in\Wext$ such that $u_0A_0\subseteq \cA_n$, and for $1\leq r\leq k$,
\begin{compactenum}[$(1)$]
\item $u_rA_0\subseteq \cA_n$ (that is, $u_r\in\mathbb{X}_n$),
\item $u_r\in\{u_{r-1},u_{r-1}s_{i_r}\}$, and if $u_{r-1}=u_r$ then either:
\begin{compactenum}[$(a)$]
\item $u_{r-1}s_{i_r}A_0\not\subseteq \cA_n$, or 
\item $u_{r-1}s_{i_r}A_0\subseteq \cA_n$, and the alcove $u_{r-1}A_0$ is on the positive side of the hyperplane separating the alcoves $u_{r-1}A_0$ and $u_{r-1}s_{i_r}A_0$. 
\end{compactenum}
\end{compactenum}
\end{defn}

If $p=(u_0,\ldots,u_k,u_k\sigma^a)$ is an $n$-folded alcove path, we define the \textit{start} of $p$ to be $\mathsf{start}(p)=u_0$, and the \textit{end} of $p$ to be $\mathsf{end}(p)=u_k\sigma^a$. Moreover:
\begin{compactenum}
\item[--] If $u_r=u_{r-1}s_{i_r}$ then the index $r$ is called a \textit{crossing};
\item[--] If $u_r=u_{r-1}$ and $u_{r-1}s_{i_r}A_0\subseteq\cA_n$ then the index $r$ is called a \textit{fold};
\item[--] If $u_r=u_{r-1}$ and $u_{r-1}s_{i_r}A_0\not\subseteq \cA_n$ then the index $r$ is called a \textit{bounce}. 
\end{compactenum}
We write 
$$
f(p)=\#(\text{folds in $p$})\quad\text{and}\quad b(p)=\#(\text{bounces in $p$}). 
$$

If $F$ is a fundamental domain for the action of $\sT_n$ on $\mathbb{X}_n$, then each $w\in \mathbb{X}_n$ can be written in a unique way as $w=\tau_{\lambda}u$ with $\lambda\in P_n$ and $u\in F$. We define the $F$-\textit{weight} and the $F$-\textit{final position} of $w=\tau_{\lambda}u$ as
$$
\mathsf{wt}_{F}(w)=\lambda\in P_n\quad\text{and}\quad \theta_{F}(w)=u\in F. 
$$
Note that the $F$-weight and $F$-final position typically change when one changes the fundamental domain~$F$. 

If $p$ is an $n$-folded alcove path we define $\mathsf{wt}_F(p)=\mathsf{wt}_{\sF}(\mathsf{end}(p))$ and $\theta_F(p)=\theta_F(\mathsf{end}(p))$. For $u\in \mathbb{X}_n$ and $\vec{w}$ a reduced expression for $w$, let 
$$
\cP_n(\vec{w},u)=\{\text{all $n$-folded alcove paths of type $\vec{w}$ starting at $u$}\}. 
$$

\begin{thm}[Path Formula]\label{thm:pathformula}
Let $F$ be a fundamental domain for the action of $\sT_n$ on $\mathbb{X}_n$, and let $(\xi_z\otimes X_u)_{u\in F}$ be an associated ordered basis. The matrix entries of $\pi(T_w)$ with respect to this basis are given by
$$
[\pi(T_w;F)]_{u,v}=\sum_{\{p\in\cP_n(\vec{w},u)\mid \theta_F(p)=v\}}(-\sq)^{-b(p)}(\sq-\sq^{-1})^{f(p)}z^{\mathsf{wt}_F(p)}.
$$
\end{thm}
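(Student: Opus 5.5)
The plan is to compute the action of a single generator $T_{s_i}$ (and of $T_\sigma$) on the vectors $\xi_z\otimes X_u$ for arbitrary $u\in\mathbb{X}_n$, not only $u\in F$. We then expand $T_w=T_{s_{i_1}}\cdots T_{s_{i_k}}T_{\sigma^a}$ along the reduced expression $\vec{w}$. Each step of the expansion produces one of three moves (crossing, fold, bounce), so the terms of the expansion are indexed exactly by $\cP_n(\vec{w},u)$. The last step re-expresses the end vector in the $F$-basis, which produces the factor $z^{\mathsf{wt}_F(p)}$.

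\emph{Step 1 (normalisation).} Show that for $\la\in P_n$ and $v\in\mathbb{X}_n$,
$$\xi_z\otimes X_{\tau_\la v}=z^{\la}\,\xi_z\otimes X_v.$$
Write $X_{\tau_\la v}=X_{\tau_\la}X_v$. This factorisation should hold because any alcove path from $A_0$ to $\tau_\la vA_0$ can be routed through $\tau_\la A_0$, and the signs $\epsilon_p$ depend only on the orientation of the hyperplanes crossed, which is invariant under the translation part of $\tau_\la$. By definition $X_{\tau_\la}=X^{\la}$ when $\la_n=0$. When $\la_n=1$, $X_{\tau_\la}$ is $X^{\la}T_n^{\pm1}$, an element of $\cL_n$. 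Its eigenvalue on $\xi_z$ is then $z^{\la}$, using $\xi_z X^{e_n}=-\epsilon\sq^{-1}\xi_z$ and $\xi_zT_n^{-1}=-\sq\,\xi_z$; the factors $(-\sq^{-1})(-\sq)$ cancel, leaving $\epsilon$. Checking the sign and exponent conventions here needs care, but the check is mechanical.

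\emph{Step 2 (one generator).} Let $u\in\mathbb{X}_n$ and $s=s_i$.
\begin{compactenum}[$(a)$]
\item If $usA_0$ is on the positive side of the hyperplane separating $uA_0$ and $usA_0$, then $X_{us}=X_uT_s$. This gives a pure crossing.
\item If $usA_0$ is on the negative side, then $X_{us}=X_uT_s^{-1}$. Hence $X_uT_s=X_{us}+(\sq-\sq^{-1})X_u$, which is a crossing plus a fold.
\end{compactenum}
When $usA_0\subseteq\cA_n$, these two cases match conditions $(1)$ and $(2)(b)$ of the definition exactly.

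If instead $usA_0\not\subseteq\cA_n$, the separating hyperplane is a wall $H_{e_n+k\delta}$ (with $k\in\{0,-1\}$) of $\cA_n$. In this case $usu^{-1}$ is a reflection lying in the $\cL_n$-world. I would show that $X_uT_sX_u^{-1}$ equals, up to a translation factor $X^{\mu}$, the element $T_n^{\pm1}$. Combining this with the eigenvalues from Step 1, the expected outcome is
$$\xi_z\otimes X_uT_s=-\sq^{-1}\,\xi_z\otimes X_u.$$
This is a bounce, with weight $(-\sq)^{-1}$ and no crossing term. The case $T_\sigma$ is a pure relabelling, since $X_{u\sigma}=X_uT_\sigma$.

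\emph{Step 3 (assembly).} By induction on $k$, applying Step 2 successively shows that $\xi_z\otimes X_u\cdot T_w$ is the sum over $p\in\cP_n(\vec{w},u)$ of
$$(-\sq)^{-b(p)}(\sq-\sq^{-1})^{f(p)}\,\xi_z\otimes X_{\mathsf{end}(p)}.$$
Writing $\mathsf{end}(p)=\tau_{\mathsf{wt}_F(p)}\theta_F(p)$ and applying Step 1 then gives the stated matrix entry.

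I expect the main obstacle to be the bounce computation in Step 2: verifying that no $\sq-\sq^{-1}$ correction term survives, and that the scalar is exactly $-\sq^{-1}$ on both walls $x_n=0$ and $x_n=1$. I would first treat $u\in W_n$ with the wall $H_{e_n}$, where the identity reduces to $\xi_zT_n=-\sq^{-1}\xi_z$. I would then transport to general $u$ by translating by $\sT_n$ as in Step 1, handling the wall $H_{-e_n+\delta}$ via $\tau_{e_n}$, which swaps the two walls.
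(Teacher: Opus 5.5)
Your proposal is correct. It is the standard generator-by-generator argument for such path formulas: crossings and folds come from $X_{us}=X_uT_s^{\pm1}$, bounces come from the eigenvalue of the Levi subalgebra on $\xi_z$, and $z^{\mathsf{wt}_F(p)}$ comes from $\xi_z\otimes X_{\tau_\la v}=z^{\la}\,\xi_z\otimes X_v$. Note that the paper gives no proof of its own here and simply imports the result from \cite{GLP:23}.

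The two points you left to check work out as follows.

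For $\la_n=1$, the factorisation $X_{\tau_\la v}=X_{\tau_\la}X_v$ also uses that the linear part $s_n$ of $\tau_\la$ preserves the orientation of every hyperplane not parallel to $H_{e_n}$. A reduced gallery from $A_0$ to $vA_0$ stays in $\cA_n$, so it crosses no hyperplane parallel to $H_{e_n}$. This gives $X_{\tau_\la}=X^{\la}T_n^{-1}$.

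For a bounce, one gets directly $X_uT_s=T_nX_u$ at the wall $x_n=0$, and $X_uT_s=X^{2e_n}T_n^{-1}X_u$ at the wall $x_n=1$. Each of these acts on $\xi_z$ by $-\sq^{-1}$.
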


Define a $*$-anti-involution on matrices $A\in\mathsf{Mat}_{|\sB|}(\sR[z])$ by transposition composed with applying $z_i^*=z_i^{-1}$ entrywise (with $\epsilon^*=\epsilon$). The path formula in Theorem~\ref{thm:pathformula} implies the following important property.

\begin{prop}\label{prop:*property}
For any choice of fundamental domain $F$ we have 
$$
\pi(T_w^*;F)=\pi(T_w;F)^*.
$$
\end{prop}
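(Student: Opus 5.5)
Since $T_w^*=T_{w^{-1}}$, the claim is $[\pi(T_{w^{-1}};F)]_{u,v}=[\pi(T_w;F)]_{v,u}^*$ for all $u,v\in F$. I would prove this via the path formula (Theorem~\ref{thm:pathformula}) by building a bijection between paths of type $\vec{w}$ from $u$ ending in the orbit $\sT_nv$ and paths of type $\vec{w}^{-1}$ (the reversed expression, which is reduced for $w^{-1}$) from $v$ ending in $\sT_nu$. The bijection must preserve numbers of folds and bounces and negate the weight, except for the last coordinate, which lives in $\ZZ_2$ and satisfies $\epsilon^*=\epsilon$. Because the expression $\vec{w}$ is fixed, the reduction to generators should be done through this bijection rather than by factorising $T_w$.

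Fix $p=(u_0,\ldots,u_k,u_k\sigma^a)$ of type $\vec{w}=s_{i_1}\cdots s_{i_k}\sigma^a$ with $u_0=u$ and $u_k\sigma^a=\tau_\la v$. Then $\tau_\la^{-1}u_k\sigma^a=v$. I would define the reversed path of type $\sigma^{-a}s_{i_k}\cdots s_{i_1}$ (conjugating the $\sigma$ across, or handling it separately, which is harmless since $\sigma$ is length zero and just relabels generators). It starts at $v$ and visits $\tau_\la^{-1}u_k,\tau_\la^{-1}u_{k-1},\ldots,\tau_\la^{-1}u_0=\tau_\la^{-1}u$. Its end therefore has $F$-final position $u$ and weight $-\la$, since $\sT_n$ is abelian and $\tau_\la^{-1}=\tau_{\la^*}$ by Lemma~\ref{lem:tauinverse}(1). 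This gives $z^{\mathsf{wt}}\mapsto (z^{\mathsf{wt}})^*$ as required. Crossings clearly reverse to crossings, and since $\sT_n$ preserves $\cA_n$, bounces reverse to bounces.

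The main obstacle is folds. Condition (b) requires the pre-fold alcove $u_{r-1}A_0$ to lie on the positive side of the fold hyperplane. Translating by $\tau_\la^{-1}$ may fail to preserve the periodic orientation, because a pseudo-translation with $\la_n=1$ contains $s_n$. In addition, reversal keeps the same alcove at the fold, so the positivity condition is literally the same statement. The apparent asymmetry is that a fold occurs at a position where the path stays put, whereas a crossing is "either direction". Hence the reversed path also folds there exactly when the orientation is the same at the translated alcove.

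To resolve the orientation issue I would check that the half-space orientation of the hyperplanes meeting the interior of $\cA_n$ (those $H_{\alpha+k\delta}$ with $\alpha\neq\pm e_n$) is $\sT_n$-invariant. Translations by $t_\la$ preserve orientation. The element $t_{e_n}s_n$ acts by $x_n\mapsto 1-x_n$, which preserves $\cA_n$ but flips $e_n$. A root $\alpha=\pm e_i\pm e_n$ is sent to $\pm e_i\mp e_n$, and it must be verified that positivity relative to $N\rho$ is compatible. If this fails, I would instead use the alternative convention of defining folds via the dominant direction within $\cA_n$, as in \cite{GLP:23}, where the path formula is shown to be independent of such choices. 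Finally, changing $F$ conjugates by a monomial matrix with entries $z^\mu$; this is unitary for $*$, so the statement for one $F$ implies it for all.
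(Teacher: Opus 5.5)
Your proposal is correct. The one step you left open does go through, so the fallback is not needed. Pseudo-translations with $\la_n=0$ are genuine translations, so only $\tau_{e_n}=t_{e_n}s_n$, which acts by $x_n\mapsto 1-x_n$, needs checking. For $i<n$ one has $\langle\tau_{e_n}x,e_i\pm e_n\rangle=\langle x,e_i\mp e_n\rangle\pm1$. For roots $\alpha$ not involving $e_n$ one has $\langle\tau_{e_n}x,\alpha\rangle=\langle x,\alpha\rangle$. Hence $\tau_{e_n}$ maps $H^+_{e_i-e_n+k\delta}$ onto $H^+_{e_i+e_n+(k-1)\delta}$, and $H^+_{e_i+e_n+k\delta}$ onto $H^+_{e_i-e_n+(k+1)\delta}$. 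The only walls whose orientation is reversed are those parallel to $H_{e_n}$. These never separate two alcoves of $\cA_n$, so they are never fold walls. Thus $\sT_n$ preserves the fold condition. Your reversal map is therefore a fold- and bounce-preserving bijection from paths of type $\vec w$ starting at $u$ and ending at $\tau_{\la}v$ to paths of type $\vec w^{-1}$ starting at $v$ and ending at $\tau_{\la^*}u$, which is all that is needed.

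This differs from the paper, which does not argue from scratch. It invokes the proof of Lemma~4.6 of \cite{CGLP:24} for the domain ${}^n\Wfin$. It then passes to arbitrary $F$ because the change-of-basis matrices are monomial in the $z^{\la}$ and hence satisfy $P^*=P^{-1}$. That is exactly your final sentence, which is in fact redundant for you, since your bijection works for every $F$ at once.

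Your reason for avoiding a reduction to generators is unfounded. The map $\pi$ is multiplicative, and matrix-$*$ is an anti-automorphism, because $*$ is a ring automorphism of the commutative ring $\sR[z]$ fixing $\sq$. So it suffices to check $\pi(T_s;F)^*=\pi(T_s;F)$ for $s\in\Saff$, together with the same identity for $T_\sigma$. That route is shorter and avoids the orientation question entirely. For a single generator, folds and bounces only put elements of $\sR$ on the diagonal. The crossing $u\to us=\tau_{\la}v$ pairs with the crossing $v\to vs=\tau_{\la}^{-1}u$. This gives $z^{\la}$ in position $(u,v)$ and $z^{\la^*}=(z^{\la})^*$ in position $(v,u)$.

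What your global bijection buys in exchange is a path-by-path identity, which is finer than the matrix identity.
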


\begin{proof}
The proof in \cite[Lemma~4.6]{CGLP:24} applies verbatim for the fundamental domain $F={^n}\Wfin$, and changing fundamental domain results in a change of basis matrix that is monomial with nonzero entries of the form $z^{\la}$ for $\la\in P_n$, and the result follows. 
\end{proof}

\subsection{The killing property}\label{sec:killing}

For each $1\leq i\leq n$ define an \textit{intertwiner}
$$
U_i=T_i-\frac{\sq-\sq^{-1}}{1-X^{-\alpha_i^{\vee}}}. 
$$
Note that $U_i$ is not an element of the Hecke algebra, however it can be considered as an operator acting on any $\Hext$-module for which each $1-X^{-\alpha_i^{\vee}}$ acts invertibly (after an appropriate extension of scalars; the module $\mathsf{M}_z$ constructed in Section~\ref{sec:pi} will satisfy this property). We record the following well known consequences of the Bernstein relation (see \cite[Proposition~2.31]{CGLP:24}).

\begin{prop}\label{prop:inter}
We have the following. 
\begin{compactenum}[$(1)$]
\item The elements $U_i$ satisfy the braid relations, and hence the element $U_w=U_{i_1}\cdots U_{i_k}$ is independent of the particular reduced expression $w=s_{i_1}\cdots s_{i_k}$. 
\item $U_wX^{\lambda}=X^{w\lambda}U_w$ for all $\lambda\in P$ and $w\in \Wfin$. 
\item We have 
$$
U_i^2=\sq^2\frac{(1-\sq^{-2}X^{-\alpha_i^{\vee}})(1-\sq^{-2}X^{\alpha_i^{\vee}})}{(1-X^{-\alpha_i^{\vee}})(1-X^{\alpha_i^{\vee}})}.
$$
\item If $u,v\in \Wfin$ then $U_uU_v=b(X)U_{uv}$ for a rational function $b(X)$. 
\end{compactenum}
\end{prop}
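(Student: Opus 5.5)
This is the standard intertwiner calculus, and I would prove it by direct computation from the Bernstein relation. Write $c_i(X)=\frac{\sq-\sq^{-1}}{1-X^{-\alpha_i^{\vee}}}$, so that $U_i=T_i-c_i(X)$. Here $T_i$ denotes $T_{s_i}$ and $X$ denotes the commutative subalgebra generated by the $X^{\lambda}$. All identities are read in the localisation of $\Hext$ at the denominators $1-X^{\pm\alpha^{\vee}}$, which makes sense because the Bernstein relation extends to rational functions in $X$.

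I would start with (2) for a single generator $w=s_i$, since everything else depends on it. For a rational function $f(X)$ the extended Bernstein relation reads
$$T_if(X)=f(s_iX)T_i+c_i(X)\bigl(f(X)-f(s_iX)\bigr).$$
Subtracting $c_i(X)f(X)$ from both sides gives $U_if(X)=f(s_iX)T_i-c_i(X)f(s_iX)$. I then need to check that this equals $f(s_iX)U_i=f(s_iX)T_i-f(s_iX)c_i(X)$. This holds because the $X$-algebra is commutative. Taking $f=X^{\lambda}$ gives $U_iX^{\lambda}=X^{s_i\lambda}U_i$. The general case of (2) then follows by induction on length, once (1) shows that $U_w$ is well defined.

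For (3), I would expand
$$U_i^2=T_i^2-T_ic_i-c_iT_i+c_i^2.$$
I substitute $T_i^2=1+(\sq-\sq^{-1})T_i$ and use the Bernstein relation to move $c_i$ past $T_i$, noting that $s_ic_i(X)=\frac{\sq-\sq^{-1}}{1-X^{\alpha_i^{\vee}}}$. The $T_i$ coefficient is $(\sq-\sq^{-1})-c_i(s_iX)-c_i(X)$, and since $\frac{1}{1-a}+\frac{1}{1-a^{-1}}=1$ this coefficient vanishes. What remains is a rational function in $X$. Simplifying it to the stated product is routine algebra, and I would check it by writing $a=X^{-\alpha_i^{\vee}}$.

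For (1), the braid relations, I would use (2) and (3) together with a uniqueness argument, rather than verifying every rank-two case by brute force. The argument uses a leading-term expansion in the basis $\{T_w\}$ over rational functions of $X$. By construction, $U_{i_1}\cdots U_{i_k}=T_w+\sum_{v<w}r_v(X)T_v$ for any reduced word of $w$. Let $E$ be the difference between the two braid words for the same $w$. By (2), $EX^{\lambda}=X^{w\lambda}E$, and $E$ is a combination of $T_v$ with $v<w$. Expanding $E$ in the form $\sum_v T_vr_v'(X)$ and comparing with the intertwining relation shows that each $r_v'$ must satisfy $X^{v\lambda}=X^{w\lambda}$ generically, so each $r_v'$ vanishes. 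This argument uses the faithfulness of the $X$-action on the polynomial representation.

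The main obstacle is making this uniqueness argument rigorous. If it proves awkward, I would instead check the relations directly in rank two, for the types $\sA_2$ and $\sB_2$ (the latter covering the $s_{n-1},s_n$ relation). Alternatively, since the result is well known, I would cite \cite[Proposition~2.31]{CGLP:24} for (1).

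Finally, (4) follows from (1) and (3). If $\ell(uv)=\ell(u)+\ell(v)$ then $U_uU_v=U_{uv}$. Otherwise, I would reduce using a factor $U_i^2$, which is a rational function by (3), and move that rational function to the front using (2). Induction on $\ell(u)$ then gives $U_uU_v=b(X)U_{uv}$.
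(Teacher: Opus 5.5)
Your proof is correct and self-contained. The paper gives no argument here: it records the proposition as a well-known consequence of the Bernstein relation and cites \cite[Proposition~2.31]{CGLP:24}.

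Your computations for (2) and (3) check out. In $U_i^2$ the $T_i$-coefficient vanishes, and the remainder $1+c_i(X)\,c_i(s_iX)$ equals the stated product. Your induction for (4) is also sound: use (3) to replace $U_i^2$ by a rational function, and (2) to move it to the front. Note that (2) extends to rational functions immediately, since $U_w p^{-1}=(wp)^{-1}U_w$.

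The real difference from quoting the literature, or from verifying rank two by hand, is your uniqueness argument for (1). It avoids the $\sA_2$ check and especially the $\sB_2$ relation between $U_{n-1}$ and $U_n$. The price is that it relies on the Bernstein basis theorem.

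The step you flagged as the obstacle is rigorous, provided you make two points precise:
\begin{itemize}
\item What you need is that $\{T_v\}_{v\in\Wfin}$ is a basis of the localised algebra as a right module over the field of rational functions in the $X^{\lambda}$. Faithfulness of the polynomial representation is not the relevant fact.
\item Compare coefficients at a Bruhat-maximal $v$ in the support of $E=\sum_{v<w}T_vr'_v(X)$. We have
$X^{w\lambda}T_v=T_vX^{v^{-1}w\lambda}+(\text{terms } T_{v'}\text{ with } v'<v)$.
So the $T_v$-coefficient of $EX^{\lambda}=X^{w\lambda}E$ gives $r'_v(X)X^{\lambda}=X^{v^{-1}w\lambda}r'_v(X)$ for all $\lambda$. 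Since $v\neq w$, this forces $r'_v=0$.
\end{itemize}
Your phrase ``each $r'_v$ must satisfy'' should be read as this downward induction, because lower coefficients receive contributions from higher ones.
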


Triangularity between $T_w$ and $U_w$ implies that $\mathsf{M}$ has ``basis'' $\{\xi_z\otimes U_w\mid w\in{^n}\Wfin\}$, where one must extend scalars to rational functions in~$z_1,\ldots,z_{n-1}$. Since $\xi_z\cdot T_n=-\sq^{-1}\xi_z$ and $\xi_z\cdot X^{\alpha_n^{\vee}}=\sq^{-2}\xi_z$ we have $\xi_z\cdot U_n=0$. It follows, using Proposition~\ref{prop:inter}, that with respect to the basis $\{\xi_z\otimes U_u\mid u\in {^n}\Wfin\}$, 
\begin{compactenum}[$(1)$]
\item the matrix $\pi(X^{\lambda})$ is diagonal for each $\lambda\in P$.
\item the matrix for $\pi(U_w)$, $w\in\Wfin$, has at most one nonzero entry in each row and column. Indeed, if $u\in {^n}\Wfin$ then the $u$th row of $\pi(U_w)$ is zero if $uw\notin {^n}\Wfin$, and has a nonzero entry only in the $(u,uw)$-position if $uw\in {^n}\Wfin$. 
\end{compactenum}

\begin{thm}[Killing Property]\label{thm:killing}
We have $\pi(C_{w})=0$ for all $w$ in the lowest two-sided cell. 
\end{thm}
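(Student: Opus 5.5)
Since $\pi(C_w)=0$ is equivalent to $C_w$ acting as zero on $\mathsf{M}$, and $\pi$ is a representation, it suffices to kill a generating family. By Theorem~\ref{thm:Delta0} every $w\in\Delta_0$ is an extension $w=x\sw_Jy$ with $J\in\{0,1\}$ and lengths adding. The span $\bigoplus_{w\in\Delta_0}\sR C_w$ is a two-sided ideal of $\Hext$, because $\Delta_0$ is the lowest two-sided cell. I will use the standard fact that this ideal is generated by $C_{\sw_0}$ and $C_{\sw_1}$. One way to see it is to note that $C_{x\sw_Jy}$ lies in $\Hext C_{\sw_J}\Hext$, which follows by induction on $\ell(x)+\ell(y)$ using $C_sC_{w}=C_{sw}+\sum_{z<w}\mu\, C_z$. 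The lower terms $C_z$ that stay in $\Delta_0$ have strictly smaller length, so the induction closes. Hence it suffices to prove $\pi(C_{\sw_0})=0$ and $\pi(C_{\sw_1})=0$. Moreover $\sw_1=\sigma\sw_0\sigma^{-1}$, so $C_{\sw_1}=T_\sigma C_{\sw_0}T_\sigma^{-1}$, and it is enough to show $\pi(C_{\sw_0})=0$.

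For $\pi(C_{\sw_0})$ I would use the intertwiner basis $\{\xi_z\otimes U_u\mid u\in{^n}\Wfin\}$ over the field of rational functions in $z$. It is classical (Kato / Macdonald) that $C_{\sw_0}$, as an element of the finite Hecke algebra $H_0$ localised at the $X$'s, factorises as $C_{\sw_0}=\sq^{-\ell(\sw_0)}\, c(X)\, U_{\sw_0}$ up to a unit. Here $c(X)$ is a rational function in $X$. Equivalently, $C_{\sw_0}$ is the (normalised) trivial idempotent of $H_0$ and can be written as $\big(\sum_{u\in\Wfin}\sq^{\ell(u)}\dots\big)$, and right multiplication by $T_i-\sq$ annihilates it for all $1\le i\le n$. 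I would instead argue directly: $C_{\sw_0}T_i=\sq C_{\sw_0}$ for $1\le i\le n$. Hence for any $m\in\mathsf{M}$, the vector $m\cdot C_{\sw_0}$ lies in the subspace $\{v\mid v\cdot T_i=\sq v\ \forall i\}$ of ``trivial-type'' vectors for the finite Hecke algebra $H_0$.

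The key step is then to show that $\mathsf{M}$ has no nonzero vector $v$ with $v\cdot T_i=\sq v$ for all $1\le i\le n$. As an $H_0$-module, $\mathsf{M}\cong \mathsf{Ind}_{H_{\{n\}}}^{H_0}(\text{sign}')$ after generic specialisation, since $\xi_z\cdot T_n=-\sq^{-1}\xi_z$ is the sign character of $\langle T_n\rangle$. By Frobenius reciprocity (Mackey over the generic semisimple Hecke algebra), the multiplicity of the trivial character $T_i\mapsto \sq$ in this induced module equals the multiplicity of the trivial character of $\langle T_n\rangle$ in its sign character. That multiplicity is $0$. To make this rigorous over $\sR[z]$ I would extend scalars to the fraction field $K$ of $\sR[z]$, where $H_0\otimes K$ is split semisimple. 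The vanishing then holds over $K$, hence over $\sR[z]$ since $\mathsf{M}$ is free.

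The main obstacle is the reduction to $\pi(C_{\sw_0})=0$, namely verifying that the $\Delta_0$ ideal is generated by $C_{\sw_0},C_{\sw_1}$ in the extended algebra. If that induction is delicate, I would instead use a direct argument. Every $w\in\Delta_0$ can be written as $w=w'\sw_0 y$ up to $\sigma$ with $\ell$ additive, by Theorem~\ref{thm:Delta0} and the $\sigma$-symmetry. By \cite{Shi:Lowest} and Xi's description, $C_w=h\,C_{\sw_0}\,h'$ for suitable $h,h'\in\Hext$; one form of this is $C_{x\sw_0y}=E_x C_{\sw_0}F_y$. I would quote Xi's result in that form.
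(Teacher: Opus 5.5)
Your proposal is correct. The reduction to $\pi(C_{\sw_0})=0$ is the same as in the paper, which simply quotes $C_w\in\Hext C_{\sw_0}\Hext$ for $w\in\Delta_0$. Your induction via $C_sC_{w'}=C_{sw'}+\sum_{z<w'}\mu(z,w')C_z$ justifies this, once you note that \emph{every} $z$ occurring satisfies $z\leq_L w'$. Since $\Delta_0$ is lowest, every such $z$ lies in $\Delta_0$, so strong induction on length closes.

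The genuine difference is in how you kill $C_{\sw_0}$. The paper extends scalars to rational functions in $z$ and expands $C_{\sw_0}=\sq^{\ell(\sw_0)}\sum_{w\in\Wfin}\sq^{-\ell(w)}c_w(X)U_w$ in intertwiners. It then shows each term dies, using either $\xi_z\cdot U_n=0$ or the vanishing numerator factor $1-\sq^{-2}\psi_z(X^{-\alpha_n^{\vee}})=0$. You stay inside the finite Hecke algebra $H_0=\langle T_1,\ldots,T_n\rangle$. Because $\Hext=\bigoplus_{u\in{^n}\Wfin}\cL_nT_u$, the restriction of $\mathsf{M}$ to $H_0$ is $\sR[z]\otimes_{\sR}\bigl(\sR\xi\otimes_{\langle T_n\rangle}H_0\bigr)$. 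This is induced from $T_n\mapsto-\sq^{-1}$ and independent of $z$, so no "generic specialisation" is needed. Each $m\cdot C_{\sw_0}$ is a trivial-type vector, and Frobenius reciprocity shows there are none.

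What your route buys:
\begin{itemize}
\item It shows the killing property depends only on the $T_n$-eigenvalue of $\xi_z$. The $X^{e_n}$-eigenvalue $\pm\sq^{-1}$ that the paper uses is forced by it through the Bernstein relation.
\item It needs no extension of scalars in $z$ and no intertwiner formula for $C_{\sw_0}$.
\item It works verbatim for any module induced from a Levi subalgebra $\cL_J$ whose inducing module has no trivial-type vector for $H_J$.
\end{itemize}
The paper's route has the advantage of reusing intertwiner calculus it needs anyway, for example for the $G_n$-invariance of $\chi$.

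Your Frobenius step even collapses to one integral line. We have $\sum_{w\in\Wfin}\sq^{\ell(w)}T_w=\sq\,C_{s_n}\sum_{u\in{^n}\Wfin}\sq^{\ell(u)}T_u$ and $T_uC_{\sw_0}=\sq^{\ell(u)}C_{\sw_0}$. Hence $(\xi_z\otimes T_u)\cdot C_{\sw_0}$ is a multiple of $(\xi_z\cdot C_{s_n})\otimes h$, which vanishes because $\xi_z\cdot(T_n+\sq^{-1})=0$.

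Two slips should be removed, though neither is load-bearing.
\begin{itemize}
\item The aside that $C_{\sw_0}=\sq^{-\ell(\sw_0)}c(X)U_{\sw_0}$ up to a unit is false. A single intertwiner term is invertible after localisation, whereas $C_{\sw_0}(T_1-\sq)=0$.
\item $\sR[z]$ has no fraction field, since $(\epsilon-1)(\epsilon+1)=0$. Because the $H_0$-structure does not involve $z$, you can instead pass to $\QQ(\sq)$ on the free $\sR$-module $\sR\xi\otimes_{\langle T_n\rangle}H_0$. Alternatively, use the one-line argument above, which works integrally.
\end{itemize}
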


\begin{proof}
It is well known that if $w\in \Delta_0$ then $C_w\in HC_{\sw_0}H$. Thus it is sufficient to show that $\pi(C_{\sw_0})=0$. It is well known that (see for example \cite[Theorem~2.32]{CGLP:24})
$$
C_{\sw_0}=\sq^{\ell(\sw_0)}\sum_{w\in W_0}\sq^{-\ell(w)}c_w(X)U_w\quad\text{where}\quad c_w(X)=\prod_{\alpha\in\Phi_0^+\backslash\Phi(w)}\frac{1-\sq^{-2}X^{-\alpha^{\vee}}}{1-X^{-\alpha^{\vee}}}.
$$
 Thus
$$
\pi(C_{\sw_0})=\sq^{\ell(\sw_0)}\sum_{w\in W_0}\sq^{-\ell(w)}\pi(c_w(X)U_w).
$$
Working in the basis of intertwiners, the matrices $\pi(c_w(X))$ are diagonal, and the matrices $\pi(U_w)$ have at most one non-zero entry in each row and column. Moreover, the matrices $\pi(U_w)$, $w\in \Wfin$, have distinct support (the places of the non-zero entries; because the $u$th row of $\pi(U_w)$ is either $0$, or has an entry only in the $uw$-column in the case that $uw\in {^n}\Wfin$, see Proposition~\ref{prop:inter}). It follows that $\pi(C_{\sw_0})=0$ if and only if $\pi(c_w(X)U_w)=0$ for all $w\in W_0$. This is equivalent to the statement 
$$
(\xi_z\otimes U_u)\cdot c_w(X)U_w=0\quad\text{for all $u\in{^n}\Wfin$ and $w\in W_0$}.
$$
We have 
$$
(\xi_z\otimes U_u)\cdot c_w(X)U_w=\bigg[\prod_{\alpha\in\Phi_0^+\backslash\Phi(w)}\frac{1-\sq^{-2}\psi_z(X^{-u\alpha^{\vee}})}{1-\psi_z(X^{-u\alpha^{\vee}})}\bigg](\xi_z\otimes U_uU_w).
$$
If $uw\notin {^n}\Wfin$ then $\xi\otimes U_uU_w=0$ (because $U_uU_w=b(X) U_{uw}$ for a rational function $b(X)$, and if $uw\notin {^n}\Wfin$ then $uw=s_ny$ with $\ell(s_ny)=\ell(y)+1$, and hence $U_{uw}=U_{n}U_y$, but $\xi_z\cdot U_n=0$). Thus suppose that $uw\in {^n}\Wfin$. Since $\ell(s_nu)=\ell(u)+1$ we have $u^{-1}\alpha_n>0$, and since $\ell(s_nuw)=\ell(uw)+1$ we have $w^{-1}(u^{-1}\alpha_n)>0$, and so $u^{-1}\alpha_n\in\Phi_0^+\backslash\Phi(w)$. Hence the factor $1-\sq^{-2}\psi_z(X^{-\alpha_n^{\vee}})$ appears in the numerator of the above product, and since $\psi_z(X^{-\alpha_n^{\vee}})=\sq^2$ we have $\pi(C_{\sw_0})=0$ as required. 
\end{proof}

\subsection{Boundedness}\label{sec:bounded}

The following result links the degree of matrix entries of $\pi(T_w)$ to $\ba(\Delta_1)=n(n-1)$. 
\begin{thm}[Boundedness]\label{thm:bounded}
Let $F$ be any ordered fundamental domain. Then 
$$\deg[\pi(T_w;F)]_{u,v}\leq n(n-1)\quad\text{for all $w\in\Wext$ and $u,v\in F$},
$$ and if equality holds for some $u,v\in F$ then $w\in \Delta_1$.
\end{thm}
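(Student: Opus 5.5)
Since changing fundamental domain multiplies $\pi(T_w;F)$ on each side by monomial matrices with entries $z^{\lambda}$, which do not affect $\sq$-degrees, we may fix a convenient domain. We take $F=\sF=\fu\sB$ from Theorem~\ref{thm:main2}(3), since it is adapted to the cell factorisation. The proof then has two parts: the degree bound $n(n-1)$ for all $w$, and the statement that equality forces $w\in\Delta_1$.

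For the bound we use the Path Formula (Theorem~\ref{thm:pathformula}). A path $p$ contributes degree $f(p)-b(p)$, so it suffices to show $f(p)-b(p)\leq n(n-1)$ for every $n$-folded alcove path. Here we follow the strategy of \cite{GLP:23,CGLP:24}, which bounds the number of folds minus bounces by the number of ``directions'' of hyperplanes inside $\cA_n$ along which positive folds can accumulate. Only hyperplanes $H_{\alpha+k\delta}$ with $\alpha$ not parallel to $e_n$ can produce folds, and a path can only fold onto the positive side of such hyperplanes. Tracking the ``position relative to the $\fu$-dominance cone'' suggests that each of the $n(n-1)$ parallelism classes of long roots contributes at most one net fold. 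Concretely, we would show that between two folds on hyperplanes of the same class the path must cross back negatively or bounce; this is the standard argument counting $|\Phi_n^+|=n(n-1)$.

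An alternative route for the bound, which I would try first, is algebraic. By Proposition~\ref{prop:inter}, $\pi$ is a quotient of an induced module from $\cL_n$. We write $T_w$ through the Kazhdan--Lusztig basis, $T_w=\sum_{y} r_{y,w}C_y$ with $r_{y,w}\in\sq^{-1}\ZZ[\sq^{-1}]$ for $y\ne w$, and use the Killing Property (Theorem~\ref{thm:killing}) to discard $y\in\Delta_0$. For $y\notin\Delta_0$, Lusztig's bound $\deg h_{x,y,z}\leq \ba(z)\leq n(n-1)$ holds. Combined with the fact that the module $\mathsf{M}$ is generated by vectors on which $C_{\sw_n}$ acts by a degree-$n(n-1)$ scalar (recall $\xi_z\cdot T_{\sw_n}$ has controlled degree), this would give $\deg\pi(C_y)\leq n(n-1)$. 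The required mechanism is an ``$\ba$-bounded realisation'': we express the basis vectors as $\xi\otimes X_u$ and show that $\pi(C_y)$ entries are, up to lower order, structure constants $h_{\cdot,y,\cdot}$ restricted to $\Ga$.

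For the equality statement, suppose an entry of $\pi(T_w)$ has degree exactly $n(n-1)$. By the previous paragraph, some $\pi(C_y)$ with $y\notin\Delta_0$ and $\ba(y)\geq n(n-1)$ is involved, giving $y\in\Delta_1$ and $y\leq w$ in Bruhat order with $\deg r_{y,w}=0$, hence $y=w$. In path language, equality forces a path with $n(n-1)$ net folds, and such a path exhibits a type $n$ vertex strictly inside $\mathsf{conv}\{1,w\}$; Theorem~\ref{thm:convexhulls} then shows $w$ is an extension of $\sw_n$. Finally, $w\notin\Delta_0$, since otherwise $\pi(T_w)$ could be rewritten modulo $\pi(C_{\Delta_0})=0$ with lower degree.

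The main obstacle is the bound itself, namely the fold-counting argument that makes $n(n-1)$ sharp uniformly over all paths. I expect to need an explicit ``dominance'' monotonicity lemma for $n$-folded paths in $\cA_n$ relative to the cone $\mathcal{C}$.
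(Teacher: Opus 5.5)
There is a genuine gap: neither of your two routes proves the degree bound. The first route needs $f(p)-b(p)\leq n(n-1)$ for every $n$-folded alcove path, and this is false. Example~\ref{ex:pathscancel} gives, for $n=4$ (so $n(n-1)=12$), a path with $13$ folds and no bounces, contributing $(\sq-\sq^{-1})^{13}$ up to a monomial in $z$. That leading term is cancelled by three further paths with more folds \emph{and} bounces. So no ``one net fold per long-root parallelism class'' count, and no cone-monotonicity lemma, can hold path by path; the bound is a cancellation phenomenon.

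Your second route asserts its key step without establishing it: that the entries of $\pi(C_y)$ are, up to lower order, structure constants $h_{\cdot,y,\cdot}$ restricted to $\Ga$. The module $\mathsf{M}$ has finite rank over $\sR[z]$ with basis $\xi_z\otimes X_u$. Knowing that $C_{\sw_n}$ acts on one vector by a scalar of degree $n(n-1)$ gives no control over the coordinates of $(\xi_z\otimes X_u)\cdot C_y$ for general $y$. Relating $\pi$ to the cell is something boundedness is later used \emph{for}, not something available beforehand.

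Your equality argument inherits this problem. The claim that degree $n(n-1)$ forces some $\pi(C_y)$ with $\ba(y)\geq n(n-1)$ presupposes a link between $\deg\pi(C_y)$ and $\ba(y)$ that you have not proved. The assertion that a path with $n(n-1)$ net folds yields a type $n$ vertex in $\mathsf{conv}\{1,w\}^{\circ}$ is unsupported. By the example, individual paths' net fold counts are not what the matrix entry sees.

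The missing idea is a self-improving positivity argument.

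First, take the a priori finiteness of the degrees from \cite[Theorem~6.9]{GLP:23}, and let $\mathbf{b}$ be the optimal bound; it is independent of $F$. Without finiteness the argument below is vacuous.

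Suppose $\pi(T_w)$ attains degree $\mathbf{b}$ in row $u$, at columns $v_1,\ldots,v_k$, with leading coefficients $a_r\in\ZZ[z]$. The identity $\pi(T_{w^{-1}})=\pi(T_w)^*$ (Proposition~\ref{prop:*property}) together with triangularity shows the $(u,u)$-entry of $\pi(C_w)\pi(C_{w^{-1}})$ has $\sq^{2\mathbf{b}}$-coefficient $\sum_r a_ra_r^*$. Its constant term is a nonzero sum of squares, so degree $2\mathbf{b}$ is attained.

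On the other hand, $\pi(C_w)\pi(C_{w^{-1}})=\sum_z h_{w,w^{-1},z}\pi(C_z)$. The killing property (Theorem~\ref{thm:killing}) restricts the sum to $z\notin\Delta_0$. For these $z$ we have $\deg h_{w,w^{-1},z}\leq\ba(z)\leq n(n-1)$ and $\deg\pi(C_z)\leq\mathbf{b}$. Hence $2\mathbf{b}\leq n(n-1)+\mathbf{b}$, i.e.\ $\mathbf{b}\leq n(n-1)$.

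If degree $n(n-1)$ is attained, the same comparison produces some $z\notin\Delta_0$ with $\deg h_{w,w^{-1},z}=\ba(z)=n(n-1)$. Then $z\in\Delta_1$, since $\ba<n(n-1)$ on all other cells. Also $\gamma_{w,w^{-1},z^{-1}}\neq 0$, so $\mathsf{P8}$ gives $w\sim_R z$, hence $w\in\Delta_1$. No path combinatorics is needed.
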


\begin{proof}
By \cite[Theorem~6.9]{GLP:23} there is a global bound $\deg[\pi(T_{w};F)]_{u,v}\leq N$ for all $w\in\Wext$ and $u,v\in F$. Let $\mathbf{b}$ be the minimum such integer~$N$ (by \cite[Proposition~6.5]{GLP:23} the integer $\mathbf{b}$ is independent of the particular fundamental domain chosen, and so we will write $\pi(T_w)$ in place of $\pi(T_w;F)$ in the proof below). 

Let $w\in \Wext$ be such that $\pi(T_w)$ attains the degree bound~$\bb$, and suppose that this degree is attained on the $u$th row of $\pi(T_w)$. Let $v_1,\ldots,v_k\in F$ index the columns with $\deg[\pi(T_w)]_{u,v_r}=\bb$ for $1\leq r\leq k$. Write 
$$
[\pi(T_w)]_{u,v_r}=a_r\sq^{\bb}+(\textrm{terms of strictly lower degree}), \text{ with $a_r\in \ZZ[z]$}.
$$
By Proposition~\ref{prop:*property} we have $
\pi(T_{w^{-1}})=\pi(T_w)^*
$ and so
$$
[\pi(T_{w^{-1}})]_{v_r,u}=a_r^*\sq^{\bb}+(\textrm{terms of strictly lower degree}).
$$
By triangularity it follows that $[\pi(C_w)]_{u,v_r}=a_r\sq^{\bb}+(\textrm{terms of strictly lower degree})$ and that $[\pi(C_{w^{-1}})]_{v_r,u}=a_r^*\sq^{\bb}+(\textrm{terms of strictly lower degree})$. Thus
\begin{align*}
[\pi(C_w)\pi(C_{w^{-1}})]_{u,u}&=\bigg(\sum_{r=1}^ka_ra_r^*\bigg)\sq^{2\bb}+(\textrm{terms of strictly lower degree}).
\end{align*}
Writing $a_r=\sum_{\la\in P_n}a_{r,\la}z^{\la}$ with $a_{r,\la}\in\ZZ$ we have $[a_ra_r^*]_{\mathsf{ct}}=\sum_{\la\in P_n}a_{r,\la}^2>0$, and hence the coefficient of $\sq^{2\bb}$ in the above expression for $[\pi(C_w)\pi(C_{w^{-1}})]_{u,u}$ does not vanish. Thus the matrix $\pi(C_w)\pi(C_{w^{-1}})$ attains degree $2\bb$ in the $(u,u)$-entry.

On the other hand,
\begin{align}\label{eq:applypi}
\pi(C_w)\pi(C_{w^{-1}})=\sum_{z}h_{w,w^{-1},z}\pi(C_z).
\end{align}
By Theorem~\ref{thm:killing} the sum is over $z$ in two-sided cells $\Delta$ with $\Delta\geq_{LR} \Delta_1$, and hence by $\mathsf{P4}$ we have $\ba(z)\leq n(n-1)$ for all such $z$. By triangularity and boundedness we have $\deg\pi(C_z)\leq \bb$, and since $\deg h_{w,w^{-1},z}\leq \ba(z)$ it follows that the maximum degree on the right hand side of~(\ref{eq:applypi}) is $n(n-1)+\bb$. Thus $2\bb\leq n(n-1)+\bb$, and hence $\bb\leq n(n-1)$.

Suppose now that $\pi(T_w)$ attains degree~$n(n-1)$, in position $(u,v)$, say. As explained in the first paragraph of the proof, $\pi(C_w)\pi(C_{w^{-1}})$ attains degree $2n(n-1)$ in position $(u,u)$, and then~(\ref{eq:applypi}) implies that there exists $z\in\Delta$ with $\Delta\geq_{LR}\Delta_1$ such that $\deg h_{w,w^{-1},z}=n(n-1)=\deg[\pi(C_z)]_{u,u}$. Thus $\ba(z)=n(n-1)$ (because $\ba(z)\leq n(n-1)$) and hence $z\in\Delta_{1}$ by $\mathsf{P11}$. But then $\mathsf{P8}$ gives $w\sim_R z$, and so $w\in \Delta_1$ as required.
\end{proof}

\begin{example}\label{ex:pathscancel}
Comparing the path formula Theorem~\ref{thm:pathformula} with the boundedness result Theorem~\ref{thm:bounded}, it is natural to ask if the contribution $(-\sq)^{-b(p)}(\sq-\sq^{-1})^{f(p)}$ of each path $p\in\cP_n(\vec{w},u)$ is bounded by $n(n-1)$. This is not the case, and in general it appears to be a complicated problem to give a combinatorial proof of Theorem~\ref{thm:bounded} using the path formula. For example, consider $n=4$, and so $n(n-1)=12$. Let $\vec{w}=13213432434123024343202$ (a reduced expression) and $v=t_{e_1+e_4}s_{e_1}$. The paths $p_i$, $1\leq i\leq 4$, given below are $n$-folded alcove paths of type $\vec{w}$ starting at the identity and ending at $v$ (here $\hat{i}$ indicates a fold, and $\check{i}$ indicates a bounce):
\begin{align*}
p_1&=\hat{1}\hat{3}\hat{2}\hat{1}34\hat{3}24\hat{3}\hat{4}1\hat{2}\hat{3}024\hat{3}43\hat{2}\hat{0}\hat{2}&&\text{($13$ folds, $0$ bounces)}\\
p_2&=\hat{1}\hat{3}\hat{2}\hat{1}34\hat{3}24\hat{3}\hat{4}1\hat{2}\hat{3}02\hat{4}3\check{4}\hat{3}\hat{2}\hat{0}\hat{2}&&\text{($14$ folds, $1$ bounce)}\\
p_3&=\hat{1}\hat{3}\hat{2}\hat{1}\hat{3}\check{4}32\hat{4}\hat{3}\hat{4}1\hat{2}\hat{3}024\hat{3}43\hat{2}\hat{0}\hat{2}&&\text{($14$ folds, $1$ bounce)}\\
p_4&=\hat{1}\hat{3}\hat{2}\hat{1}\hat{3}\check{4}32\hat{4}\hat{3}\hat{4}1\hat{2}\hat{3}02\hat{4}3\check{4}\hat{3}\hat{2}\hat{0}\hat{2}&&\text{($15$ folds, $2$ bounces)}.
\end{align*}
In particular, each of these paths $p_i$ has degree contribution exceeding $12$ (indeed it turns out that these $4$ paths are precisely the $n$-folded paths of type $\vec{w}$ starting at the identity, ending at~$v$, with degree contribution at least~$12$). Their respective contributions are $(\sq-\sq^{-1})^{13}$, $-\sq^{-1}(\sq-\sq^{-1})^{14}$, $-\sq^{-1}(\sq-\sq^{-1})^{14}$, and $\sq^{-2}(\sq-\sq^{-1})^{15}$ (with each multiplied by $z^{e_1+e_4}=\epsilon z_1$). The combined contribution is thus $(1-2(1-\sq^{-2})+(1-\sq^{-2})^2)(\sq-\sq^{-1})^{13}=\sq^{-4}(\sq-\sq^{-1})^{13}$ which has degree only~$9$ (in agreement with Theorem~\ref{thm:bounded}). 

Thus we see that the minus signs introduced by bounces can lead to delicate cancellations of leading terms in the path formula, making a ``path-by-path'' bounding argument difficult. Of particular note, the contribution from path $p_1$ exceeds $n(n-1)$ without any bounces, however the leading term is cancelled by paths with more folds and with bounces, having the same degree contribution. 
\end{example}

\subsection{The matrix $\pi(C_{\sw_n})$ with respect to the $W_n$-fundamental domain}\label{sec:Wn}

It is well known that the Kazhdan-Lusztig basis element for the longest element $\sw_J$ of a spherical Coxeter group $W_J$ is 
$$
C_{\sw_J}=\sq^{-\ell(\sw_J)}\sum_{w\in W_J}\sq^{\ell(w)}T_w=\sq^{\ell(\sw_J)}\sum_{w\in W_J}\sq^{-\ell(w)}T_{w^{-1}}^{-1}. 
$$
In particular, note that $C_{\sw_J}^2=\sq^{-\ell(\sw_J)}W_J(\sq^2)C_{\sw_J}$, where $W_J(\sq^2)=\sum_{w\in W_J}\sq^{2\ell(w)}$.

The formula in Lemma~\ref{lem:CJp} below gives another expression for $C_{\sw_J}$ for any proper subset $J\subset\Saff$ in terms of signed lengths, where the \textit{signed length} of $w\in\Wext$ is
$$
\veps(w)=\#\{\beta\in\Phi(w)\mid wA_0\subseteq H_{\beta}^+\}-\#\{\beta\in\Phi(w)\mid wA_0\subseteq H_{\beta}^-\}. 
$$

\begin{lemma}\label{lem:CJp} If $J\subset\Saff$ is a proper subset, then
$$
C_{\sw_J}=\sq^{-\veps(\sw_J)}\sum_{w\in W_{J}}\sq^{\veps(w)}X_w.
$$
\end{lemma}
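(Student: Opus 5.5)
We start from the second expression for $C_{\sw_J}$ recalled just before the lemma,
$$
C_{\sw_J}=\sq^{\ell(\sw_J)}\sum_{w\in W_J}\sq^{-\ell(w)}T_{w^{-1}}^{-1}.
$$
The plan is to rewrite it in the $X$-basis using the geometry of the spherical residue. Since $J$ is proper, $W_J$ is finite and fixes a vertex $\nu$ of $A_0$ (up to the $\Wext$ action, the vertex of type complementary to $J$, or the face of $A_0$ cut out by the walls of $J$). The alcoves $wA_0$, $w\in W_J$, are exactly the alcoves around this face. The hyperplanes that meet the face are finitely many: those $H_\beta$ with $\beta\in\Phi_J^+$. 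Each such hyperplane has a well-defined positive side $H_\beta^+$, determined by the direction $\rho=\omega_1+\cdots+\omega_n$.

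The first step is the following observation. For a reduced expression $w=s_{i_1}\cdots s_{i_k}$ with all $i_r\in J$, every alcove in the gallery lies in the residue, and every crossed hyperplane lies in $\Phi_J$. Hence $X_w=T_{s_{i_1}}^{\epsilon_1}\cdots T_{s_{i_k}}^{\epsilon_k}$, where $\epsilon_r=+1$ if the $r$th crossing goes from the negative to the positive side. Because the gallery is reduced, each hyperplane in $\Phi(w)$ is crossed exactly once. So $\epsilon_r=+1$ exactly when $wA_0\subseteq H_\beta^+$ for the crossed root $\beta$, and therefore $\sum_r\epsilon_r=\veps(w)$.

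The key reduction is then to recognise that this is a ``twisted'' finite Hecke algebra computation. Let $y\in W_J$ be the unique alcove in the residue lying on the positive side of all hyperplanes in $\Phi_J$; this is the alcove in the direction of $\rho$. Then $\veps(w)=\ell(w)-2\,d(w)$, where $d(w)$ counts the hyperplanes in $\Phi(w)$ crossed negatively. Equivalently, $X_w=T_{y}^{-1}\!\cdots$ in a suitable sense. More concretely, I would use the standard identity $X_w=T_{y_1}^{-1}T_{y_2}$, where $w=y_1^{-1}y_2$ with $y_1,y_2$ determined by the projection of the gallery through the ``bottom'' alcove $x$ of the residue (the one on the negative side of all walls). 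Writing every element relative to $x$ gives $X_w=T_{x}^{-1}T_{xw}$, interpreted inside the finite Hecke algebra $H_J$ via the relabelling of generators by conjugation by $x$, which is a diagram automorphism of $W_J$.

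With this in hand, the lemma becomes the finite identity
$$
\sq^{-\ell(\sw_J)}\sum_{v\in W_J}\sq^{\ell(v)}T_v=\sq^{-\veps(\sw_J)}\sum_{w}\sq^{\veps(w)}T_{x}^{-1}T_{xw}.
$$
Left multiplication by $T_x$ sends $C_{\sw_J}$ to $\sq^{\ell(x)}C_{\sw_J}$, since $T_sC_{\sw_J}=\sq C_{\sw_J}$ for $s\in J$. The remaining task is then to check that $\veps(w)-\veps(\sw_J)$ matches $\ell(xw)-\ell(x)-\ell(\sw_J)$ after reindexing $v=xw$. This follows from the count above: $\veps(w)=\ell(xw)-\ell(x)$ because negatively crossed hyperplanes are exactly those separating $x$ from the target alcove in the other direction.

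The main obstacle will be making the identification $X_w=T_x^{-1}T_{xw}$ rigorous. This means handling the fact that $x$ need not be the identity, and it requires checking that the orientation convention (positive side containing $N\rho$) agrees with the length bookkeeping inside the residue. I would do this by induction on $\ell(w)$ using the defining rule for $X$ one step at a time.
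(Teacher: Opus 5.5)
Your route is sound and differs from the paper's, but two steps need correcting as written.

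\textbf{First correction: the factorisation.} It must go through $x=b^{-1}$, where $bA_0$ is the bottom alcove of the residue, not through $b$ itself. No relabelling by a diagram automorphism is involved; conjugation by an element of $W_J$ does not in general permute the simple reflections.

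Use the expression $w=b\cdot(b^{-1}w)$ built from reduced words for $b$ and $b^{-1}w$. It need not be reduced, but $X_w$ is independent of the expression.
\begin{itemize}
\item On the leg $A_0\to bA_0$, every crossed wall passes through the face and has $bA_0$ on its negative side. So each step contributes $T_s^{-1}$, and the leg gives $T_{b^{-1}}^{-1}$.
\item On the reduced leg $bA_0\to wA_0$, every step enters a positive side, so the leg gives $T_{b^{-1}w}$.
\end{itemize}
Hence $X_w=T_x^{-1}T_{xw}$. Moreover $\veps(w)=\ell(xw)-\ell(x)$, because the signed count of crossings along any gallery from $A_0$ to $wA_0$ equals $\veps(w)$: crossings of a fixed wall alternate in direction. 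With $x=b$ taken literally, the formula fails whenever $b\neq b^{-1}$.

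\textbf{Second correction: the exponent.} After left multiplication by $T_x$ and using $T_xC_{\sw_J}=\sq^{\ell(x)}C_{\sw_J}$, the exponent to match at $v=xw$ is
$$\veps(w)-\veps(\sw_J)=\ell(xw)+\ell(x)-\ell(\sw_J),$$
not $\ell(xw)-\ell(x)-\ell(\sw_J)$. This holds because $\veps(\sw_J)=\ell(x\sw_J)-\ell(x)=\ell(\sw_J)-2\ell(x)$. The element $y$ and the second expression for $C_{\sw_J}$ play no role.

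\textbf{Comparison with the paper.} The paper characterises $C_{\sw_J}$ as the unique $A=\sum_{v\in W_J}a_vT_v$ with $a_{\sw_J}=1$ and $AT_{s_j}=\sq A$ for all $j\in J$. It then checks this for $\sq^{-\veps(\sw_J)}\sum_w\sq^{\veps(w)}X_w$ by pairing $w$ with $ws_j$. If $ws_jA_0$ lies on the positive side of the common wall, then $\veps(ws_j)=\veps(w)+1$, $X_wT_{s_j}=X_{ws_j}$, and $X_{ws_j}T_{s_j}=X_w+(\sq-\sq^{-1})X_{ws_j}$.

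That argument is purely local and never needs any special alcove. Yours needs the (easy) fact that the residue has a unique alcove on the negative side of every wall through the face. In exchange, it gives the closed formula $\veps(w)=\ell(xw)-\ell(x)$. It also shows that the lemma interpolates between the two classical expressions for $C_{\sw_J}$: these are exactly the cases where $A_0$ is itself the bottom or the top alcove of the residue.
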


\begin{proof}
Note that $C_{\sw_J}$ is the unique element $A$ of $H$ of the form $A=\sum_{v\in W_J}a_vT_v$ with $a_{\sw_J}=1$ and $AT_{s_j}=\sq A$ for all $j\in J$ (to see this, use the formula $AT_{s_j}=\sq A$ to see that $a_{vs_j}=\sq^{-1}a_v$ whenever $vs_j<v$, which determines all coefficients $a_v$ given that $a_{\sw_J}=1$). It is straightforward to check that the element $A=\sq^{-\veps(\sw_J)}\sum_{w\in W_J}\sq^{\veps(w)}X_w$ satisfies these properties. 
%
\end{proof}

\begin{lemma}\label{lem:etalebasics1}
We have $\veps(\sw_{n})=0$, and 
$$\veps(w)\leq\ell(\fu)\quad\text{ for all $w\in W_{n}$}
$$
with equality if and only if~$w=\fu$. 
\end{lemma}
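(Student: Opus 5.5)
We work directly from the definition of signed length via the inversion set. Lemma~\ref{lem:etalebasics}(1) lists $\Phi_n^+=\{e_i-e_j\}\cup\{-(e_i+e_j)+\delta\}$ for $i<j$. For $w\in W_n$ we have $\Phi(w)\subseteq\Phi_n^+$, since $W_n$ is a standard parabolic subgroup and its inversions are roots in the parabolic subsystem. Moreover every alcove $wA_0$ with $w\in W_n$ contains the type $n$ vertex $\xi=\frac12(e_1+\cdots+e_n)$. Hence for each $\beta\in\Phi_n^+$ the hyperplane $H_\beta$ passes through $\xi$, and deciding whether $wA_0\subseteq H_\beta^+$ or $H_\beta^-$ amounts to comparing the side of the alcove with the direction $\rho$ that defines the positive half spaces.

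First we identify the signs. For $\beta=e_i-e_j$ ($i<j$) the positive side $H_\beta^+$ contains $N\rho$ for large $N$. Since $\langle\rho,e_i-e_j\rangle>0$, this side is $\{\langle x,e_i-e_j\rangle\geq 0\}$, which is the side containing $A_0$. So if $\beta\in\Phi(w)$ then $wA_0$ lies on the negative side, and $\beta$ contributes $-1$. For $\beta=-(e_i+e_j)+\delta$ the hyperplane is $\langle x,e_i+e_j\rangle=1$. Since $\langle N\rho,e_i+e_j\rangle\to\infty$, the positive side is $\{\langle x,e_i+e_j\rangle\geq1\}$, the side not containing $A_0$. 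So if $\beta\in\Phi(w)$ then $wA_0\subseteq H_\beta^+$, and $\beta$ contributes $+1$.

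It follows that for $w\in W_n$,
\[
\veps(w)=\#\bigl(\Phi(w)\cap\{-(e_i+e_j)+\delta\}\bigr)-\#\bigl(\Phi(w)\cap\{e_i-e_j\}\bigr).
\]
The first term is at most $n(n-1)/2=\ell(\fu)$ and the second is nonnegative. Hence $\veps(w)\leq\ell(\fu)$, with equality exactly when $\Phi(w)=\{-(e_i+e_j)+\delta\mid i<j\}$. By Lemma~\ref{lem:etalebasics}(2) this set is $\Phi(\fu)$. Two elements of $\Waff$ with the same inversion set are equal, since inversion sets determine elements up to right multiplication by $\sigma$ and $W_n\subseteq\Waff$. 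So equality forces $w=\fu$. For $w=\sw_n$ we have $\Phi(\sw_n)=\Phi_n^+$, which contains $n(n-1)/2$ roots of each kind, so $\veps(\sw_n)=0$.

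The main point needing care is the sign determination. It requires that the positive side of each $H_\beta$ be read off from $\rho$, and that for $w\in W_n$ with $\beta\in\Phi(w)$ the alcove $wA_0$ be on the side of $H_\beta$ opposite to $A_0$. The latter holds by the definition of $\Phi(w)$ as the set of separating hyperplanes. No path-counting is needed.
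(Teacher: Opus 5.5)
Your proof is correct and follows essentially the same route as the paper: you split $\Phi_n^+$ into the roots $e_i-e_j$ (each contributing $-1$) and the roots $-(e_i+e_j)+\delta$ (each contributing $+1$), so that $\veps(w)\leq\ell(\fu)$ with equality exactly when $\Phi(w)=\Phi(\fu)$, which forces $w=\fu$. The only difference is that you justify the signs explicitly, using $\rho$ to fix the positive half spaces and the fact that $wA_0$ lies on the opposite side of $H_\beta$ from $A_0$, whereas the paper reads the signs off from the decomposition $\sw_n=\sw_{0,n}\fu$.
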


\begin{proof}
We have  $\sw_{n}=\sw_{0,n} \fu$ with $\ell(\sw_n)=\ell(\sw_{0,n})+\ell(\fu)$ (see Lemma~\ref{lem:etalebasics}) and so $\Phi(\sw_n)=\Phi_n^+=\Phi(\sw_{0,n})\sqcup \sw_{0,n}\star \Phi(\fu)$. We have $\Phi(\sw_{0,n})=\{e_i-e_j\mid 1\leq i<j\leq n\}$ and $\sw_{0,n}\star\Phi(\fu)=\Phi(\fu)=\{-(e_i+e_j)+\delta\mid 1\leq i<j\leq n\}$ (see Lemma~\ref{lem:etalebasics}). The roots $\beta\in\Phi(\sw_{0,n})=\Phi_{0,n}^+$ have $\sw_nA_0\subseteq H_{\beta}^-$, and the roots $\beta\in \sw_{0,n}\star \Phi(\fu)$ have $\sw_nA_0\subseteq H_{\beta}^+$, and hence $\veps(\sw_n)=-\ell(\sw_{0,n})+\ell(\fu)=0$. Moreover, if $w$ is any element of $W_{n}$ then the hyperplanes crossed in a reduced expression for $w$ are contained in $\Phi_{n}^+=\Phi(\sw_{0,n})\sqcup\Phi(\fu)$, and hence $\veps(w)\leq \veps(\fu)$ with equality if and only if $w=\fu$. 
\end{proof}

\begin{cor}\label{cor:CJp}
We have 
$$
\CwJp=\sum_{w\in W_{n}}\sq^{\veps(w)}X_w. 
$$
\end{cor}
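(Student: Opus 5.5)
This is a direct specialisation of Lemma~\ref{lem:CJp} to $J=\{0,1,\ldots,n-1\}$, so that $W_J=W_n$ and $\sw_J=\sw_n$. The set $J$ is a proper subset of $\Saff$, so Lemma~\ref{lem:CJp} applies and gives
$$
\CwJp=\sq^{-\veps(\sw_n)}\sum_{w\in W_n}\sq^{\veps(w)}X_w .
$$
By Lemma~\ref{lem:etalebasics1} we have $\veps(\sw_n)=0$. The prefactor $\sq^{-\veps(\sw_n)}$ is therefore $1$, and the displayed formula is exactly the claim.

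The only substantive input is Lemma~\ref{lem:CJp}, which we take as given; its proof is in any case a routine check that $A=\sum_{w\in W_J}\sq^{\veps(w)}X_w$ satisfies $AT_{s_j}=\sq A$ for all $j\in J$. We note one point of caution. The corollary is stated with the normalisation $a_{\sw_n}=1$ in the $T$-basis, while Lemma~\ref{lem:CJp} is written in the $X$-basis. For consistency we should confirm that the coefficient of $T_{\sw_n}$ in $\sum_{w}\sq^{\veps(w)}X_w$ is $\sq^{\veps(\sw_n)}=1$.

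This holds because $X_{\sw_n}$ equals $T_{\sw_n}$ plus lower terms. Indeed, $X_{\sw_n}$ is a product of $T_{s}^{\pm1}$ along a reduced expression, and $T_s^{-1}=T_s-(\sq-\sq^{-1})$. The other $X_w$ with $w\in W_n$ have $\ell(w)<\ell(\sw_n)$, so they involve only $T_v$ with $v<\sw_n$. Given Lemma~\ref{lem:CJp} there is no real obstacle; the computation of $\veps(\sw_n)=0$ was the only nontrivial ingredient, and it has already been carried out in Lemma~\ref{lem:etalebasics1}.
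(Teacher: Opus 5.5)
Your proposal is correct and is exactly the paper's argument: specialise Lemma~\ref{lem:CJp} to $J=\{0,1,\ldots,n-1\}$, so that $W_J=W_n$, and use $\veps(\sw_n)=0$ from Lemma~\ref{lem:etalebasics1}. Your extra check on the coefficient of $T_{\sw_n}$ is harmless but redundant, since the prefactor $\sq^{-\veps(\sw_J)}$ in Lemma~\ref{lem:CJp} already builds in that normalisation.
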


\begin{proof}
This follows from Lemmas~\ref{lem:CJp} and~\ref{lem:etalebasics1}.
\end{proof}

\begin{lemma}\label{lem:Cform}
With respect to the $W_n$-fundamental domain, we have 
$$
[\pi(\CwJp;W_n)]_{u,v}=\sq^{\veps(u)+\veps(v)}\quad\text{for $u,v\in W_n$}.
$$
Moreover $\chi(C_{\sw_n})=\sq^{-\ell(\sw_n)}W_n(\sq^2)$. 
\end{lemma}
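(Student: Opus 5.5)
Since $W_n$ is a fundamental domain (Lemma~\ref{lem:Dnfunddom}), the vectors $\xi_z\otimes X_u$ with $u\in W_n$ form a basis of $\mathsf{M}$. The plan is to compute the action of $\CwJp$ on these vectors through the expression $\CwJp=\sum_{w\in W_n}\sq^{\veps(w)}X_w$ from Corollary~\ref{cor:CJp}. The first observation is that $\CwJp$ absorbs $X_u$ on the left for $u\in W_n$: I will show $X_u\CwJp=\sq^{\veps(u)}\CwJp$. One route is to note that $X_{s_i}$, for $s_i\in W_n$, equals $T_{s_i}^{\pm1}$ according to which side of the separating hyperplane $s_iA_0$ lies, and that the left-handed relation $T_{s_i}\CwJp=\sq\CwJp$ gives $T_{s_i}^{-1}\CwJp=\sq^{-1}\CwJp$. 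Building $u$ up one generator at a time, each step multiplies by $\sq^{\pm1}$. The exponents accumulate to the signed length $\veps(u)$ because $X_{uv}=X_uX_v$ whenever the $X$'s are built from the same signed-gallery data, so the signs are read off from which side of each crossed hyperplane the gallery ends. An alternative route is to use $X_uX_w=X_{uw}$ for $u,w$ in the finite group $W_n$ together with the additivity of signed length $\veps(uw)=\veps(u)+\veps(w)$ for such elements, and then reindex the sum.

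This yields $(\xi_z\otimes X_u)\cdot\CwJp=\sq^{\veps(u)}\,\xi_z\otimes\CwJp$, so the matrix has rank one, with row vector $\sq^{\veps(u)}$ times the fixed vector $\xi_z\otimes\CwJp$. It remains to expand this fixed vector as $\xi_z\otimes\CwJp=\sum_{v\in W_n}\sq^{\veps(v)}\,\xi_z\otimes X_v$. This is immediate from Corollary~\ref{cor:CJp}, since $\xi_z\otimes X_v$ is itself a basis vector for $v\in W_n$ and no rewriting modulo $\cL_n$ is required. The two steps together give $[\pi(\CwJp;W_n)]_{u,v}=\sq^{\veps(u)+\veps(v)}$.

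For the character, I take the trace: $\chi(C_{\sw_n})=\sum_{u\in W_n}\sq^{2\veps(u)}$. I then need the identity $\sum_{u\in W_n}\sq^{2\veps(u)}=\sq^{-\ell(\sw_n)}W_n(\sq^2)$. Writing $u=\sw_{0,n}$-part times $\fu$-part as in Lemma~\ref{lem:etalebasics1}, we get $\veps(u)=\#(\text{crossed roots in }\Phi(\fu))-\#(\text{crossed roots in }\Phi_{0,n}^+)$. The cleanest way to obtain the identity, however, is to take the trace of $C_{\sw_n}^2=\sq^{-\ell(\sw_n)}W_n(\sq^2)C_{\sw_n}$. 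Since the matrix has rank one, $\pi(C_{\sw_n})^2=\chi(C_{\sw_n})\pi(C_{\sw_n})$. Comparing the two expressions, and using that $\pi(C_{\sw_n})\neq0$, gives the formula directly.

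The main obstacle is the absorption step $X_u\CwJp=\sq^{\veps(u)}\CwJp$. It requires carefully matching the sign conventions of $X$, taken relative to the positive direction $N\rho$, with those of $\veps$, and confirming that within the finite parabolic $W_n$ the ordering of $X$ factors multiplies correctly. I would prove it by induction on $\ell(u)$, using the defining recursion for $X_w$ along a reduced word.
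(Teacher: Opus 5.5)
Your argument is correct and is essentially the paper's: the absorption identity $X_u C_{\sw_n}=\sq^{\veps(u)}C_{\sw_n}$ for $u\in W_n$, combined with Corollary~\ref{cor:CJp}, gives the matrix, and the character formula comes from comparing with $C_{\sw_n}^2=\sq^{-\ell(\sw_n)}W_n(\sq^2)C_{\sw_n}$. The paper makes that comparison directly in $\Hext$, via $C_{\sw_n}^2=\big(\sum_{u\in W_n}\sq^{2\veps(u)}\big)C_{\sw_n}$, rather than through your rank-one matrix identity. Do discard your \emph{alternative route}, though: $X_uX_w\neq X_{uw}$ and $\veps$ is not additive on $W_n$ (take $u=w=s_i$, where $X_{s_i}X_{s_i}=T_{s_i}^{\pm 2}\neq 1=X_1$). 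Justify the induction instead by noting that along a reduced word for $u$ each hyperplane $H_\beta$ with $\beta\in\Phi(u)$ is crossed exactly once, so the sign recorded at that crossing is $+1$ exactly when $uA_0\subseteq H_\beta^+$, and hence the exponents sum to $\veps(u)$.
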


\begin{proof}
For $u\in W_n$ we have
$
(\xi\otimes X_u)\cdot \CwJp=\sq^{\veps(u)}(\xi\otimes \CwJp),
$
and the first statement follows using the formula in Corollary~\ref{cor:CJp}. For the trace formula, we have $\chi(C_{\sw_n})=\sum_{u\in W_n}\sq^{2\veps(u)}$. But the formula in Corollary~\ref{cor:CJp} implies that $C_{\sw_n}^2=\big(\sum_{u\in W_n}\sq^{2\veps(u)}\big)C_{\sw_n}$ and hence $\sum_{u\in W_n}\sq^{2\veps(u)}=\sq^{-\ell(\sw_n)}W_n(\sq^2)$. 
\end{proof}

\begin{cor}\label{cor:Cmat}
The matrix $\pi(\CwJp;W_n)$ is bounded in degree by $n(n-1)$. Moreover, this degree is attained only in the $(\fu,\fu)$-position, where the entry is $\sq^{n(n-1)}$.
\end{cor}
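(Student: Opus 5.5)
This is a direct consequence of Lemma~\ref{lem:Cform} combined with Lemma~\ref{lem:etalebasics1}. By Lemma~\ref{lem:Cform}, the $(u,v)$-entry of $\pi(\CwJp;W_n)$ is the single monomial $\sq^{\veps(u)+\veps(v)}$ for $u,v\in W_n$. Its degree is therefore exactly $\veps(u)+\veps(v)$, and there is no cancellation to worry about.

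Lemma~\ref{lem:etalebasics1} gives $\veps(w)\leq\ell(\fu)$ for every $w\in W_n$, with equality only when $w=\fu$. Lemma~\ref{lem:etalebasics}(2) gives $\ell(\fu)=n(n-1)/2$. Hence
$$
\veps(u)+\veps(v)\leq 2\ell(\fu)=n(n-1)
$$
for all $u,v\in W_n$. Equality holds if and only if $\veps(u)=\veps(v)=\ell(\fu)$, that is, if and only if $u=v=\fu$. At that position the entry is $\sq^{2\ell(\fu)}=\sq^{n(n-1)}$.

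There is no real obstacle here. The only point to be careful about is that the equality clause of Lemma~\ref{lem:etalebasics1} is genuinely strict for $w\neq\fu$. This holds because $\veps(w)$ counts the crossed hyperplanes from $\Phi(\fu)$ positively and those from $\Phi(\sw_{0,n})$ negatively. The value $\ell(\fu)$ is reached only when $\Phi(w)=\Phi(\fu)$, which forces $w=\fu$ among elements of $W_n\subseteq\Waff$. That lemma is already established, so it can be cited directly.
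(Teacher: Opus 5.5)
Your proposal is correct and is essentially the paper's own argument. You read off the monomial entries $\sq^{\veps(u)+\veps(v)}$ from Lemma~\ref{lem:Cform} and bound them using $\veps(w)\leq\ell(\fu)=n(n-1)/2$, with equality only at $w=\fu$ (Lemmas~\ref{lem:etalebasics} and~\ref{lem:etalebasics1}). The paper also notes that the degree bound alone follows from Theorem~\ref{thm:bounded}, but the direct computation you give is exactly what it uses to locate where equality holds.
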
 

\begin{proof}
The general degree bound follows from Theorem~\ref{thm:bounded}, however we see it directly here using Lemma~\ref{lem:Cform} and Lemma~\ref{lem:etalebasics} because $\deg[\pi(\CwJp)]_{u,v}\leq 2\ell(\fu)=\ell(\sw_n)=n(n-1)$ with equality if and only if $u=v=\fu$. 
\end{proof}

\begin{remark}
Using the $W_n$-fundamental domain leads to the remarkably simple formula for $\pi(C_{\sw_n})$ in Lemma~\ref{lem:Cform}. We note that other choices of fundamental domain introduce monomials $z^{\la}$ into the matrix entries. 
\end{remark}

%

\subsection{Leading matrices and the asymptotic representation}\label{sec:leading}

%
%
%

\begin{defn}\label{defn:leading}
The \textit{leading matrix} of $w\in\Wext$ is 
$$
\fc(w)=\mathsf{sp}_{\sq^{-1}=0}\big(\sq^{-n(n-1)}\pi(C_w;\sF)\big)=\mathsf{sp}_{\sq^{-1}=0}\big(\sq^{-n(n-1)}\pi(T_w;\sF)\big)\in\mathsf{Mat}_{2^{n-1}n!}(\ZZ[z]),
$$
where $\mathsf{sp}_{\sq^{-1}=0}$ is specialisation at $\sq^{-1}=0$. By Theorem~\ref{thm:bounded} this specialisation is well defined, and if $\fc(w)\neq 0$ then $w\in\Delta_1$. Moreover, one can use either $C_w$ or $T_w$ in the above definition, by triangularity between these bases. 
\end{defn}

\begin{remark}
Note that in the definition of the leading matrices we are fixing the choice of fundamental domain $\sF=\fu\sB$. This is the choice of fundamental domain that ultimately leads to the cleanest description of the asymptotic algebra.  
\end{remark}

We will index rows and columns of our matrices by $\sB$. Thus the $(u,v)$-entry corresponds to the row and column of $\fu u$ and $\fu v$. For $u,v\in\sB$ let $E_{u,v}$ denote the $|\sB|\times|\sB|$ matrix with $(u,v)$-entry equal to~$1$ and all other entries equal to~$0$. By Corollary~\ref{cor:Cmat}, and the fact that $\fu\in W_n\cap \sF$, we have
\begin{align}\label{eq:Cwn}
\fc(\sw_n)=E_{11}. 
\end{align}

We also have the following important property. 

\begin{cor}\label{cor:starinv}
We have $\fc(w^{-1})=\fc(w)^*$ for all $w\in\Wext$.
\end{cor}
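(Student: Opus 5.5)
The plan is to deduce this directly from Proposition~\ref{prop:*property} together with Definition~\ref{defn:leading}. Since the definition allows $T_w$ in place of $C_w$, write
$$
\fc(w^{-1})=\mathsf{sp}_{\sq^{-1}=0}\big(\sq^{-n(n-1)}\pi(T_{w^{-1}};\sF)\big).
$$
The anti-involution $*$ on $\Hext$ satisfies $T_w^*=T_{w^{-1}}$. Proposition~\ref{prop:*property}, applied with the fundamental domain $F=\sF=\fu\sB$, then gives
$$
\pi(T_{w^{-1}};\sF)=\pi(T_w^*;\sF)=\pi(T_w;\sF)^*,
$$
where on the right $*$ denotes transposition composed with $z_i\mapsto z_i^{-1}$ for $1\leq i\leq n-1$ and $\epsilon\mapsto\epsilon$.

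The key steps, in order, are as follows.

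First, I note that the matrix operation $*$ acts only on the $z$-variables and on the positions of entries, and is $\sR$-linear in $\sq$. Hence it commutes with multiplying by $\sq^{-n(n-1)}$.

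Second, I check that $*$ commutes with specialisation at $\sq^{-1}=0$. By Theorem~\ref{thm:bounded}, every entry of $\sq^{-n(n-1)}\pi(T_w;\sF)$ lies in $\ZZ[\sq^{-1}][z]$. Applying $z_i\mapsto z_i^{-1}$ coefficientwise and transposing preserves this subring, and specialising $\sq^{-1}\mapsto 0$ is a ring homomorphism $\ZZ[\sq^{-1}][z]\to\ZZ[z]$ that commutes with the substitution $z_i\mapsto z_i^{-1}$. Therefore
$$
\fc(w^{-1})=\mathsf{sp}_{\sq^{-1}=0}\big((\sq^{-n(n-1)}\pi(T_w;\sF))^*\big)=\fc(w)^*.
$$

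There is no real obstacle here. The only point needing care is that Proposition~\ref{prop:*property} is stated for an arbitrary fundamental domain. It must be applied to $\sF$ itself, not to ${^n}\Wfin$, because a monomial change of basis could in principle interact with $*$. The proof of that proposition already handles this: the change-of-basis matrices are monomial with entries $z^{\la}$, and these satisfy $(z^{\la})^*=(z^{\la})^{-1}$ exactly as required. So we may invoke it directly for $F=\sF$.
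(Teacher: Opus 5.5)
Your proposal is correct and follows essentially the paper's route. The paper cites the entrywise identity $[\pi(T_{w^{-1}})]_{v,u}=a^*\sq^{\mathbf{b}}+(\text{lower terms})$ from the proof of Theorem~\ref{thm:bounded}. That identity is just Proposition~\ref{prop:*property} read off at the top power of $\sq$, which is what you do at the level of whole matrices.

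Your additional checks make explicit what the paper leaves implicit: that $*$ commutes with specialisation at $\sq^{-1}=0$, and that the monomial change of basis to $\sF$ satisfies $M^*=M^{-1}$.
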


\begin{proof}
This follows from the formulae for $[\pi(T_w)]_{u,v_r}$ and $[\pi(T_{w^{-1}})]_{v_r,u}$ in the proof of Theorem~\ref{thm:bounded}. 
\end{proof}

The following theorem shows that the leading matrices give a representation of the asymptotic algebra (ultimately we will prove that this representation is faithful, but this is far from obvious at this stage).

\begin{thm}[The asymptotic representation]\label{thm:asymptoticrepresentation}
The $\ZZ$-linear map 
$$
\pi^{\infty}:\mathcal{J}_{\Delta_1}\to \mathsf{Mat}_{|\sB|}(\ZZ[z])\quad\text{with}\quad \st_w\mapsto \fc(w)
$$
is a $\ZZ$-algebra homomorphism. 
\end{thm}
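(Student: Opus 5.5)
We need $\fc(x)\fc(y)=\sum_z\gamma_{x,y,z^{-1}}\fc(z)$ for $x,y\in\Delta_1$. The plan follows the standard argument of \cite{GP:19,CGLP:24}: apply $\pi$ to the Kazhdan--Lusztig product $C_xC_y=\sum_z h_{x,y,z}C_z$, multiply by $\sq^{-2n(n-1)}$, and specialise at $\sq^{-1}=0$. Everything is computed with respect to the fixed fundamental domain $\sF$.

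The first step is to handle the left-hand side. Since $\pi$ is a representation, $\pi(C_x)\pi(C_y)=\sum_z h_{x,y,z}\pi(C_z)$. By Theorem~\ref{thm:bounded} and triangularity, every entry of $\pi(C_x;\sF)$ and of $\pi(C_y;\sF)$ has $\sq$-degree at most $n(n-1)$. Hence $\sq^{-2n(n-1)}\pi(C_x)\pi(C_y)$ specialises at $\sq^{-1}=0$ to $\fc(x)\fc(y)$; there are finitely many entries, so there is no convergence issue.

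The second step is to handle the right-hand side term by term, for each $z$ with $h_{x,y,z}\neq0$. By the Killing Property (Theorem~\ref{thm:killing}) we may discard every $z\in\Delta_0$. For the remaining $z$, $\mathsf{P4}$ gives $\ba(z)\le n(n-1)$, since $z\le_{LR}x\in\Delta_1$ and $z\notin\Delta_0$, so every other cell below $\Delta_1$ has smaller $\ba$-value. Thus $\deg h_{x,y,z}\le \ba(z)\le n(n-1)$ and $\deg\pi(C_z)\le n(n-1)$, and every term has degree at most $2n(n-1)$. Degree exactly $2n(n-1)$ forces two things. First, $\deg h_{x,y,z}=n(n-1)=\ba(z)$, so $z\in\Delta_1$ (by $\mathsf{P11}$ or $\mathsf{P4}$) and the top coefficient of $h_{x,y,z}$ is $\gamma_{x,y,z^{-1}}$. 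Second, $\pi(C_z)$ must attain degree $n(n-1)$, and its top part is $\fc(z)$. So the specialisation of the right-hand side is $\sum_{z\in\Delta_1}\gamma_{x,y,z^{-1}}\fc(z)$. This is exactly $\pi^\infty(\st_x\st_y)$, since $\gamma_{x,y,z^{-1}}\neq0$ implies $z\in\Delta_1$ by $\mathsf{P8}$.

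The main obstacle is finiteness, because $\Wext$ is infinite and the sum over $z$ must be justified. But $C_xC_y$ is a finite combination of the $C_z$, since $h_{x,y,z}\neq0$ only for finitely many $z$. So the sum is finite and the specialisation can be taken term by term.

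It remains to check that $\pi^\infty$ is $\ZZ$-linear, which holds by definition, and that it respects the ring structure of $\cJ_{\Delta_1}$. The product $\st_x\st_y$ with $x,y\in\Delta_1$ stays in $\cJ_{\Delta_1}$ by $\mathsf{P8}$, so multiplicativity on basis elements suffices. No unit needs to be preserved, as $\cJ_{\Delta_1}$ is treated as a (possibly non-unital) ring.
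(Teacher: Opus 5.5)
Your proposal is correct and is essentially the paper's proof. You apply $\pi$ to $C_xC_y=\sum_z h_{x,y,z}C_z$, discard $z\in\Delta_0$ by Theorem~\ref{thm:killing}, bound degrees using $\deg h_{x,y,z}\le \ba(z)\le n(n-1)$ and Theorem~\ref{thm:bounded}, and specialise at $\sq^{-1}=0$.

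One slip in your justification: $\mathsf{P4}$ applied to $z\le_{LR}x$ gives $\ba(z)\ge n(n-1)$, not $\le$. The upper bound comes, as in the paper, from the fact that every two-sided cell other than $\Delta_0$ lies $\ge_{LR}\Delta_1$. Combining the two inequalities shows that every such $z$ actually lies in $\Delta_1$.
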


\begin{proof}
For $x,y\in\Delta_1$ we have 
$$
[\sq^{-n(n-1)}\pi(C_x;\sF)][\sq^{-n(n-1)}\pi(C_y;\sF)]=\sum_{z\in\Wext}[\sq^{-n(n-1)}h_{x,y,z}][\sq^{-n(n-1)}\pi(C_z;\sF)].
$$
The left hand side specialises at $\sq^{-1}=0$ to $\fc(x)\fc(y)$. By Theorem~\ref{thm:killing} the sum on the right is over $z\in\Delta$ with $\Delta\geq \Delta_1$, and hence $\ba(z)\leq n(n-1)$ by $\mathsf{P4}$. Hence $\deg \sq^{-n(n-1)}h_{x,y,z}\leq 0$ for all terms in the sum. Thus each nonzero term $[\sq^{-n(n-1)}h_{x,y,z}][\sq^{-n(n-1)}\pi(C_z;\sF)]$ specialises at $\sq^{-1}=0$, and if this specialisation is non-zero then $\fc(z)\neq 0$ (and so $z\in\Delta_1$, see Definition~\ref{defn:leading}), and $\deg h_{x,y,z}=n(n-1)=\ba(z)$ (and so $\sq^{-n(n-1)}h_{x,y,z}$ specialises to $\gamma_{x,y,z^{-1}}$). Hence
$$
\fc(x)\fc(y)=\sum_{z\in\Delta_1}\gamma_{x,y,z^{-1}}\fc(z)
$$
as required. 
\end{proof}

\begin{cor}\label{cor:partial}
We have the following.
\begin{compactenum}[$(1)$]
\item If $w\in\Ga\cap\Ga^{-1}$ and $\fc(w)\neq 0$ then $\fc(w)$ has a unique nonzero entry in the $(1,1)$-position. 
\item If $\fc(w)$ has a nonzero entry in the $(1,1)$-position then $w\in\Ga\cap\Ga^{-1}$ and the nonzero entry is unique. 
\end{compactenum}
\end{cor}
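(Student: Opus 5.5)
The whole argument runs through the asymptotic representation of Theorem~\ref{thm:asymptoticrepresentation}, the identity $\fc(\sw_n)=E_{11}$ from~(\ref{eq:Cwn}), and the idempotent calculus of Lemma~\ref{lem:duflobasics}. The distinguished involution of $\Ga=\Ga_1$ is $\sd(1)=\sw_n$. By Lemma~\ref{lem:duflobasics}(3), for any $w\in\Delta_1$ we have $\st_{\sw_n}\st_w\st_{\sw_n}=\st_w$ if $w\in\Ga\cap\Ga^{-1}$, and $\st_{\sw_n}\st_w\st_{\sw_n}=0$ otherwise.

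For (1), take $w\in\Ga\cap\Ga^{-1}$ and apply $\pi^{\infty}$ to this identity. This gives $\fc(w)=E_{11}\fc(w)E_{11}=[\fc(w)]_{1,1}E_{11}$. So every entry of $\fc(w)$ off the $(1,1)$-position vanishes. If $\fc(w)\neq0$, its unique nonzero entry must therefore sit in position $(1,1)$.

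For (2), suppose $[\fc(w)]_{1,1}\neq0$. Then $\fc(w)\neq 0$, so $w\in\Delta_1$ by Definition~\ref{defn:leading} (which rests on Theorem~\ref{thm:bounded}). This is the step needed to use the $\cJ_{\Delta_1}$ identities at all. Next compute
\[
\pi^{\infty}(\st_{\sw_n}\st_w\st_{\sw_n})=E_{11}\fc(w)E_{11}=[\fc(w)]_{1,1}E_{11}\neq0 .
\]
Hence $\st_{\sw_n}\st_w\st_{\sw_n}\neq0$, and Lemma~\ref{lem:duflobasics}(3) forces $w\in\Ga\cap\Ga^{-1}$. Uniqueness of the nonzero entry then follows from part (1).

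The only point needing care is that Lemma~\ref{lem:duflobasics} applies only to $w\in\Delta_1$. That is why in (2) we first deduce $w\in\Delta_1$ from the boundedness theorem before invoking it. Everything else is formal multiplication by $E_{11}$, so I expect no real obstacle.
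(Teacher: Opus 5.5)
Your proof is correct and is essentially the paper's argument. The only difference is packaging: you use the two-sided identity $\st_{\sw_n}\st_w\st_{\sw_n}=\st_w$ (or $0$) from Lemma~\ref{lem:duflobasics}(3), while the paper uses the one-sided identities $\st_{\sw_n}\st_w=\st_w=\st_w\st_{\sw_n}$ separately and, in part (2), deduces $w\sim_R\sw_n$ and $w\sim_L\sw_n$ one at a time.

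Your explicit check that $\fc(w)\neq0$ forces $w\in\Delta_1$ is a step the paper leaves implicit. What it is really needed for, though, is that $\pi^{\infty}$ is a homomorphism only on $\cJ_{\Delta_1}$. It is not needed for Lemma~\ref{lem:duflobasics}, which holds for all $w\in\Wext$.
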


\begin{proof}
(1) If $w\in\Ga\cap\Ga^{-1}$ then $\st_{\sw_n}\st_w=\st_w$ and $\st_w\st_{\sw_n}=\st_w$ (see Lemma~\ref{lem:duflobasics}). Thus by Theorem~\ref{thm:asymptoticrepresentation} we have $\fc(\sw_n)\fc(w)=\fc(w)$ and $\fc(w)\fc(\sw_n)=\fc(w)$. By~(\ref{eq:Cwn}) we have $\fc(\sw_n)=E_{11}$ and it follows that if $\fc(w)$ is nonzero, then there is a unique nonzero entry, and this entry is in the $(1,1)$-position. 

(2) If $\fc(w)$ has a nonzero entry in the $(1,1)$-position then $\fc(\sw_n)\fc(w)\neq 0$, and so by Theorem~\ref{thm:asymptoticrepresentation} we have $\st_{\sw_n}\st_w\neq 0$ and so $w\sim_R\sw_n$. Similarly since $\fc(w)\fc(\sw_n)\neq 0$ we have $w\sim_L\sw_n$, and so $w\in\Ga\cap\Ga^{-1}$. 
\end{proof}

\subsection{$G_n$-invariance of $\chi$}

The group $G_n$ acts on $\sR[z]$ in the natural way as signed permutations of $z_1,\ldots,z_{n-1}$ and fixing~$\epsilon$. Write 
$$
\sR[z]^{G_n}=\{f(z)\in\sR[z]\mid g\cdot f(z)=f(z)\text{ for all $g\in G_n$}\},
$$
and similarly $\ZZ[z]^{G_n}=\{f(z)\in\ZZ[z]\mid g\cdot f(z)=f(z)\text{ for all $g\in G_n$}\}$.

\begin{thm}\label{thm:invariance}
Working over $\QQ(\sq,z_1,\ldots,z_{n-1})[z_n]/(z_n^2-1)$ we have $\pi_z\cong \pi_{gz}$ for all $g\in G_n$. 
\end{thm}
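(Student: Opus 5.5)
It suffices to treat the generators of $G_n$, namely $s_1,\ldots,s_{n-2}$ and $s_{n-1}'=s_{e_{n-1}}$, since isomorphism of representations is transitive and $\pi_{(gh)z}$ is obtained from $\pi_{hz}$ by the same substitution. The natural tool is the intertwiners $U_i$ of Proposition~\ref{prop:inter}, working over $\mathbb{K}=\QQ(\sq,z_1,\ldots,z_{n-1})[z_n]/(z_n^2-1)$, where the denominators $1-X^{-\alpha^\vee}$ act invertibly. I would realise $\mathsf{M}_z$ as the span of the vectors $\xi_z\otimes U_u$, $u\in{^n}\Wfin$, on which $X^\la$ acts diagonally by $\psi_z(X^{u\la})$.

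For a generator $g$ I would locate a vector $\eta\in\mathsf{M}_z$ on which the $X^\la$ act by the character $\psi_{gz}$ and on which $T_n$ acts by $-\sq^{-1}$. Such an $\eta$ is a homomorphic image of the one–dimensional $\cL_n$-module $\mathbb{K}\xi_{gz}$. Frobenius reciprocity then gives a nonzero map $\mathsf{M}_{gz}\to\mathsf{M}_z$, $\xi_{gz}\otimes h\mapsto\eta h$. I expect the candidate to be $\eta=\xi_z\otimes U_{c}$ for a suitable $c\in\Wfin$ with $c\alpha_n$ related to $\alpha_n$, built as follows.
\begin{compactenum}[$(1)$]
\item For $g=s_i$ with $1\leq i\leq n-2$ the element $s_i$ commutes with $s_n$, so $U_i$ commutes with $U_n$ (braid relations), and $\xi_z\otimes U_i$ is killed by $U_n$. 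Its $X$-weight is $\psi_z(X^{s_i\la})$, which is $\psi_{s_iz}$ because $s_i$ fixes $e_n$.
\item For $g=s_{n-1}'=s_{n-1}s_ns_{n-1}$ I would use $U_{s_{n-1}s_ns_{n-1}}$ in the same way. The weight is $\psi_z(X^{g\la})$, which negates the $e_{n-1}$-coordinate and fixes the $e_n$-coordinate, so it equals $\psi_{gz}$. The condition $\eta\cdot U_n=0$ requires $gs_n$ to admit the reduced factorisation with $U_n$ on the right: $U_gU_n=U_{s_{n-1}s_ns_{n-1}s_n}=U_{s_ns_{n-1}s_ns_{n-1}}$, which begins with $U_n$ and hence annihilates $\xi_z$.
\end{compactenum}
In both cases, $\eta\cdot U_n=0$ combined with the weight condition gives $\eta\cdot T_n=-\sq^{-1}\eta$.

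For injectivity and surjectivity, note that $\mathsf{M}_{gz}$ and $\mathsf{M}_z$ have the same rank $|{^n}\Wfin|$ over $\mathbb{K}$. The weights $u\psi_{z}$, $u\in{^n}\Wfin$, are pairwise distinct characters of the commutative algebra $\langle X^\la\rangle$ over the generic field: they are distinct on the $z_1,\ldots,z_{n-1}$ coordinates together with the $\sq$-powers from $X^{e_n}$. Hence the weight spaces of $\mathsf{M}_z$ are one-dimensional, and it suffices to show that the image of the map hits every weight space. The image of $\xi_{gz}\otimes U_v$ is $\eta U_v=b(X)\,\xi_z\otimes U_{cv}$ by Proposition~\ref{prop:inter}(4), where $b(X)$ is rational. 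As $v$ runs over ${^n}\Wfin$, the elements $cv$ should run over ${^n}\Wfin$ up to the right coset of $\langle s_n\rangle$, so it remains to check that the scalars $b$ are nonzero.

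I expect the main obstacle to be this nonvanishing. The scalars $b$ are products of factors from Proposition~\ref{prop:inter}(3), of the form $(1-\sq^{-2}X^{\pm\alpha^\vee})$ evaluated at $\psi_z$. A factor vanishes only when $\psi_z(X^{\alpha^\vee})=\sq^{\pm2}$. For generic $z_1,\ldots,z_{n-1}$ this happens only for $\alpha^\vee=\pm\alpha_n^\vee$-type roots, that is, roots whose coroot involves only $e_n$. These are exactly the ones excluded by requiring $cv\in{^n}\Wfin$, which is the computation already carried out in the proof of Theorem~\ref{thm:killing}. I would mirror that argument to see that no other factor can vanish.

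If this bookkeeping becomes messy, my fallback is to compare characters. Since $\mathbb{K}$ has characteristic zero and both modules are semisimple, it suffices to show $\chi_z(h)=\chi_{gz}(h)$. The characters are invariant in $z$ because the trace of the diagonal $X$-action is $\sum_u\psi_z(X^{u\la})$, which is manifestly symmetric. The $T_i$-parts would be handled by the standard basis-independence of the induced character formula.
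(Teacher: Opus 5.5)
Your proposal is correct, and the map you build is the paper's. For an involutive generator $g$, your map $\mathsf{M}_{gz}\to\mathsf{M}_z$, $\xi_{gz}\otimes h\mapsto\xi_z\otimes U_ch$ with $c=s_i$ or $c=s_{n-1}s_ns_{n-1}$, is exactly the paper's $I_g(gz)$ with $U'_g=U_c$. Your check that $\xi_z\otimes U_c$ is killed by $U_n$ (using $U_cU_n=U_nU_c$ from the braid relation, together with $ce_n=e_n$) is the paper's check that $U'_{n-1}$ commutes with $T_n$.

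The real difference is how invertibility is proved. The paper composes $I_g(gz)\circ I_g(z)$. Since $U_g'^2$ is a rational function of $X$ and $\xi_z\otimes 1$ is a cyclic $X$-eigenvector, the composite is multiplication by one scalar, namely $U_g'^2$ evaluated at $\psi_z$; for $g=s'_{n-1}$ this is $\kappa(z_{n-1},-\epsilon\sq^{-1})$. So the paper needs no basis, no weight spaces, and no case analysis over $v$. Your route shows instead that the map is monomial in the intertwiner bases. That gives a more explicit description of the isomorphism, at the cost of the bookkeeping you anticipated.

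On that bookkeeping, two corrections.

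First, there is no coset ambiguity. Since $c\alpha_n=\alpha_n$, we have $(cv)^{-1}\alpha_n=v^{-1}\alpha_n$, so $v\mapsto cv$ is a bijection of ${^n}\Wfin$. This matters, because $\xi_z\otimes U_{s_nw}=0$.

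Second, your stated reason for the nonvanishing of $b$ is off target. The argument in Theorem~\ref{thm:killing} shows that the $\alpha_n$-factor \emph{does} occur in a different product, and the condition $cv\in{^n}\Wfin$ is not what excludes it here. The correct reason is as follows.
\begin{itemize}
\item Move each $U_i^2$ arising in $U_cU_v$ to the far left using Proposition~\ref{prop:inter}(2). The resulting factors are $\sq^2\frac{(1-\sq^{-2}X^{-\gamma^\vee})(1-\sq^{-2}X^{\gamma^\vee})}{(1-X^{-\gamma^\vee})(1-X^{\gamma^\vee})}$.
\item Here $\gamma$ runs over a subset of the inversion set $\{\gamma\in\Phi_0^+\mid c\gamma<0\}$, which is $\{\alpha_i\}$ or $\{e_{n-1}-e_n,e_{n-1},e_{n-1}+e_n\}$.
\item This set avoids $e_n$ precisely because $c$ fixes $e_n$.
\item These are exactly the factors of the paper's $\kappa$. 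They are units in $\mathbb{K}$ for both values of $\epsilon$, since $\psi_z(X^{e_{n-1}\pm e_n})=z_{n-1}(-\epsilon\sq^{-1})^{\pm1}$, $\psi_z(X^{2e_{n-1}})=z_{n-1}^2$, and $\psi_z(X^{\alpha_i^\vee})=z_i/z_{i+1}$.
\end{itemize}

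With this fixed, your distinct-weights observation is unnecessary, and the character fallback can be dropped. As sketched, the fallback would need more in any case: $\mathbb{K}$ is not a field, and semisimplicity is not justified.
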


\begin{proof}
For $1\leq i\leq n-2$ let $U_i'=U_i$, and let $U_{n-1}'=U_{n-1}U_nU_{n-1}$. For $1\leq i\leq n-1$ we have $U_i'T_n=T_nU_i'$. We claim that $U_{n-1}'T_n=T_nU_{n-1}'$. Since $T_n=U_n+(\sq-\sq^{-1})/(1-X^{-2e_n})$ it is sufficient to prove that $U_{n-1}'$ commutes with both $U_n$ and $X^{e_n}$. By the braid relation we have $U_{n-1}'U_n=U_{n-1}U_nU_{n-1}U_n=U_nU_{n-1}U_nU_{n-1}=U_nU_{n-1}'$, and since $s_{n-1}s_ns_{n-1}e_n=e_n$ we have $U_{n-1}'X^{e_n}=X^{e_n}U_{n-1}'$. Hence the claim. 

Thus for each generator $s_i'$ of $G_n$ (where $s_i'=s_i$ for $1\leq i\leq n-2$ and $s_{n-1}'=s_{n-1}s_ns_{n-1}$) there is an intertwiner $U_i'$ commuting with $T_n$ and with $U_i'p=(s_i'\cdot p)U_i'$ for $p\in \sR[P]$. These operators satisfy the $\sB_{n-1}$ braid relations. Indeed, the only nontrivial case to check is the $4$-term braid relation for $U_{n-2}'$ and $U_{n-1}'$:
\begin{align*}
U_{n-2}'U_{n-1}'U_{n-2}'U_{n-1}'&=U_{n-2}U_{n-1}U_n(U_{n-1}U_{n-2}U_{n-1})U_nU_{n-1}\\
&=U_{n-2}U_{n-1}U_n(U_{n-2}U_{n-1}U_{n-2})U_nU_{n-1}\\
&=(U_{n-2}U_{n-1}U_{n-2})U_nU_{n-1}U_nU_{n-2}U_{n-1}\\
&=U_{n-1}U_{n-2}(U_{n-1}U_nU_{n-1}U_n)U_{n-2}U_{n-1}\\
&=U_{n-1}U_{n-2}U_{n}U_{n-1}U_{n}(U_{n-1}U_{n-2}U_{n-1})\\
&=U_{n-1}U_{n}U_{n-2}U_{n-1}U_{n}U_{n-2}U_{n-1}U_{n-2}\\
&=U_{n-1}U_{n}(U_{n-2}U_{n-1}U_{n-2})U_{n}U_{n-1}U_{n-2}\\
&=(U_{n-1}U_{n}U_{n-1})U_{n-2}(U_{n-1}U_{n}U_{n-1})U_{n-2}\\
&=U_{n-1}'U_{n-2}'U_{n-1}'U_{n-2}'. 
\end{align*}
Hence for each $g\in G_n$ there is a well defined intertwiner $U_g'=U_{i_1}'\cdots U_{i_k}'$ whenever $g=s_{i_1}'\cdots s_{i_k}'$ is a reduced expression in the Coxeter group $G_n$ generated by $s_1',\ldots,s_{n-1}'$. 

For each $g\in G_n$ define a right module intertwiner $I_g(z):\mathsf{M}_z\to \mathsf{M}_{gz}$ by $\xi_z\otimes h\mapsto \xi_{gz}\otimes U_g'h$. To check that this is well defined, note that:
$$
(\xi_{gz}\otimes U_g')X^{\lambda}=\xi_{gz}\otimes X^{g\lambda}U_g'=\psi_z(X^{\la})(\xi_{gz}\otimes U_g'),
$$
and 
$$
(\xi_{gz}\otimes U_g')T_n=\xi_{gz}\otimes T_nU_g'=-\sq^{-1}(\xi_{gz}\otimes U_g'). 
$$
To prove that $\pi_z\cong \pi_{gz}$ over $\QQ(\sq,z_1,\ldots,z_{n-1})[z_n]/(z_n^2-1)$ we must show that each $I_g(z)$ is invertible, and it suffices to prove this for the generators $g$ of $G_n$. For $1\leq i\leq n-2$ we have (by Proposition~\ref{prop:inter})
$$
I_{s_i'}(s_i'z)\circ I_{s_i'}(z)(\xi_z\otimes 1)=(\xi_z\otimes U_i^2)=\sq^2\frac{(1-\sq^{-2}z_i^{-1}z_{i+1})(1-\sq^{-2}z_iz_{i+1}^{-1})}{(1-z_i^{-1}z_{i+1})(1-z_iz_{i+1}^{-1})}(\xi_z\otimes 1),
$$
and since $\xi_z\otimes 1$ is cyclic this gives $I_{s_i'}(s_i'z)\circ I_{s_i'}(z)$ is a nonzero multiple of the identity. For the generator $s_{n-1}'$, a computation using Proposition~\ref{prop:inter} shows that $U_{n-1}'^2=\kappa(X_{n-1},X_n)$, where $\kappa(X,Y)$ equals
\begin{align*}
\sq^6\frac{(1-\sq^{-2}X^{-1}Y^{-1})(1-\sq^{-2}XY)(1-\sq^{-2}X^{-2})(1-\sq^{-2}X^2)(1-\sq^{-2}X^{-1}Y)(1-\sq^{-2}XY^{-1})}{(1-X^{-1}Y^{-1})(1-XY)(1-X^{-2})(1-X^2)(1-X^{-1}Y)(1-XY^{-1})}.
\end{align*}
Thus
$$
I_{s_{n-1}'}(s_{n-1}'z)\circ I_{s_{n-1}'}(z)(\xi_z\otimes 1)=\kappa(z_{n-1},-\epsilon\sq^{-1})(\xi_z\otimes 1)
$$ 
In particular, both $\kappa(z_{n-1},-\sq^{-1})\neq 0$ and $\kappa(z_{n-1},\sq^{-1})\neq 0$ (as element of the field $\mathbb{Q}(\sq,z_1,\ldots,z_{n-1})$), and it follows that the intertwiners $I_g(z)$ are invertible (over rational functions), and hence $\pi_z\cong \pi_{gz}$ for all $g\in G_n$. 
\end{proof}

\begin{cor}[$G_n$-invariance]\label{cor:inv}
We have $\chi(h)\in \sR[z]^{G_n}$ for all $h\in\Hext$. 
\end{cor}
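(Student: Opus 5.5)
The plan is to deduce $G_n$-invariance of the character directly from Theorem~\ref{thm:invariance}. Fix $h\in\Hext$. With respect to the basis $\{\xi_z\otimes X_u\mid u\in F\}$ for any fundamental domain $F$, the matrix entries of $\pi_z(h;F)$ lie in $\sR[z]$, so $\chi_z(h)\in\sR[z]$. It remains to show $g\cdot\chi_z(h)=\chi_z(h)$ for $g\in G_n$, and it suffices to treat the generators $s_1',\ldots,s_{n-1}'$.

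The first step is to observe that $\pi_{gz}$ is obtained from $\pi_z$ by applying $g$ entrywise to the matrices. The module $\mathsf{M}_{gz}$ is defined by the same formulas as $\mathsf{M}_z$, with $z$ replaced by $gz$. The basis $\xi_{gz}\otimes X_u$ and the structure constants expressing $(\xi\otimes X_u)\cdot T_{s}$ in this basis are polynomial expressions in the values $\psi(X^\lambda)$; equivalently, in the path formula (Theorem~\ref{thm:pathformula}) only the monomials $z^{\mathsf{wt}_F(p)}$ depend on $z$. Hence $\pi_{gz}(h;F)=g\cdot\pi_z(h;F)$ entrywise, and consequently $\chi_{gz}(h)=g\cdot\chi_z(h)$. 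One should check that $g$ acts on $\sR[z]$ compatibly with $z^\lambda\mapsto z^{g\lambda}$. Since $G_n$ acts by signed permutations of the first $n-1$ coordinates and fixes $\lambda_n$, this holds, and it is consistent with $\epsilon^2=1$.

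Next, Theorem~\ref{thm:invariance} gives $\pi_z\cong\pi_{gz}$ over the field $K=\QQ(\sq,z_1,\ldots,z_{n-1})$ extended by $z_n$ with $z_n^2=1$. Since isomorphic representations have equal traces, $\chi_z(h)=\chi_{gz}(h)$ as elements of $K[z_n]/(z_n^2-1)$. This ring is not a field. However, the intertwiners $I_g(z)$ are defined over it and are invertible there: the scalar $\kappa(z_{n-1},-\epsilon\sq^{-1})$ is invertible because both of its specialisations $\epsilon=\pm1$ are nonzero, and $K[\epsilon]/(\epsilon^2-1)\cong K\times K$. Equality of traces under conjugation by an invertible matrix therefore holds.

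Finally, $\sR[z]$ embeds into $K[z_n]/(z_n^2-1)$, so the identity $\chi_z(h)=g\cdot\chi_z(h)$ holds in $\sR[z]$. This gives $\chi(h)\in\sR[z]^{G_n}$.

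The main point needing care is the first step, namely justifying $\pi_{gz}(h)=g\cdot\pi_z(h)$, i.e.\ that the action of $g$ really commutes with the module structure. I would verify it on the generators $T_{s_i}$ (and $T_\sigma$) via the path formula, where it is immediate because the combinatorial data (paths, folds, bounces, final positions, weights) do not involve $z$.
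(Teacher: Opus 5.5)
Your proposal is correct and is essentially the paper's argument: the paper deduces the corollary from Theorem~\ref{thm:invariance} in one line, ``by taking traces''. You also make explicit what the paper leaves implicit, namely that $\chi_{gz}(h)=g\cdot\chi_z(h)$ (because $\mathsf{M}_{gz}$ is obtained from $\mathsf{M}_z$ by the substitution $z\mapsto gz$), and that the trace identity in $\QQ(\sq,z_1,\ldots,z_{n-1})[z_n]/(z_n^2-1)$ descends to $\sR[z]$ because $\sR[z]$ embeds injectively into that ring.
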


\begin{proof}
This follows from Theorem~\ref{thm:invariance} by taking traces. 
\end{proof}

\subsection{The leading matrix $\fc(\sm_{\la})$}\label{sec:leading}

The \textit{$n$-dominance partial order} on $P_n^+$ is given by 
$$\mu\leq_n \la\quad\text{if and only if}\quad  \lambda-\mu\in\sum_{i=1}^{n-2}\ZZ_{\geq 0}(e_i-e_{i+1})+\ZZ_{\geq 0}(2e_{n-1}).
$$

\begin{lemma}\label{lem:monotone}
If $\la,\mu\in P_n^+$ with $\mu\leq_n\la$ then $\ell(\sm_{\mu})\leq\ell(\sm_{\la})$ with equality if and only if $\mu=\la$. 
\end{lemma}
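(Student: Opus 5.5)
The plan is to use the explicit length formula of Corollary~\ref{cor:mlength}, which gives $\ell(\sm_{\la})=n(n-1)+L(\la)$ with
$$
L(\la)=\sum_{i=1}^{n-1}(2n-2i+1)\la_i .
$$
Because $L$ is a linear function of the first $n-1$ coordinates and does not depend on $\la_n$, the inequality reduces to showing that $L(\la-\mu)\geq 0$, with equality only when $\mu=\la$.

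First write $\la-\mu=\sum_{i=1}^{n-2}c_i(e_i-e_{i+1})+c_{n-1}(2e_{n-1})$ with every $c_i\in\ZZ_{\geq 0}$; this is what $\mu\leq_n\la$ means. The generators of the cone all lie in the span of $e_1,\ldots,e_{n-1}$. Consequently $\la_n=\mu_n$, and the last coordinate plays no role.

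Next apply the linear functional $L$ to each generator:
\begin{align*}
L(e_i-e_{i+1})&=(2n-2i+1)-(2n-2i-1)=2\quad\text{for }1\leq i\leq n-2,\\
L(2e_{n-1})&=2\cdot 3=6.
\end{align*}
Hence $\ell(\sm_{\la})-\ell(\sm_{\mu})=L(\la-\mu)=2\sum_{i=1}^{n-2}c_i+6c_{n-1}\geq 0$.

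Finally, equality holds exactly when every $c_i=0$, that is, when $\la-\mu=0$. Since we have already shown $\la_n=\mu_n$, this gives $\mu=\la$. There is no real obstacle in this argument. The only point needing care is confirming that the $n$-dominance cone contains no $e_n$ direction, so that $\la_n$, which does not contribute to the length, is automatically equal for $\la$ and $\mu$.
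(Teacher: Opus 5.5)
Your proposal is correct and follows essentially the same argument as the paper: it writes $\ell(\sm_{\la})=n(n-1)+H(\la)$ via Corollary~\ref{cor:mlength}, evaluates the linear function on the generators ($2$ on each $e_i-e_{i+1}$ and $6$ on $2e_{n-1}$), and concludes that equality forces all coefficients to vanish. Your explicit remark that the cone has no $e_n$-component, so $\la_n=\mu_n$ automatically, makes explicit a step the paper leaves implicit.
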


\begin{proof} By Corollary~\ref{cor:mlength} we have $\ell(\sm_{\la})=n(n-1)+H(\lambda)$, where $H$ is the linear function 
$
H(\lambda)=\sum_{i=1}^{n-1}(2n-2i+1)\lambda_i
$. Since $\mu\leq_n\lambda$ we have $\lambda-\mu=\sum_{i=1}^{n-2}a_i(e_i-e_{i+1})+a_{n-1}(2e_{n-1})$ with $a_1,\ldots,a_{n-1}\geq 0$. We have $H(e_i-e_{i+1})=2$ and $H(2e_{n-1})=6$, and hence $\ell(\sm_{\la})-\ell(\sm_{\mu})=2\sum_{i=1}^{n-2}a_i+6a_{n-1}\geq 0$. Moreover, equality holds if and only if $a_i=0$ for all $i=1,\ldots,n-1$, and so $\lambda=\mu$. 
\end{proof}

\begin{lemma}\label{lem:fullfold}
If $0\leq i\leq n-1$ then $\fu s_i\in \mathbb{X}_n$ and $\fu s_iA_0$ is on the negative side of the hyperplane separating $\fu A_0$ and $\fu s_iA_0$.  
\end{lemma}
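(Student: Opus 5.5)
The two assertions are really statements about the alcoves adjacent to $\fu A_0$ across its walls of types $0,\ldots,n-1$. I would convert both into conditions on the roots $\fu\star\alpha_i$. Recall that $\fu s_iA_0$ and $\fu A_0$ are separated by the hyperplane $H_{\beta_i}$ with $\beta_i=\pm\fu\star\alpha_i$, where the sign makes $\beta_i$ positive. Since $\fu\in W_n$ and $s_i\in W_n$ for $0\leq i\leq n-1$, we have $\fu s_i\in W_n$. Also $W_n\subseteq\mathbb{X}_n$, by Lemma~\ref{lem:Dnfunddom} (or directly, because $W_n$ fixes the type $n$ vertex $\xi$ and the roots $e_n,-e_n+\delta$ are not in $\Phi_n^+\supseteq\Phi(w)$ for $w\in W_n$). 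The first claim $\fu s_i\in\mathbb{X}_n$ therefore follows immediately, so the substance of the lemma is the sign statement.

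For the sign, I would compute $\fu\star\alpha_i$ explicitly using the standard form of $\fu$ from Lemma~\ref{lem:etaleform}, namely $\fu=t_{\omega_n}\sw_{0,n}\sw_0$ for $n$ even and $\fu=t_{\omega_{n-1}}\sw_{0,n}\sw_{0,1}$ for $n$ odd. The linear part acts on $e_j$ as follows:
\begin{itemize}
\item $\sw_{0,n}\sw_0$ sends $e_j\mapsto -e_{n+1-j}$;
\item $\sw_{0,n}\sw_{0,1}$ sends $e_1\mapsto e_n$ and $e_j\mapsto -e_{n+1-j}$ for $j\geq2$.
\end{itemize}
For $1\leq i\leq n-1$ this gives $\fu\star\alpha_i=e_{n-i}-e_{n+1-i}+k\delta$ for the appropriate $k$, and a similar root for $\alpha_0$. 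A quicker route avoids the coordinates. By Lemma~\ref{lem:etalebasics}(2), $\Phi(\fu)=\{-(e_a+e_b)+\delta\}$, and the left inversions of $\fu s_i$ versus $\fu$ differ by $\pm\fu\star\alpha_i$. Since $\fu=\sw_{0,n}\sw_n$ and $\sw_n\star\alpha_i=-\alpha_{j}$ for some $j\in\{0,\ldots,n-1\}$, we get $\fu\star\alpha_i=-\sw_{0,n}\star\alpha_j$. If $j\geq1$ this is $-(e_a-e_b)$ with $a<b$. If $j=0$ it is $-\sw_{0,n}\star(-e_1-e_2+\delta)=e_{n-1}+e_n-\delta$. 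So $\beta_i$ is either $e_a-e_b$ with $a<b$ (and then $\ell(\fu s_i)=\ell(\fu)+1$), or $-(e_{n-1}+e_n)+\delta$ (and then $\ell(\fu s_i)=\ell(\fu)-1$).

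Next I would determine positive sides. Here $H^+_\beta$ is the side containing $N\rho$ for large $N$, and $\rho=\omega_1+\cdots+\omega_n$ has strictly decreasing positive coordinates. For $\beta=e_a-e_b$ with $a<b$, the positive side is $x_a\geq x_b$. This is the side containing $A_0$, hence containing $\fu A_0$ because $\beta\notin\Phi(\fu)$. Hence $\fu s_iA_0$, which lies across $H_\beta$, is on the negative side. For $\beta=-(e_{n-1}+e_n)+\delta$, the positive side is $\{x_{n-1}+x_n\geq1\}$. Since $\beta\in\Phi(\fu)$, the alcove $\fu A_0$ is on the side not containing $A_0$, which is $x_{n-1}+x_n\geq1$, the positive side. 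Thus $\fu s_iA_0$ again lies on the negative side.

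The main obstacle is bookkeeping. I need to verify that the diagram automorphism induced by $\sw_n$ on $\{0,\ldots,n-1\}$ behaves as claimed, and that the case $j=0$ is the only one producing a root in $\Phi(\fu)$; this depends on the parity of $n$ through whether $\sw_n$ swaps $s_0$ and $s_1$. I would settle it by direct substitution into the explicit formulas of Lemma~\ref{lem:etaleform} in each parity. The final conclusion does not depend on which case occurs, because both possible root types were shown to put $\fu s_iA_0$ on the negative side.
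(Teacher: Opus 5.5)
Your argument is correct and follows essentially the paper's route. Both get $\fu s_i\in\mathbb{X}_n$ from $W_n\subseteq\mathbb{X}_n$, and both compute $\fu\star\alpha_i=-\sw_{0,n}\star\alpha_j$ from $\fu=\sw_{0,n}\sw_n$ and $\sw_n\star\alpha_i=-\alpha_j$.

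There is a sign slip in the case $j\geq 1$. Since $\sw_{0,n}$ reverses the coordinates, $\sw_{0,n}\star\alpha_j=-\alpha_{n-j}$. Hence $\fu\star\alpha_i=\alpha_{n-j}=e_{n-j}-e_{n-j+1}$ is \emph{positive}, not $-(e_a-e_b)$. This is what your own remark $\ell(\fu s_i)=\ell(\fu)+1$ requires. The slip is harmless, because your side argument only uses $\beta_i=e_a-e_b$ together with $\beta_i\notin\Phi(\fu)$, and the latter you read off from Lemma~\ref{lem:etalebasics}.

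The paper finishes more directly. The element $\fu$ carries the side of $H_{\alpha_i}$ on which $\alpha_i$ is positive (which contains $A_0$) to the side of $H_{\fu\star\alpha_i}$ on which $\fu\star\alpha_i$ is positive. So the single observation $\fu\star\alpha_i\in\Phi_0^{+}+\ZZ\delta$ already places $\fu A_0$ on the positive side. This observation holds in both of your cases, namely $\alpha_{n-j}$ and $e_{n-1}+e_n-\delta$. No case split or appeal to $\Phi(\fu)$ is then needed.
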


\begin{proof}
Since $\fu\in W_n$ we have $\fu s_i\in W_n$, and hence $\fu s_i\in\mathbb{X}_n$ (as $W_n\subseteq \mathbb{X}_n$ by Lemma~\ref{lem:Dnfunddom}). A direct calculation, using $\fu=\sw_{0,n}\sw_n$, shows that $\fu\star \alpha_i\in \Phi_0^++\ZZ\delta$ for all $i\in\{0,1,\ldots,n-1\}$, and this is equivalent to the statement that $\fu s_iA_0$ is on the negative side of the hyperplane separating $\fu A_0$ from $\fu s_i A_0$. 
\end{proof}

For $\la\in P_n^+$ let $M_{\la}(z)\in\ZZ[z]^{G_n}$ be the \textit{monomial symmetric function}
$$
M_{\la}(z)=\frac{1}{|\mathsf{Stab}_{G_n}(\la)|}\sum_{g\in G_n}z^{g\la}.
$$

\begin{thm}\label{thm:paththm}
For each $\la\in P_n^+$ we have 
$$
\fc(\sm_{\la})=f_{\lambda}(z)E_{1,1}\quad\text{for some $f_{\lambda}(z)\in\ZZ[z]^{G_n}$}.
$$
Moreover, writing $f_{\la}(z)$ in the basis of monomial symmetric functions 
$$
f_{\lambda}(z)=cM_{\lambda}(z)+\sum_{\mu\in P_n^+\backslash \{\lambda\}} c_{\mu}M_{\mu}(z),
$$
we have $c\in\ZZ_{>0}$, and if $c_{\mu}\neq 0$ then $\lambda\not<_n\mu$.
\end{thm}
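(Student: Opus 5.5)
By Theorem~\ref{thm:gammaintersection} we have $\sm_{\la}\in\Ga\cap\Ga^{-1}$. Corollary~\ref{cor:partial}(1) then says that $\fc(\sm_{\la})$ is either zero or has a single nonzero entry, in the $(1,1)$-position. So we may write $\fc(\sm_{\la})=f_{\la}(z)E_{1,1}$ with $f_\la(z)\in\ZZ[z]$, where $f_\la$ could a priori be zero. The proof then has three parts: show $f_\la$ is $G_n$-invariant, identify its leading monomial and show the coefficient is positive, and bound the other monomials.

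\emph{Invariance.} Since $f_\la$ is the only nonzero entry of the leading matrix, it equals the specialisation at $\sq^{-1}=0$ of $\sq^{-n(n-1)}\chi(T_{\sm_\la})$. This holds because the trace is the diagonal sum, and every other diagonal entry has degree $<n(n-1)$. Corollary~\ref{cor:inv} gives $\chi(T_{\sm_\la})\in\sR[z]^{G_n}$, and specialisation commutes with the $G_n$-action, so $f_\la\in\ZZ[z]^{G_n}$. Consequently we may expand $f_\la$ in the monomial basis $M_\mu$, $\mu\in P_n^+$.

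\emph{Leading term.} We compute the $(1,1)$-entry of $\pi(T_{\sm_\la};\sF)$ with the path formula, Theorem~\ref{thm:pathformula}. Its row and column are indexed by $\fu\in\sF$. Write $\sm_\la=\sw_n\tau'_\la$ with lengths adding (Theorem~\ref{thm:mlambda}), where $\tau_\la'=\fu^{-1}\tau_\la\fu$, and take the reduced expression that begins with a reduced word for $\sw_n$. By Lemma~\ref{lem:fullfold}, starting at $\fu$ there is a path that folds at every step of the $\sw_n$ part and stays at $\fu$; more generally, paths through the $W_n$-residue contribute the entries $\sq^{\veps(u)+\veps(v)}$ of Lemma~\ref{lem:Cform}. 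The top degree $\sq^{n(n-1)}$ arises only at $u=v=\fu$, as in Corollary~\ref{cor:Cmat}. After this we follow the straight (unfolded) path of type $\tau'_\la$ from $\fu$. Since $\fu\tau'_\la=\tau_\la\fu$, this path ends at $\tau_\la\fu$, which has $\sF$-weight $\la$ and final position $\fu$, so it contributes the monomial $z^\la$ with coefficient $1$ times $\sq^{n(n-1)}$. We then need to show two things: any other path with final position $\fu$ and degree $n(n-1)$ cannot also have weight $\la$, and every path of top degree and final position $\fu$ has weight $\mu$ whose dominant representative satisfies $\mu^+\le_n\la$ (or at least $\la\not<_n\mu^+$). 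Positivity of $c$ would follow because the straight path is the unique weight-$\la$ contribution, and it has sign $+1$ since it has no bounces.

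\emph{Bounding the other weights.} Alternatively, one can use Proposition~\ref{prop:taulength} and Lemma~\ref{lem:monotone}. A path of type $\vec{w}$ can reach weight $\mu$ only if the alcove $\tau_\mu\fu A_0$ lies within distance $\ell(\sm_\la)$ from the start. More precisely, any $n$-folded path stays inside the region $\mathsf{conv}$-type hull determined by the straight path. Since $\tau'$ restricted to dominant weights is length additive with $\ell$ strictly monotone in $\le_n$, a weight $\mu^+$ with $\la<_n\mu^+$ would force $\ell(\sm_{\mu^+})>\ell(\sm_\la)$, which is impossible for a path of length $\ell(\sm_\la)$ that must travel at least $\ell(\tau'_{\mu})$ hyperplane crossings. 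I expect this step to be the main obstacle. Example~\ref{ex:pathscancel} shows that path contributions can cancel, so the argument must use only the weights reachable by any path, that is, convexity, and never the degrees. The positivity of the $z^\la$ coefficient relies on the straight path being the only path reaching $\tau_\la\fu$, which should follow because reaching the extreme alcove requires crossing every hyperplane, so no fold or bounce is allowed.
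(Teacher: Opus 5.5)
Your outline follows the paper's: the all-fold $\sw_n$-prefix at $\fu$ via Lemma~\ref{lem:fullfold}, the straight $\tau'_\la$-segment ending at $\tau_\la\fu$, $G_n$-invariance via $\chi$, and triangularity via Lemma~\ref{lem:monotone}. The reduction to $f_\la E_{1,1}$ and the invariance step are fine, but both path-counting steps have gaps.

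\textbf{Positivity of $c$.} You rely on the straight path being the only path to $\tau_\la\fu$, justified by ``no fold or bounce is allowed''. That cannot be right as stated, since your own path has $n(n-1)$ folds. What you would actually need is uniqueness among top-degree paths, and you do not prove it. Lemma~\ref{lem:Cform} does not supply it: that lemma concerns $\pi(C_{\sw_n};W_n)$. Moreover, splitting $\pi(T_{\sm_\la})=\pi(T_{\sw_n})\pi(T_{\tau'_\la})$ leaves uncontrolled the products of lower-degree entries in row $\fu$ of the first factor with positive-degree entries of the second, i.e.\ paths whose folds are spread over both segments. Uniqueness is in fact unnecessary. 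For any path $p'$ of type $\vec{\sm}_\la$ from $\fu$ to $\tau_\la\fu$, the crossing letters multiply to $\tau'_\la$. Hence $\ell(\sm_\la)-f(p')-b(p')\geq\ell(\tau'_\la)$, i.e.\ $f(p')+b(p')\leq n(n-1)$. Contributing in degree $n(n-1)$ requires $f(p')-b(p')\geq n(n-1)$. Together these force $b(p')=0$ and $f(p')=n(n-1)$. So every such path contributes $(\sq-\sq^{-1})^{n(n-1)}z^{\la}$ with leading coefficient $+1$, nothing cancels, and $c>0$.

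\textbf{Triangularity.} Your decision to use only reachability and ``never the degrees'' is what breaks the argument, because pure length counting is off by $n(n-1)$. A word of length $\ell(\sm_\la)=\ell(\tau'_\la)+n(n-1)$ may contain up to $\ell(\sm_\la)$ crossings. Reaching $\tau_\mu\fu$ needs only $\ell(\tau'_\mu)=\ell(\sm_\mu)-n(n-1)$ of them, so $\ell(\sm_\mu)>\ell(\sm_\la)$ yields no contradiction. (Your convexity claim is also unfounded: endpoints of folded paths are subword products, governed by Bruhat order rather than by a convex hull of the straight path.)

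The missing input is that any path relevant to the leading matrix has at least $n(n-1)$ folds, and using degrees to get it is legitimate. If $c_\mu\neq 0$, the coefficient of $\sq^{n(n-1)}z^{\mu}$ in the $(1,1)$-entry is nonzero. So some individual path $p''$ ending at $\tau_\mu\fu$ has $f(p'')-b(p'')\geq n(n-1)$. Cancellation as in Example~\ref{ex:pathscancel} only obstructs the converse implication. The crossings of $p''$ then number at most $\ell(\sm_\la)-n(n-1)-2b(p'')$ and multiply to $\tau'_\mu$. This gives $\ell(\sm_\mu)\leq\ell(\sm_\la)-2b(p'')\leq\ell(\sm_\la)$, and Lemma~\ref{lem:monotone} yields $\la\not<_n\mu$.
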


\begin{proof} 
We will use the path formula (Theorem~\ref{thm:pathformula}) to compute a leading term in $\pi(T_{\sm_{\la}};\sF)$ of degree $n(n-1)$ (which in particular shows that the $(1,1)$-entry of $\fc(\sm_{\la})$ is nonzero). From Theorem~\ref{thm:main2} we have $\sm_{\la}=\sw_n\cdot x_{\la}$ with $x_{\la}=\fu^{-1}\tau_{\la}\fu$. Let $p$ be the path of type $\vec{\sm}_{\la}=\vec{\sw}_n\cdot\vec{x}_{\la}$ starting at $\fu\in\sF$ with the $\vec{\sw}_n$ part being all folds, and the remaining $\vec{x}_{\la}$ consisting of crossings. We must show that this is an $n$-folded alcove path. The $\ell(\sw_n)$ folds are permitted by Lemma~\ref{lem:fullfold}. The remaining crossings start at $\fu$ and end at $\mathsf{end}(p)=\fu x_{\la}=\tau_{\la}\fu\in \mathbb{X}_n$ (because $\sF$ is a fundamental domain). Thus the start and end of this part of the path lie in $\mathbb{X}_n$, and since the path is of reduced type convexity implies that the entire path lies in $\mathbb{X}_n$, and so indeed $p$ is a valid $n$-folded alcove path. We have $\mathsf{wt}_{\sF}(p)=\la$ and $\theta_{\sF}(p)=\fu$, and so by Theorem~\ref{thm:pathformula} the path contributes $(\sq-\sq^{-1})^{n(n-1)}z^{\la}$ to the $(1,1)$-entry of the matrix $\pi(T_{\sm_{\la}};\sF)$. 

We must show that this leading contribution $\sq^{n(n-1)}z^{\la}$ from $p$ cannot be cancelled by contributions from other paths. Thus suppose that $p'$ is another path of type $\vec{\sm}_{\la}$ starting at $\fu$ and ending at $\tau_{\la}\fu$ (hence contributing a multiple of $z^{\la}$ to the matrix entry). Thus $\fu^{-1}\tau_{\la}\fu$ is obtained from $\sw_n(\fu^{-1}\tau_{\la}\fu)$ by deleting the generators corresponding to the folds and bounces in~$p'$. Moreover, this path $p'$ must attain at least degree $n(n-1)$ to potentially cancel the leading contribution $\sq^{n(n-1)}z^{\la}$ from $p$, and so $f(p')-b(p')\geq \ell(\sw_n)$. Putting this together, 
\begin{align*}
\ell(\sm_{\la})&=\ell(\sw_n)+\ell(\fu^{-1}\tau_{\la}\fu)\\
&\leq \ell(\sw_n)+\ell(\sw_n\fu^{-1}\tau_{\la}\fu)-f(p')-b(p')\\
&\leq \ell(\sw_n)+\ell(\sm_{\la})-\ell(\sw_n)-b(p')-b(p')\\
&=\ell(\sm_{\la})-2b(p').
\end{align*}
Hence $b(p')=0$ and $f(p')=n(n-1)$, which raises the leading contribution to $2\sq^{n(n-1)}z^{\la}$ (assuming $p'\neq p$). Hence the leading term does not cancel, and moreover it proves that $c>0$ (with $c$ as in the statement of the theorem).

Thus $[\fc(\sm_{\la})]_{1,1}\neq 0$, and so by Corollary~\ref{cor:partial} we have that $\fc(\sm_{\la})=f_{\la}(z)E_{1,1}$ for some $f_{\la}(z)\in\ZZ[z]$. Moreover, since $\chi(T_{\sm_{\la}})$ is $G_n$-invariant (by Corollary~\ref{cor:inv}) and $f_{\la}(z)$ is the coefficient of $\sq^{n(n-1)}$ in this trace (as $\pi(T_{\sm_{\la}};\sF)=\sq^{n(n-1)}f_{\la}(z)E_{1,1}+\text{(terms of strictly lower degree)}$) we have $f_{\la}(z)\in\ZZ[z]^{G_n}$. Thus $f_{\la}(z)$ can be expressed as a sum of monomial symmetric functions. 
%

Finally we show that if $M_{\mu}(z)$ with $\mu\in P_n^+$ occurs with nonzero coefficient in $f_{\la}(z)$ then $\la\not<_n\mu$. The argument is similar to the argument in the second paragraph of the proof -- now there is a path $p''$ of type $\vec{\sm}_{\la}$ starting at $\fu$ and ending at $\tau_{\mu}\fu$, with $f(p'')-b(p'')\geq \ell(\sw_n)$. Thus $\fu^{-1}\tau_{\mu}\fu$ is obtained from $\sw_n(\fu^{-1}\tau_{\la}\fu)$ by deleting the generators corresponding to the folds and bounces, and so as above we have $\ell(\sm_{\mu})=\ell(\sw_n)+\ell(\fu^{-1}\tau_{\mu}\fu)\leq \ell(\sm_{\la})-2b(p'')$. 
If $\ell(\sm_{\mu})<\ell(\sm_{\la})$ then by Lemma~\ref{lem:monotone} we have $\lambda\not<_n\mu$. If $\ell(\sm_{\mu})=\ell(\sm_{\lambda})$ with $\lambda\leq_n\mu$ then Lemma~\ref{lem:monotone} gives $\lambda=\mu$, completing the proof. 
\end{proof}

\begin{remark}
Ultimately we will see that $f_{\lambda}(z)$ is a single Schur function, and in particular $c=1$ and the terms $M_{\mu}(z)$ appearing in the sum have $\mu<_n\lambda$. 
\end{remark}

\subsection{The asymptotic Plancherel density}\label{sec:plancherel}

The \textit{Plancherel Theorem} of Opdam \cite{Opd:04} is a spectral decomposition of the canonical trace functional $\Tr:H(q)\to \RR$, where $H(q)$ denotes the algebra $\Hext$ with parameter $\sq$ specialised to a real number $q>1$. The theorem expresses $\Tr(h)$ as an integral over the tempered dual of $\Hext(q)$:
$$
\Tr(h)=\int_{\widehat{H(q)}_{\mathsf{temp}}}\chi_{\pi}(h)\,d\mu_q(\pi). 
$$

The spectrum is organised by induction data $\xi=(P,\delta,z)$ where $P$ is a finite root system parabolic, $\delta$ is a discrete series representation of the corresponding semisimple affine Hecke algebra, and $z\in T_u^P$ is a unitary unramified parameter. Twisting $\delta$ by $z$ and inducing from the affine Levi subalgebra produces $\pi_{P,\delta,z}$. 

The representation $\pi_{z}$ from Section~\ref{sec:pi} corresponds to the parabolic $W_{\{s_n\}}=\langle s_n\rangle$. The associated semisimple affine Hecke algebra is the $\{s_n\}$-affine Hecke algebra $H_{\{s_n\}}$ generated by $T_n$ and $X^{e_n}$ (see \cite[Section 5.3]{GLP:23}) of extended type $\tilde{\sA}_1$. The discrete series representation $\delta=\delta_{\epsilon}$ of $H_{\{s_n\}}$ is the $1$-dimensional representation with $T_n$ acting by $-\sq^{-1}$ and $X^{e_n}$ acting by $-\epsilon\sq^{-1}$. The twisting by $z\in T_u^P$ corresponds to the action of the additional lattice elements $X^{e_1},\ldots,X^{e_{n-1}}$ by $z_1,\ldots,z_{n-1}$ (these elements $X^{e_i}$ with $1\leq i\leq n-1$ are in the Levi subalgebra, but not in the semisimple $\{s_n\}$-affine Hecke algebra). Here we specialise $z_1,\ldots,z_{n-1}$ to nonzero complex numbers, and $\epsilon$ to $\pm 1$. The tempered representations have $|z_i|=1$.

Let 
$$
N(t)=\frac{(1-t^{-1})(1-t)}{(1-q^{-2}t^{-1})(1-q^{-2}t)}\quad \text{and}\quad N'(t)=\frac{(1+\epsilon q^{-1}t)(1+\epsilon q^{-1}t^{-1})}{(1+\epsilon q^{-3}t)(1+\epsilon q^{-3}t^{-1})}.
$$

\begin{thm}\label{thm:planchereldensity}
For each choice $\epsilon=\pm 1$ the density of $\pi_{z,\epsilon}$ in the Plancherel Theorem is
$$
d\nu_{q,\epsilon}(z)=\frac{q^{-2n(n-1)}(1-q^{-2})}{2|G_n|(1+q^{-2})}\bigg[\prod_{1\leq i<j\leq n-1}N(z_i/z_j)N(z_iz_j)\bigg]\bigg[\prod_{k=1}^{n-1}N(z_k^2)N'(z_k)\bigg]\,dz. 
$$
\end{thm}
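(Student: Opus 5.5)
The plan is to read the density off Opdam's explicit Plancherel formula \cite{Opd:04} for the induction datum $\xi=(P,\delta_\epsilon,z)$ with $P=\{\alpha_n\}$. I will not rederive it from scratch. In Opdam's normalisation the density of $\pi_{P,\delta,z}$ on the component $T_u^P$ has the general form
$$
d\nu(\xi)=C\cdot \frac{d(\delta)}{|W(P,\delta)|}\,\frac{q(\sw_0)^{-1}}{|c_{P}(\delta,z)|^{2}}\,dz ,
$$
up to the standard normalisation of $\Tr$. Here $d(\delta)$ is the formal degree of the discrete series $\delta$ of the semisimple Hecke algebra $H_{\{s_n\}}$, $W(P,\delta)$ is the stabiliser group of the datum, and $c_P$ is the product of Macdonald $c$-functions over $\Phi_0^+\backslash\Phi_P^+$, evaluated at the central character $r_\delta z$. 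The first step is to fix conventions so that $\Tr(T_w)=\delta_{w,e}$ matches Opdam's trace $\tau$. The extended lattice $P$ versus $Q$ only contributes a factor coming from $|\Sigma|=2$, which accounts for the $2$ in the denominator.

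The second step is the formal degree of $\delta_\epsilon$ for the rank one algebra $H_{\{s_n\}}$ of extended type $\tilde{\sA}_1$. This is a one-dimensional Steinberg-type character with $T_n\mapsto-\sq^{-1}$. I will compute $d(\delta_\epsilon)$ directly from the rank one case (Macdonald's formula for $\tilde{\sA}_1$, or the residue of the rank one $c$-function at $X^{\alpha_n^\vee}=\sq^{-2}$). I expect to obtain $\frac{(1-q^{-2})}{(1+q^{-2})}$ up to the $\sigma$-factor, and the same value for both $\epsilon$.

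The third step evaluates $|c_P|^{-2}$ at the central character. The point is $X^{e_i}=z_i$ for $i<n$ and $X^{e_n}=-\epsilon q^{-1}$. I will sort the roots of $\Phi_0^+\backslash\{e_n\}$ into orbits.

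1. The roots $e_i\pm e_j$ with $i<j<n$ give $c(z_i/z_j)c(z_iz_j)$ factors, and hence $N(z_i/z_j)N(z_iz_j)$.
2. The roots $e_k$ with $k<n$ give $N(z_k^2)$, since the coroot is $2e_k$.
3. The pair $e_k\pm e_n$ gives $c(-\epsilon q^{-1}z_k^{\pm1})\,c(-\epsilon q z_k^{\pm1})$-type terms. Simplifying these telescoping products gives $N'(z_k)$, together with an overall power of $q$.

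In the $|c|^{-2}$ expression a factor $c(t)c(t^{-1})$ becomes $N(t)$. The power $q^{-2n(n-1)}$ should emerge from combining $q(\sw_0)^{-1}$ with the $q$-powers produced in the third family. This factor matches $\sq^{-2\ba(\Delta_1)}$, which is a useful consistency check. Finally, $|W(P,\delta)|=|G_n|$, because the normaliser of $P$ acting on $T^P$ is exactly $G_n$ and fixes $\delta_\epsilon$ by Theorem~\ref{thm:invariance}.

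I expect the main obstacle to be bookkeeping rather than concept. The hard part is getting the constant exactly right, namely the factors $2$, $|G_n|$, $(1-q^{-2})/(1+q^{-2})$ and the $q$-power, and ensuring the $e_k\pm e_n$ terms collapse precisely to $N'(z_k)$. For this I would first verify the formula for $n=2$. There I can cross-check against a direct computation. One route is to integrate $\chi_z(C_{\sw_n})=\sq^{-\ell(\sw_n)}W_n(\sq^2)$ (Lemma~\ref{lem:Cform}) against the candidate density, in combination with the known contributions of other components, which is harder to isolate. Alternatively, one can use the known rank two Plancherel measure of \cite{GP:19,GP:19b}. After that I would fix any remaining global constant in general by a residue/normalisation comparison.
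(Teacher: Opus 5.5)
Your plan is the paper's proof. The paper quotes Opdam's formula $d\nu_{q,\epsilon}(z)=\frac{\mathsf{fdim}(\delta_\epsilon)}{|W(\sB_{n-1})|}\,m^P(t(z,\epsilon))\,dz$ with $t(z,\epsilon)=(z_1,\ldots,z_{n-1},-\epsilon q^{-1})$. It computes $1/\mathsf{fdim}(\delta_\epsilon)=2+4\sum_{k\geq 1}q^{-2k}$, evaluates $m^P$ by exactly your three root families, and collapses $N(-\epsilon q z_k)N(-\epsilon q^{-1}z_k)=q^2N'(z_k)$.

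One piece of your bookkeeping fails as written: the $q$-power. Take $c$ normalised so that $1/(c(t)c(t^{-1}))=N(t)$. Then the normalising power in $m^P$ is $q(\sw_0)^{-1}q(s_n)=q^{-2(n^2-1)}$, because the longest element of $W_P=\langle s_n\rangle$ contributes. It is not $q(\sw_0)^{-1}=q^{-2n^2}$. Only the former, combined with the $q^{2(n-1)}$ from the $e_k\pm e_n$ family, gives $q^{-2n(n-1)}$. Your combination gives $q^{-2n(n-1)-2}$, which would make the limit in Corollary~\ref{cor:asymptoticplancherel} vanish.

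Two smaller points. The $2$ you attribute to $|\Sigma|$ is the same factor that already sits inside $\mathsf{fdim}(\delta_\epsilon)=\frac{1-q^{-2}}{2(1+q^{-2})}$. There it counts the two length-zero elements of the extended rank-one group generated by $s_n$ and $t_{e_n}$, so count it exactly once. Also, $|W(P,\delta_\epsilon)|=|G_n|$ holds simply because $G_n$ fixes $e_n$ and hence acts trivially on $H_{\{s_n\}}$. Theorem~\ref{thm:invariance}, which concerns the induced modules, is not the reason.
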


\begin{proof}
It follows from \cite[(4.96) and Theorem~4.43(ii)]{Opd:04} that the Plancherel mass for each representation $\pi_{z,\epsilon}$ ($\epsilon=\pm 1$) is 
$$
d\nu_{q,\epsilon}(z)=\frac{\mathsf{fdim}(\delta_{\epsilon})}{|W(\mathsf{B}_{n-1})|}m^P(t(z,\epsilon))\,dz,
$$
where $dz$ is normalised Haar measure on $\mathbb{T}^{n-1}$, with $\TT$ the group of complex numbers of modulus~$1$, and $t(z,\epsilon)=(z_1,\ldots,z_{n-1},-\epsilon q^{-1})$. Using \cite[Corollary~3.32]{Opd:04} one computes the formal dimension $\mathsf{fdim}(\delta_{\epsilon})$ as 
$$
\frac{1}{\mathsf{fdim}(\delta_{\epsilon})}=2+4\sum_{k=1}^{\infty}q^{-2k}=\frac{2(1+q^{-2})}{(1-q^{-2})}. 
$$
Moreover by \cite[Proposition~3.27(ii)]{Opd:04} it follows that 
$$
m^P(t(z,\epsilon))=q^{-2(n^2-1)}\bigg[\prod_{1\leq i<j\leq n-1}N(z_iz_j^{-1})N(z_iz_j)\bigg]\bigg[\prod_{k=1}^{n-1}N(z_k^2)N(-\epsilon qz_k)N(-\epsilon q^{-1}z_k)\bigg],
$$
where 
$$
N(t)=\frac{(1-t^{-1})(1-t)}{(1-q^{-2}t^{-1})(1-q^{-2}t)}. 
$$
We have the cancellation
$$
N(-\epsilon q z_i)N(-\epsilon q^{-1}z_i)=q^2\frac{(1+\epsilon q^{-1}z_i)(1+\epsilon q^{-1}z_i^{-1})}{(1+\epsilon q^{-3}z_i)(1+\epsilon q^{-3}z_i^{-1})}=q^2N'(z_i), 
$$
and hence the result (since $|G_n|=|W(\sB_{n-1})|$). 
\end{proof}

Let 
\begin{align}\label{eq:D}
D(z)=\bigg[\prod_{1\leq i<j\leq n-1}(1-z_i^{-1}z_j)(1-z_i^{-1}z_j^{-1})\bigg]\bigg[\prod_{k=1}^{n-1}(1-z_k^{-2})\bigg].
\end{align}
The specialisation $\sq^{-1}=0$ corresponds to the limit $q\to\infty$. The following corollary computes the ``asymptotic density'' associated to the representation $\pi_z$. 

\begin{cor}\label{cor:asymptoticplancherel}
For each choice of specialisation $\epsilon=\pm 1$ we have 
$$
\lim_{q\to\infty}q^{2n(n-1)}d\nu_{q,\epsilon}(z)=\frac{|D(z)|^2}{2|G_n|}\,dz. 
$$
\end{cor}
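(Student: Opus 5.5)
The plan is to take the explicit density from Theorem~\ref{thm:planchereldensity}, multiply by $q^{2n(n-1)}$, and let $q\to\infty$ factor by factor. Multiplying by $q^{2n(n-1)}$ exactly cancels the prefactor $q^{-2n(n-1)}$. The remaining scalar factor $(1-q^{-2})/(2|G_n|(1+q^{-2}))$ tends to $1/(2|G_n|)$. It then remains to compute the pointwise limits of the factors $N(z_i/z_j)$, $N(z_iz_j)$, $N(z_k^2)$ and $N'(z_k)$ for $z\in\TT^{n-1}$, and to recognise the product of these limits as $|D(z)|^2$.

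For the individual factors I will use the following limits.
\begin{itemize}
\item Since $N(t)=(1-t^{-1})(1-t)/((1-q^{-2}t^{-1})(1-q^{-2}t))$ and $q^{-2}t^{\pm1}\to0$ for fixed $t$, we get $N(t)\to(1-t^{-1})(1-t)$.
\item Similarly $N'(t)\to1$, since each of $\epsilon q^{-1}t^{\pm1}$ and $\epsilon q^{-3}t^{\pm1}$ tends to $0$. This holds for either sign $\epsilon$, which is why the limit is independent of $\epsilon$.
\item For $|t|=1$ we have $\overline{t}=t^{-1}$, so $(1-t^{-1})(1-t)=(1-t^{-1})\overline{(1-t^{-1})}=|1-t^{-1}|^2$.
\end{itemize}

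Applying this with $t=z_i/z_j$, $t=z_iz_j$ and $t=z_k^2$ (all unimodular when $z\in\TT^{n-1}$) gives the limiting product
$$\prod_{i<j}|1-z_i^{-1}z_j|^2\,|1-z_i^{-1}z_j^{-1}|^2\prod_k|1-z_k^{-2}|^2.$$
Here I use $|1-z_j/z_i|=|1-z_i/z_j|$ on the torus, so the orientation of each ratio does not matter. By the definition~(\ref{eq:D}) this product is exactly $|D(z)|^2$.

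The only point needing a little care is the meaning of the limit of measures. Both $d\nu_{q,\epsilon}$ and the limit are absolutely continuous with respect to the fixed Haar measure $dz$, so it suffices to show that the densities converge. They do, uniformly on $\TT^{n-1}$: for $q\ge2$ the denominators $(1-q^{-2}t^{\pm1})$ and $(1+\epsilon q^{-3}t^{\pm1})$ are bounded away from zero uniformly in $|t|=1$. Uniform convergence of the densities gives convergence of the measures in any reasonable sense, for example against continuous test functions, which is how the result will be used later. I do not expect any genuine obstacle; the argument is a direct computation.
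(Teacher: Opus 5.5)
Your proposal is correct and matches the paper's approach: the paper says only that the corollary is immediate from Theorem~\ref{thm:planchereldensity}. You have supplied the routine details, namely cancelling $q^{-2n(n-1)}$, using $N(t)\to(1-t)(1-t^{-1})=|1-t^{-1}|^2$ on $\TT$ and $N'(t)\to 1$, and matching the resulting product with $D(z)\overline{D(z)}$, with uniform convergence of the densities on $\TT^{n-1}$ justifying the limit of measures.
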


\begin{proof}
This is immediate from Theorem~\ref{thm:planchereldensity}. 
\end{proof}

\subsection{Schur functions}

Define a symmetric bilinear form $(\cdot,\cdot)$ in $\sR[z]$ by 
\begin{align}\label{eq:innerprod}
( f_1,f_2)=\frac{1}{|G_n|}\big[f_1f_2^*D(z)D(z)^*\big]_{\mathsf{ct}}\quad\text{for $f_1,f_2\in\sR[z]$},
\end{align}
where $D(z)$ is as in~(\ref{eq:D}) (with the $z_i$ unspecialised). 

For $\lambda\in P_n^+$ define $\fs_{\la}(z)\in \ZZ[z]^{G_n}$ by 
$$
\fs_{\lambda}(z)=\sum_{g\in G_n}g\cdot\left(\frac{z^{\lambda}}{D(z)}\right).
$$

\begin{remark}\label{rem:characters}
It is easy to see that $\fs_{\la}(z)$ is indeed an element of $\ZZ[z]$. In fact, writing $\lambda'=\lambda_1e_1+\cdots+\lambda_{n-1}e_{n-1}$ we have
$$
\fs_{\la}(z)=\epsilon^{\lambda_n}\fs^{\mathsf{C}_{n-1}}_{\la'}(z_1,\ldots,z_{n-1})
$$
where $\fs_{\la'}^{\mathsf{C}_{n-1}}$ is the usual Schur function of type $\mathsf{C}_{n-1}$ corresponding to the weight $\la'$. We will refer to the elements $\fs_{\la}(z)$ as ``Schur functions''. These Schur functions are the characters of irreducible highest weight representations of the group $\mathsf{Sp}_{2n-2}(\CC)\times C_2$ (with $C_2$ the group of order~$2$). Specifically, for $\la\in P_n^+$, writing $\la'$ as above, the Schur function $\fs_{\la}(z)$ is the character of $V_{\la}=V_{\la'}\otimes \epsilon^{\la_n}$ where $V_{\la'}$ is the irreducible highest weight representation of $\mathsf{Sp}_{2n-2}(\CC)$, and $\epsilon$ is the nontrivial irreducible representation of $C_2$. 
\end{remark}

Recall the definition of the monomial symmetric functions $M_{\la}(z)$ from Section~\ref{sec:leading}

\begin{prop}
The Schur functions $\fs_{\la}(z)$ with $\la\in P_n^+$ are the unique elements of $\ZZ[z]^{G_{n}}$ satisfying 
\begin{compactenum}[$(1)$]
\item $\fs_{\la}(z)=M_{\la}(z)+\sum_{\mu<_n\la} a_{\la,\mu}M_{\mu}(z)$ with $a_{\la,\mu}\in\ZZ$, and 
\item $(\fs_{\la}(z),\fs_{\mu}(z))=\delta_{\la,\mu}$. 
\end{compactenum}
\end{prop}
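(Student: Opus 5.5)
The plan is to reduce everything to the classical Weyl character formula for $\mathsf{Sp}_{2n-2}(\CC)$, keeping track of the extra $C_2$ factor $\epsilon$. Write $\la=(\la',\la_n)$. Since $G_n$ fixes $\epsilon$ and acts on $z_1,\ldots,z_{n-1}$ by signed permutations, we have $z^{g\la}=\epsilon^{\la_n}z^{g\la'}$. Hence $\fs_{\la}(z)=\epsilon^{\la_n}\sum_{g}g\cdot(z^{\la'}/D(z))$. Next observe that $D(z)$ equals $z^{-\rho'}\prod_{\alpha>0}(z^{\alpha/2}-z^{-\alpha/2})$ up to a monomial normalisation, where the product runs over the positive roots of type $\sC_{n-1}$: the short roots $e_i\pm e_j$ and the long roots $2e_k$, with $\rho'=\sum_i(n-i)e_i$. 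So $D(z)=z^{-\rho'}\Delta(z)$, where $\Delta$ is the Weyl denominator and satisfies $g\Delta=\det(g)\Delta$. This gives
\[\fs_{\la}(z)=\epsilon^{\la_n}\frac{\sum_g\det(g)z^{g(\la'+\rho')}}{\Delta(z)},\]
which is the Weyl character of $V_{\la'}$, as asserted in Remark~\ref{rem:characters}.

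Property (1) then follows from standard facts about highest weight characters. The character is $G_n$-invariant with integer coefficients, and its weights are $\la'$ with multiplicity one together with weights $\mu'$ dominated by $\la'$, meaning $\la'-\mu'$ is a nonnegative integer combination of the simple roots $e_i-e_{i+1}$ and $2e_{n-1}$. This is exactly $\mu<_n\la$ with $\mu_n=\la_n$. For property (2), I would unwind the bilinear form~(\ref{eq:innerprod}):
\[(\fs_\la,\fs_\mu)=\frac{1}{|G_n|}\Big[\epsilon^{\la_n+\mu_n}A_{\la'+\rho'}A_{\mu'+\rho'}^*\Big]_{\mathsf{ct}},\]
with $A_\nu=\sum_g\det(g)z^{g\nu}$. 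The factors $z^{\pm\rho'}$ cancel because $D D^*=\Delta\Delta^*$. Because $\la'+\rho'$ and $\mu'+\rho'$ are strictly dominant, their $G_n$-orbits are free, and the orbits are disjoint unless $\la'=\mu'$. So the constant term of $A A^*$ in the $z_1,\ldots,z_{n-1}$ variables equals $|G_n|\delta_{\la',\mu'}$. The remaining point is the $\epsilon$-part. The constant term in $\ZZ[z]$ is taken with respect to the basis $z^\la$, $\la\in P_n$, so $\epsilon$ counts as a nonconstant monomial. Hence $[\epsilon^{\la_n+\mu_n}]_{\mathsf{ct}}=\delta_{\la_n,\mu_n}$, which gives $\delta_{\la,\mu}$ (here one checks that the involution $*$ fixes $\epsilon$ and that $\epsilon^2=1$).

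Uniqueness is the part I expect to need the most care. Suppose $f_\la\in\ZZ[z]^{G_n}$ satisfies (1) and (2). Argue by induction on $\la$ in the partial order $\leq_n$, noting that each lower set is finite because $\ell(\sm_\mu)$ strictly decreases by Lemma~\ref{lem:monotone}. By (1) and the triangularity just established for $\fs$, we can write $f_\la=\fs_\la+\sum_{\mu<_n\la}b_\mu\fs_\mu$ with $b_\mu\in\ZZ$. By induction $f_\mu=\fs_\mu$ for all $\mu<_n\la$, so orthogonality $(f_\la,f_\mu)=0$ gives $b_\mu=(f_\la,\fs_\mu)=0$, using the orthonormality of the $\fs$'s from (2). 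One should also check that the $M_\mu$ form a basis of $\ZZ[z]^{G_n}$, so that the expansions in (1) are well defined; this holds because $G_n$-orbits on $P_n$ are represented exactly once by $P_n^+$. The main obstacle is purely bookkeeping: confirming that the normalisation of $D(z)$ matches the Weyl denominator up to a monomial (rather than a sign that depends on $g$ in an unexpected way), and that the constant-term convention treats $\epsilon$ as stated.
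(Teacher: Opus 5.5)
Your proof is correct and takes the same route as the paper. The paper simply defers to the classical characterisation of type $\sC_{n-1}$ Schur functions (via \cite[Proposition~3.4]{Ram}); you have written out those classical facts explicitly. This includes the identification $D(z)=z^{-\rho'}\Delta(z)$, the Weyl character formula, the regular-orbit constant-term computation, and the correct treatment of $\epsilon$ as a non-constant basis monomial in the pairing.

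One minor point: your induction only needs $<_n$ to be well-founded on $P_n^+$. Lemma~\ref{lem:monotone} already gives this, because $\ell(\sm_\mu)$ strictly decreases along $<_n$ and lengths are nonnegative integers. Finiteness of lower sets also holds, since the linear form $\sum_i(2n-2i+1)\mu_i$ bounds every coordinate.
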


\begin{proof} This follows from classical properties of Schur functions, see \cite[Proposition~3.4]{Ram}. 
\end{proof}

\subsection{The ring $\mathcal{J}_{\Ga\cap\Ga^{-1}}$}

In this section we determine the structure of $\mathcal{J}_{\Ga\cap\Ga^{-1}}$. Recall from Theorem~\ref{thm:paththm} that $\fc(\sm_{\la})=f_{\la}(z)E_{1,1}$ with $f_{\la}(z)\in \ZZ[z]^{G_n}$ satisfying a weak positivity and triangularity property. The following crucial theorem computes $f_{\la}(z)$ precisely. 

\begin{thm}\label{thm:identifyf}
For $\la\in P_n^+$ we have $f_{\lambda}(z)=\fs_{\lambda}(z)$. Thus $\fc(\sm_{\la})=\fs_{\la}(z)E_{11}$. 
\end{thm}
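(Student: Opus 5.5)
The plan is to show that each $f_\la(z)$ has norm one for the form $(\cdot,\cdot)$ of~(\ref{eq:innerprod}). Combined with integrality, $G_n$-invariance and the triangularity of Theorem~\ref{thm:paththm}, this should force $f_\la=\fs_\la$. The norm identity comes from comparing Opdam's Plancherel formula with the asymptotic trace.

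\textbf{Step 1 (trace identity).} Since $\sm_\la$ is an involution (Proposition~\ref{prop:dominance}), Lemma~\ref{lem:duflobasics} gives $\st_{\sm_\la}\st_{\sm_\la}^*\in\st_{\sw_n}\cJ_{\Delta_1}\st_{\sw_n}$. Hence
$$
\st_{\sm_\la}\st_{\sm_\la}^*=\st_{\sw_n}+\sum_{\mu\neq 0}b_\mu\st_{\sm_\mu}
\quad\text{with}\quad
\Tr^\infty(\st_{\sm_\la}\st_{\sm_\la}^*)=1,
$$
because the only distinguished involution in $\Ga\cap\Ga^{-1}$ is $\sw_n$. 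The same identity holds for any $x,y\in\Ga\cap\Ga^{-1}$ in the form $\Tr^\infty(\st_x\st_y^*)=\delta_{x,y}$.

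\textbf{Step 2 (Plancherel).} I would apply Opdam's formula to $h=C_xC_{y^{-1}}$ for $x,y\in\Ga\cap\Ga^{-1}$ and multiply by $q^{-n(n-1)}$. On the Hecke side, $\Tr(C_xC_{y^{-1}})=\sum_z h_{x,y^{-1},z}\Tr(C_z)$, and the leading coefficient $q^{n(n-1)}$ collects $\gamma_{x,y^{-1},d}$ over distinguished $d$; this uses $\deg P_{1,d}=-\ba(d)$ together with the standard fact that $\Tr(C_z)$ has degree $<-\ba(z)$ unless $z\in\cD$. Thus $\sq^{-n(n-1)}\Tr(C_xC_{y^{-1}})\to\delta_{x,y}$.

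On the spectral side, Theorem~\ref{thm:killing} says the $\pi_z$ series annihilates $\Delta_0$. The other tempered series (indexed by parabolics other than $\{s_n\}$ and its associates) should have characters of $C_xC_{y^{-1}}$ of too small degree relative to their density to survive the limit. This is the \textbf{main obstacle}. Each other series corresponds to a cell $\Delta\neq\Delta_0,\Delta_1$, so Lusztig's $\ba$-function bounds, analogous to Theorem~\ref{thm:bounded}, should give vanishing after rescaling. I would first try to handle this as in \cite{CGLP:24,GP:19b}, using that those representations have degree bound $<n(n-1)$ on $C_w$ for $w\in\Delta_1$ while their Plancherel densities decay at least as fast.

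Granting this, Corollary~\ref{cor:asymptoticplancherel} and $\chi(C_xC_{y^{-1}})=\sq^{2n(n-1)}\Tr(\fc(x)\fc(y)^*)+\cdots$ (Corollary~\ref{cor:starinv}) yield
$$
\delta_{x,y}=\sum_{\epsilon=\pm1}\int_{\TT^{n-1}} f_x f_y^*\,\frac{|D(z)|^2}{2|G_n|}\,dz=(f_x,f_y),
$$
where the average over $\epsilon$ picks out the constant term in $\epsilon$.

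\textbf{Step 3 (conclusion).} I would expand $f_\la=\sum_\mu d_\mu\fs_\mu$ with $d_\mu\in\ZZ$, using that the Schur functions form a $\ZZ$-basis of $\ZZ[z]^{G_n}$. Orthonormality gives $\sum d_\mu^2=1$, so $f_\la=\pm\fs_\mu$ for a single $\mu$. Comparing with Theorem~\ref{thm:paththm}: $M_\la$ occurs with coefficient $c>0$, so $\la\le_n\mu$ by the unitriangularity of $\fs_\mu$. Moreover $M_\mu$ occurs with nonzero coefficient, so $\la\not<_n\mu$. Hence $\mu=\la$, and $c>0$ forces the sign to be $+$. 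Therefore $f_\la=\fs_\la$.
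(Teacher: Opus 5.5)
\textbf{The gap.} Step~2 is exactly where the proof breaks: it is the step you flag as the main obstacle. You aim for an \emph{equality} $(f_x,f_y)=\delta_{x,y}$. For that, every tempered family in Opdam's formula other than $\pi_{z,\pm1}$ must contribute $o(1)$ as $q\to\infty$. You give no argument for this, and the mechanism you suggest does not work.

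The families are not attached one-to-one to cells above $\Delta_1$. The unitary principal series $\pi_{\emptyset}$ is the family that detects $\Delta_0$. It does \emph{not} have degree $<n(n-1)$ on $\Delta_1$: rerun the computation of Lemma~\ref{lem:Cform} in the principal series, using a fundamental domain containing $W_n$, and the matrix of $C_{\sw_n}$ already has an entry $\sq^{n(n-1)}$. So its contribution could only be shown to vanish through a strict bound $\deg\pi_{\emptyset}(C_x)<\ell(\sw_0)$ for $x\in\Delta_1$, weighed against its density, which I expect is of order $q^{-2\ell(\sw_0)}$. Similar degree-versus-density estimates would be needed for every other pair (parabolic, discrete series). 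Theorem~\ref{thm:bounded} concerns only $\pi_z$, and nothing in the paper supplies these estimates, so Step~2 is not a proof as it stands.

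There is also a normalisation slip. We have $\Tr(C_xC_{y^{-1}})=\langle C_x,C_y\rangle=\sum_vP_{v,x}P_{v,y}\in\delta_{x,y}+\sq^{-1}\ZZ[\sq^{-1}]$. So it is $\Tr(C_xC_{y^{-1}})$ itself, not $\sq^{-n(n-1)}\Tr(C_xC_{y^{-1}})$, that tends to $\delta_{x,y}$. The factor $\sq^{2n(n-1)}$ in the character is absorbed by the $q^{-2n(n-1)}$ in the density. Neither Step~1 nor distinguished involutions are needed.

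\textbf{The fix.} The obstacle disappears if you aim only for an inequality, which is the paper's route. Opdam's measure is positive, and the tempered representations are unitary $*$-representations; for $\pi_{z,\epsilon}$ with $z\in\TT^{n-1}$ this is Proposition~\ref{prop:*property}. Hence each family contributes a nonnegative amount to $\Tr(T_wT_w^*)=1$.
\begin{itemize}
\item Discard all families except $\pi_{z,\pm1}$ and take $w=\sm_\la$.
\item Write the integrand as $\mathsf{tr}(A_q^*A_q)\,q^{2n(n-1)}d\nu_{q,\epsilon}(z)$, where $A_q=q^{-n(n-1)}\pi_{z,\epsilon}(T_{\sm_\la})\to f_\la(z)E_{11}$.
\item Apply Fatou's lemma together with Corollary~\ref{cor:asymptoticplancherel}. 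This gives $(f_\la,f_\la)\le 1$.
\end{itemize}
Now $f_\la\neq 0$, since the coefficient $c$ in Theorem~\ref{thm:paththm} is positive. Also $(f_\la,f_\la)=\sum_\mu d_\mu^2$ is an integer, so it equals $1$. Your Step~3 then finishes the argument verbatim.

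A posteriori, positivity shows that the discarded families do contribute nothing in the limit. So your equality is true; it is just the wrong thing to try to prove first.
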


\begin{proof}
The Plancherel formula is a decomposition of the canonical trace against a positive measure into characters of unitary $*$-representations (see \cite{Opd:04}). Hence every integrand evaluated at $T_w^*T_w$ is nonnegative. Retaining only the two families $\pi_{z,\epsilon}$, using Proposition~\ref{prop:*property}, and the fact that $\Tr(T_w^*T_w)=1$, gives the inequality
%
$$
\sum_{\epsilon\in\{-1,1\}}\int_{\TT^{n-1}}\mathsf{tr}(\pi_{z,\epsilon}(T_w)^*\pi_{z,\epsilon}(T_w))\,d\nu_{q,\epsilon}(z)\leq 1\quad\text{for all $w\in\Wext$.}
$$
Take $w=\sm_{\la}$ in this inequality, and so 
$$
\sum_{\epsilon\in\{-1,1\}}\int_{\TT^{n-1}}\mathsf{tr}\big([\sq^{-n(n-1)}\pi_{z,\epsilon}(T_{\sm_{\la}})]^*[\sq^{-n(n-1)}\pi_{z,\epsilon}(T_{\sm_{\la}})]\big)[\sq^{2n(n-1)}d\nu_{q,\epsilon}(z)]\leq 1. 
$$
By Theorem~\ref{thm:paththm} we have $q^{-n(n-1)}\pi_{z,\epsilon}(T_{\sm_{\la}})\to f_{\la}(z)E_{1,1}$, and so by Corollary~\ref{cor:asymptoticplancherel} and Fatou's Lemma we have 
$$
\frac{1}{2|G_n|}\sum_{\epsilon\in\{-1,1\}}\int_{\TT^{n-1}}f_{\la}(z)^*f_{\la}(z)|D(z)|^2\,dz\leq 1. 
$$
As discussed in \cite[Remarks~5.5 and 5.12]{CGLP:24}, the process of averaging over $\epsilon\in\{-1,1\}$ and integrating over $\TT^{n-1}$ agrees with the combinatorial constant term inner product defined in~(\ref{eq:innerprod}), and hence the above inequality gives $(f_{\la},f_{\la})\leq 1$ (in the specialised Hecke algebra). 

On the other hand, the Schur functions $\fs_{\la}(z)$ form an orthonormal $\ZZ$-basis of $\ZZ[z]^{G_n}$ and so $f_{\la}(z)=\sum_{\mu\in P_n^+}c_{\mu}\fs_{\mu}(z)$. Then $(f_{\la},f_{\la})=\sum_{\mu\in P_n^+}c_{\mu}^2$ (a positive integer), and the above inequality forces $(f_{\la},f_{\la})=1$. Thus $c_{\mu}=\pm 1$ for a unique $\mu\in P_n^+$, and $c_{\mu'}=0$ for all $\mu'\neq \mu$. So $f_{\la}(z)=\pm \fs_{\mu}(z)$ for some $\mu\in P_n^+$. The weak positivity and triangularity in Theorem~\ref{thm:paththm} then forces $\mu=\la$ and $f_{\la}(z)=\fs_{\la}(z)$.  
\end{proof}

\begin{cor}
We have $\cJ_{\Ga\cap\Ga^{-1}}\cong \ZZ[z]^{G_n}$. 
\end{cor}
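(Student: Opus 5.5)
The natural map is the restriction of the asymptotic representation $\pi^{\infty}$ from Theorem~\ref{thm:asymptoticrepresentation} to $\cJ_{\Ga\cap\Ga^{-1}}=\mathsf{span}_{\ZZ}\{\st_{\sm_{\la}}\mid \la\in P_n^+\}$ (Theorem~\ref{thm:gammaintersection}). Composing with the projection onto the $(1,1)$-entry, I define
$$
\Theta:\cJ_{\Ga\cap\Ga^{-1}}\to\ZZ[z]^{G_n},\qquad \st_{\sm_{\la}}\mapsto [\fc(\sm_{\la})]_{1,1}=\fs_{\la}(z),
$$
using Theorem~\ref{thm:identifyf}.

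First, $\cJ_{\Ga\cap\Ga^{-1}}$ is a subring. By Lemma~\ref{lem:duflobasics}(4) it equals $\st_{\sw_n}\cJ_{\Delta_1}\st_{\sw_n}$, which is closed under multiplication and has identity $\st_{\sw_n}=\st_{\sm_0}$.

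Next, $\Theta$ is a ring homomorphism. The map $\pi^{\infty}$ is multiplicative by Theorem~\ref{thm:asymptoticrepresentation}. By Theorem~\ref{thm:identifyf}, every element of $\cJ_{\Ga\cap\Ga^{-1}}$ is sent to a matrix supported only in the $(1,1)$-position. Multiplication of such matrices is just multiplication of their $(1,1)$-entries, so taking the $(1,1)$-entry is a ring map on the image. The identity is preserved because $\fc(\sw_n)=E_{11}$ by (\ref{eq:Cwn}), so $\Theta(\st_{\sw_n})=1=\fs_0(z)$.

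Finally, $\Theta$ is bijective. It sends the $\ZZ$-basis $\{\st_{\sm_{\la}}\mid\la\in P_n^+\}$ to $\{\fs_{\la}(z)\mid\la\in P_n^+\}$, and distinct $\la$ give distinct elements $\sm_{\la}$ by the Corollary to Theorem~\ref{thm:mlambda}. It remains to know that the Schur functions form a $\ZZ$-basis of $\ZZ[z]^{G_n}$. They are linearly independent by orthonormality under $(\cdot,\cdot)$. They span because of the unitriangularity against the monomial basis $M_{\la}(z)$ in the $n$-dominance order; an induction is available since the order ideals below $\la$ are finite, via Lemma~\ref{lem:monotone} and the finiteness of $\{\mu\in P_n^+ \mid \ell(\sm_\mu)\le \ell(\sm_\la)\}$. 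The $M_{\la}(z)$ themselves span $\ZZ[z]^{G_n}$ since $P_n^+$ indexes the $G_n$-orbits on $P_n$.

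The main point to check carefully is this spanning step, specifically that the expansion into Schur functions has integer (not merely rational) coefficients. It follows from the unitriangularity with leading coefficient $1$ in the characterising Proposition of the Schur function subsection.
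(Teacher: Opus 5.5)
Your proof is correct and takes the same route as the paper. The paper's proof just cites the homomorphism $\pi^{\infty}$ of Theorem~\ref{thm:asymptoticrepresentation}, the identification $\fc(\sm_{\la})=\fs_{\la}(z)E_{11}$ of Theorem~\ref{thm:identifyf}, and the fact that the Schur functions form a $\ZZ$-basis of $\ZZ[z]^{G_n}$; you have filled in the details it leaves implicit (the subring $\st_{\sw_n}\cJ_{\Delta_1}\st_{\sw_n}$, injectivity of $\la\mapsto\sm_{\la}$ on $P_n^+$, and integral spanning via unitriangularity over finite lower order ideals).
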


\begin{proof}
This follows from Theorem~\ref{thm:asymptoticrepresentation}, Theorem~\ref{thm:identifyf}, and the fact that the Schur functions form a $\ZZ$-basis of~$\ZZ[z]^{G_n}$.
\end{proof}

\subsection{The asymptotic algebra $\mathcal{J}_{\Delta_1}$}\label{sec:asymptoticfinal}

We determined the leading matrices $\fc(w)$ for elements $w\in\Ga\cap\Ga^{-1}$ in Theorem~\ref{thm:identifyf}. We now use our factorisation formula (Theorem~\ref{thm:main4}) to determine all other leading matrices. 


\begin{proof}[Proof of Theorem~\ref{thm:main5}] We must show that 
$$
\fc(u^{-1}\sm_{\la}v)=\fs_{\lambda}(z)E_{u,v}.
$$
We will first prove that $\fc(u^{-1}\sw_n)=E_{u,1}$. Write $a_u=\st_{u^{-1}\sw_n}$. Since $a_u=a_u\st_{\sw_n}$, applying $\pi^{\infty}$ gives $\fc(u^{-1}\sw_n)=\fc(u^{-1}\sw_n)E_{11}$, and hence only the first column of $\fc(u^{-1}\sw_n)$ is nonzero. Denote this first column by $\mathbf{v}$. By~(\ref{eq:second}) we have $a_u^*a_u=\st_{\sw_n}$ and hence $\fc(u^{-1}\sw_n)^*\fc(u^{-1}\sw_n)=E_{11}$, giving $\mathbf{v}^*\mathbf{v}=1$. Writing $(\mathbf{v})_i=\sum_{\la}c_{i,\la}z^{\la}$ and taking the constant term of $\mathbf{v}^*\mathbf{v}$ it follows that $\sum_{i,\la}c_{i,\la}^2=1$. Hence only one entry of $\mathbf{v}$ is nonzero, and this nonzero entry is of the form $\pm z^{\la}$ for some $\lambda\in P_n$. We locate the position of this entry, and compute the entry, using paths. 

Let $p$ be the path of type $u^{-1}\sw_n$ starting at $\fu u$ such that the $u^{-1}$-steps are crossings, and the $\sw_n$-steps are all folds (as in the proof of Theorem~\ref{thm:paththm} this is an $n$-folded alcove path). We have $\mathsf{end}(p)=\fu$, and hence $\mathsf{wt}_{\sF}(p)=0$ and $\theta_{\sF}(p)=\fu$. Thus by Theorem~\ref{thm:pathformula} the path $p$ contributes $(\sq-\sq^{-1})^{n(n-1)}$ to the $(u,1)$-entry of $\pi(T_{u^{-1}\sw_{n}};\sF)$. There is no cancellation of this coefficient, because any path $p'$ of type $u^{-1}\sw_n$ with these same start and end points has at least $\ell(u)$ crossings, and if $p'$ has $k$ crossings then the degree of the contribution from $p'$ is $n(n-1)+\ell(u)-k-2b(p')\leq n(n-1)$. Equality requires $k=\ell(u)$ and $b(p')=0$, and every such path contributes positive leading coefficient. Thus we have $\fc(u^{-1}\sw_n)=E_{u,1}$ as claimed. 

By Theorem~\ref{thm:main4} we have $\st_{u^{-1}\sw_n}\st_{\sm_{\la}}\st_{\sw_nv}=\st_{u^{-1}\sm_{\la}v}$ and hence by Theorem~\ref{thm:identifyf} we conclude that
$$
\fc(u^{-1}\sm_{\la}v)=\fc(u^{-1}\sw_n)\fc(\sm_{\la})\fc(\sw_nv)=\fs_{\la}(z)E_{u,1}E_{1,1}E_{1,v}=\fs_{\la}(z)E_{u,v},
$$
completing the proof. 
\end{proof}

\begin{cor}\label{cor:asymp}
The map 
$$\pi^{\infty}:\cJ_{\Delta_1}\to \mathsf{Mat}_{2^{n-1}n!}(\ZZ[z]^{G_n})\quad\text{with}\quad \st_{u^{-1}\sm_{\la}v}\mapsto \fc(u^{-1}\sm_{\la}v)=\fs_{\la}(z)E_{u,v}$$ is an isomorphism. 
\end{cor}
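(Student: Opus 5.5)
The map $\pi^{\infty}$ is already known to be a $\ZZ$-algebra homomorphism $\cJ_{\Delta_1}\to\mathsf{Mat}_{|\sB|}(\ZZ[z])$ by Theorem~\ref{thm:asymptoticrepresentation}. Since $|\sB|=2^{n-1}n!$ by Theorem~\ref{thm:Bcount}, the plan is to show three things from the explicit formula of Theorem~\ref{thm:main5}: that the image lies in $\mathsf{Mat}_{2^{n-1}n!}(\ZZ[z]^{G_n})$, that $\pi^{\infty}$ is injective, and that it is surjective.

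The image lies in the invariant matrices because each $\fs_{\la}(z)\in\ZZ[z]^{G_n}$. By Theorem~\ref{thm:main1}(1),(2), the set $\{\st_{u^{-1}\sm_{\la}v}\mid u,v\in\sB,\ \la\in P_n^+\}$ is exactly the $\ZZ$-basis $\{\st_w\mid w\in\Delta_1\}$ of $\cJ_{\Delta_1}$, with no repetitions. Theorem~\ref{thm:main5} sends this basis to $\{\fs_{\la}(z)E_{u,v}\}$, so the whole question reduces to whether this family is a $\ZZ$-basis of $\mathsf{Mat}_{2^{n-1}n!}(\ZZ[z]^{G_n})$.

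A matrix over $\ZZ[z]^{G_n}$ is uniquely $\sum_{u,v}f_{u,v}E_{u,v}$ with $f_{u,v}\in\ZZ[z]^{G_n}$. Hence it suffices that $\{\fs_{\la}(z)\mid\la\in P_n^+\}$ is a $\ZZ$-basis of $\ZZ[z]^{G_n}$.
\begin{itemize}
\item \textbf{Spanning.} The monomial symmetric functions $M_{\la}(z)$, $\la\in P_n^+$, form a $\ZZ$-basis of $\ZZ[z]^{G_n}$, because $P_n^+$ is a set of orbit representatives for $G_n$ acting on $P_n$ and monomials $z^{\mu}$, $\mu\in P_n$, form a basis of $\ZZ[z]$. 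The unitriangularity $\fs_{\la}=M_{\la}+\sum_{\mu<_n\la}a_{\la,\mu}M_{\mu}$ from the characterising proposition, together with finiteness of the sets $\{\mu\in P_n^+\mid\mu<_n\la\}$, gives that the $\fs_{\la}$ span by downward induction.
\item \textbf{Independence.} This follows from orthonormality, $(\fs_{\la},\fs_{\mu})=\delta_{\la,\mu}$. This fact was already used in the proof of Theorem~\ref{thm:identifyf} and in the corollary identifying $\cJ_{\Ga\cap\Ga^{-1}}$.
\end{itemize}

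Consequently $\pi^{\infty}$ carries a $\ZZ$-basis bijectively onto a $\ZZ$-basis, so it is a bijective algebra homomorphism, hence an isomorphism. There is no real obstacle at this stage. The only point needing care is the well-foundedness of $<_n$ in the triangularity argument. It holds because $\ell(\sm_{\mu})<\ell(\sm_{\la})$ for $\mu<_n\la$ (Lemma~\ref{lem:monotone}), and only finitely many $\mu\in P_n^+$ have bounded $\ell(\sm_\mu)$ (Corollary~\ref{cor:mlength}).
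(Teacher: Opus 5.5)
Your proposal is correct and follows the same route as the paper. Theorem~\ref{thm:main1} identifies $\{\st_{u^{-1}\sm_{\la}v}\}$ with the basis $\{\st_w\mid w\in\Delta_1\}$ of $\cJ_{\Delta_1}$, Theorem~\ref{thm:main5} sends it to $\{\fs_{\la}(z)E_{u,v}\}$, and injectivity and surjectivity then come from linear independence and spanning of the Schur functions in $\ZZ[z]^{G_n}$. The only difference is that you justify the $\ZZ$-basis property yourself, via unitriangularity against the $M_{\la}(z)$ (well-founded by Lemma~\ref{lem:monotone} and Corollary~\ref{cor:mlength}) together with orthonormality, whereas the paper simply takes that property as known.
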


\begin{proof}
By Theorem~\ref{thm:main1} each element of $\cJ_{\Delta_1}$ can be written as $A=\sum a_{u,v,\lambda}\st_{u^{-1}\sm_{\la}\sv}$ with the sum over $u,v\in\sB$ and $\lambda\in P_n^+$. If $\pi^{\infty}(A)=0$ then Theorem~\ref{thm:main5} implies that for each $u,v\in\sB$ we have $\sum_{\la\in P_n^+}a_{u,v,\lambda}\fs_{\lambda}(z)=0$, and linear independence of the Schur functions forces $a_{u,v,\la}=0$ for each $\la\in P_n^+$. Thus $\pi^{\infty}$ is injective, and surjectivity follows from the fact that the Schur functions span $\ZZ[z]^{G_n}$. 
\end{proof}

Theorem~\ref{thm:main6} now follows immediately from Corollary~\ref{cor:asymp} (see Remark~\ref{rem:characters}). Moreover, we can now prove Corollary~\ref{cor:main7}. 

\begin{proof}[Proof of Corollary~\ref{cor:main7}]
Under Lusztig's bijection, the cell $\Delta_1$ corresponds to the minimal nontrivial unipotent class of $\mathsf{Sp}_{2n}(\CC)$, given by partition $(2,1^{2n-2})$, and in the notation of \cite[Conjecture~10.5]{Lus:IV} one computes $F=\mathsf{Sp}_{2n-2}(\CC)\times C_2$. We take $Y=\sB$ with the trivial $F$-action. An irreducible $F$-equivariant vector bundle on $\sB\times\sB$ is then supported at one point $(u,v)$, with fibre an irreducible representation $V_{\la}$. Consequently $K_F(\sB\times\sB)\cong \mathsf{Mat}_{|\sB|}(\mathsf{Rep}(F))$. Specifically, $\st_{u^{-1}\sm_{\la}v}$ corresponds to the bundle supported at $(u,v)$ with fibre $V_{\la}$. Moreover, since $(u^{-1}\sm_{\la}v)^{-1}=v^{-1}\sm_{\la}u$ and $V_{\la}^*\cong V_{\la}$ (as both symplectic and $C_2$-characters are self-dual) the inversion has precisely the required effect on bundles (swapping the two coordinates, and dualising the fibre).
\end{proof}

\subsection{Further applications}\label{sec:applications2}

We conclude with further applications. Firstly, the isomorphism in Theorem~\ref{thm:main6} gives the explicit multiplication rules in~$\cJ_{\Delta_1}$ in terms of multiplicities in tensor products of irreducible highest weight representations of the group $\mathsf{Sp}_{2n-2}(\CC)\times C_2$. 

\begin{cor} For $u,v,x,y\in\sB$ and $\la,\mu\in P_n^+$ we have
$$
\st_{u^{-1}\sm_{\la}v}\st_{x^{-1}\sm_{\mu}y}=\delta_{v,x}\sum_{\nu\in P_n^+}N_{\la,\mu}^{\nu}\st_{u^{-1}\sm_{\nu}y}\quad\text{where}\quad V_{\la}\otimes V_{\mu}\cong \bigoplus_{\nu\in P_n^+}V_{\nu}^{\oplus N_{\la,\mu}^{\nu}}.
$$
\end{cor}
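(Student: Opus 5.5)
The plan is to read this off from the isomorphism $\pi^{\infty}$ of Corollary~\ref{cor:asymp}, i.e.\ from Theorem~\ref{thm:main5}. Since $\pi^{\infty}$ is an injective algebra homomorphism (Theorem~\ref{thm:asymptoticrepresentation} together with Corollary~\ref{cor:asymp}), it suffices to compute the images of both sides and compare them.

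For the left hand side, Theorem~\ref{thm:main5} gives
$$
\fc(u^{-1}\sm_{\la}v)\fc(x^{-1}\sm_{\mu}y)=\fs_{\la}(z)\fs_{\mu}(z)E_{u,v}E_{x,y}=\delta_{v,x}\,\fs_{\la}(z)\fs_{\mu}(z)E_{u,y}.
$$
Next expand the product $\fs_{\la}\fs_{\mu}$ in the Schur basis of $\ZZ[z]^{G_n}$. By Remark~\ref{rem:characters}, $\fs_{\la}(z)$ is the character of $V_{\la}=V_{\la'}\otimes\epsilon^{\la_n}$ of $\mathsf{Sp}_{2n-2}(\CC)\times C_2$, and the character of a tensor product is the product of the characters. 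Hence
$$
\fs_{\la}\fs_{\mu}=\sum_{\nu}N_{\la,\mu}^{\nu}\fs_{\nu}.
$$
Here the labels need a consistency check. Irreducibles of $\mathsf{Sp}_{2n-2}\times C_2$ are indexed by pairs consisting of a dominant weight $\la'$ with $\la_1\ge\cdots\ge\la_{n-1}\ge0$ and $\la_n\in\{0,1\}$, and this indexing set is exactly $P_n^+$. The $\epsilon$-components multiply as $\epsilon^{\la_n+\mu_n}$, with exponents reduced mod $2$ using $\epsilon^2=1$. This matches $V_{\la}\otimes V_{\mu}$, since $\epsilon\otimes\epsilon$ is trivial. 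Alternatively, uniqueness of the expansion follows from linear independence of the Schur functions, which is orthonormality in the pairing $(\cdot,\cdot)$.

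For the right hand side, Theorem~\ref{thm:main5} gives
$$
\pi^{\infty}\Big(\delta_{v,x}\sum_\nu N_{\la,\mu}^{\nu}\st_{u^{-1}\sm_{\nu}y}\Big)=\delta_{v,x}\sum_\nu N^\nu_{\la,\mu}\fs_\nu E_{u,y}.
$$
This agrees with the image of the left hand side computed above, so injectivity of $\pi^{\infty}$ yields the claimed formula.

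The only step requiring any care is matching the indexing and the $C_2$-factor. The rest is formal once Corollary~\ref{cor:asymp} is available.
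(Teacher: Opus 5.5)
Your proposal is correct and is essentially the paper's argument. The paper cites Theorem~\ref{thm:main6}, which is Corollary~\ref{cor:asymp} combined with Remark~\ref{rem:characters}, and you have simply unpacked that isomorphism by computing images under the injective homomorphism $\pi^{\infty}$ and expanding $\fs_{\la}\fs_{\mu}$ via characters of tensor products.
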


\begin{proof} This is an immediate consequence of Theorem~\ref{thm:main6}. 
\end{proof}

Secondly, we can easily determine the centre of~$\cJ_{\Delta_1}$. 

\begin{cor}
For each $\la\in P_n^+$ define $\mathsf{z}_{\la}=\sum_{u\in\sB}\st_{u^{-1}\sm_{\la}u}$. Then
$$
Z(\cJ_{\Delta_1})=\bigoplus_{\la\in P_n^+}\ZZ \mathsf{z}_{\la}\cong\mathsf{Rep}(\mathsf{Sp}_{2n-2}(\CC)\times C_2). 
$$
\end{cor}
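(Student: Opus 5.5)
The plan is to transport the problem through the isomorphism of Corollary~\ref{cor:asymp}, which identifies $\cJ_{\Delta_1}$ with $\mathsf{Mat}_{N}(\ZZ[z]^{G_n})$, $N=|\sB|=2^{n-1}n!$, via $\st_{u^{-1}\sm_{\la}v}\mapsto\fs_{\la}(z)E_{u,v}$. Computing the centre of $\cJ_{\Delta_1}$ then amounts to computing the centre of a full matrix ring over a commutative ring.

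First, I would record the standard fact that for a commutative ring $R$ the centre of $\mathsf{Mat}_N(R)$ is the set of scalar matrices $R\cdot I_N$. To prove it, let $A=\sum a_{u,v}E_{u,v}$ be central. Commuting with $E_{x,x}$ forces $a_{u,v}=0$ for $u\neq v$, and commuting with $E_{x,y}$ forces $a_{x,x}=a_{y,y}$. Here $R=\ZZ[z]^{G_n}$ is commutative because it is a subring of the commutative ring $\ZZ[z]$. Since the Schur functions $\fs_{\la}(z)$, $\la\in P_n^+$, form a $\ZZ$-basis of $\ZZ[z]^{G_n}$, the centre $R\cdot I_N$ has $\ZZ$-basis $\{\fs_{\la}(z)I_N\mid\la\in P_n^+\}$.

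Next, I pull these back along $\pi^{\infty}$. Since $\fs_{\la}(z)I_N=\sum_{u\in\sB}\fs_{\la}(z)E_{u,u}$, Theorem~\ref{thm:main5} shows its preimage is $\sum_{u\in\sB}\st_{u^{-1}\sm_{\la}u}=\mathsf{z}_{\la}$. Because $\pi^{\infty}$ is a ring isomorphism, it maps $Z(\cJ_{\Delta_1})$ isomorphically onto $Z(\mathsf{Mat}_N(R))$. Hence $Z(\cJ_{\Delta_1})=\bigoplus_{\la}\ZZ\mathsf{z}_{\la}$, and the sum is direct because the $\fs_{\la}$ are linearly independent.

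Finally, the map $\mathsf{z}_{\la}\mapsto\fs_{\la}(z)\mapsto[V_{\la}]$ identifies $Z(\cJ_{\Delta_1})$ with $R\cong\mathsf{Rep}(\mathsf{Sp}_{2n-2}(\CC)\times C_2)$ as rings. This uses Remark~\ref{rem:characters}, which identifies $\fs_{\la}$ with the character of $V_{\la}$, together with the fact that the character map is an injective ring homomorphism from the representation ring. Multiplicativity can be seen directly, since $\mathsf{z}_{\la}\mathsf{z}_{\mu}\mapsto\fs_{\la}\fs_{\mu}I_N$.

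There is no substantial obstacle. The only points needing care are that $\ZZ[z]^{G_n}$ is commutative, which holds even with the $\epsilon$ factor, and that $\pi^{\infty}$ is a genuine isomorphism rather than merely a homomorphism. Both are already established by Corollary~\ref{cor:asymp}.
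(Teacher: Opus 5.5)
Your proposal is correct and follows the paper's own argument: transport along the isomorphism $\pi^{\infty}$ of Corollary~\ref{cor:asymp}, use that the centre of $\mathsf{Mat}_{|\sB|}(\ZZ[z]^{G_n})$ is the scalar matrices, and observe via Theorem~\ref{thm:main5} that $\mathsf{z}_{\la}$ corresponds to $\fs_{\la}(z)I$. Your extra checks (commutativity of $\ZZ[z]^{G_n}$ and the identification of $\fs_{\la}$ with the character of $V_{\la}$ from Remark~\ref{rem:characters}) simply make explicit what the paper leaves implicit.
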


\begin{proof}
The centre of $\mathsf{Mat}_{|\sB|}(\ZZ[z]^{G_n})$ consists precisely of the scalar matrices, and under the isomorphism $\mathsf{z}_{\la}$ corresponds to $\fs_{\la}(z)I$.  
\end{proof}

Thirdly we have the following characterisation of the cell $\Delta_1$ in terms of the representation~$\pi$ (informally, $\pi$ \textit{recognises} $\Delta_1$). 

\begin{cor} Let $F$ be a fundamental domain for the action of $\sT_n$ on $\mathbf{X}_n$. 
The matrix $\pi(T_w;F)$ attains degree $n(n-1)$ if and only if $w\in\Delta_1$. 
\end{cor}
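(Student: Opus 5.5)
One direction is already in hand. If $\pi(T_w;F)$ attains degree $n(n-1)$, then Theorem~\ref{thm:bounded} gives $w\in\Delta_1$. That theorem is stated for an arbitrary fundamental domain $F$, so no change of basis is needed here.

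For the converse, take $w\in\Delta_1$. I first reduce to the fundamental domain $\sF=\fu\sB$. By Section~\ref{sec:pi}, the change of basis matrix from the $\sF$-basis to the $F$-basis is monomial, with nonzero entries of the form $z^{\mu}$ for $\mu\in P_n$. Conjugating by such a matrix only permutes entries and multiplies each by a monomial in $z$, and a monomial $z^{\mu}$ is a unit in $\sR[z]$ that carries no power of $\sq$. Hence the $\sq$-degree of each entry is preserved, and the maximal degree over all entries of $\pi(T_w;F)$ equals that of $\pi(T_w;\sF)$. So it suffices to treat $F=\sF$.

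With $F=\sF$, Theorem~\ref{thm:main1} lets me write $w=u^{-1}\sm_{\la}v$ with $u,v\in\sB$ and $\la\in P_n^+$. Theorem~\ref{thm:main5} then gives $\fc(w)=\fs_{\la}(z)E_{u,v}$, and $\fs_{\la}(z)\neq 0$ since the Schur functions form a $\ZZ$-basis of $\ZZ[z]^{G_n}$. By Definition~\ref{defn:leading}, $\fc(w)$ is the specialisation at $\sq^{-1}=0$ of $\sq^{-n(n-1)}\pi(T_w;\sF)$, and Theorem~\ref{thm:bounded} bounds every entry of $\pi(T_w;\sF)$ in degree by $n(n-1)$. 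So $\fc(w)\neq 0$ means exactly that the $(u,v)$-entry of $\pi(T_w;\sF)$ has a nonzero $\sq^{n(n-1)}$ coefficient, namely $\fs_{\la}(z)$. Thus that entry has degree exactly $n(n-1)$, and the matrix attains this degree.

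The only point needing care is the invariance of degree under change of fundamental domain. It holds because the transition matrices are monomial with $\sq$-free monomial entries. Once that is noted, the statement is a direct consequence of Theorems~\ref{thm:bounded} and~\ref{thm:main5}.
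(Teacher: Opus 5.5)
Your proof is correct and follows the paper's argument. You reduce to $F=\sF$, use Theorem~\ref{thm:bounded} for one direction, and use the nonvanishing leading matrix $\fc(w)=\fs_{\la}(z)E_{u,v}$ from Theorem~\ref{thm:main5} (equivalently Corollary~\ref{cor:asymp}) for the other. The only difference is how independence of the fundamental domain is justified: the paper cites \cite[Proposition~6.5]{GLP:23}, whereas you derive it directly from the monomial, $\sq$-free change-of-basis matrices noted in Section~\ref{sec:pi}, which is a valid self-contained substitute.
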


\begin{proof}
By \cite[Proposition~6.5]{GLP:23} the value of the bound, and the set of elements recognised, does not depend on the particular fundamental domain. Hence, taking $F=\sF$, the result follows from Theorem~\ref{thm:bounded} and Corollary~\ref{cor:asymp}. 
\end{proof}

Similarly, we see that the character $\chi$ of $\pi$ recognises the involutions in $\Delta_1$. Note that this is now a basis independent statement. 

\begin{cor}
The character $\chi(T_w)$ attains degree $n(n-1)$ if and only if $w\in\Delta_1$ with $w^2=1$. 
\end{cor}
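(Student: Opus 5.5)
The plan is to compute the leading coefficient of $\chi(T_w)$ from the leading matrices. By Definition~\ref{defn:leading} and Theorem~\ref{thm:bounded}, the $\sq$-degree of $\chi(T_w)=\mathsf{tr}\,\pi(T_w;\sF)$ is at most $n(n-1)$. The coefficient of $\sq^{n(n-1)}$ is exactly $\mathsf{tr}\,\fc(w)$, since specialisation at $\sq^{-1}=0$ commutes with taking the trace. Moreover $\chi$ is basis independent, so the choice of $\sF$ is harmless. Hence $\chi(T_w)$ attains degree $n(n-1)$ if and only if $\mathsf{tr}\,\fc(w)\neq 0$ in $\ZZ[z]$.

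There are two cases. If $w\notin\Delta_1$, then $\fc(w)=0$ by Theorem~\ref{thm:bounded}, so the degree is not attained. If $w\in\Delta_1$, write $w=u^{-1}\sm_{\la}v$ uniquely using Theorem~\ref{thm:main1}. By Theorem~\ref{thm:main5} we have $\fc(w)=\fs_{\la}(z)E_{u,v}$, and therefore
$$\mathsf{tr}\,\fc(w)=\delta_{u,v}\,\fs_{\la}(z).$$
Since $\fs_{\la}(z)\neq 0$ (for instance $(\fs_\la,\fs_\la)=1$, or its leading monomial term is $M_\la(z)$), the trace is nonzero exactly when $u=v$.

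Finally, Corollary~\ref{cor:involutions} says that $w\in\Delta_1$ satisfies $w^2=1$ exactly when $u=v$. So $\chi(T_w)$ attains degree $n(n-1)$ if and only if $w\in\Delta_1$ and $w^2=1$.

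The only step needing care is the first: the degree bound on each matrix entry gives a bound on the trace, and the top coefficient of the trace is the sum of the top coefficients of the diagonal entries. I expect no real obstacle beyond this bookkeeping.
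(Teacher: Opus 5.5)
Your proposal is correct and follows essentially the same route as the paper's proof. Both arguments use Theorem~\ref{thm:bounded} to rule out $w\notin\Delta_1$, identify the $\sq^{n(n-1)}$-coefficient of $\chi(T_w)$ with $\mathsf{tr}\,\fc(w)=\delta_{u,v}\fs_{\la}(z)$ via Theorem~\ref{thm:main5}, and conclude with Corollary~\ref{cor:involutions}.
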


\begin{proof} 
If $w\notin\Delta_1$ then $\deg\chi(T_w)<n(n-1)$ by Theorem~\ref{thm:bounded}. If $w\in\Delta_1$ then $w=u^{-1}\sm_{\la}v$ for some $u,v\in \sB$ and $\la\in P_n^+$, and by the definition of leading matrices, and Theorem~\ref{thm:main5}, we have
\begin{align*}
\sq^{-n(n-1)}\chi(T_w)\in \mathsf{tr}(\fc(u^{-1}\sm_{\la}v))+\sq^{-1}\ZZ[z][\sq^{-1}]=\fs_{\la}(z)\delta_{u,v}+\sq^{-1}\ZZ[z][\sq^{-1}],
\end{align*}
and so $\deg\chi(T_w)=n(n-1)$ if and only if $w=u^{-1}\sm_{\la}u$, if and only if $w\in\Delta_1$ with $w^2=1$ (by Corollary~\ref{cor:involutions}). 
\end{proof}

Next, we have the following ``Asymptotic Plancherel Theorem'' for the cell~$\Delta_1$ (see also \cite[Section~5.3]{CGLP:24} for further discussion in type $\tilde{\sA}_n$). Note that $d\nu_{\infty}$ is the limiting density from Corollary~\ref{cor:asymptoticplancherel}. We write $\pi^{\infty}=\pi^{\infty}_{z,\epsilon}$ to emphasise the dependence on specialised $z$ and $\epsilon$. 

\begin{cor}[Asymptotic Plancherel Theorem for $\Delta_1$]
For $A=\sum_{w\in\Delta_1}a_w\st_w$ we have 
$$
\sum_{\epsilon\in\{-1,1\}}\int_{\TT^{n-1}}\mathsf{tr}\big(\pi^{\infty}_{z,\epsilon}(A)\pi^{\infty}_{z,\epsilon}(A)^*\big)d\nu_{\infty}=\sum_wa_w^2\quad\text{where}\quad d\nu_{\infty}=\frac{|D(z)|^2}{2|G_n|}\,dz.
$$
\end{cor}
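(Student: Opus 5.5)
The plan is to reduce everything, via the isomorphism $\pi^{\infty}$ of Corollary~\ref{cor:asymp}, to Schur orthogonality for the constant-term inner product~(\ref{eq:innerprod}).

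First, by Theorem~\ref{thm:main1} we write $A=\sum_{u,v\in\sB}\sum_{\la\in P_n^+}a_{u,v,\la}\st_{u^{-1}\sm_{\la}v}$. By Theorem~\ref{thm:main5} and linearity we then get
$$\pi^{\infty}_{z,\epsilon}(A)=\sum_{u,v}F_{u,v}(z)E_{u,v},\qquad F_{u,v}=\sum_{\la}a_{u,v,\la}\fs_{\la}(z),$$
where $\fs_\la(z)$ is specialised at $z_1,\ldots,z_{n-1}\in\TT$ and $\epsilon=\pm1$. By Corollary~\ref{cor:starinv}, the $*$ on matrices is transpose followed by $z_i\mapsto z_i^{-1}$. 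On the torus this is complex conjugation, since $\overline{z_i}=z_i^{-1}$ for $|z_i|=1$, the coefficients are integers, and $\epsilon$ is real. Hence
$$\mathsf{tr}\big(\pi^\infty(A)\pi^\infty(A)^*\big)=\sum_{u,v}F_{u,v}F_{u,v}^*=\sum_{u,v}|F_{u,v}(z)|^2.$$

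Next, we integrate against $d\nu_\infty=|D(z)|^2dz/(2|G_n|)$ and sum over $\epsilon$. As in the proof of Theorem~\ref{thm:identifyf} (citing \cite[Remarks~5.5 and 5.12]{CGLP:24}), averaging over $\epsilon\in\{\pm1\}$ and integrating over $\TT^{n-1}$ against normalised Haar measure extracts the constant term in the $z^{\la}$ expansion with $\la\in P_n$. The reason is that $\frac12\sum_\epsilon\epsilon^{k}=\delta_{k\equiv 0}$, and Haar integration kills nonconstant monomials in $z_1,\ldots,z_{n-1}$. Therefore
$$\frac{1}{2|G_n|}\sum_{\epsilon}\int_{\TT^{n-1}}f f^*|D|^2\,dz=\frac{1}{|G_n|}[ff^*DD^*]_{\mathsf{ct}}=(f,f)$$
for every $f\in\ZZ[z]$, and the left-hand side of the claimed identity equals $\sum_{u,v}(F_{u,v},F_{u,v})$.

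Finally, by the proposition characterising Schur functions, $(\fs_\la,\fs_\mu)=\delta_{\la,\mu}$. So $(F_{u,v},F_{u,v})=\sum_\la a_{u,v,\la}^2$, and summing over $u,v$ gives $\sum_w a_w^2$. This uses the uniqueness of the parametrisation $w=u^{-1}\sm_\la v$ from Theorem~\ref{thm:main1}(2), so that the $a_{u,v,\la}$ are exactly the $a_w$.

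The only step needing care is the identification of the integral with the constant-term form. In particular, the factor $\tfrac12$ must exactly compensate the sum over two values of $\epsilon$, and $D(z)^*$ must equal $\overline{D(z)}$ on the torus. This is the same normalisation issue already handled in the proof of Theorem~\ref{thm:identifyf}, so it can be invoked directly. Everything else is bookkeeping, and the sum is finite since $A$ has finite support.
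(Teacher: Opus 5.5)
Your proof is correct and is essentially the paper's argument, which simply cites Theorem~\ref{thm:main5} together with the orthonormality of the Schur functions $\fs_{\la}(z)$ with respect to $d\nu_{\infty}$. You additionally write out the trace computation and the identification of the $\epsilon$-averaged torus integral with the constant-term form~(\ref{eq:innerprod}). One citation slip: the definition of $*$ on matrices comes from the paragraph before Proposition~\ref{prop:*property}, not from Corollary~\ref{cor:starinv}.
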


\begin{proof}
This follows from Theorem~\ref{thm:main5} and the fact that the Schur functions $\fs_{\la}(z)$ are orthonormal with respect to the above measure. 
\end{proof}

Finally, we deduce the structure of the asymptotic algebra for the cell $\Delta_1'=\Delta_1\cap\Waff$ of the non-extended affine Weyl group. While the symplectic group $\mathsf{Sp}_{2n-2}(\CC)$ arises in the corollary below, we note (from the proof) that it does so in a somewhat twisted way. 

\begin{cor}
We have $\cJ_{\Delta_1'}\cong \mathsf{Mat}_N(\mathsf{Rep}(\mathsf{Sp}_{2n-2}(\CC)))$, where $N=2^{n-1}n!$. 
\end{cor}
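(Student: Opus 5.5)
The plan is to realise $\cJ_{\Delta_1'}$ as a based subring of $\cJ_{\Delta_1}$ and then carry it through the isomorphism of Corollary~\ref{cor:asymp}.

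\textbf{Step 1: $\cJ_{\Delta_1'}$ inside $\cJ_{\Delta_1}$.} The Hecke algebra $H_{\mathsf{aff}}$ of $\Waff$ sits inside $\Hext$. Kazhdan--Lusztig basis elements, the structure constants $h_{x,y,z}$ and the $\ba$-function are compatible with this inclusion. Moreover, $h_{x,y,z}=0$ unless $z\in xy\Waff$. So for $x,y\in\Waff$ the $\gamma$-coefficients computed in $\Waff$ agree with those computed in $\Wext$. It follows that $\cJ_{\Delta_1'}=\mathsf{span}_{\ZZ}\{\st_w\mid w\in\Delta_1\cap\Waff\}$ is a based subring of $\cJ_{\Delta_1}$. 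This requires $\Delta_1'=\Delta_1\cap\Waff$ to be a single two-sided cell of $\Waff$, which I will take from Shi's description (the paper defines $\Delta_1'$ this way).

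\textbf{Step 2: which basis elements lie in $\Waff$.} Since $\sB\subseteq\Waff$, the element $u^{-1}\sm_{\la}v$ lies in $\Waff$ if and only if $\sm_{\la}\in\Waff$. By Theorem~\ref{thm:mlambda}, $\sm_{\la}=\sw_n\fu^{-1}\tau_{\la}\fu$. Here $\tau_{\la}=t_{\la}$ or $t_{\la}s_n$, so $\sm_\la\in\Waff$ exactly when $\la\in Q$, i.e. when $\la_1+\cdots+\la_{n-1}+\la_n$ is even. Hence under $\pi^{\infty}$ the ring $\cJ_{\Delta_1'}$ maps isomorphically onto
$$\mathsf{Mat}_N\big(\mathsf{span}_{\ZZ}\{\fs_{\la}(z)\mid \la\in P_n^+,\ |\la|\ \text{even}\}\big)$$
(note $N$ is unchanged).

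\textbf{Step 3: identify the coefficient ring.} Write $\fs_{\la}(z)=\epsilon^{\la_n}\fs^{\sC_{n-1}}_{\la'}$. The parity condition says that $\la_n\equiv|\la'|\pmod 2$, so $\la_n$ is determined by $\la'$. Therefore the map $\la\mapsto\la'$ is a bijection from the even elements of $P_n^+$ onto all dominant weights of $\mathsf{Sp}_{2n-2}(\CC)$. The coefficient ring is then spanned by the elements $\epsilon^{|\la'|}\fs^{\sC_{n-1}}_{\la'}$.

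Next I check that $\fs^{\sC}_{\la'}\mapsto\epsilon^{|\la'|}\fs^{\sC}_{\la'}$ is a ring isomorphism. The weights of $V_{\la'}\otimes V_{\mu'}$ all lie in $\la'+\mu'+Q(\sC_{n-1})$, and every element of the root lattice $Q(\sC_{n-1})$ has even coordinate sum. So every constituent $V_{\nu'}$ of the tensor product satisfies $|\nu'|\equiv|\la'|+|\mu'|\pmod 2$. Consequently the map is multiplicative, and it is bijective on bases. This gives the required isomorphism with $\mathsf{Rep}(\mathsf{Sp}_{2n-2}(\CC))$; the sign twist by $\epsilon$ is the ``twisted'' way in which $\mathsf{Sp}_{2n-2}$ arises.

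The main obstacle is Step 1: justifying that the $\gamma$-coefficients and the cell for $\Waff$ are the restrictions of those for $\Wext$. The rest is bookkeeping.
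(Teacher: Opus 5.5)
Your proposal is correct and follows essentially the same route as the paper. The paper likewise uses the parity condition $\la\in Q$ to see that $\la_n\equiv\la_1+\cdots+\la_{n-1}\pmod 2$, and then identifies the coefficient ring, phrasing your Step~3 as the group isomorphism $(\mathsf{Sp}_{2n-2}(\CC)\times C_2)/\langle(-I,-1)\rangle\cong\mathsf{Sp}_{2n-2}(\CC)$, $(g,\eta)\mapsto\eta g$. Under this isomorphism $V_{\la'}$ pulls back to $V_{\la'}\otimes\epsilon^{|\la'|}$, which is exactly your twisted basis; the paper also leaves your Step~1 (compatibility of $\cJ(\Waff)$ with $\cJ(\Wext)$) implicit.
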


\begin{proof}
An element $\la\in P_n$ is in $Q\cap P_n$ if and only if $\la_n=\la_1+\cdots+\la_{n-1}\pmod{2}$. Thus $\la_n\in\{0,1\}$ is determined by $\la'=(\la_1,\ldots,\la_{n-1})$. It follows that $\cJ_{\Delta_1'}$ is isomorphic to the full matrix algebra of $N\times N$ matrices over the representation ring of the group $(\mathsf{Sp}_{2n-2}(\CC)\times C_2)/\langle (-I,-1)\rangle\cong \mathsf{Sp}_{2n-2}(\CC)$ (via the isomorphism $(g,\eta)\mapsto \eta g$). The element $-I$ acts on $V_{\la'}$ by $(-1)^{\la_1+\cdots+\la_{n-1}}$.  
\end{proof}

\subsection{Comments on the $\tilde{\sA}_n$ case}\label{sec:An}

In this final section, let $\Wext=P\rtimes \Wfin$ be the extended affine Weyl group of type $\tilde{\sA}_n$. In this case a complete description of the decomposition into cells (one-sided and two-sided) is known by the work of Lusztig and Shi~\cite{Lus:IV,ShiAn}. The two-sided cells are in bijection with partitions of $n+1$, and for $\la\vdash n+1$ we write $\Delta_{\la}$ for the two-sided cell containing $\sw_{\la'}$, where $\la'$ is the dual partition, and $\sw_{\la'}$ is the longest element of the associated Young subgroup of the symmetric group~$\Wfin$. In \cite{Xibook} Xi determined the structure of the asymptotic algebra $\cJ=\bigoplus_{\la\vdash n+1}\cJ_{\Delta_{\la}}$, showing that $\cJ_{\Delta_{\la}}\cong \mathsf{Mat}_{N_{\la}}(\mathsf{Rep}(F_{\la}))$, where $N_{\la}=(n+1)!/(\la_1!\cdots \la_{l(\la)}!)$ and $F_{\la}$ is the (possibly disconnected) maximal reductive subgroup of the centraliser in $\mathsf{SL}_{n+1}(\CC)$ of the unipotent element $u_{\la}$ with Jordan blocks given by the partition~$\la$. 

In \cite{CGLP:24} we gave another approach to Xi's theorem, using techniques similar to those used in the present paper. In particular, in \cite[Corollary~7.5]{CGLP:24} we show that for each $\la\vdash n+1$ we have $\cJ_{\Delta_{\la}}\cong \mathsf{Mat}_{N_{\la}}(\ZZ[\zeta_{\la}]^{G_{\la}})$, here $\ZZ[\zeta_{\la}]$ is a certain quotient of a polynomial ring, and $G_{\la}$ is a symmetry group associated to the \textit{fundamental $\la$-alcove}~$\cA_{\la}$. There is an associated pseudo-translation group $\sT_{\la}$, and a set $\mathbb{X}_{\la}=\{w\in\Wext\mid wA_0\subseteq \cA_{\la}\}$. In \cite[Theorem~7.8]{CGLP:24} we show, conditional on an assumption relating to distinguished involutions, that a (signed) choice of fundamental domain for the action of $\sT_{\la}$ on $\mathbb{X}_{\la}$ may be chosen so that each leading matrix $\fc_{\la}(w)$, with $w\in \Delta_{\la}$, is of the form $\fc_{\la}(w)=\fs_{\gamma}(\zeta_{\la})E_{u,v}$, where $\fs_{\gamma}(\zeta_{\la})\in \ZZ[\zeta_{\la}]^{G_{\la}}$ is a Schur function. However this theorem was conditional on \cite[Conjecture~7.7]{CGLP:24}, where it is conjectured that (up to reordering the basis) each distinguished involution $d_i\in\Delta_{\la}$ has leading matrix of the form $\fc_{\la}(d_i)=E_{i,i}$. We take this opportunity to give a short proof of \cite[Conjecture~7.7]{CGLP:24}, thus making the description of the asymptotic algebra in \cite[Theorem~7.8]{CGLP:24} unconditional. Indeed, Conjecture~7.7 follows quickly from the following elementary general observation. 

\begin{lemma}\label{lem:elementary}
Let $G$ be a group and let $*$ be the involution on $\ZZ[G]$ given by $g^*=g^{-1}$. Every self adjoint idempotent matrix in $\mathsf{Mat}_N(\ZZ[G])$ is a diagonal matrix with diagonal entries in $\{0,1\}$. 
\end{lemma}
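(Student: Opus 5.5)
The idea is to use the $\ZZ$-valued trace functional $\tau:\ZZ[G]\to\ZZ$, $\tau(\sum_g a_gg)=a_1$, exactly as $\Tr^{\infty}$ was used in the proof of Theorem~\ref{thm:main4}. The key fact is positivity: for $f=\sum_g a_g g\in\ZZ[G]$ we have $\tau(ff^*)=\sum_g a_g^2\geq 0$, with equality if and only if $f=0$. For matrices we extend $*$ to $\mathsf{Mat}_N(\ZZ[G])$ by transposition composed with entrywise $*$, and set $T(A)=\sum_i\tau(A_{ii})$. Then for any matrix $A$,
$$
T(AA^*)=\sum_{i,j}\tau(A_{ij}A_{ij}^*)=\sum_{i,j}\sum_g (A_{ij})_g^2 .
$$
This is the sum of squares of all integer coefficients of all entries, so $T(AA^*)$ is a nonnegative integer, and it is zero only when $A=0$.

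Now let $P=P^*=P^2$. The first step is to compute $T(P)=T(PP^*)=\sum_{i,j}\sum_g (P_{ij})_g^2$. The second step is to compare this with the diagonal. Write $p_i=(P_{ii})_1\in\ZZ$ for the coefficient of the identity in the $i$th diagonal entry. Then $T(P)=\sum_i p_i$. On the other hand the displayed sum includes the terms $\sum_i p_i^2$, so
$$
\sum_i p_i=T(P)\geq \sum_i p_i^2 .
$$
Since each $p_i$ is an integer, $p_i^2\geq p_i$, with equality only if $p_i\in\{0,1\}$. It follows that $\sum_i p_i^2=\sum_i p_i$ and $p_i\in\{0,1\}$ for every $i$.

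Consequently every other coefficient must vanish. That is, $(P_{ij})_g=0$ whenever $(i,j,g)\neq(i,i,1)$. So $P$ is diagonal, and each diagonal entry is $p_i\cdot 1\in\{0,1\}$.

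There is no serious obstacle here. The only point needing care is the identity $T(P)=T(PP^*)$ together with its expansion as a sum of squares. This needs $\tau(fg)$ to pick out $\sum_h f_h g_{h^{-1}}$, and then $\tau(ff^*)=\sum_h f_h^2$, which holds because $(f^*)_{h^{-1}}=f_h$. I will state that computation explicitly.
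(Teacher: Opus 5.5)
Your proof is correct and is essentially the paper's argument: both use $P=PP^*$, take the coefficient of $1$ in the diagonal, and read it as a sum of squares of integer coefficients. The only difference is that the paper works one row at a time (the $(u,u)$-entry gives $a_{u,u,1}=\sum_{v,g}a_{u,v,g}^2\geq a_{u,u,1}^2$, which immediately kills the rest of row $u$), whereas you sum over all rows via the trace, which needs the extra termwise step $p_i^2\geq p_i$ but is otherwise identical.
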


\begin{proof}
Suppose that $P=P^*=P^2$, and write $[P]_{u,v}=\sum_{g\in G}a_{u,v,g}g$ with $a_{u,v,g}\in\ZZ$. Fix a row $u$. Taking the coefficient of $1$ in the $(u,u)$-entry of $P=PP^*$ gives $a_{u,u,1}=\sum_{v,\,g}a_{u,v,g}^2$. The right hand side includes $a_{u,u,1}^2$, and so $a_{u,u,1}\geq a_{u,u,1}^2$, forcing $a_{u,u,1}=0$ or $a_{u,u,1}=1$. In the former case the entire $u$-row of $P$ is zero. In the latter case $a_{u,u,1}=1$ is the unique nonzero entry of the $u$-row. Hence the result. 
\end{proof}

\begin{cor}
Conjecture~7.7 of \cite{CGLP:24} holds. 
\end{cor}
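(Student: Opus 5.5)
The plan is to reduce the corollary to Lemma~\ref{lem:elementary}, applied to the leading matrix $\fc_{\la}(d)$ of a distinguished involution $d\in\Delta_{\la}$. Throughout I work in the setting of \cite{CGLP:24}, where the asymptotic representation $\pi^{\infty}_{\la}:\cJ_{\Delta_{\la}}\to\mathsf{Mat}_{N_{\la}}(\ZZ[\zeta_{\la}])$, $\st_w\mapsto\fc_{\la}(w)$, is an algebra homomorphism (the analogue of Theorem~\ref{thm:asymptoticrepresentation}). I will also use the analogue of Corollary~\ref{cor:starinv}, namely $\fc_{\la}(w^{-1})=\fc_{\la}(w)^*$.

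First I will show that each $P_d=\fc_{\la}(d)$ is a self-adjoint idempotent.
\begin{compactenum}[$(1)$]
\item By $\mathsf{P6}$ we have $d=d^{-1}$, so $P_d^*=\fc_{\la}(d^{-1})=P_d$.
\item The argument of Lemma~\ref{lem:duflobasics}(1) (using $\mathsf{P2}$, $\mathsf{P5}$, $\mathsf{P7}$, $\mathsf{P13}$) gives $\st_d\st_d=\st_d$. Applying $\pi^{\infty}_{\la}$ then gives $P_d^2=P_d$.
\end{compactenum}

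To apply Lemma~\ref{lem:elementary}, I will check that the ring $\ZZ[\zeta_{\la}]$, with its involution $*$, is a group ring $\ZZ[G]$ with $g^*=g^{-1}$. Here $G$ is the abelian group of monomials $\zeta^{\mu}$, and the monomials form a $\ZZ$-basis. The main point to verify is that the quotient defining $\ZZ[\zeta_{\la}]$ only imposes relations of the form monomial $=$ monomial (as $\epsilon^2=1$ does here). This gives a group algebra of a finitely generated abelian group, possibly with torsion. The lemma then shows that each $P_d$ is diagonal with entries in $\{0,1\}$.

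Next I will show that the distinguished involutions give distinct rank-one projections summing to the identity.
\begin{compactenum}[$(1)$]
\item For distinct distinguished involutions $d\ne d'$, they lie in different left cells, so $\st_d\st_{d'}=0$ by the analogue of Lemma~\ref{lem:duflobasics}. Hence $P_dP_{d'}=0$.
\item Each $P_d$ is nonzero. This will follow from faithfulness of $\pi^{\infty}_{\la}$, or, more cheaply, from boundedness together with the trace computation $\Tr^{\infty}(\st_d)=1$.
\item $\sum_d\st_d$ is the unit of $\cJ_{\Delta_{\la}}$. Its image is therefore an idempotent acting as the identity on the image of $\pi^{\infty}_{\la}$.
\end{compactenum}
The number of distinguished involutions is the number of left cells, which equals $N_{\la}$ by \cite{Lus:IV,ShiAn}.

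Combining these, the $N_{\la}$ matrices $P_d$ are nonzero, mutually orthogonal, diagonal $0/1$ matrices of size $N_{\la}$. So each is a single $E_{i,i}$, with distinct indices. After reordering the basis we get $\fc_{\la}(d_i)=E_{i,i}$, which is the claim.

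The main obstacle is step (2) of the last part, nonvanishing of each $P_d$. If needed, I would argue that there is some $w\in\Delta_{\la}$ with $\fc_{\la}(w)\neq0$, since $\pi$ recognises the cell as in \cite{CGLP:24}. Then $\st_w=\st_d\st_w$ with $d$ the distinguished involution of the right cell of $w$, and so $\fc_{\la}(d)\ne0$. This handles every right cell containing such a $w$. The left-cell bijection then covers all $d$.
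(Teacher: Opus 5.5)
Your proposal is correct and follows essentially the same route as the paper. $P_d=\fc_{\la}(d)$ is self-adjoint (since $d=d^{-1}$ and $\fc_{\la}(w^{-1})=\fc_{\la}(w)^*$) and idempotent (since $\st_d^2=\st_d$), so Lemma~\ref{lem:elementary} applies with $G=P/Q_{\la}$. The $P_d$ are mutually orthogonal, and counting the $N_{\la}$ distinguished involutions finishes the argument; your observation that $\sum_d\st_d$ is the unit is not needed.

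The paper does not prove $P_d\neq 0$; it cites \cite[Theorem~5.10]{CGLP:24}. Appealing to faithfulness of $\pi^{\infty}_{\la}$ as established in \cite{CGLP:24} is an acceptable substitute. The ``boundedness plus $\Tr^{\infty}(\st_d)=1$'' idea, however, needs the asymptotic Plancherel formula to connect $\Tr^{\infty}$ with $\fc_{\la}$. Your final ``left-cell bijection'' step does not by itself reach every $d$, unless recognition already gives $\fc_{\la}(w)\neq 0$ for all $w\in\Delta_{\la}$; in that case you can simply take $w=d$.
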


\begin{proof}
In the notation of \cite{CGLP:24}, let $P_i=\fc_{\la}(d_i)$ and $G=P/Q_{\la}$. Using the asymptotic representation (see \cite[Proposition~4.12]{CGLP:24}) we have 
$$P_i^2=\fc_{\la}(d_i)^2=\pi^{\infty}_{\la}(\st_{d_i})^2=\pi^{\infty}_{\la}(\st_{d_i}^2)=\pi^{\infty}_{\la}(\st_{d_i})=\fc_{\la}(d_i)=P_i.
$$ Since $\fc_{\la}(w^{-1})=\fc_{\la}(w)^*$ for all $w\in\Wext$ (see \cite[(4.5)]{CGLP:24}) and $d_i^{-1}=d_i$ we have $P_i^*=P_i$. We have $P_i\neq 0$ by \cite[Theorem~5.10]{CGLP:24}. Thus by Lemma~\ref{lem:elementary} $P_i$ is diagonal with entries $0$ and $1$. Since we also have $\st_{d_i}\st_{d_j}=0$ whenever $i\neq j$ the asymptotic representation gives $P_iP_j=0$ if $i\neq j$. Since there are $N_{\la}$ distinct right cells in $\Delta_{\la}$, there are $N_{\la}$ distinct distinguished involutions in $\Delta_{\la}$, and it follows that $\fc_{\la}(d_i)=E_{\sigma(i),\sigma(i)}$ for some permutation $\sigma:\{1,\ldots,N_{\la}\}\to\{1,\ldots,N_{\la}\}$, hence the result. 
\end{proof}

\section*{AI declaration}

During the preparation of this work, the author used \texttt{ChatGPT-6 Astra} to assist in calculations, proofread the manuscript, suggest structural improvements and applications, and find examples and counterexamples. More specifically, AI was used in the inversion set calculations in Sections~\ref{sec:Jalcove} and~\ref{sec:main}, determining the bounding hyperplanes for the convex regions in the proof of Theorem~\ref{thm:determineB}, finding the examples of paths and cancellation in Example~\ref{ex:pathscancel}, and in suggesting some of the applications presented in Sections~\ref{sec:applications1} and~\ref{sec:applications2}. In all cases, the resulting mathematics was easily human-verified. 

After the manuscript was prepared by the author, AI was used for proofreading and checking for mathematical inconsistencies, omissions, and errors. The minor suggestions for improvements, additions, and corrections were checked by the author. The core innovations, ideas, and proofs in the paper are due to the author, and he takes full responsibility for the mathematical correctness and results.

\bibliographystyle{plain}

\bigskip
  \footnotesize
 \noindent James Parkinson, \textsc{School of Mathematics and Statistics, University of Sydney, NSW 2006, Australia}\par\nopagebreak
 \noindent \textit{E-mail address:} \texttt{jamesp@maths.usyd.edu.au}

\end{document}